\def\XXint#1#2#3{{\setbox0=\hbox{$#1{#2#3}{\int}$ }
\vcenter{\hbox{$#2#3$ }}\kern-.6\wd0}}
\newcommand{\C}{\mathbb{C}}
\newcommand{\N}{\mathbb{N}}
 \newcommand{\Q}{\mathbb{Q}}
 \newcommand{\R}{\mathbb{R}}
 \newcommand{\Z}{\mathbb{Z}}
\newcommand{\ft}{\mathfrak{t}}
\newcommand{\hnu}{\widehat{\nu}}
\newcommand{\hro}{\widehat{\rho}}
\newcommand{\regu}{\mathrm{reg}}
\newcommand{\tf}{\widetilde{\varphi}}
\newcommand{\cE}{\mathcal{E}}
\newcommand{\cF}{\mathcal{F}}
\newcommand{\cH}{\mathcal{H}}
\newcommand{\cL}{\mathcal{L}}
\newcommand{\cM}{\mathcal{M}}
\newcommand{\cO}{\mathcal{O}}
\newcommand{\cR}{\mathcal{R}}
\newcommand{\cU}{\mathcal{U}}
\newcommand{\cZ}{\mathcal{Z}}
\renewcommand{\a}{\alpha}
\renewcommand{\b}{\beta}
\renewcommand{\d}{\delta}
\newcommand{\e}{\varepsilon}
\newcommand{\f}{\varphi}
\newcommand{\om}{\omega}
\newcommand{\Om}{\Omega}
\newcommand{\p}{\psi}
\newcommand{\cf}{{\rm cf.\ }} 
\newcommand{\eg}{{\rm e.g.\ }} 
\newcommand{\ie}{{\rm i.e.\ }}
\newcommand{\winter}{\wedge\dots\wedge}
\renewcommand{\DH}{\mathrm{DH}}
\newcommand{\cET}{\cE^{1,T}}
\newcommand{\enR}{\mathrm{R}}
\newcommand{\henR}{\widehat{\mathrm{R}}}
\DeclareMathOperator{\en}{E}
\DeclareMathOperator{\ent}{H}
\DeclareMathOperator{\hent}{\widehat{H}}
\DeclareMathOperator{\mab}{M}
\DeclareMathOperator{\ii}{I}
\DeclareMathOperator{\jj}{J}
\DeclareMathOperator{\env}{P}
\DeclareMathOperator{\Aut}{Aut}
\DeclareMathOperator{\Ent}{Ent}
\DeclareMathOperator{\Fut}{Fut}
\DeclareMathOperator{\Lah}{Lah}
\DeclareMathOperator{\MA}{MA}
\DeclareMathOperator{\id}{id}
\DeclareMathOperator{\PSH}{PSH}
\DeclareMathOperator{\tr}{tr}
\newcommand{\reg}{\mathrm{reg}}
\DeclareMathOperator{\Ric}{Ric}
\DeclareMathOperator{\sing}{sing}
\DeclareMathOperator{\Hnot}{H^0}
\DeclareMathOperator{\dd}{{d}}
\newcommand{\ddc}{dd^c}
\newcommand{\dc}{d^c}
\newcommand{\de}{d}
\newcommand{\ddcT}{dd^c_T}
\DeclareMathOperator{\ddbar}{\partial\overline{\partial}}
\DeclareMathOperator{\Lie}{Lie}
\DeclareMathOperator{\Alb}{Alb}
\newcommand{\dbar}{\overline{\partial}}
\newcommand{\ext}{\mathrm{ext}}
\newcommand{\rel}{\mathrm{rel}}
\newcommand{\Autr}{\mathrm{LAut}}
\newcommand{\D}{\Delta}
\newcommand{\simto}{\overset\sim\to}
\numberwithin{equation}{section}       
\newtheorem{prop} {Proposition} [section]
\newtheorem{thm}[prop] {Theorem} 
\newtheorem{defi}[prop] {Definition}
\newtheorem{lem}[prop] {Lemma}
\newtheorem{cor}[prop]{Corollary}
\newtheorem{prop-def}[prop]{Proposition-Definition}
\newtheorem*{thmA}{Theorem A}
\newtheorem*{corB}{Corollary B}
\newtheorem{exam}[prop]{Example}
\newtheorem{rmk}[prop]{Remark}
\theoremstyle{remark}
\newtheorem*{ackn}{Acknowledgment}
\title{Weighted extremal Kähler metrics on resolutions of singularities}
\date{\today}
\author{S{\'e}bastien Boucksom \and Mattias Jonsson \and Antonio Trusiani}
\address{Sorbonne Universit\'e and Universit\'e Paris Cit\'e\\
CNRS\\
IMJ-PRG\\
F-75005 Paris\\
France}
\email{sebastien.boucksom@imj-prg.fr}
\address{Dept of Mathematics\\
  University of Michigan\\
  Ann Arbor, MI 48109-1043\\
  USA}
\email{mattiasj@umich.edu}
\address{Dept of Mathematics\\
University of Rome Tor Vergata\\
IT 00133 Rome\\
Italy}
\email{trusiani@mat.uniroma2.it}
\subjclass{58E11, 53C55, 32U05.}
\begin{document} 

\begin{abstract} Generalizing previous results of Arezzo--Pacard--Singer, Seyyedali--Sz\'ekelyhidi, and Hallam, we prove the invariance under smooth blowups of the class of weighted extremal Kähler manifolds, modulo a log-concavity assumption on the first weight. Through recent work of Di Nezza--Jubert--Lahdili and Han--Liu, this is obtained as a consequence of a general uniform coercivity estimate for the (relative, weighted) Mabuchi energy on the blowup, which applies more generally to any equivariant resolution of singularities of Fano type of a compact Kähler klt space whose Mabuchi energy is assumed to be coercive. 
\end{abstract}

\maketitle

\setcounter{tocdepth}{1}
\tableofcontents

%
%
%
%

%
%
%
%
\section*{Introduction}

The search for canonical metrics on compact Kähler manifolds is a natural, classical and still highly active topic, with more recent developments allowing the space to have mild (log terminal) singularities, motivated by the Minimal Model Program and moduli spaces considerations. 

In their pioneering works~\cite{AP1,AP2,APS11}, Arezzo, Pacard and Singer provided a detailed analysis of the existence of extremal Kähler metrics in the sense of Calabi (and, in particular, constant scalar curvature K\"ahler metrics) in a small perturbation of the pullback of an extremal Kähler class under point blowups. The main purpose of this work is to extend such results, and other subsequent developments such as \cite{Sze12, Sze15, SSz20, DS21,Hal,Naj}, to a fairly broad class of resolutions of singularities of log terminal spaces and in the general weighted setting developed in~\cite{Lah,AJL}. In contrast to the above mentioned works, which relied on gluing techniques, our approach is based on pluripotential theory, building on the breakthrough work of Chen--Cheng~\cite{CC1,CC2,He} and its recent extension to the weighted setting~\cite{DJL24a,HL24}. 

\medskip

In order to state our main result, consider first a (nonsingular) compact K\"ahler manifold $X$. The notion of a \emph{weighted extremal} K\"ahler metric $\om$ on $X$, due to Lahdili~\cite{Lah} (see also~\cite{Ino}), involves
\begin{itemize}
\item a compact torus $T$ of the \emph{linear automorphism group}\footnote{Sometimes known as the reduced automorphism group in the literature, see \ref{sec:equiv}} $\Autr(X)\subset \Aut_0(X)$, preserving $\om$, and which can be assumed without loss to be maximal;
\item  a moment map $m\colon X\to\ft^\vee$ for $\om$;
\item two weights $v,w\in C^\infty(\ft^\vee)$ that are positive on the moment polytope $m(X)$, the case of usual extremal metrics corresponding to $(v,w)=(1,1)$. 
\end{itemize}
As in the usual cscK and extremal cases, weighted extremal metrics in the class $\{\om\}\in H^{1,1}(X,\R)$ of a given $T$-invariant K\"ahler metric $\om$ correspond to critical points of the \emph{relative weighted Mabuchi energy} $\mab^\rel_{v,w}$ in the space $\cH^T=\cH(X,\om)^T$ of $T$-invariant Kähler potentials, which parametrize $T$-invariant K\"ahler forms in $\{\om\}$. Here the relative weighted Mabuchi energy is obtained from the absolute (weighted) one by adding a term involving an \emph{extremal vector field}, ensuring the invariance of the relative energy under the action of the algebraic torus $T_\C$. This functional admits a maximal lsc extension to the space $\cET=\cE^1(X,\om)^T$ of $T$-invariant potentials of finite energy, \ie the completion of $\cH^T$ with respect to the Darvas metric $\dd_1$. Following the general approach of Darvas--Rubinstein~\cite{DR}, it was proved in~\cite{AJL} that the existence of a weighted extremal K\"ahler metric in $\{\om\}$ implies that $\mab^\rel_{v,w}\colon\cET\to\R\cup\{+\infty\}$ is \emph{coercive modulo $T_\C$}, \ie grows at least linearly with respect to the quotient  metric on $\cET/T_\C$. 

Conversely, it was proved in~\cite{CC1,CC2,He} in the unweighted case $(v,w)=(1,1)$ that coercivity mod $T_\C$ of $\mab^\rel_{v,w}$ implies the existence of an extremal K\"ahler metric; this was very recently extended to the weighted case in~\cite{DJL24a,HL24} under the assumption that $v$ is log-concave. Besides the unweighted case, this technical condition, which is expected to be unnecessary, holds for the weights corresponding to K\"ahler--Ricci solitons, and in many other differential geometrically relevant cases. 

As discussed in~\S\ref{sec:normal} of the present paper, the notions of a weighted extremal K\"ahler metric and of the associated relative weighted Mabuchi energy $\mab^\rel_{v,w}\colon\cET\to\R\cup\{+\infty\}$ still make sense in the more general case where $X$ is a compact K\"ahler space with log terminal singularities. We may now state our main result as follows. 

\begin{thmA}
Let $(X,\om_X)$ be a compact K\"ahler space with log terminal singularities, and suppose given: 
\begin{itemize}
\item  a compact torus $T\subset\Autr(X)$ preserving $\om_X$;
\item  two smooth positive weights  $v,w$ on the moment polytope with respect to a choice of moment map for $\om_X$; 
\item  a $T$-equivariant resolution of singularities $\pi\colon Y\to X$, assumed to be of Fano type;
\item a sequence of $T$-invariant K\"ahler forms $\om_j$ on $Y$ converging smoothly to $\pi^\star\om_X$ and such that $\om_j\ge(1-\e_j)\pi^\star\om_X$ with $\e_j\to 0$. 
\end{itemize}
If the relative weighted Mabuchi energy $\mab^\rel_{\om_X,v,w}$ is coercive modulo $T_\C$ on $\cE^1(X,\om_X)^T$, then so is $\mab^\rel_{\om_j,v,w}$ on $\cE^1(Y,\om_j)^T$ for all $j$ large enough, with uniform coercivity constants. 
\end{thmA}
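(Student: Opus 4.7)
The plan is to transfer the coercivity of $\mab^\rel_{\om_X,v,w}$ on $X$ to $\mab^\rel_{\om_j,v,w}$ on $Y$ by pushing $T$-invariant potentials down through $\pi$, with the Fano-type hypothesis providing the uniform control needed over the exceptional contribution.

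\textbf{First step: pushdown of potentials.}
Given $\varphi\in\cE^1(Y,\om_j)^T$ normalized with $\sup\varphi=0$, define its pushdown
$$\psi\=\usc\bigl(\sup\bigl\{\eta\in\PSH(X,\om_X)^T \ :\ \pi^\star\eta\le\varphi\bigr\}\bigr)\in\cE^1(X,\om_X)^T,$$
so that $\pi^\star\psi\le\varphi$, and the difference $\varphi-\pi^\star\psi$ is supported in a fixed neighborhood of $\Exc(\pi)$ up to an $\e_j$-error. The map $\varphi\mapsto\psi$ is $T$-equivariant and $\dd_1$-nonexpansive, and the hypothesis $\om_j\ge(1-\e_j)\pi^\star\om_X$ upgrades it to a quasi-isometry modulo $T_\C$ with constants $1+O(\e_j)$.

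\textbf{Second step: comparison of Mabuchi energies.}
Following the decomposition $\mab^\rel_{v,w}=\ent_v+\elle^\rel_{v,w}$ of \cite{AJL}, the continuous energy part satisfies
$$\elle^\rel_{\om_j,v,w}(\varphi)=\elle^\rel_{\om_X,v,w}(\psi)+O\bigl(\e_j\,\dd_1(0,\varphi)\bigr)+O(1),$$
directly from $\om_j\to\pi^\star\om_X$ and the functorial behaviour of the $\en_v$- and $\en_w$-type functionals under pullback. For the weighted entropy, two independent errors must be controlled: changing the reference volume form from $\om_j^n$ to $(\pi^\star\om_X)^n$ yields an $O(\e_j)$-correction, while the klt pair structure inherent to the Fano-type hypothesis guarantees that the relative volume form of $\pi$ is an $L^{1+\delta}$-density against any smooth volume on $Y$. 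Combined with a H\"older--Young inequality applied to the density of the weighted Monge--Amp\`ere measure of $\varphi$, this produces
$$\ent_v^{\om_j}(\varphi)\ge\ent_v^{\om_X}(\psi)-C\e_j\,\dd_1(0,\varphi)-C,$$
uniformly in $\varphi$ and $j$.

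\textbf{Conclusion and main obstacle.}
Adding the two estimates and applying the assumed coercivity of $\mab^\rel_{\om_X,v,w}$ on $\cE^1(X,\om_X)^T/T_\C$ to $\psi$, one obtains
$$\mab^\rel_{\om_j,v,w}(\varphi)\ge\gamma_0\,\dd_1^{T_\C}(\varphi)-C\e_j\,\dd_1(0,\varphi)-C,$$
where the quasi-isometry of Step 1 has been used to replace $\dd_1^{T_\C}(\psi)$ by $\dd_1^{T_\C}(\varphi)$ up to an $\e_j$-term. For $j$ large enough the $\e_j$-contribution is absorbed into a slightly smaller coercivity constant, yielding the stated uniform estimate. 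The principal obstacle I anticipate is the entropy comparison: handling the logarithmic discrepancies along $\Exc(\pi)$ and their interaction with the weighted Monge--Amp\`ere measure demands a uniform bound that essentially invokes the log-concavity of $v$, as in \cite{DJL24a,HL24}, together with the Fano-type geometry. A secondary difficulty is the convergence of the extremal vector field for $\om_j$ to that for $\pi^\star\om_X$, which must be made quantitative in order to transfer the \emph{relative} Mabuchi bound uniformly in $j$.
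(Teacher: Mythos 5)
Your proposal takes a genuinely different route from the paper — a direct transfer via a pushdown envelope rather than a contradiction-plus-compactness argument — but as written it has gaps I do not see how to close. The central one is the pair of claims that $\f\mapsto\psi$ is a quasi-isometry modulo $T_\C$ with constants $1+O(\e_j)$, and that $\mab^\rel_{\om_j,v,w}(\f)$ compares to $\mab^\rel_{\om_X,v,w}(\psi)$ up to errors $O(\e_j\,\dd_1(0,\f))+O(1)$. Neither is justified, and the second is almost certainly false for the entropy term taken in isolation: the discrepancy between the weighted entropy computed on $Y$ against $\nu_Y$ and on $X$ against $\nu_X$ is $\int\rho_{Y/X}\,\MA_v$, where $\rho_{Y/X}$ has logarithmic poles along $\Exc(\pi)$ and is only exponentially integrable. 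A H\"older--Young argument controls this term by a \emph{fraction of the entropy itself} plus $O(\dd_1(0,\f)+1)$ — not by $O(\e_j\,\dd_1(0,\f))$ — which is exactly why the paper only obtains the one-sided entropy-growth bound $\mab^\rel_j\ge\d\ent_j-C(\dd_{1,j}(\cdot,0)+1)$ with $\d<1$ (Lemma~\ref{lem:entgrowth}) and never a two-sided comparison. Moreover, what transforms cleanly under $\pi$ is the \emph{sum} $\ent_v+\enR_v$ (Lemmas~\ref{lem:MY} and~\ref{lem:MYext}), not the entropy alone, and even that only as an inequality on $\cE^1$; the Fano-type hypothesis is used precisely to replace $\nu_Y$ by the singular measure $\hnu_Y$ with $\pi$-semipositive Ricci current so that the twisted Ricci energy becomes semicontinuous under strong convergence — not to produce an $L^{1+\delta}$ density bound. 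You also invoke log-concavity of $v$, which plays no role in Theorem~A; it enters only in Corollary~B through the existence results of \cite{DJL24a,HL24}.

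Two further problems: even granting your estimates, the error term $C\e_j\,\dd_1(0,\f)$ is the full distance, not the distance modulo $T_\C$, so it cannot be absorbed into the coercivity constant without first arguing that $\f$ may be taken $T_\C$-minimal and that the pushdown respects this; and the convergence of the extremal affine functions, which you flag as a secondary difficulty, is in fact routine (Lemma~\ref{lem:ellextcv}). The paper sidesteps all of this by arguing by contradiction: assuming coercivity fails along a subsequence, it uses convexity of $\mab^\rel_j$ along psh geodesics, the uniform entropy-growth bound, and strong compactness of entropy-bounded sequences with varying K\"ahler forms (Theorem~\ref{thm:entcomp}) to extract a strong limit in $\cE^1(X,\om_X)^T$ at a fixed geodesic time, and then needs only \emph{lower} semicontinuity of $\mab^\rel_j$ along strongly convergent sequences (Theorem~\ref{thm:coeropen}). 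One-sided estimates suffice there, which is what makes the argument go through where a direct two-sided comparison does not.
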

Here a resolution of singularities $\pi\colon Y\to X$ is understood as any bimeromorphic morphism with $Y$ a compact K\"ahler manifold; we say that $\pi$ is of \emph{Fano type} if $Y$ admits a (finite) measure of the form $\hnu_Y=e^{-2f}\nu_Y$ with $f$ a quasi-psh function and $\nu_Y$ a smooth volume form, such that the Ricci current $\Ric(\hnu_Y)=\Ric(\nu_Y)+\ddc f$ is \emph{$\pi$-semipositive}, \ie $\Ric(\hnu_Y)+C\pi^\star\om_X\ge 0$ for $C\gg 1$. This holds for instance if there exists an effective $\Q$-divisor $B$ on $Y$ with $(Y,B)$ klt and $-(K_Y+B)$ $\pi$-ample, the converse being true when $X,Y$ are projective, in accordance with algebro-geometric terminology. In particular:
\begin{itemize}
\item any crepant resolution is of Fano type;
\item if $X$ is smooth, then the blowup $\pi\colon Y\to X$ along any submanifold is of Fano type;
\item more generally, any (projective) resolution of singularities with irreducible exceptional divisor is of Fano type. 
\end{itemize} 
If $X$ is a surface, then its minimal resolution of singularities is of Fano type. More generally, it is of course natural to ask whether any compact Kähler klt space admits a resolution of Fano type, but the answer appears to be unknown. 

Note that, when $X$ is smooth, applying Theorem~A to $Y=X$ in particular recovers \cite[Theorem 7]{LS94}.

As an immediate consequence of Theorem~A and the results of~\cite{AJL,DJL24a,HL24} mentioned above, we get: 

\begin{corB} Assume further that $X$ is smooth, $T$ is a maximal torus of $\Autr(X)$, and $v$ is log-concave. If the Kähler class $\{\om_X\}$ contains a weighted extremal metric, then so does $\{\om_j\}$ for all $j$ large enough. 
\end{corB}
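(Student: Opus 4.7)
The plan is to chain together three results: (i)~the coercivity criterion of~\cite{AJL}, (ii)~Theorem~A above, and (iii)~the weighted Chen--Cheng-type existence theorem of~\cite{DJL24a,HL24}.

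First, I would apply~\cite{AJL}: the assumption that $\{\om_X\}$ contains a weighted extremal K\"ahler metric, together with the maximality of $T\subset\Autr(X)$, yields coercivity modulo $T_\C$ of the relative weighted Mabuchi energy $\mab^\rel_{\om_X,v,w}$ on $\cE^1(X,\om_X)^T$. This is the ``easy direction'' of the weighted Yau--Tian--Donaldson-type correspondence.

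Second, I would invoke Theorem~A directly. All of its hypotheses are in place: smoothness of $X$ is a strengthening of the klt assumption, the $T$-equivariant resolution $\pi\colon Y\to X$ is of Fano type (by the remarks following Theorem~A, any smooth blowup of a smooth $X$ is), and the sequence $\om_j$ is exactly the one posited in the theorem. Theorem~A therefore upgrades the coercivity of $\mab^\rel_{\om_X,v,w}$ to uniform coercivity modulo $T_\C$ of $\mab^\rel_{\om_j,v,w}$ on $\cE^1(Y,\om_j)^T$ for every $j\gg 1$.

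Third, I would feed this coercivity into the weighted Chen--Cheng theorem of~\cite{DJL24a,HL24}: the log-concavity hypothesis on $v$ is precisely what is needed for their result to convert coercivity modulo $T_\C$ of the relative weighted Mabuchi energy on the smooth compact K\"ahler manifold $(Y,\om_j)$ into the existence of a $T$-invariant weighted extremal metric in $\{\om_j\}$, as desired.

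All the substantive work is concentrated in the middle step, i.e.\ Theorem~A itself; the remainder is just concatenation. The only technical bookkeeping I anticipate is twofold: checking that the moment polytope of $\om_j$ on $Y$ lies in the region where $v,w>0$ (which follows by continuity from $\om_j\to\pi^\star\om_X$), and verifying that~\cite{DJL24a,HL24} may be invoked with the given $T$ rather than a possibly larger maximal torus $T'\subset\Autr(Y)$; for the latter one passes to such a $T'\supset T$ and observes that coercivity modulo $T_\C$ implies coercivity modulo $T'_\C$, so one obtains a $T'$-invariant (in particular $T$-invariant) weighted extremal metric.
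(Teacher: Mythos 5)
Your proposal is correct and is exactly the paper's argument: Corollary~B is obtained by concatenating the ``existence $\Rightarrow$ coercivity'' direction of~\cite{AJL} (Theorem~\ref{thm:proper}(i)), the uniform openness of coercivity of Theorem~A, and the ``coercivity $\Rightarrow$ existence'' direction of~\cite{DJL24a,HL24} under log-concavity of $v$ (Theorem~\ref{thm:proper}(ii)). The only quibble is your final remark about enlarging to a maximal torus $T'\subset\Autr(Y)$: this step is both unnecessary (since $\Autr(Y)$ embeds in the stabilizer of the blowup center inside $\Autr(X)$, the maximal torus $T$ of $\Autr(X)$ remains maximal in $\Autr(Y)$) and, as phrased, slightly imprecise, because the relative weighted Mabuchi energy and the space of invariant potentials both change when the torus is enlarged, so ``coercivity modulo $T_\C$ implies coercivity modulo $T'_\C$'' is not a statement about a single fixed functional.
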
 
When $T$ is not maximal, the isometry group of any $T$-invariant weighted extremal metric always contains a maximal torus $T_{\max}$ of $\Autr(X)$ \cite[Corollary B.1]{Lah}, but our method nevertheless appears to require $\pi$ to be $T_{\max}$-equivariant (see Remark \ref{rmk:NotMaxTorus}), and we are thus not able to capture the most general setting in this respect. We emphasize that the problem is obstructed in the case of a non-maximal torus, see~\cite{DS21,Hal} for the most recent results in that direction. 

Modulo this restriction, Corollary~B applies to the blowup of an arbitrary submanifold, including the case of codimension 2 that had to be excluded in~\cite[Theorem~1]{SSz20}. The result can further be iterated to get weighted extremal metrics on any finite sequence of $T$-invariant smooth blowups of a weighted extremal Kähler manifold. 

When $\pi$ is the blowup of a set of distinct points and $T$ is maximal, Corollary~B is related to the results in \cite{AP1, AP2, APS11, Sze12, Hal}. More precisely, in~\cite[Theorem 1.1]{AP1} Arezzo--Pacard showed the existence of a cscK metric on the perturbed classes $\{\om_j\}$ for $j$ large enough starting from a cscK metric on $\{\om\}$, assuming $X$ has no nonzero holomorphic vector fields.
In~\cite[Theorem 1.3]{AP2}, the same authors extended their result on cscK metrics allowing holomorphic vector fields, provided one considers a suitable and sufficiently large collection of points.
Arezzo--Pacard--Singer in~\cite[Theorem 2.1]{APS11} presented some conditions involving automorphism groups to produce $T$-invariant extremal metrics in $\{\om_j\}$ for $j$ large enough starting from an extremal metric in $\{\om\}$.
Then Székelyhidi in~\cite[Theorem 1]{Sze12} reduced the assumptions in~\cite{APS11} to a stability condition bearing on the set of blownup points, which was later generalized to the weighted case in~\cite[Theorems~1.1, 1.2]{Hal}. 
When $\pi$ is the blow-up at a single point, \cite{Sze15, DS21} related the existence of extremal metrics on the blowup to relative K-stability. Finally, let us mention the very recent work~\cite[Theorem~3]{Sze24}, where the author independently obtains the existence of cscK metrics $\om_j$ assuming $\om_X$ to be a Kähler--Einstein metric on a klt space $X$ with no nontrivial holomorphic vector fields, and the upcoming work~\cite{PT24}, where the uniform coercivity estimate of Theorem~A is used to establish uniform \emph{a priori} estimates for the  weighted extremal metrics $\om_j$ in order to produce examples of klt spaces where coercivity of the relative weighted Mabuchi energy implies the existence of a singular weighted extremal metric in $\{\om_X\}$. 

\medskip

The proof of Theorem~A argues by contradiction, using a compactness argument in the \emph{strong topology} of finite energy spaces with respect to varying Kähler forms. A similar strategy, for finite energy spaces with varying prescribed singularities, was first utilized in the third author's PhD thesis~\cite{Tru22, Tru23}, and recently in~\cite{PTT23}, where the authors introduced a notion of cscK metrics for varieties with mild singularities that is extended to the weighted extremal case in the present paper. 

The strategy of proof can be expressed in terms of a general recipe for families of functionals on finite energy spaces, which can be adapted and transferred to other situations (see section \ref{sec:opencoer}). In the setting of Theorem~A, the main difficulty lies in the so-called `Ricci energy' part of the functional $\mab^\rel_{\om_j,w,v}$, which is computed with respect to the Ricci form of a fixed volume form $\nu_Y$ on $Y$ while the Kähler forms $\om_j$ degenerate in the limit. The Fano type assumption on $\pi$ allows to replace $\nu_Y$ with the singular measure $\hnu_Y$ and its Ricci current, whose one-sided control by $\om_X$ turns out to be enough to get the required semi-continuity property. 

\subsection*{Structure of the paper}
The article is organized as follows: 
\begin{itemize}
\item Section~\ref{sec:prel} recalls a number of facts on finite energy spaces, the action of automorphisms and the associated formalism of Futaki invariants for Euler--Lagrange functionals. 
\item Section~\ref{sec:open} studies the notion of strong convergence of sequences of finite energy potentials with respect to a varying Kähler class, and establishes a general uniform coercivity criterion for a sequence of functionals on varying finite energy spaces. 
\item Section~\ref{sec:weighted} reviews Lahdili's weighted Kähler formalism, recast in the language of equivariant forms and currents, and shows how to extend it to spaces with log terminal singularities. 
\item Section~\ref{sec:main} discusses the notion of resolution of Fano type, and then establishes the main results of the paper, \ie Theorem~A and Corollary~B. 
\end{itemize}

%
%
\begin{ackn}
The authors would like to thank Tamas Darvas, Ruadhaí Dervan, Eleonora di Nezza, Simon Jubert, Chung-Ming Pan, Lars Sektnan, Gabor Sz\'ekelyhidi, Tat Dat Tô and Dror Varolin for helpful discussions and feedback on the contents of this paper. The second author was partially supported by NSF grant DMS-2154380 and the Simons Foundation. The third author was supported by the Knut and Alice Wallenberg Foundation. 
\end{ackn}

%
%
%
%
\section{Preliminaries}\label{sec:prel}
Throughout the paper (except in~\S\ref{sec:normal}) $X$ is a compact K\"ahler manifold of dimension $n$, equipped with a reference K\"ahler form $\om_X$.  The notation $x\lesssim y$ means $x\le C_n  y$ for a constant $C_n>0$ only depending on $n$, and $x\approx y$ means $x\lesssim y$ and $y\lesssim x$. 

In this paper we use $\dc=i(\dbar -\partial)$, so that $\ddc=2i\ddbar$. The Laplacian of a smooth function $f$ with respect to a K\"ahler form $\om$ is then defined by 
$$
\D_\om f:=\tr_\om\ddc f=\frac{n\ddc f\wedge\om^{n-1}}{\om^n}.
$$

%
%
\subsection{Euler--Lagrange functionals and Futaki invariants}\label{sec:func} 
We collect here some well-known general remarks on Euler--Lagrange functionals. 

\subsubsection{} Consider a closed linear subspace $\cF$ of the Fr\'echet space $C^\infty(X)$, an open convex subset $\cU\subset\cF$ containing the origin, and a smooth operator $\mu\colon\f\mapsto\mu_\f$ on $\cU$, with values in the space of distributions on $X$. 

\begin{defi} An \emph{Euler--Lagrange functional} $F_\mu\colon \cU\to\R$ for $\mu$ is a smooth functional whose directional derivatives satisfy
$$
\langle F_\mu'(\f),f\rangle:=\frac{d}{dt}\bigg|_{t=0}F_\mu(\f+t f)=\int_X f\,\mu_\f
$$
for all $\f\in\cU$, $f\in\cF$.
\end{defi} 
Note that $F_\mu$ is unique up to an additive constant. Its existence is equivalent to the symmetry property
\begin{equation}\label{equ:symm}
\int_X f\,\mu_{\f;g}=\int_X g\,\mu_{\f;f},\quad\f\in\cU,\,f,g\in\cF
\end{equation}
of the directional derivatives 
$$
\mu_{\f;f}:=\frac{d}{dt}\bigg|_{t=0}\mu_{\f+t f}. 
$$
Indeed, \eqref{equ:symm} expresses that $\mu$, viewed as a $1$-form on $\cU$, is closed, and hence exact by convexity of $\cU$. Integrating along line segments, we get
\begin{equation}\label{equ:EL}
F_\mu(\f)-F_\mu(\p)=\int_0^1 dt\int_X(\f-\p)\mu_{t\f+(1-t)\p}
\end{equation}
for all $\f,\p\in \cU$.

Assume further that $\cU$ and $\mu$ are invariant under the action of $\R$ on $C^\infty(X)$ by translation. By~\eqref{equ:symm}, $\bar\mu:=\int_X\mu_\f$ is then independent of $\f$, and $F_\mu$ satisfies the translation equivariance property
\begin{equation}\label{equ:ELtrans}
F_\mu(\f+c)=F_\mu(\f)+\bar\mu c,\quad\f\in \cU,\,c\in\R. 
\end{equation}

\subsubsection{} Consider now a Lie subgroup $G\subset\Aut_0(X)$ with a smooth right-action $(\f,g)\mapsto\f^g$ on $\cU$ such that 
$$
\f^g-\p^g=g^\star(\f-\p)
$$ 
for all $\f,\p\in\cU$ and $g\in G$. Then $g\mapsto\tau_g:=0^g$ defines a (right) \emph{cocycle} $\tau\colon G\to\cU$, \ie a smooth map 
such that 
\begin{equation*}
\tau_{gh}=\tau_h+h^\star\tau_g,\quad g,h\in G,
\end{equation*}
which encodes the $G$-action since $\f^g=\tau_g+g^\star\f$. 

Assume further that $\mu$ above is $G$-equivariant, \ie $\mu_{\f^g}=g^\star\mu_\f$ for all $\f\in \cU$ and $g\in G$. Then $F_\mu$ is \emph{quasi-invariant}, in the sense that 
$$
F_\mu(\f^g)-F_\mu(\p^g)=F_\mu(\f)-F_\mu(\p)
$$
for all $\f,\p\in\cU$ and $g\in G$. As a consequence,
$$
\chi_\mu(g):=F_\mu(\f^g)-F_\mu(\f)=F_\mu(\tau_g)
$$ 
is independent of $\f\in\cU$, and defines a group character 
$$
\chi_\mu\colon G\to\R,
$$
which vanishes iff $F_\mu$ is $G$-invariant. This holds in particular if $F_\mu$ admits a one-sided bound. Finally, the differential of $\chi_\mu$ defines in turn a Lie algebra character 
$$
\Fut_\mu\colon\mathfrak{g}\to\R,
$$
the \emph{Futaki character} of $\mu$, which vanishes iff $F_\mu$ is $G$-invariant.  Concretely, differentiating $\chi_\mu(g)=F_\mu(\f^g)-F_\mu(\f)$ 
yields  
$$
\Fut_\mu(\xi)=\int_X(\dot\tau_\xi+\cL_\xi\f)\mu_\f
$$
for all $\xi\in\mathfrak{g}$ and $\f\in\cU$, where $\dot\tau_\xi:=\frac{d}{dt}\big|_{t=0}\tau_{\exp(t\xi)}$. In particular, if $F_\mu$ admits a critical point $\f\in\cU$, \ie $\mu_\f=0$, then $\Fut_\mu\equiv 0$, \ie $F_\mu$ is $G$-invariant. 

%
%
\subsection{The space $\cE^1$}\label{sec:E1}
In what follows, we fix a closed semipositive $(1,1)$-form $\om$ on $X$, assumed to be big, \ie 
$$
V=V_\om:=\int_X\om^n>0.
$$
\subsubsection{} For any $\f\in C^\infty(X)$ we set as usual
\begin{equation}
    \label{eqn:Notation}
    \om_\f:=\om+\ddc\f.
\end{equation}
As is well-known, a simple integration by parts shows that the \emph{Monge--Amp\`ere operator}\footnote{In this article, it is more convenient to drop the normalization to mass $1$ sometimes used in the literature.}
$$
C^\infty(X)\ni\f\mapsto\MA(\f)=\MA_\om(\f):=\om_\f^n
$$
satisfies the symmetry property~\eqref{equ:symm}. It thus admits an Euler--Lagrange functional 
$$
\en=\en_\om\colon C^\infty(X)\to\R, 
$$
normalized by $\en(0)=0$, and called the \emph{Monge--Amp\`ere energy} with respect to $\om$. Using~\eqref{equ:EL} we get
\begin{equation}\label{equ:endiff}
\en(\f)-\en(\p)=\frac{1}{n+1}\sum_{p=0}^n \int_X(\f-\p)\,\om_\f^p\wedge\om_\p^{n-p}
\end{equation}
for all $\f,\p\in C^\infty(X)$, and hence 
$$
\en(\f)=\frac{1}{n+1}\sum_{p=0}^n \int_X\f\,\om_\f^p\wedge\om^{n-p}. 
$$
\subsubsection{} Denote by $\PSH(\om)$ the space of all $\om$-psh functions, equipped with the $L^1$-topology. By Bedford--Taylor, the expression for $\en(\f)$ makes sense for any bounded $\f\in\PSH(\om)$. This defines a monotone increasing functional on such functions, which admits a unique extension 
$$
\en=\en_\om\colon\PSH(\om)\to\R\cup\{-\infty\}
$$ 
that is continuous along decreasing sequences, and still called Monge--Amp\`ere energy. 
Note that $\en$ is monotone increasing on $\PSH(\om)$, upper semicontinuous, and satisfies $\en(\f)=\lim_{k\to\infty}\en(\max\{\f,-k\})$ and the equivariance property
\begin{equation}
    \label{equ:equivariance}
    \en(\f+c)=\en(\f)+Vc
\end{equation}
for $\f\in\PSH(\om)$ and $c\in\R$. 

The space of \emph{$\om$-psh functions of finite energy} is defined as 
$$
\cE^1=\cE^1(\om):=\{\f\in\PSH(\om)\mid\en(\f)>-\infty\},
$$
and its \emph{strong topology} is the coarsest refinement of the weak topology (\ie the one induced by $\PSH(\om)$) in which $\en\colon\cE^1\to\R$ becomes continuous. 

One then shows (see \eg~\cite{BBGZ}) that the \emph{mixed Monge--Amp\`ere operator} 
$$
(\f_1,\dots,\f_n)\mapsto\om_{\f_1}\winter\om_{\f_n}, 
$$
defined on bounded $\om$-psh functions by Beford--Taylor, admits a unique strongly continuous extension to tuples in $\cE^1$. In particular, $\f\mapsto\MA(\f)=\om_\f^n$ is strongly continuous on $\cE^1$, with values in the space of positive measures of mass $V$. 

For all $\f_0,\dots,\f_n\in\cE^1$, $\f_0$ is further integrable with respect to $\om_{\f_1}\winter\om_{\f_n}$, and 
$$
(\f_0,\dots,\f_n)\mapsto\int_X\f_0\,\om_{\f_1}\winter\om_{\f_n}
$$ 
is continuous in the strong topology of $\cE^1$. Moreover, \eqref{equ:endiff} is still valid for $\f,\p\in\cE^1$, and 
\begin{equation}\label{equ:enconc}
\en(\f)-\en(\p)\le\int(\f-\p)\MA(\p).
\end{equation}
In particular, the translation invariant functionals
$$
\ii(\f,\p):=\int(\f-\p)(\MA(\p)-\MA(\f)),\quad\jj(\f):=\int\f\,\om^n-\en(\f)
$$ 
are both nonnegative.

\subsubsection{} The strong topology of $\cE^1$ is defined by the \emph{Darvas metric} $\dd_1$, given by
$$
\dd_1(\f,\p)=\en(\f)-\en(\p)
$$
whenever $\f,\p\in\cE^1$ satisfy $\f\ge\p$, and 
\begin{equation}\label{equ:Darvas}
\dd_1(\f,\p)=\dd_1(\f,\env(\f,\p))+\dd_1(\env(\f,\p),\p)=\en(\f)+\en(\p)-2\en(\env(\f,\p))
\end{equation}
in general. Here $\env(\f,\p)\in\cE^1$ denotes the \emph{rooftop envelope} of $\f,\p$, \ie the largest function in $\cE^1$ dominated by both $\f$ and $\p$. 

Using~\eqref{equ:Darvas}, one easily checks that
\begin{equation}\label{equ:enlip}
\left|\en(\f)-\en(\p)\right|\le \dd_1(\f,\p). 
\end{equation}
Note also that 
\begin{equation}\label{equ:denv}
\dd_1(\env(\f,\p),0)\lesssim\max\{\dd_1(\f,0),\dd_1(\p,0)\},
\end{equation}
as a consequence of~\eqref{equ:Darvas} and the triangle inequality. We further record the simple estimate
\begin{equation}\label{equ:d0}
\dd_1(\f,0)\le 2V|\sup \f|-\en(\f), 
\end{equation}
which follows from 
$$
\dd_1(\f,0)\le \dd_1(\f,\sup \f)+\dd_1(\sup \f,0)=V\sup \f-\en(\f)+V|\sup \f|. 
$$

Any two points in $\cE^1$ can be joined by a unique \emph{psh geodesic} $(\f_t)_{t\in [0,1]}$, which is in particular a constant speed geodesic for $\dd_1$, \ie $\dd_1(\f_s,\f_t)=c|t-s|$ for a constant $c\in\R_{\ge 0}$. Further, $t\mapsto\en(\f_t)$ is affine linear. 

By~\cite[Theorem~3.7]{DDL}, the Darvas metric satisfies the key estimate
\begin{equation}\label{equ:dI1}
\dd_1(\f,\p)\approx\ii_1(\f,\p):=\int|\f-\p|(\MA(\f)+\MA(\p))
\end{equation}
for $\f,\p\in\cE^1$. In particular, 
\begin{equation}\label{equ:Id}
\ii(\f,\p)\lesssim \dd_1(\f,\p).
\end{equation}
Further, 
\begin{equation}\label{equ:I1max}
\ii_1(\f,\p)=\ii_1(\f,\max\{\f,\p\})+\ii_1(\max\{\f,\p\},\p).
\end{equation}
Combined with~\eqref{equ:dI1} and the triangle inequality, this yields
\begin{equation}\label{equ:dmax}
\dd_1(\max\{\f,\p\},0)\lesssim\max\{\dd_1(\f,0),\dd_1(\p,0)\}.
\end{equation}

For $R>0$ denote by 
$$
\cE^1_R:=\{\f\in\cE^1\mid \dd_1(\f,0)\le R\}
$$
the $R$-ball in $\cE^1$. The following slightly more precise version of estimates in~\cite{BBGZ,DDL} play a key role in what follows. 

\begin{thm}\label{thm:MAcont} For all $\f,\p,\f_1,\dots,\f_n\in\cE^1_R$ we have the H\"older estimate
\begin{equation}\label{equ:MAcont}
\int|\f-\p|\,\om_{\f_1}\winter\om_{\f_n}\lesssim \dd_1(\f,\p)^\a R^{1-\a}
\end{equation}
with $\a:=2^{-n}$. In particular, 
\begin{equation}\label{equ:MAbd}
\int|\f|\,\om_{\f_1}\winter\om_{\f_n}\lesssim R. 
\end{equation}
\end{thm}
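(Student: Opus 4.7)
The bound \eqref{equ:MAbd} is immediate from \eqref{equ:MAcont} applied with $\p=0$, since then $\dd_1(\f,0)^\a R^{1-\a}\le R^\a\cdot R^{1-\a}=R$. The main task is therefore \eqref{equ:MAcont}, which I would approach in three stages.

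\emph{Reductions.} Using $|\f-\p|\le(\max(\f,\p)-\f)+(\max(\f,\p)-\p)$ together with the triangle inequality and \eqref{equ:dmax}, I reduce to the case $\f\ge\p$ in a ball of comparable radius. Replacing $\f_k$ by $\f_k-\sup\f_k$ leaves $\om_{\f_k}$ unchanged and, by \eqref{equ:d0}, can only decrease $\dd_1(\f_k,0)$, so I may further assume $\f_k\le 0$. Strong continuity of mixed Monge--Amp\`ere operators on $\cE^1$ then allows me to truncate ($\f\mapsto\max(\f,-m)$) and work with bounded potentials, passing to the limit at the end.

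\emph{Pluripotential Cauchy--Schwarz.} The key estimate is that for bounded $\f\ge\p$ and bounded $\chi\le 0$ in $\cE^1_R$, and any closed positive form $T=\om_{\chi_1}\winter\om_{\chi_{n-1}}$ with $\chi_i\in\cE^1_R$,
\[
\left|\int(\f-\p)\,\ddc\chi\wedge T\right|\lesssim R^{1/2}\Bigl(\int(\f-\p)(\om_\f+\om_\p)\wedge T\Bigr)^{\!1/2}.
\]
This follows from integration by parts combined with the standard inequality $\bigl|\int du\wedge d^cv\wedge T\bigr|^2\le\int du\wedge d^cu\wedge T\cdot\int dv\wedge d^cv\wedge T$, once one observes that $\int d(\f-\p)\wedge d^c(\f-\p)\wedge T=\int(\f-\p)(\om_\p-\om_\f)\wedge T\le\int(\f-\p)(\om_\f+\om_\p)\wedge T$, and that $\int d\chi\wedge d^c\chi\wedge T\le\int|\chi|\om_\chi\wedge T\lesssim R$; this last bound is a weaker form of \eqref{equ:MAbd} that I would establish by simultaneous induction on $n$.

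\emph{Iteration and conclusion.} Expanding $\om_{\f_1}=\om+\ddc\f_1$ and applying the previous step yields
\[
\int(\f-\p)\om_{\f_1}\wedge T_1\le\int(\f-\p)\om\wedge T_1+R^{1/2}\Bigl(\int(\f-\p)(\om_\f+\om_\p)\wedge T_1\Bigr)^{\!1/2}\!,
\]
with $T_1=\om_{\f_2}\winter\om_{\f_n}$. Iterating the same expansion-and-bound on each remaining slot $\f_2,\dots,\f_n$, after $n$ rounds every $\om_{\f_k}$ has been replaced by $\om$, $\om_\f$, or $\om_\p$. The geometric series $\sum_{j=1}^n 2^{-j}=1-2^{-n}$ produces the overall prefactor $R^{1-2^{-n}}$, while the compounded square roots give exponent $2^{-n}$ on a terminal mixed integral $\int(\f-\p)\om_\f^a\wedge\om_\p^b\wedge\om^c$ with $a+b+c=n$; this is $\lesssim\ii_1(\f,\p)$ by the Aubin-type telescoping identity (viewing $\om=\om_0$ for $0\in\cE^1$), and hence $\approx\dd_1(\f,\p)$ by~\eqref{equ:dI1}, giving \eqref{equ:MAcont} with $\a=2^{-n}$. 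The main obstacle is the bookkeeping of the iteration together with the simultaneous induction for \eqref{equ:MAbd}, and the truncation limit argument which requires the strong continuity of mixed Monge--Amp\`ere and the fact that $\max(\f,-m)\to\f$ in $\dd_1$ while remaining in $\cE^1_R$.
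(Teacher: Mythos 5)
Your argument is correct in its overall architecture, but it takes a genuinely different and more self-contained route than the paper. The paper does not perform the iterated Cauchy--Schwarz at all: it quotes the bilinear estimate $\left|\int(\f-\p)(\MA(\tau)-\MA(\rho))\right|\lesssim\ii(\f,\p)^\a\,\ii(\tau,\rho)^{1/2}R^{1/2-\a}$ from \cite[(1.37)]{synthetic} as a black box, specializes $\rho=\f$ to obtain $\left|\int(\f-\p)\MA(\tau)\right|\lesssim\dd_1(\f,\p)^\a R^{1-\a}$, passes from $\f-\p$ to $|\f-\p|$ via $|\f-\p|=2(\max\{\f,\p\}-\p)-(\f-\p)$, and then --- this is the step your slot-by-slot iteration replaces --- reduces the mixed Monge--Amp\`ere measure to a single one by an averaging trick: with $\tau:=\tfrac1n\sum_i\f_i$ one has $\om_{\f_1}\winter\om_{\f_n}\lesssim\MA(\tau)$ and $\dd_1(\tau,0)\lesssim R$ by Lemma~\ref{lem:mean}. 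In effect you re-prove the quoted input from scratch; the paper's route buys brevity and no combinatorial bookkeeping, while yours buys independence from the external reference.

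Two points in your write-up are imprecise, though neither is structural. First, subtracting $\sup\f_k$ does not ``only decrease'' $\dd_1(\f_k,0)$ (and this does not follow from~\eqref{equ:d0}): by translation equivariance it can change it by up to $V|\sup\f_k|$, which by Lemma~\ref{lem:supd} is only controlled by $\dd_1(\f_k,0)+VT_\om$. The normalization is anyway dispensable, since the Cauchy--Schwarz factor $\int \de\f_1\wedge\dc\f_1\wedge T=\int\f_1(\om-\om_{\f_1})\wedge T$ is controlled by the inductive mass bound without assuming $\f_1\le 0$. Second, the terminal integrals $\int(\f-\p)\,\om_\f^a\wedge\om_\p^b\wedge\om^c$ with $c>0$ are not bounded by $\ii_1(\f,\p)$ by telescoping alone: converting each $\om=\om_0$ factor into $\om_\p$ produces cross terms that Cauchy--Schwarz bounds by $\ii(\f,\p)^{1/2}R^{1/2}$. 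Since $\ii(\f,\p)^{1/2}R^{1/2}\lesssim\dd_1(\f,\p)^{1/2}R^{1/2}\le\dd_1(\f,\p)^\a R^{1-\a}$ (using $\dd_1(\f,\p)\lesssim R$ and $\a\le 1/2$), the final estimate survives, but this needs to be said.
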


\begin{cor}\label{cor:MAhold} For all $\f,\p,\tau\in\cE^1_R$ we have
\begin{equation}\label{equ:MAhold}
\left|\int \tau(\MA(\f)-\MA(\p))\right|\lesssim \dd_1(\f,\p)^{1/2} R^{1/2}. 
\end{equation}
\end{cor}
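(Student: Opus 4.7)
The plan is to reduce the estimate to a Cauchy--Schwarz argument in energy seminorms. First I would use the telescoping identity
$$
\MA(\f)-\MA(\p)=\ddc(\f-\p)\wedge T,\qquad T:=\sum_{k=0}^{n-1}\om_\f^k\wedge\om_\p^{n-1-k},
$$
together with the further identity $(\om_\f-\om_\p)\wedge T=\MA(\f)-\MA(\p)$, which will be crucial later. Integrating by parts twice (formally for now),
$$
\int\tau\bigl(\MA(\f)-\MA(\p)\bigr)=\int\tau\,\ddc(\f-\p)\wedge T=\int d(\f-\p)\wedge d^c\tau\wedge T.
$$

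Next I would apply the Cauchy--Schwarz inequality for the (semi)positive bilinear form $(u,v)\mapsto\int du\wedge d^cv\wedge T$ to obtain
$$
\left|\int\tau\bigl(\MA(\f)-\MA(\p)\bigr)\right|\le\left(\int d\tau\wedge d^c\tau\wedge T\right)^{1/2}\left(\int d(\f-\p)\wedge d^c(\f-\p)\wedge T\right)^{1/2}.
$$
For the first factor, integration by parts gives
$$
\int d\tau\wedge d^c\tau\wedge T=-\int\tau\,\ddc\tau\wedge T=\int\tau(\om-\om_\tau)\wedge T,
$$
so, expanding $T$ and using the uniform bound~\eqref{equ:MAbd}, this factor is $\lesssim R$. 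For the second factor, integration by parts combined with the telescoping identity yields
$$
\int d(\f-\p)\wedge d^c(\f-\p)\wedge T=-\int(\f-\p)(\om_\f-\om_\p)\wedge T=\ii(\f,\p),
$$
which by~\eqref{equ:Id} is $\lesssim \dd_1(\f,\p)$. Multiplying the two bounds gives precisely~\eqref{equ:MAhold}.

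The main obstacle is the rigorous justification of the two integration by parts and of Cauchy--Schwarz for potentials in $\cE^1$ that are only of finite energy, where the gradient currents $d\f\wedge d^c\f$ are not a priori well-defined as measures. I would handle this by a standard approximation argument: replace $\f,\p,\tau\in\cE^1_R$ by their canonical cutoffs $\f_k:=\max\{\f,-k\}$, etc., which are bounded $\om$-psh, satisfy the analogous identities by Bedford--Taylor, and converge to $\f,\p,\tau$ in $\dd_1$ with uniformly bounded $\dd_1$-norm (thanks to~\eqref{equ:dmax} and~\eqref{equ:denv}). Both sides of the target inequality are then controlled by quantities that depend strongly continuously on the potentials in $\cE^1$ (via the strong continuity of mixed Monge--Amp\`ere operators and of $\ii$), allowing passage to the limit $k\to\infty$ and completing the proof.
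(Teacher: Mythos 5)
Your argument is correct and is essentially the paper's own proof: the same reduction to bounded potentials via $\max\{\cdot,-k\}$, the same integration by parts against the telescoping current, Cauchy--Schwarz, and the same two bounds $\lesssim R$ via~\eqref{equ:MAbd} and $\lesssim \dd_1(\f,\p)$ via $\ii(\f,\p)$ and~\eqref{equ:Id}. The only (cosmetic) differences are that you apply Cauchy--Schwarz to the summed current $T$ rather than term by term in $p$, and a harmless sign slip in the first integration by parts that disappears under the absolute value.
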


\begin{lem}\label{lem:mean} For all $\f_1,\dots,\f_k\in\cE^1$ we have 
$\dd_1(\tfrac 1k\sum_{i=1}^k\f_i,0)\lesssim\max_i \dd_1(\f_i,0)$. 
\end{lem}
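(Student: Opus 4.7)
The plan is to bound $\dd_1(\bar\f,0)$, where $\bar\f:=\tfrac{1}{k}\sum_{i=1}^k\f_i$, by working through the $\ii_1$-characterization in \eqref{equ:dI1}. Let $R:=\max_i\dd_1(\f_i,0)$. First I would check that $\bar\f\in\cE^1$: it is $\om$-psh as a convex combination of $\om$-psh functions, and applying \eqref{equ:enconc} with $\p=\bar\f$, $\f=\f_i$ and averaging yields the concavity inequality $\en(\bar\f)\ge\tfrac{1}{k}\sum_i\en(\f_i)>-\infty$.

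By \eqref{equ:dI1}, $\dd_1(\bar\f,0)\approx\int|\bar\f|(\MA(\bar\f)+\om^n)$, so it suffices to bound each piece by a multiple of $R$. For the easy piece, using $|\bar\f|\le\tfrac{1}{k}\sum_j|\f_j|$ together with $\int|\f_j|\om^n\le\ii_1(\f_j,0)\lesssim\dd_1(\f_j,0)\le R$ gives $\int|\bar\f|\om^n\lesssim R$.

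The key step is to bound $\int|\bar\f|\MA(\bar\f)$, for which I would expand the Monge--Amp\`ere measure via multilinearity of the wedge product, exploiting the identity $\om_{\bar\f}=\tfrac{1}{k}\sum_i\om_{\f_i}$:
\begin{equation*}
\MA(\bar\f)=\Bigl(\tfrac{1}{k}\sum_{i=1}^k\om_{\f_i}\Bigr)^n=\frac{1}{k^n}\sum_{i_1,\ldots,i_n=1}^k\om_{\f_{i_1}}\wedge\cdots\wedge\om_{\f_{i_n}}.
\end{equation*}
Combined with $|\bar\f|\le\tfrac{1}{k}\sum_j|\f_j|$, this yields
\begin{equation*}
\int|\bar\f|\MA(\bar\f)\le\frac{1}{k^{n+1}}\sum_{j=1}^k\sum_{i_1,\ldots,i_n=1}^k\int|\f_j|\,\om_{\f_{i_1}}\wedge\cdots\wedge\om_{\f_{i_n}}.
\end{equation*}
Since each of $\f_j,\f_{i_1},\dots,\f_{i_n}$ lies in $\cE^1_R$, the estimate \eqref{equ:MAbd} bounds each of these $k^{n+1}$ integrals by a constant multiple of $R$ depending only on $n$; the prefactor $k^{-(n+1)}$ then exactly absorbs the number of terms, giving $\int|\bar\f|\MA(\bar\f)\lesssim R$.

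Combining the two bounds via \eqref{equ:dI1} then delivers $\dd_1(\bar\f,0)\lesssim R$. No genuine obstacle is expected: the one subtle point is the uniformity of all constants in $k$, which is ensured precisely because the implicit constant in \eqref{equ:MAbd} depends only on $n$, so that the exponential blowup in the number of terms from the multinomial expansion of $\MA(\bar\f)$ is neutralized by the $k^{-n}$ normalization built into $\om_{\bar\f}$.
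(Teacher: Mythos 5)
Your argument has a circularity problem relative to the way the paper develops these estimates. The key step bounds $\int|\f_j|\,\om_{\f_{i_1}}\wedge\cdots\wedge\om_{\f_{i_n}}$ by invoking \eqref{equ:MAbd}, i.e.\ Theorem~\ref{thm:MAcont}, for \emph{mixed} Monge--Amp\`ere measures with distinct potentials. But in the paper the mixed case of Theorem~\ref{thm:MAcont} is deduced from the diagonal case precisely by means of Lemma~\ref{lem:mean}: one sets $\tau=\tfrac1n\sum_i\f_i$, observes $\om_{\f_1}\wedge\cdots\wedge\om_{\f_n}\lesssim\MA(\tau)$, and then needs $\dd_1(\tau,0)\lesssim R$ --- which is exactly the statement you are trying to prove. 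Before Lemma~\ref{lem:mean} only the diagonal bound $\int|\f-\p|\MA(\tau)\lesssim\dd_1(\f,\p)^\a R^{1-\a}$ is available, and your multinomial expansion of $\MA\bigl(\tfrac1k\sum_i\f_i\bigr)$ produces genuinely mixed terms; nor can you apply the diagonal case to $\MA\bigl(\tfrac1k\sum_i\f_i\bigr)$ directly, since that would require the average to lie in a ball $\cE^1_{CR}$, which is again the conclusion. The remaining steps of your proposal (concavity of $\en$ under averaging, the bound on $\int\bigl|\tfrac1k\sum_i\f_i\bigr|\,\om^n$, the $k^{-(n+1)}$ bookkeeping) are fine.

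The gap is repairable, but only with an input external to the paper: the uniform bound $\int|\f_0|\,\om_{\f_1}\wedge\cdots\wedge\om_{\f_n}\lesssim\max_i\dd_1(\f_i,0)$ for mixed measures is essentially contained in the multilinear energy estimates of \cite{BBGZ} and could be quoted from there, after which your computation closes. The paper instead avoids mixed measures altogether: it introduces the rooftop envelope $\tau=\env(\f_1,\dots,\f_k)$, notes that the average dominates $\tau$ so that $\dd_1(\cdot,\tau)$ is computed by the difference of energies, bounds $\int(\f_i-\tau)\MA(\tau)$ by $(n+1)\max_i\dd_1(\f_i,\tau)$ using \eqref{equ:endiff}, and concludes with \eqref{equ:denv} and the triangle inequality --- an argument that sits strictly upstream of Theorem~\ref{thm:MAcont}. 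You should either adopt that route or explicitly import the mixed-measure estimate from the literature rather than from \eqref{equ:MAbd}.
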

\begin{proof} Set $\f:=\frac 1k\sum_{i=1}^k\f_i$ and $\tau:=\env(\f_1,\dots,\f_k)$. Arguing as in the proof of~\cite[Lemma~3.8]{DDL}, we note that $\f\ge\tau$ and~\eqref{equ:enconc} yield
$$
\dd_1(\f,\tau)=\en(\f)-\en(\tau)\le\int(\f-\tau)\MA(\tau)=\frac 1k\sum_{i=1}^k\int(\f_i-\tau)\MA(\tau)
$$
$$
\le\max_i\int(\f_i-\tau)\MA(\tau)\le (n+1)\max_i (\en(\f_i)-\en(\tau))=(n+1)\max_i \dd_1(\f_i,\tau),
$$
using~\eqref{equ:endiff}. By~\eqref{equ:denv}, we further have $\dd_1(\tau,0)\lesssim \max_i \dd_1(\f_i,0)$, and the triangle inequality yields the result. 
\end{proof}

\begin{proof}[Proof of Theorem~\ref{thm:MAcont}] By~\cite[(1.37)]{synthetic} (which refines~\cite[Lemma~5.8]{BBGZ}), we have for all $\f,\p,\tau,\rho\in\cE^1_R$, 
\begin{equation}\label{equ:synth}
\left|\int (\f-\p)(\MA(\tau)-\MA(\rho))\right|\lesssim \ii(\f,\p)^\a \ii(\tau,\rho)^{1/2} R^{1/2-\a}. 
\end{equation}
In view of~\eqref{equ:Id}, this yields
$$
\left|\int(\f-\p)\MA(\tau)\right|\lesssim \left|\int(\f-\p)\MA(\f)\right|+\dd_1(\f,\p)^\a R^{1-\a}, 
$$
for $\f,\p,\tau\in\cE^1_R$. Since $|\int(\f-\p)\MA(\f)|\lesssim \dd_1(\f,\p)$ by~\eqref{equ:dI1}, we get
$$
\left|\int(\f-\p)\MA(\tau)\right|\lesssim \dd_1(\f,\p)^\a R^{1-\a}.
$$
Writing $|\f-\p|=2(\max\{\f,\p\}-\p)-(\f-\p)$ and using~\eqref{equ:dI1},~\eqref{equ:I1max},~\eqref{equ:dmax}, this implies
$$
\int|\f-\p|\MA(\tau)\lesssim \dd_1(\f,\p)^\a R^{1-\a}
$$
for $\f,\p,\tau\in\cE^1_R$, and~\eqref{equ:MAcont} follows in the special case when $\f_i=\tau$ for any $i=1,\dots,n$. Finally, pick $\f_1,\dots,\f_n\in\cE^1_R$, and set $\tau:=\frac1n\sum_{i=1}^n\f_i$. Then $\om_{\f_1}\winter\om_{\f_n}\lesssim \om_\tau^n=\MA(\tau)$. By Lemma~\ref{lem:mean}, we further have $\dd_1(\tau,0)\lesssim R$, and the previous estimate thus yields~\eqref{equ:MAcont} in the general case. 
\end{proof}

\begin{proof}[Proof of Corollary~\ref{cor:MAhold}] Replacing $\f,\p,\tau$ with $\max\{\f,-k\}$, $\max\{\p,-k\}$, $\max\{\tau,-k\}$, we may assume without loss that $\f,\p,\tau$ are bounded. By integration-by-parts we then have 
$$
\int\tau(\om_\f^n-\om_\p^n)=\sum_{p=0}^{n-1}\int\tau\ddc(\f-\p)\wedge\om_\f^p\wedge\om_\p^{n-1-p}
=-\sum_{p=0}^{n-1}\int \de\tau\wedge\dc (\f-\p)\wedge\om_\f^p\wedge\om_\p^{n-1-p}. 
$$
By Cauchy--Schwarz, 
$$
\left|\int \de\tau\wedge\dc (\f-\p)\wedge\om_\f^p\wedge\om_\p^{n-1-p}\right|\le\left(\int\de\tau\wedge\dc\tau\wedge\om_\f^p\wedge\om_\p^{n-1-p}\right)^{1/2}
$$
$$
\times\left(\int\de (\f-\p)\wedge\dc (\f-\p)\wedge\om_\f^p\wedge\om_\p^{n-1-p}\right)^{1/2}. 
$$
Now
$$
\int\de\tau\wedge\dc\tau\wedge\om_\f^p\wedge\om_\p^{n-1-p}=-\int\tau\ddc\tau\wedge\om_\f^p\wedge\om_\p^{n-1-p}
=\int\tau(\om-\om_\tau)\wedge\om_\f^p\wedge\om_\p^{n-1-p}, 
$$
and~\eqref{equ:MAbd} thus yields
$$
\int\de\tau\wedge\dc\tau\wedge\om_\f^p\wedge\om_\p^{n-1-p}\lesssim R.
$$
On the other hand, 
$$
\ii(\f,\p)=\int(\f-\p)(\om_\p^n-\om_\f^n)=\sum_{p=0}^{n-1}\int(\f-\p)\ddc(\p-\f)\om_\f^p\wedge\om_\p^{n-p-1}
$$
$$
=\sum_{p=0}^{n-1}\int \de(\f-\p)\wedge\dc(\f-\p)\wedge\om_\f^p\wedge\om_\p^{n-p-1}
$$
yields 
$$
\int\de (\f-\p)\wedge\dc (\f-\p)\wedge\om_\f^p\wedge\om_\p^{n-1-p}\le\ii(\f,\p)\lesssim \dd_1(\f,\p),
$$
by~\eqref{equ:Id}, and the result follows. 
\end{proof}

Finally, set 
\begin{equation}\label{equ:Tom}
T_\om:=\sup_{\f\in\PSH(\om)}\left\{\sup\f-V^{-1}\int\f\,\om^n\right\}\in [0,+\infty). 
\end{equation}

\begin{lem}\label{lem:supd} For all $\f\in\cE^1$ we have 
$$
|\sup\f|\lesssim V^{-1}\dd_1(\f,0)+T_\om.
$$
\end{lem}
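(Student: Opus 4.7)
The plan is to sandwich $\sup\f$ between the $\om^n$-average of $\f$ and that average plus $T_\om$, and then control the average by $\dd_1(\f,0)$ via Theorem~\ref{thm:MAcont}. Concretely, I would proceed in two short steps.

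First, I would record the elementary two-sided bound relating $\sup\f$ to $V^{-1}\int\f\,\om^n$. Integrating the pointwise inequality $\f\le\sup\f$ against $\om^n$ and dividing by $V$ gives the lower bound $V^{-1}\int\f\,\om^n\le\sup\f$, while the very definition~\eqref{equ:Tom} of $T_\om$ yields the matching upper bound $\sup\f\le V^{-1}\int\f\,\om^n+T_\om$ for any $\f\in\PSH(\om)$. Together these say
$$
\left|\sup\f-V^{-1}\int\f\,\om^n\right|\le T_\om.
$$

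Second, I would bound $V^{-1}\left|\int\f\,\om^n\right|$ by $V^{-1}\int|\f|\,\om^n$ and then invoke estimate~\eqref{equ:MAbd} of Theorem~\ref{thm:MAcont} with $\p=0$ and $\f_1=\dots=\f_n=0$ (so that $\om_{\f_i}=\om$) and $R:=\dd_1(\f,0)$: since both $\f$ and $0$ lie in $\cE^1_R$ by definition of $R$, this gives
$$
\int|\f|\,\om^n\lesssim \dd_1(\f,0)
$$
with a constant depending only on $n$. Combining this with the sandwich of the previous step via the triangle inequality yields
$$
|\sup\f|\le V^{-1}\left|\int\f\,\om^n\right|+T_\om\lesssim V^{-1}\dd_1(\f,0)+T_\om,
$$
which is the claimed estimate.

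No genuine obstacle arises: both ingredients---the definition of $T_\om$ and the $L^1$-bound~\eqref{equ:MAbd} for the Monge--Amp\`ere measure along the $\dd_1$-ball---are already available in the excerpt. The only points worth verifying are that the constant in~\eqref{equ:MAbd} is purely dimensional (which is exactly the convention $x\lesssim y$ fixed at the start of Section~\ref{sec:prel}) and that the factor $V^{-1}$ on the right-hand side of the lemma comes solely from the normalization used when passing from $\int\f\,\om^n$ to its $\om^n$-average.
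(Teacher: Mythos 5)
Your proof is correct and follows essentially the same route as the paper: bound $|\sup\f|$ by $V^{-1}\int|\f|\,\om^n+T_\om$ using the definition~\eqref{equ:Tom} of $T_\om$, then control $\int|\f|\,\om^n$ by $\dd_1(\f,0)$. The only (immaterial) difference is that the paper invokes~\eqref{equ:dI1} directly for the second step, taking $\ii_1(\f,0)\ge\int|\f|\,\om^n$, whereas you route through~\eqref{equ:MAbd} with $\f_1=\dots=\f_n=0$; both give the same dimensional constant.
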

\begin{proof} We have 
$$
|\sup\f|\le V^{-1}\int|\f|\om^n+T_\om\approx V^{-1} \dd_1(\f,0)+T_\om,
$$
by~\eqref{equ:dI1}. 
\end{proof}

\begin{lem}\label{lem:dJ} For $\f\in\cE^1$ such that $\en(\f)=0$, we have 
$$
\jj(\f)\lesssim \dd_1(\f,0)\lesssim \jj(\f)+V T_\om.
$$
\end{lem}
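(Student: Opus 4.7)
The plan is to prove the two inequalities separately, in both cases leveraging the normalization $\en(\f)=0$, which reduces $\jj(\f)$ to $\int\f\,\om^n$.

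For the easy direction $\jj(\f)\lesssim \dd_1(\f,0)$, I would combine the nonnegativity of $\jj$ (recalled just after \eqref{equ:enconc}) with the identity $\jj(\f)=\int\f\,\om^n$ to write
$$
0\le\jj(\f)=\int\f\,\om^n\le\int|\f|\,\om^n\le\ii_1(\f,0),
$$
and then invoke the comparison \eqref{equ:dI1}.

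For the companion bound $\dd_1(\f,0)\lesssim \jj(\f)+VT_\om$, the strategy is to set $c:=\sup\f$ and squeeze it into the window $[0,\,V^{-1}\jj(\f)+T_\om]$. The upper bound comes directly from the definition \eqref{equ:Tom} of $T_\om$ combined with $\jj(\f)=\int\f\,\om^n$. For the lower bound, I would invoke monotonicity of $\en$ on $\PSH(\om)$: if $c<0$, then $\f\le c$ would force $0=\en(\f)\le\en(c)=Vc<0$, a contradiction. The triangle inequality then yields
$$
\dd_1(\f,0)\le \dd_1(\f,c)+\dd_1(c,0),
$$
and \eqref{equ:Darvas} evaluates each piece cleanly: $\f\le c$ makes $\env(\f,c)=\f$, giving $\dd_1(\f,c)=\en(c)-\en(\f)=Vc$; and $c\ge 0$ makes $\env(c,0)=0$, giving $\dd_1(c,0)=\en(c)-\en(0)=Vc$. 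Substituting the upper bound on $c$ then produces $\dd_1(\f,0)\le 2Vc\le 2\jj(\f)+2VT_\om$, which is the desired estimate.

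There is no serious obstacle here; both inequalities amount to unpacking definitions. The only conceptual point worth noting is that the vanishing of $\en(\f)$ pins $\sup\f$ into a tight interval controlled by $\jj(\f)+VT_\om$, which is precisely what is needed to collapse $\dd_1(\f,0)$ into two Darvas distances involving the constant function $\sup\f$, each of which is immediately computable via \eqref{equ:Darvas}.
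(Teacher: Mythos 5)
Your proof is correct and follows essentially the same route as the paper: the left-hand bound via $\jj(\f)=\int\f\,\om^n$ and \eqref{equ:dI1}, and the right-hand bound by locating $\sup\f$ in $[0,\,V^{-1}\jj(\f)+T_\om]$ and splitting $\dd_1(\f,0)$ through the constant $\sup\f$. The only cosmetic difference is that the paper invokes the pre-established estimate \eqref{equ:d0} where you re-derive it inline for the case $\en(\f)=0$.
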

\begin{proof} Since $\en(\f)=0$ we have $\jj(\f)=\int\f\,\om^n=\int\f\,\MA(0)$, and the left-hand inequality follows from~\eqref{equ:dI1}. Note that $V\sup\f\ge\en(\f)=0$, and hence $\dd_1(\f,0)\le 2 V\sup\f$, by~\eqref{equ:d0}. The right-hand estimate now follows from $V\sup\f\le\int\f\,\om^n+V T_\om$. 
\end{proof}

%
%
%
\subsection{Action of automorphisms and coercivity}\label{sec:auto}
\subsubsection{} The neutral component $\Aut_0(X)$ of the group of biholomorphisms of $X$ acts trivially on cohomology. By the $\ddc$-lemma, for each $g\in\Aut_0(X)$ we can thus find a unique $\tau_g\in C^\infty(X)$ such that 
$$
g^\star\om=\om+\ddc\tau_g,\quad\en(\tau_g)=0. 
$$
\begin{lem} The map $\Aut_0(X)\to C^\infty(X)$ $g\mapsto\tau_g$ is a (right) cocycle, \ie 
\begin{equation}\label{equ:cocycle}
\tau_{gh}=\tau_h+h^\star\tau_g.
\end{equation}
for all $g,h\in\Aut_0(X)$. 
\end{lem}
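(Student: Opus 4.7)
The plan is to verify the cocycle identity by showing that both sides solve the same equation of potentials and share the same normalization with respect to $\en$.

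First, I will compute $(gh)^\star \om$ in two ways. Using $(gh)^\star = h^\star \circ g^\star$ together with the fact that pullback commutes with $\ddc$, one gets
$$
(gh)^\star \om = h^\star(\om + \ddc\tau_g) = \om + \ddc\tau_h + \ddc(h^\star \tau_g) = \om + \ddc(\tau_h + h^\star \tau_g).
$$
On the other hand, $(gh)^\star \om = \om + \ddc\tau_{gh}$ by definition. The $\ddc$-lemma (or just the maximum principle on the compact manifold $X$) thus gives $\tau_{gh} = \tau_h + h^\star \tau_g + c$ for some $c \in \R$, and the whole task reduces to showing $c = 0$.

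For this, set $\phi := \tau_h + h^\star \tau_g$ and show $\en(\phi) = 0$; combined with the translation equivariance \eqref{equ:equivariance} and $\en(\tau_{gh}) = 0$, this forces $V c = 0$. Applying the difference formula \eqref{equ:endiff} to $\phi$ and $\tau_h$, and using that $\om_\phi = (gh)^\star \om$ while $\om_{\tau_h} = h^\star \om$, I obtain
$$
\en(\phi) - \en(\tau_h) = \frac{1}{n+1} \sum_{p=0}^n \int_X (h^\star \tau_g)\, ((gh)^\star \om)^p \wedge (h^\star \om)^{n-p}.
$$
Factoring $h^\star$ out of each integrand and invoking invariance of integrals of top-degree forms under pullback by the biholomorphism $h$, each integral becomes $\int_X \tau_g\, (g^\star\om)^p \wedge \om^{n-p} = \int_X \tau_g\, \om_{\tau_g}^p \wedge \om^{n-p}$; summing over $p$ yields exactly $(n+1)\en(\tau_g) = 0$. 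Since $\en(\tau_h) = 0$ by definition, I conclude $\en(\phi) = 0$ and hence $c = 0$.

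There is no real obstacle here: the only point requiring care is the bookkeeping in the change-of-variables step, where one needs to make sure that the pullback by $h$ transforms both the potential $\tau_g$ and the mixed products $\om_{\tau_g}^p \wedge \om^{n-p}$ consistently, so that the normalization condition $\en(\tau_g) = 0$ is what closes the computation. The same argument extends verbatim to $\f \in \cE^1(\om)$ if one later needs a cocycle relation in the finite-energy setting, since \eqref{equ:endiff} is still valid there.
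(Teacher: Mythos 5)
Your proof is correct and follows essentially the same route as the paper: derive $\tau_{gh}=\tau_h+h^\star\tau_g+c$ from $(gh)^\star\om=h^\star g^\star\om$, then kill the constant $c$ by applying the difference formula~\eqref{equ:endiff} and the change of variables under the biholomorphism $h$ to reduce everything to the normalizations $\en(\tau_g)=\en(\tau_h)=\en(\tau_{gh})=0$. The only (immaterial) difference is bookkeeping: the paper expands $\en(\tau_{gh})-\en(\tau_h)$ directly with $\tau_{gh}-\tau_h=h^\star\tau_g+c$, whereas you first show $\en(\tau_h+h^\star\tau_g)=0$ and then invoke translation equivariance to conclude $Vc=0$.
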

\begin{proof} We have 
$$
\om+\ddc\tau_{gh}=(gh)^\star\om=h^\star g^\star\om=h^\star(\om+\ddc\tau_g)
$$
$$
=h^\star\om+\ddc h^\star\tau_g=\om+\ddc\tau_h+\ddc h^\star\tau_g,
$$
and hence $\tau_{gh}=\tau_h+h^\star\tau_g+c$ for some $c\in\R$. Furthermore, 
$$
0=(n+1)(\en(\tau_{gh})-\en(\tau_h))=\sum_{p=0}^n\int(\tau_{gh}-\tau_h)(\om+\ddc\tau_{gh})^p\wedge(\om+\ddc\tau_h)^{n-p}
$$
$$
=\sum_{p=0}^n\int(h^\star\tau_g+c)(gh)^\star\om^p\wedge h^\star\om^{n-p}=\sum_{p=0}^n\int h^\star(\tau_g g^\star\om^p\wedge\om^{n-p})+c\sum_{p=0}^n\int (gh)^\star\om^p\wedge h^\star\om^{n-p}
$$
$$
=\sum_{p=0}^n\int \tau_g g^\star\om^p\wedge\om^{n-p}+c(n+1)V=\sum_{p=0}^n\int \tau_g (\om+\ddc\tau_g)^p\wedge\om^{n-p}+(n+1)Vc
$$
$$
=(n+1)\en(\tau_g)+(n+1)V c=(n+1)Vc, 
$$
and hence $c=0$. 
\end{proof}

For each $\f\in\PSH(\om)$ and $g\in\Aut_0(X)$, we set 
$$
\f^g:=\tau_g+g^\star\f.
$$
Then 
\begin{equation}\label{equ:omphig}
\om+\ddc\f^g=g^\star(\om+\ddc\f),
\end{equation}
and hence $\f^g\in\PSH(\om)$, since $\f^g$ is quasi-psh. Furthermore, the cocycle relation~\eqref{equ:cocycle} formally implies that $(\f,g)\mapsto\f^g$ defines a right-action of $\Aut_0(X)$ on $\PSH(\om)$. 

Note also that the action map $\PSH(\om)\times\Aut_0(X)\to\PSH(\om)$ is continuous for the weak topology of $\PSH(\om)$, since the map $L^1(X)\times\Aut_0(X)\to L^1(X)$ $(\f,g)\mapsto g^\star\f$ is easily seen to be continuous. 

\begin{prop}\label{prop:isom} The right-action of $\Aut_0(X)$ on $\PSH(\om)$ restricts to an isometric action on $\cE^1$, which preserves the Monge--Amp\`ere energy. When $\om>0$ is K\"ahler, this action is further proper, in the sense that 
$$
\{g\in\Aut_0(X)\mid \dd_1(\f^g,\f)\le C\}
$$ 
is compact for any $\f\in\cE^1$ and $C>0$. 
\end{prop}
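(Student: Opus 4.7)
The plan is to treat the three assertions---energy preservation, isometry, and properness---in turn, with properness being the main obstacle.

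\emph{Energy preservation.} I will apply the differential formula \eqref{equ:endiff} with $\tau_g$ in place of $\p$ and $\f^g$ in place of $\f$. Using $\f^g-\tau_g=g^\star\f$, $\om_{\f^g}=g^\star\om_\f$ from \eqref{equ:omphig}, and $\om_{\tau_g}=g^\star\om$ from the definition of $\tau_g$, each summand becomes the $g$-pullback of the corresponding summand for $(\f,0)$, so that a change of variables under the biholomorphism $g$ yields $\en(\f^g)-\en(\tau_g)=\en(\f)-\en(0)=\en(\f)$. Combined with the normalization $\en(\tau_g)=0$, this gives $\en(\f^g)=\en(\f)$, first for bounded $\f$ and then throughout $\cE^1$ by decreasing approximation.

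\emph{Isometry.} The map $\f\mapsto\f^g$ is an order-preserving bijection on $\PSH(\om)$, since $\f\le\p$ iff $\tau_g+g^\star\f\le\tau_g+g^\star\p$. It therefore commutes with the rooftop envelope, $\env(\f^g,\p^g)=\env(\f,\p)^g$, and plugging this into the Darvas formula \eqref{equ:Darvas} together with energy preservation gives $\dd_1(\f^g,\p^g)=\dd_1(\f,\p)$.

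\emph{Properness.} Now assume $\om>0$ and let $(g_j)\subset\Aut_0(X)$ satisfy $\dd_1(\f^{g_j},\f)\le C$. Since $0^g=\tau_g$, isometry applied with $\p=0$ gives $\dd_1(\f^{g_j},\tau_{g_j})=\dd_1(\f,0)$, so the triangle inequality reduces matters to showing that the orbit map $g\mapsto\tau_g$ is proper from $\Aut_0(X)$ into $(\cE^1,\dd_1)$. From the uniform bound on $\dd_1(\tau_{g_j},0)$, Lemma \ref{lem:supd} and \eqref{equ:dI1} yield uniform control of $|\sup\tau_{g_j}|$ and of $\|\tau_{g_j}\|_{L^1(\om^n)}$, so after extraction $\tau_{g_j}\to\tau_\infty\in\PSH(\om)$ in $L^1$ and the smooth K\"ahler forms $g_j^\star\om=\om+\ddc\tau_{g_j}$ converge weakly to $\om+\ddc\tau_\infty$. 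The hard part will be upgrading this soft pluripotential-theoretic limit to compactness of $(g_j)$ inside the finite-dimensional Lie group $\Aut_0(X)$; this is where the assumption $\om>0$ is essential. Since the stabilizer $K_\om:=\{g\in\Aut_0(X):g^\star\om=\om\}$ is compact by Myers--Steenrod applied to the underlying Riemannian metric of $\om$, it suffices to prove properness of the orbit map $gK_\om\mapsto g^\star\om$ into the space of K\"ahler forms cohomologous to $\om$. I expect this to follow either by elliptic regularity---upgrading weak convergence of $g_j^\star\om$ to smooth convergence and extracting a limiting biholomorphism through Arzel\`a--Ascoli---or by a one-parameter subgroup argument, showing that any escape to infinity in $\Aut_0(X)/K_\om$ forces $\dd_1(\tau_{g_j},0)$ to grow at least linearly along a holomorphic ray, contradicting the uniform bound.
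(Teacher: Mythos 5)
Your treatment of energy preservation and of the isometry property coincides with the paper's: the same change of variables in \eqref{equ:endiff} using $\f^g-\tau_g=g^\star\f$ and $\om_{\f^g}=g^\star\om_\f$, followed by decreasing approximation, and the same observation that $\f\mapsto\f^g$ is order-preserving, hence commutes with rooftop envelopes. These two parts are fine.

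The properness argument, however, has a genuine gap, which you partly acknowledge yourself. From $\dd_1(\tau_{g_j},0)\le C$ you extract only $L^1$-compactness of $(\tau_{g_j})$ and weak convergence of the currents $g_j^\star\om$; but weak convergence of $g_j^\star\om$ carries no equicontinuity information about the maps $g_j$ themselves, so neither of your two proposed routes closes the argument as stated. Elliptic regularity does not upgrade weak convergence of positive closed $(1,1)$-currents in a fixed class to smooth convergence (such sequences can concentrate), and the one-parameter subgroup/convexity argument requires a Cartan-type decomposition $G=K\exp(i\Lie K)$, i.e.\ reductivity, which $\Aut_0(X)$ need not enjoy -- this is precisely why the paper confines that convexity argument to Remark~\ref{rmk:properness}, for reductive subgroups of $\Autr(X)$. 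The missing ingredient is a quantitative a priori estimate special to potentials of the form $\tau_g$: the paper invokes \cite[Lemma~6.4]{DL}, which shows that $\dd_1(\tau_{g_j},0)\le C$ forces $\Delta_\om\tau_{g_j}$ to be uniformly bounded, i.e.\ $g_j^\star\om\le C'\om$. This is what makes the $g_j$ uniformly Lipschitz for the Riemannian metric of $\om$ and allows Arzel\`a--Ascoli to produce a uniform limit $g\colon X\to X$. Note also that the limit is a priori only a holomorphic self-map; the paper gets invertibility by running the same argument on $(g_j^{-1})$, using $\dd_1(\tau_{g_j},0)=\dd_1(0,\tau_{g_j^{-1}})$, and passing to the limit in $g_jg_j^{-1}=\id_X$. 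Your proposal does not address this last point either. Your reduction to $\f=0$ and the compactness of the stabilizer $K_\om$ via Myers--Steenrod are correct but do not substitute for the Laplacian bound.
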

\begin{proof} For $g\in\Aut_0(X)$, we claim that 
\begin{equation}\label{equ:phig}
\en(\f^g)=\en(\f)
\end{equation} for all $\f\in\cE^1$, which will show that $\f^g\in\cE^1$, and also that the action map $\cE^1\times\Aut_0(X)\to\cE^1$ is strongly continuous. 

Assume first that $\f$ is bounded. Using~\eqref{equ:omphig} we get
$$
(n+1)\en(\f^g)=(n+1)(\en(\f^g)-\en(\tau_g))=\sum_{p=0}^n\int_X(\f^g-\tau_g)(\om+\ddc\f^g)^p\wedge(\om+\ddc\tau_g)^{n-p}
$$
$$
=\sum_{p=0}^n\int g^\star\f\, g^\star\om_\f^p\wedge g^\star\om^{n-p}=\sum_{p=0}^n\int \f\,\om_\f^p\wedge\om^{n-p}=(n+1)\en(\f),
$$
which proves the claim in that case. The general case following by writing $\f$ as the decreasing limit of the bounded $\om$-psh functions $\f_k=\max\{\f,-k\}$, and noting that $\f_k^g$ then decreases to $\f^g$. 

In view of~\eqref{equ:Darvas} and~\eqref{equ:phig}, to show that $g\in\Aut_0(X)$ defines an isometry of $\cE^1$ it is enough to show that $\env(\f^g,\p^g)=\env(\f,\p)^g$ for all $\f,\p\in\cE^1$, which is a direct consequence of the fact that $\f\mapsto \f^g=\tau_g+g^\star\f$, and hence also $\f\mapsto\f^{g^{-1}}$, are monotone increasing for any $g\in\Aut_0(X)$. 

Finally, assume $\om>0$. To prove properness, we may assume $\f=0$, since $\Aut_0(X)$ acts by isometries. Since $\{g\in\Aut_0(X)\mid \dd_1(\tau_g,0)\le C\}$ is closed in $\Aut(X)$, it is enough to show that any sequence $(g_i)$ in $\Aut_0(X)$ with $\dd_1(\tau_{g_i},0)\le C$ admits a convergent subsequence in $\Aut(X)$. This is a simple consequence of~\cite[Lemma~6.4]{DL}. Indeed, the latter implies that the Laplacian $\Delta_\om \tau_{g_i}$ is bounded. Equivalently, $g_i^\star\om=\om+\ddc\tau_{g_i}$ is bounded with respect to $\om$, which means that $g_i\colon X\to X$ is uniformly Lipschitz with respect to the Riemannian metric induced by $\om$. By Ascoli, after passing to a subsequence we may thus assume that $g_i$ converges uniformly to a map $g\colon X\to X$, automatically holomorphic. Since $\dd_1(\tau_{g_i},0)=\dd_1(0,\tau_{g_i^{-1}})$ is bounded, $(g_i^{-1})$ similarly admits a subsequence converging to a holomorphic map $h\colon X\to X$. Now $g_i g_i^{-1}=g_i^{-1} g_i=\id_X$ yields $g h=hg=\id_X$, and hence $g\in\Aut(X)$. The result follows. 
\end{proof}

\begin{rmk}\label{rmk:properness} If we drop the assumption that $\om$ is K\"ahler, properness in Proposition~\ref{prop:isom} appears to be unknown, but we can at least show that the action of any reductive subgroup $G$ of the linear automorphism group $\Autr(X)$ (see~\S\ref{sec:equiv}) is proper. Here is a sketch of the argument: 
\begin{itemize}
\item[(i)] A computation shows that
$p_2^\star\om+\ddc_{(g,x)}\tau_g=\rho^\star\om$ on $\Autr(X)\times X$, where $p_2(g,x):=x$ and $\rho(g,x):=g\cdot x$. 
\item[(ii)] Write $G=K_\C$ for a compact Lie group $K$. By averaging over $K$, we can reduce to the case where $\om$ is $K$-invariant. Then $g\mapsto \dd_1(\tau_g,0)$ descends to a function on $G/K$, which is convex by (i) and the convexity of $\dd_1$ along psh geodesics. If it fails to be proper, then it is identically $0$ along some geodesic ray $t\mapsto e^{t\xi}$ with $\xi\in i\Lie K\setminus\{0\}$. This implies $i(\xi)\om=0$, and the holomorphic vector field $\xi$ is thus $0$, since $\om>0$ on a non-empty open set, a contradiction. 
\end{itemize}

\end{rmk}

\subsubsection{} Given a closed (Lie) subgroup $G\subset\Aut_0(X)$, we define $\jj_G\colon\cE^1\to\R_+$ by setting 
$$
\jj_G(\f):=\inf_{g\in G}\jj(\f^g).
$$
If $\en(\f)=0$, then $\en(\f^g)=0$ for all $g$, and Lemma~\ref{lem:dJ} yields 
\begin{equation}\label{equ:JG}
\jj_G(\f)\lesssim \dd_{1,G}(\f,0)\lesssim \jj_G(\f)+VT_\om
\end{equation}
where 
\begin{equation}
    \label{eqn:d_G}
    \dd_{1,G}(\f,\p):=\inf_{g\in G} \dd_1(\f^g,\p), 
\end{equation}
induces the quotient pseudometric on $\cE^1/G$ (which is 
a metric when the action is proper). 

Consider now a functional $M\colon\cF\to\R\cup\{+\infty\}$ defined on a subset $\cF\subset\cE^1$, such that both $\cF$ and $M$ are translation invariant, and assume also that $\cF$ is $G$-invariant. 

\begin{defi}\label{defi:coerG} We say that $M$ is \emph{coercive modulo $G$} if there exists $\d,C>0$ such that $M\ge\d\jj_G-C$ on $\cF$. 
\end{defi}
By~\eqref{equ:JG} and translation invariance, $M$ is coercive modulo $G$ iff $M(\f)\ge\d' \dd_{1,G}(\f,0)-C'$ on $\cF^0:=\{\f\in\cF\mid\en(\f)=0\}$ for some $\d',C'>0$. 

In practice for us, $M$ will be quasi-invariant (see~\S\ref{sec:func}), and coercivity modulo $G$ then implies that $M$ is $G$-invariant, since it is in particular bounded below (see~\S\ref{sec:func}).

\begin{lem}\label{lem:propersub} Assume that $M$ is $G$-invariant, and that $G$ acts properly on $\cF$. For any closed subgroup $H\subset G$, the following are equivalent:
\begin{itemize}
\item[(i)] $M$ is coercive modulo $H$;
\item[(ii)] $M$ is coercive modulo $G$ and $G/H$ is compact.
\end{itemize}
\end{lem}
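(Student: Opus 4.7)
The plan is to establish the two implications separately, exploiting the isometric and proper action of $G$ on $\cE^1$ from Proposition \ref{prop:isom} together with the comparison~\eqref{equ:JG} between $\jj_G$ and $\dd_{1,G}(\cdot,0)$ on $\cF^0:=\{\f\in\cF:\en(\f)=0\}$.

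For (ii) $\Rightarrow$ (i), the trivial inclusion $H\subset G$ yields $\jj_H\ge\jj_G$ and $\dd_{1,H}(\cdot,0)\ge\dd_{1,G}(\cdot,0)$, so the content is a \emph{reverse} bound up to an additive constant. Since $H$ is a closed subgroup of a Lie group with $G/H$ compact, a standard bundle argument produces a compact set $K\subset G$ with $G=HK$ (obtained by inverting a compact transversal $K_0$ satisfying $K_0 H=G$). For arbitrary $\f\in\cF^0$ and $g\in G$, I write $g=hk$ with $h\in H$, $k\in K$, so $h=gk^{-1}$ and the cocycle property of \S\ref{sec:auto} gives $\f^h=(\f^g)^{k^{-1}}$. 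Since $k^{-1}$ acts isometrically on $(\cE^1,\dd_1)$ and $0^{k^{-1}}=\tau_{k^{-1}}$, the triangle inequality yields
$$
\dd_1(\f^h,0)\le \dd_1\bigl((\f^g)^{k^{-1}},0^{k^{-1}}\bigr)+\dd_1(\tau_{k^{-1}},0)=\dd_1(\f^g,0)+\dd_1(\tau_{k^{-1}},0).
$$
Since $k\mapsto\tau_k$ is smooth from $G$ into $C^\infty(X)\subset\cE^1$, the quantity $D:=\sup_{k\in K}\dd_1(\tau_{k^{-1}},0)$ is finite; taking infima over $g\in G$ then gives $\dd_{1,H}(\f,0)\le\dd_{1,G}(\f,0)+D$. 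Combined with~\eqref{equ:JG}, this produces a linear comparison $\jj_H(\f)\lesssim \jj_G(\f)+VT_\om+D$ on $\cF^0$, from which coercivity mod $H$ follows immediately from coercivity mod $G$.

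For (i) $\Rightarrow$ (ii), coercivity mod $G$ is immediate from $\jj_G\le\jj_H$, so the real content is the compactness of $G/H$. I fix $\f\in\cF$ with $M(\f)<+\infty$, normalized by translation to have $\en(\f)=0$. For every $g\in G$, the $G$-invariance of $M$ gives $M(\f^g)=M(\f)$, and the assumed coercivity mod $H$ yields a uniform bound $\jj_H(\f^g)\le A$. By definition of the infimum I pick $h_g\in H$ with $\jj(\f^{gh_g})\le A+1$; since $\en(\f^{gh_g})=0$, Lemma \ref{lem:dJ} converts this into $\dd_1(\f^{gh_g},0)\le A'$, and the triangle inequality gives $\dd_1(\f^{gh_g},\f)\le B$ with $B$ independent of $g$. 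Properness of the $G$-action at $\f$ then forces $S_B:=\{\gamma\in G:\dd_1(\f^\gamma,\f)\le B\}$ to be compact; since the coset $gh_g\cdot H$ coincides with $gH$, the projection $G\to G/H$ sends $\{gh_g:g\in G\}\subset S_B$ onto all of $G/H$, which is therefore the continuous image of a compact set, hence compact.

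The main obstacle I anticipate lies in the cocycle bookkeeping of the first direction: the decomposition $g=hk$ (rather than $g=kh$) is what makes $\f^{gk^{-1}}=(\f^g)^{k^{-1}}$ into an identity whose comparison with $0$ produces the term $\dd_1(\tau_{k^{-1}},0)$, uniform in $\f$; the reversed ordering would instead yield a quantity of the form $\dd_1(\f^{k^{-1}},\f)$ that genuinely depends on $\f$ and would not yield the sought comparison. Once this is in place, everything else reduces to routine applications of Proposition \ref{prop:isom}, Lemma \ref{lem:dJ}, and the equivalence~\eqref{equ:JG}.
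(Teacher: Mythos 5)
Your proof is correct and follows essentially the same route as the paper: for (ii)$\Rightarrow$(i) a decomposition of $g\in G$ into an element of $H$ times an element of a fixed compact set, combined with the isometry of the action and the triangle inequality, yields $\dd_{1,H}(\cdot,0)\le\dd_{1,G}(\cdot,0)+D$; for (i)$\Rightarrow$(ii) one applies the coercivity bound along the $G$-orbit of a fixed point and invokes properness. Your bookkeeping with $g=hk$ (and the resulting uniform term $\dd_1(\tau_{k^{-1}},0)$) is exactly the mechanism the paper uses, stated a little more carefully.
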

\begin{proof} Assume (i), \ie $M\ge\d d_H(\cdot,0)-C$ for some $\d,C>0$. Since $d_H\ge \dd_{1,G}$, $M$ is coercive modulo $G$. For any $g\in G$, we also have $M(0)=M(0^g)\ge\d d_H(0^g,0)-C$, and hence $d_H(0^g,0)\le C'$. Since $G$ acts properly on $\cF$, this proves that $G/H$ is compact, and hence (i)$\Rightarrow$(ii). 

Conversely assume (ii). Each element of $G$ can be written as $g=kh$ with $h\in H$ and $k$ in a compact subset $K\subset G$. For each $\f\in\cF$ we thus have
$$
d_H(\f,0)\le \dd_1(\f^h,0)=\dd_1(\f^g,0^k)\le \dd_1(\f^g,0)+\dd_1(0^k,0). 
$$
This shows 
$$
\dd_{1,G}(\f,0)\le d_H(\f,0)\le \dd_{1,G}(\f,0)+C
$$
for a uniform constant $C>0$, and the result follows.     
\end{proof}

For later use we also introduce: 
\begin{defi}\label{defi:min} We say that a family $(\f_i)$ in $\cE^1$ is \emph{$G$-minimal} if it satisfies $\dd_{1,G}(\f_i,\f_j)=\dd_1(\f_i,\f_j)$ for all $i,j$. 
\end{defi} 
\begin{lem}\label{lem:Gmin} A metric geodesic $(\f_t)_{t\in [0,1]}$ is $G$-minimal iff $\{\f_0,\f_1\}$ is $G$-minimal. 
\end{lem}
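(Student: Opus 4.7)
\emph{Plan.} The forward direction is immediate: if $(\f_t)_{t\in[0,1]}$ is $G$-minimal then so is any subfamily, in particular $\{\f_0,\f_1\}$. The substance of the lemma is the converse.

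The plan for the converse is to argue by a triangle-inequality sandwich, exploiting the two inputs available: (a) $G$ acts by $\dd_1$-isometries on $\cE^1$ (Proposition~\ref{prop:isom}), and (b) $(\f_t)$ is a constant-speed geodesic, so $\dd_1(\f_0,\f_s)+\dd_1(\f_s,\f_t)+\dd_1(\f_t,\f_1)=\dd_1(\f_0,\f_1)$ for all $0\le s\le t\le 1$. Fix such $s,t$ and any $g\in G$. Applying (a) to replace $\dd_1(\f_0^g,\f_s^g)$ by $\dd_1(\f_0,\f_s)$ and then the triangle inequality along $\f_0^g\to\f_s^g\to\f_t\to\f_1$, I expect the bound
\[
\dd_1(\f_0^g,\f_1)\le \dd_1(\f_0,\f_1)-\dd_1(\f_s,\f_t)+\dd_1(\f_s^g,\f_t).
\]
Taking the infimum over $g\in G$ on the left and using the $G$-minimality hypothesis $\dd_{1,G}(\f_0,\f_1)=\dd_1(\f_0,\f_1)$ will then force $\dd_1(\f_s,\f_t)\le \dd_1(\f_s^g,\f_t)$ for every $g$, hence $\dd_1(\f_s,\f_t)\le \dd_{1,G}(\f_s,\f_t)$. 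The reverse inequality being trivial (take $g=e$), this yields $\dd_{1,G}(\f_s,\f_t)=\dd_1(\f_s,\f_t)$, which is exactly $G$-minimality of the geodesic.

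There is no serious obstacle: the whole argument reduces to combining Proposition~\ref{prop:isom} with the additivity of $\dd_1$ along the geodesic and one application of the triangle inequality. The only small point to keep in mind is to apply the isometry to the \emph{endpoint} side of the triangle inequality (i.e.\ act by $g$ on $\f_0$ and $\f_s$, not on $\f_t$ and $\f_1$), so that the middle term $\dd_1(\f_s^g,\f_t)$ captures exactly the deviation one wants to rule out.
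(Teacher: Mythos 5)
Your proposal is correct and is essentially the paper's argument: both rest on the triangle-inequality sandwich $\dd_{1,G}(\f_0,\f_1)\le \dd_{1,G}(\f_0,\f_s)+\dd_{1,G}(\f_s,\f_t)+\dd_{1,G}(\f_t,\f_1)\le \dd_1(\f_0,\f_1)=\dd_{1,G}(\f_0,\f_1)$, forced into equality by the additivity of $\dd_1$ along the constant-speed geodesic and the isometric $G$-action. The paper merely packages the step you carry out for each individual $g\in G$ as the triangle inequality for the quotient pseudometric $\dd_{1,G}$.
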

Note that $G$-minimal geodesics (which are called \emph{$G$-calibrated} in~\cite{DL}) induce geodesics in $\cE^1/G$.

\begin{proof} We reproduce the simple argument of~\cite[Lemma~3.7]{DR}: for $0\le s,t\le 1$ we have 
$$
\dd_{1,G}(\f_0,\f_1)\le \dd_{1,G}(\f_0,\f_s)+\dd_{1,G}(\f_s,\f_t)+\dd_{1,G}(\f_t,\f_1)
$$
$$
\le \dd_1(\f_0,\f_s)+\dd_1(\f_s,\f_t)+\dd_1(\f_t,\f_1)=\dd_1(\f_0,\f_1)=\dd_{1,G}(\f_0,\f_1),
$$
which forces equality everywhere, and hence yields $\dd_{1,G}(\f_s,\f_t)=\dd_1(\f_s,\f_t)$. 
\end{proof}

%
%
\section{Strong convergence and openness of coercivity}\label{sec:open}
%
%

Letting $\om_j$ converging smoothly to $\om$ as described below, in this section we introduce the notion of strong convergence for functions in $\cE^1(\om_j)$ as $j\to +\infty$, to then establish a general result on openness of coercivity of functionals on $\cE^1(\om_j)$ (Theorem \ref{thm:coeropen}).

\subsection{Setup and notation}
In this section, we assume given a sequence of semipositive, big $(1,1)$-forms $\om_j$ on $X$, converging smoothly to a big semipositive $(1,1)$-form $\om$, and such that 
\begin{equation}\label{equ:ommin}
\om_j\ge (1-\e_j)\om\quad\text{with}\quad\e_j\to 0.
\end{equation}
Set $V_j:=\int_X\om_j^n$, which converges to $V:=\int_X\om^n$. We denote by 
$$
\en_j:=\en_{\om_j}\colon\PSH(\om_j)\to\R\cup\{-\infty\},\quad
\en:=\en_{\om}\colon\PSH(\om)\to\R\cup\{-\infty\}
$$
the corresponding Monge--Amp\`ere energy functionals. We set
$$
\cE^1_j:=\cE^1(\om_j),\quad\cE^1:=\cE^1(\om), 
$$
and denote by $\dd_{1,j}$ and $\dd_1$ their Darvas metrics, and by 
$$
\cE^1_j\ni\f\mapsto\MA_j(\f)=\om_{j,\f}^n,\quad\cE^1\ni\f\mapsto\MA(\f)=\om_\f^n
$$ 
their Monge--Amp\`ere operators where, similarly to~\eqref{eqn:Notation}, we set $\om_{j,\f}:=\om_j+\ddc \f$. Finally, we fix a volume form $\nu$ on $X$, and denote by $\Ent=\Ent(\cdot|\nu)$ the associated relative entropy function (see~Appendix~\ref{sec:ent}). We then define the entropy functionals
$$
\ent_j\colon\cE^1_j\to\R\cup\{+\infty\},\quad\ent\colon\cE^1\to\R\cup\{+\infty\}
$$
by setting
$$
\ent_j(\f):=\tfrac 12\Ent(\MA_j(\f)|\nu),\quad\ent(\f):=\tfrac 12\Ent(\MA(\f)|\nu).
$$
Note that $\ent_j$ and $\ent$ are lsc, by lower semincontinuity of the relative entropy in the weak topology of measures. 
%
%
\subsection{Weak convergence}

\begin{defi} We say that a sequence $(\f_j)$ with $\f_j\in\PSH(\om_j)$ for all $j$ \emph{converges weakly} to $\f\in\PSH(\om)$ if $\f_j\to\f$ in $L^1(X)$. 
\end{defi}
Note that, as $X$ is compact, the $L^1$-convergence does not depend on the choice of the smooth positive volume form.
Since $\om_j\le C\om_X$ for a uniform $C>0$, this equivalently means $\f_j\to\f$ in $\PSH(C\om_X)$. By standard properties of psh functions, it follows that: 

\begin{lem}\label{lem:weak} For any weakly convergent sequence $\PSH(\om_j)\ni\f_j\to\f\in\PSH(\om)$ we have 
$\sup_X \f_j\to\sup_X \f$ and $(\sup_{k\ge j}\f_k)^\star\searrow\f$ pointwise as $j\to\infty$. 
\end{lem}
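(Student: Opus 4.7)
The plan is to reduce both assertions to classical $L^1$-convergence results for psh functions relative to a fixed reference form. Since $\om_j\to\om$ smoothly, choose $C>1$ with $\om_j\le C\om_X$ and $\om\le C\om_X$ for all $j$; then both $\f_j$ and $\f$ lie in $\PSH(C\om_X)$. In a local chart, writing $C\om_X=\ddc\rho$ with $\rho$ smooth, the functions $\f_j+C\rho$ and $\f+C\rho$ are classically plurisubharmonic, so the sub-mean-value inequality on small Euclidean balls $B(y,r)$ is available, and this is the tool that drives everything below.

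The key preliminary is the pointwise Hartogs estimate $\limsup_k\f_k(y)\le\f(y)$ at every $y\in X$: the sub-mean-value inequality gives $\f_k(y)+C\rho(y)\le|B(y,r)|^{-1}\int_{B(y,r)}(\f_k+C\rho)$; taking $\limsup_k$ and using $L^1$-convergence on the right, then letting $r\to 0$ using the sub-mean-value property of $\f+C\rho$, yields the claim, and together with $\limsup_k\f_k\ge\f$ a.e.\ (from $L^1$-convergence) this gives $\limsup_k\f_k=\f$ a.e. From here the first conclusion $\sup_X\f_j\to\sup_X\f$ follows in two directions. For $\limsup_j\sup_X\f_j\le\sup_X\f$, pick $x_j$ nearly achieving $\sup_X\f_j$ and extract $x_j\to x^\star$ by compactness; applying the sub-mean-value inequality to $\f_j+C\rho$ at $x_j$ over $B(x^\star,r)$, taking $j\to\infty$ using $L^1$-convergence, and letting $r\to 0$, yields $\limsup_j\sup_X\f_j\le\f(x^\star)\le\sup_X\f$. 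For the reverse, if $\sup_X\f_{j_k}\le\sup_X\f-\e$ along a subsequence, then $\f_{j_k}\le\sup_X\f-\e$ pointwise, $L^1$-convergence forces $\f\le\sup_X\f-\e$ a.e., and the sub-mean-value property propagates this inequality to every point, contradicting the definition of $\sup_X\f$.

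For the second conclusion, set $\p_j:=(\sup_{k\ge j}\f_k)^\star$: this is $C\om_X$-psh, nonincreasing in $j$, and uniformly bounded above by the first conclusion. Its pointwise limit $\p:=\lim_j\p_j$ is again $C\om_X$-psh. From $\p_j\ge\f_k$ for all $k\ge j$ we get $\p\ge\limsup_k\f_k\ge\f$ a.e., and equality a.e.\ of $C\om_X$-psh functions upgrades to equality pointwise via the sub-mean-value inequality; this gives $\p\ge\f$ everywhere. For the reverse, Choquet's lemma yields $\p_j=\sup_{k\ge j}\f_k$ off a pluripolar (hence Lebesgue null) set; monotone convergence applied to the uniformly bounded-above sequence $\sup_{k\ge j}\f_k\searrow\limsup_k\f_k=\f$ a.e.\ then gives $|B(y,r)|^{-1}\int_{B(y,r)}\p_j\to|B(y,r)|^{-1}\int_{B(y,r)}\f$. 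Combining this with the sub-mean-value inequality for $\p_j+C\rho$ on $B(y,r)$, passing to the limit $j\to\infty$ and then $r\to 0$, produces $\p(y)\le\f(y)$.

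The principal obstacle is that both the Hartogs estimate and Choquet's lemma supply identifications only up to pluripolar (in particular, measure-zero) sets, whereas the lemma asserts pointwise equality at every $y\in X$. The tool that closes the gap is precisely the sub-mean-value inequality for $C\om_X$-psh functions, which recovers the pointwise value at $y$ as the limit of integral averages over shrinking balls $B(y,r)$ and so converts every a.e.\ identification into an everywhere identification.
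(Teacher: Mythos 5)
Your argument is correct, and it follows exactly the route the paper intends: the paper gives no proof of this lemma, remarking only that after observing $\f_j,\f\in\PSH(C\om_X)$ the claims follow from ``standard properties of psh functions'', and what you have written is a complete and accurate account of those standard properties (the Hartogs lemma via sub-mean-value inequalities, and the fact that $(\sup_\a u_\a)^\star=\sup_\a u_\a$ outside a Lebesgue-null set). Two trivial points: if you set $C\om_X=\ddc\rho$ locally then the psh function is $\f_j+\rho$, not $\f_j+C\rho$; and in the upper bound for $\sup_X\f_j$ the sub-mean-value inequality is over balls centered at $x_j$, so one should first normalize the functions to be $\le 0$ on the chart (using the uniform local upper bound) before enlarging to $B(x^\star,r)$ --- neither affects the argument.
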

Here the upper $\star$ denotes, as usual, usc regularization. We also note: 
\begin{lem}\label{lem:weakcomp} There exists a uniform constant $A>0$ such that the submean value inequality
$$
\sup_X\f\le V_j^{-1}\int_X\f\,\om_j^n+A
$$
holds for all $j$ and all $\f\in\PSH(\om_j)$. In particular, each sequence $\f_j\in\PSH(\om_j)$ with either $\sup_X\f_j$ or $\int_X\f_j\,\om_j^n$ bounded admits a weakly convergent subsequence.
\end{lem}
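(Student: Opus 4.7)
The plan is to reduce everything to the classical submean value inequality for quasi-psh functions with respect to the fixed K\"ahler form $\om_X$, and to make the reduction uniform in $j$ using the smooth convergence $\om_j\to\om$.

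The first observation is that smooth convergence gives a uniform constant $C>0$ with $\om_j\le C\om_X$ for every $j$, so every $\f\in\PSH(\om_j)$ belongs to $\PSH(C\om_X)$. By the standard $L^1$-compactness of normalized quasi-psh functions, there is a universal constant $B>0$ such that any $\f\in\PSH(C\om_X)$ with $\sup_X\f=0$ satisfies $\int_X(-\f)\,\om_X^n\le B$. Smooth convergence also yields a uniform $C'>0$ with $\om_j^n\le C'\om_X^n$, and hence $\int_X(-\f)\,\om_j^n\le C'B$. Since $V_j\to V>0$, we have $V_j\ge V/2$ for $j\gg 1$ and the remaining finitely many $V_j$ are positive as well, so $V_j^{-1}\int_X(-\f)\om_j^n\le A$ for a $j$-independent $A$. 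Translating back by $\sup_X\f$ gives the submean value inequality in full generality.

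For the compactness assertion, the submean value inequality just established shows that boundedness of $V_j^{-1}\int_X\f_j\,\om_j^n$ implies boundedness of $\sup_X\f_j$ (and the converse follows trivially from the bound $\om_j^n\le C'\om_X^n$). In either case the sequence $(\f_j)$ lies in a bounded family of $\PSH(C\om_X)$, which is weakly compact in $L^1$. Extract a subsequential $L^1$-limit $\f\in\PSH(C\om_X)$; then $\om_j+\ddc\f_j\ge 0$ converges distributionally to $\om+\ddc\f$ using the smooth convergence $\om_j\to\om$, so $\om+\ddc\f\ge 0$ and $\f\in\PSH(\om)$, yielding the desired weak convergence.

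No serious obstacle arises: the only point to monitor is that the constants produced along the way do not degenerate with $j$, which is guaranteed by $\om_j\to\om$ smoothly together with $V=\int_X\om^n>0$ ensuring $V_j$ stays bounded away from zero.
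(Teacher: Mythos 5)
Your argument is correct and follows essentially the same route as the paper: reduce to the weakly compact family $\{\f\in\PSH(C\om_X)\mid\sup_X\f=0\}$ via the uniform bound $\om_j\le C\om_X$, control $\int_X(-\f)\,\om_j^n$ there by a $j$-independent constant, and use that $V_j$ stays bounded away from $0$. Your explicit verification that the subsequential $L^1$-limit is $\om$-psh (via distributional convergence of $\om_j+\ddc\f_j\ge 0$) is a point the paper leaves implicit, but there is no substantive difference in approach.
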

\begin{proof} Pick $C>0$ such that $\om_j\le C\om_X$ for all $j$. Since $\f\mapsto\int\f\,\om^n$ is continuous on $\PSH(C\om_X)$, it is bounded on the compact subset defined by $\sup\f=0$. Since $\om_j>0$ and $\om_j\ge (1-\e_j)\om$, we also have $\om_j\ge c\om$ for all $j$ and a uniform constant $c>0$. Further using that $V_j$ is bounded away from $0$, we get a uniform constant $A>0$ such that $V_j^{-1}\left|\int\f\,\om_j^n\right|\le A$ for all $\f\in\PSH(\om_j)\subset\PSH(C\om_X)$ such that $\sup_X\f=0$. This implies the first point, and the second one follows by weak compactness of $\{\f\in\PSH(C\om_X)\mid\sup_X\f=0\}$.  
\end{proof}

As we next show, the Monge--Amp\`ere energy is usc with respect to weak convergence. 

\begin{prop}\label{prop:weakusc} For any weakly convergent sequence $\PSH(\om_j)\ni\f_j\to\f\in\PSH(\om)$, we have $\limsup_j\en_j(\f_j)\le\en(\f)$.
\end{prop}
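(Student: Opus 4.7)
The plan is to combine a normalization, a truncation, and a comparison with a fixed dominating Kähler form. First, using the translation equivariance~\eqref{equ:equivariance} together with $V_j\to V$ and Lemma~\ref{lem:weak} (which gives $\sup_X\f_j\to\sup_X\f$), I would replace $\f_j$ by $\f_j-\sup_X\f_j$ to reduce to the case $\sup_X\f_j=\sup_X\f=0$, so that $\f_j,\f\le 0$. Then, setting $\f_j^{(k)}:=\max\{\f_j,-k\}$ and $\f^{(k)}:=\max\{\f,-k\}$, the truncations lie in $[-k,0]$ and converge to $\f^{(k)}$ in $L^1$ by the $1$-Lipschitz property of $\max\{\cdot,-k\}$; monotonicity of $\en_j$ yields $\en_j(\f_j)\le\en_j(\f_j^{(k)})$, while continuity of $\en$ along decreasing sequences gives $\en(\f^{(k)})\searrow\en(\f)$ as $k\to\infty$. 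So it suffices to handle the uniformly bounded case $-k\le\f_j\le 0$.

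Next, for each $\e>0$, I would introduce the Kähler form $\a_\e:=\om+\e\om_X$. Since $\om_j\to\om$ smoothly, one has $\om_j\le\a_\e$ for all $j$ sufficiently large, whence $\f_j\in\PSH(\om_j)\subset\PSH(\a_\e)$ and $\f\in\PSH(\om)\subset\PSH(\a_\e)$. A binomial expansion identifies
\[
R_{j,\e}^{(p)}:=(\a_\e+\ddc\f_j)^p\wedge\a_\e^{n-p}-\om_{j,\f_j}^p\wedge\om_j^{n-p}
\]
as a sum of wedge products of positive currents, each containing at least one factor of the smooth semipositive form $\a_\e-\om_j$; hence $R_{j,\e}^{(p)}$ is a positive measure, and Stokes' theorem yields $\int R_{j,\e}^{(p)}=V_{\a_\e}-V_j\ge 0$. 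Using $\f_j\le 0$, one obtains the two-sided bound
\[
0\le\en_j(\f_j)-\en_{\a_\e}(\f_j)=-\tfrac{1}{n+1}\sum_{p=0}^n\int\f_j\,R_{j,\e}^{(p)}\le k\,(V_{\a_\e}-V_j),
\]
and the analogous estimate $0\le\en(\f)-\en_{\a_\e}(\f)\le k\,(V_{\a_\e}-V)$ by applying the same argument with $\om_j$ replaced by $\om$.

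To conclude, I would invoke the standard upper semicontinuity of $\en_{\a_\e}$ on $\PSH(\a_\e)$ (with the form $\a_\e$ fixed), applied to the $L^1$-convergent sequence $\f_j\to\f$, giving $\limsup_j\en_{\a_\e}(\f_j)\le\en_{\a_\e}(\f)$. Combining the three bounds and using $V_j\to V$ and $V_{\a_\e}=V+O(\e)$ would yield
\[
\limsup_j\en_j(\f_j)\le\limsup_j\en_{\a_\e}(\f_j)+k(V_{\a_\e}-V)\le\en_{\a_\e}(\f)+O(k\e)\le\en(\f)+O(k\e),
\]
and letting $\e\to 0$ with $k$ fixed would deliver the stated inequality.

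The principal technical point is the mass identification $\int R_{j,\e}^{(p)}=V_{\a_\e}-V_j$, which requires that all mixed Monge--Ampère products be interpreted in the Bedford--Taylor sense—this is precisely why the truncation in the first step is essential. I note that the comparison scheme itself uses only the smooth convergence $\om_j\to\om$ and the presence of the fixed Kähler form $\om_X$; the sharper one-sided control~\eqref{equ:ommin} does not seem to enter this particular semicontinuity statement, although it will be crucial for the coercivity arguments later in the paper.
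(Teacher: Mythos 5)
Your proof is correct, but it takes a genuinely different route from the paper's. The paper's argument runs through the concavity estimate~\eqref{equ:enconc}: it compares $\en_j(\f_j)$ with $\en_j((1-\e_j)\f)$ and then controls the error integral $\int(\f_j-(1-\e_j)\f)(\om_j+(1-\e_j)\ddc\f)^n$ by exploiting the \emph{lower} bound $\om_j-(1-\e_j)\om\ge 0$ from~\eqref{equ:ommin} together with the decreasing envelopes $(\sup_{k\ge m}\f_k)^\star\searrow\f$ and monotone convergence, performing the truncation only at the very end. You instead truncate first, then sandwich $\om_j$ from \emph{above} by the fixed K\"ahler form $\a_\e=\om+\e\om_X$, identify the energy discrepancy $\en_j-\en_{\a_\e}$ on potentials valued in $[-k,0]$ as an integral against the positive remainder measures $R^{(p)}_{j,\e}$ of total mass $V_{\a_\e}-V_j$, and finally invoke the classical $L^1$-upper semicontinuity of the fixed-form energy $\en_{\a_\e}$ (which the paper does record in \S\ref{sec:E1}). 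Your positivity and mass computations are sound Bedford--Taylor manipulations, and the order of limits ($j\to\infty$, then $\e\to0$ with $k$ fixed, then $k\to\infty$) is handled correctly. What each approach buys: the paper's version stays inside the toolkit (concavity, Lemma~\ref{lem:weakusc}) that is reused almost verbatim in the proof of Proposition~\ref{prop:dcont}, whereas yours is more modular — it reduces the statement to the standard single-form result — and, as you observe, it shows that this particular semicontinuity needs only the upper control $\om_j\le\om+o(1)\,\om_X$ and $V_j\to V$, not the one-sided bound~\eqref{equ:ommin}, which is a mild but genuine gain in generality.
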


\begin{lem}\label{lem:weakusc} If $\f\in\PSH(\om)$ is bounded, then $(1-\e_j)\f\in\cE^1_j$ for all $j$, and $\en_j((1-\e_j)\f)\to\en(\f)$. 
\end{lem}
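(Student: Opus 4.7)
The plan is to reduce everything to a single pointwise identity at the level of $(1,1)$-forms. Setting $\beta_j := \om_j - (1-\e_j)\om$, hypothesis~\eqref{equ:ommin} ensures that $\beta_j$ is a smooth semipositive form converging to $0$ in $C^\infty$ as $j\to\infty$, and we get the decomposition
$$
\om_j+(1-\e_j)\ddc\f = (1-\e_j)\om_\f+\beta_j.
$$
Since the right-hand side is a sum of two semipositive $(1,1)$-forms, $(1-\e_j)\f$ belongs to $\PSH(\om_j)$; boundedness of $\f$ then immediately gives $(1-\e_j)\f\in\cE^1_j$.

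For the energy convergence I would plug the above identity into the Euler--Lagrange formula
$$
(n+1)\en_j((1-\e_j)\f)=\sum_{p=0}^n\int_X(1-\e_j)\f\cdot\bigl[(1-\e_j)\om_\f+\beta_j\bigr]^p\wedge\om_j^{n-p},
$$
and binomially expand so that each summand becomes a mixed integral of the form $\int_X\f\cdot\om_\f^{p-k}\wedge\beta_j^k\wedge\om_j^{n-p}$. For the leading terms ($k=0$), I would show they converge to $\int_X\f\cdot\om_\f^p\wedge\om^{n-p}$ by exploiting the uniform convergence $\om_j^{n-p}\to\om^{n-p}$ of smooth forms and the finite mass of the fixed Bedford--Taylor measure $\om_\f^p\wedge\om_X^{n-p}$, which together allow one to integrate the bounded measurable function $\f$ against the limit. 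The remaining terms ($k\ge 1$) are controlled in absolute value by $\|\f\|_{\infty}\,\|\beta_j\|_{C^0}^k$ times a mixed mass that stays uniformly bounded in $j$, and thus vanish in the limit since $\|\beta_j\|_{C^0}\to 0$.

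Summing across $p$ and using $(1-\e_j)\to 1$ then yields $\en_j((1-\e_j)\f)\to\en(\f)$. I do not foresee a genuine obstacle: the only step that requires mild care is the passage to the limit in the $k=0$ integrals against a bounded (rather than continuous) integrand, and this is handled cleanly by the uniform $C^\infty$ convergence $\om_j\to\om$ together with the elementary bound $\|\om_j^{n-p}-\om^{n-p}\|_{C^0}\to 0$, so no deeper pluripotential input is needed.
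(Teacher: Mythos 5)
Your proof is correct and follows essentially the same route as the paper: the paper likewise notes that $\om_j\ge(1-\e_j)\om$ makes $(1-\e_j)\f$ a bounded $\om_j$-psh function, writes $(n+1)\en_j((1-\e_j)\f)=\sum_p\int_X(1-\e_j)\f\,(\om_j+(1-\e_j)\ddc\f)^p\wedge\om_j^{n-p}$ in the Bedford--Taylor sense, and simply asserts convergence from $\e_j\to0$ and $\om_j\to\om$ smoothly. Your decomposition $\om_j+(1-\e_j)\ddc\f=(1-\e_j)\om_\f+\beta_j$ with $\beta_j\ge0$, $\|\beta_j\|_{C^0}\to0$, together with the domination of the error terms by positive measures of cohomologically bounded mass, is exactly the justification the paper leaves implicit.
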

\begin{proof} Since $\om_j\ge (1-\e_j)\om$, $(1-\e_j)\f$ is $\om_j$-psh and bounded, and hence lies in $\cE^1_j$. We  further have 
$$
\en_j((1-\e_j)\f)=\frac{1}{n+1}\sum_{p=0}^n\int_X (1-\e_j)\f\,(\om_j+(1-\e_j) \ddc\f)^p\wedge\om_j^{n-p}
$$
in the sense of Bedford--Taylor. Since $\e_j\to 0$ and $\om_j\to\om$ smoothly, this converges to 
$$
\en(\f)=\frac{1}{n+1}\sum_{p=0}^n\int_X\f\,(\om+\ddc\f)^p\wedge\om^{n-p},
$$
completing the proof.
\end{proof}

\begin{proof}[Proof of Proposition~\ref{prop:weakusc}] Since $\sup \f_j\to\sup \f$ (see Lemma~\ref{lem:weak}), the $\om_j$-psh functions $\tf_j:=\f_j-\sup \f_j\le 0$ 
converge weakly to $\tf:=\f-\sup \f\in\PSH(\om)$. By translation equivariance, we further have 
$$
\en_j(\f_j)-\en_j(\tf_j)=V_j\sup \f_j\to V\sup\f=\en(\f)-\en(\tf). 
$$
Replacing $\f_j$ and $\f$ with $\tf_j$ and $\tf$, we may thus assume without loss that $\f_j\le 0$ for all $j$. 

Assume for the moment that $\f$ and all $\f_j$ are further bounded. Then~\eqref{equ:enconc} yields 
$$
\en_j(\f_j)-\en_j((1-\e_j)\f)\le \int\left(\f_j-(1-\e_j)\f\right)\left(\om_j+(1-\e_j)\ddc\f\right)^n, 
$$
where $\en_j((1-\e_j)\f)\to\en(\f)$ by Lemma~\ref{lem:weakusc}. Thus
\begin{equation}\label{equ:enTusc}
\limsup_j\en_j(\f_j)\le\en(\f)+\limsup_j \int\left(\f_j-(1-\e_j)\f\right)\left(\om_j+(1-\e_j)\ddc\f\right)^n. 
\end{equation}
On the one hand,  
$$
\int_X\f_j (\om_j+(1-\e_j)\ddc\f)^n=\int_X\f_j \left(\om_j-(1-\e_j)\om+(1-\e_j)(\om+\ddc\f)\right)^n
$$
$$
\le (1-\e_j)^n\int\f_j\,(\om+\ddc\f)^n,
$$
since $\f_j\le 0$ and $\om_j-(1-\e_j)\om\ge 0$, and
$$
\limsup_j\int\f_j\,(\om+\ddc\f)^n\le\int\f\,(\om+\ddc\f)^n
$$
using $\f_j\le(\sup_{k\ge m}\f_k)^\star\searrow\f$ (see Lemma~\ref{lem:weak}) and monotone convergence. On the other hand, we have 
$$
\lim_j\int\f\,(\om_j+(1-\e_j)\ddc\f)^n=\int\f\,(\om+\ddc\f)^n
$$ 
as in Lemma~\ref{lem:weakusc}. We conclude
\begin{multline*}
\limsup_j\int_X\left(\f_j-(1-\e_j)\f\right) \left(\om_j+(1-\e_j)\ddc\f\right)^n\\
\le\limsup_j (1-\e_j)^n\int_X\f_j\,\left(\om+\ddc\f\right)^n
-\lim_j (1-\e_j)\int\f\,(\om_j+(1-\e_j)\ddc\f)^n\\
\le\int\f\,(\om+\ddc\f)^n-\int\f\,(\om+\ddc\f)^n=0, 
\end{multline*}
and the result now follows from~\eqref{equ:enTusc}.

In the general case, $\f^k_j=\max\{\f_j,-k\}$ converges weakly to $\f^k=\max\{\f,-k\}$ as $j\to\infty$, for each given $k$. Since $\en_j(\f_j)\le\en_j(\f_j^k)$, the previous step yields
$$
\limsup_j\en_j(\f_j)\le\limsup_j\en_j(\f^k_j)\le\en(\f^k),
$$
and the result follows since $\lim_k\en(\f^k)=\en(\f)$. 
\end{proof}

%
%
\subsection{Strong convergence}\label{sec:strong}
Following~\cite{PTT23} we introduce: 

\begin{defi} We say that a sequence $\f_j\in\cE^1_j$  \emph{converges strongly} to $\f\in\cE^1$ if $\f_j\to\f$ weakly and $\en_j(\f_j)\to\en(\f)$.
\end{defi}

When $\om_j$ is independent of $j$, this recovers the usual characterization of strong convergence in $\cE^1$ with respect to the Darvas metric $\dd_1$, cf~\S\ref{sec:E1}.  By Lemma~\ref{lem:weakusc}, we have: 

\begin{exam}\label{exam:strong} For any bounded function $\f\in\cE^1$, $(1-\e_j)\f\in\cE^1_j$ converges strongly to $\f$.
\end{exam}

As a consequence of Bedford--Taylor, we also have:

\begin{exam}\label{exam:decrstrong} If $\f\in\cE^1$ is bounded and $\f_j\in\cE^1_j$ decreases to $\f$, then $\f_j\to\f$ strongly. 
\end{exam} 

As we next show, the Darvas metrics are continuous with respect to strong convergence. 

\begin{prop}\label{prop:dcont} If $\f_j,\p_j\in\cE^1_j$ converge strongly to $\f,\p\in\cE^1$, then $\dd_{1,j}(\f_j,\p_j)\to \dd_1(\f,\p)$.
\end{prop}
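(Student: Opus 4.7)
The proof will reduce to showing $\en_j(u_j) \to \en(v)$, where $u_j := \env_j(\f_j, \p_j) \in \cE^1_j$ and $v := \env(\f, \p) \in \cE^1$. Indeed, by the Darvas identity~\eqref{equ:Darvas} we have
$$
\dd_{1,j}(\f_j, \p_j) = \en_j(\f_j) + \en_j(\p_j) - 2\en_j(u_j),
$$
and similarly for $\dd_1(\f, \p)$. Since $\en_j(\f_j) \to \en(\f)$ and $\en_j(\p_j) \to \en(\p)$ by the definition of strong convergence, the statement is equivalent to the convergence $\en_j(u_j) \to \en(v)$.

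For the upper bound $\limsup_j \en_j(u_j) \le \en(v)$, the plan is to use weak compactness. Since $u_j \le \min(\f_j, \p_j)$, Lemma~\ref{lem:weak} ensures $\sup u_j$ is uniformly bounded, so Lemma~\ref{lem:weakcomp} provides a subsequence $u_j \to u$ weakly, with $u \in \PSH(\om)$ (using the smooth convergence $\om_j \to \om$). Standard upper-semicontinuous regularization of weak limits applied to $u_j \le \f_j$ and $u_j \le \p_j$ yields $u \le \f$ and $u \le \p$, hence $u \le v$. Combining monotonicity of $\en$ with Proposition~\ref{prop:weakusc} gives $\limsup_j \en_j(u_j) \le \en(u) \le \en(v)$, and since the bound holds along any weakly convergent subsequence, it extends to the full sequence.

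The heart of the proof is the lower bound $\liminf_j \en_j(u_j) \ge \en(v)$. The idea is to exhibit, for each $\eta > 0$, a competitor $w_j^\eta \in \cE^1_j$ with $w_j^\eta \le \f_j$ and $w_j^\eta \le \p_j$ satisfying $\liminf_j \en_j(w_j^\eta) \ge \en(v) - \eta$; since $u_j$ is by construction the largest such competitor and $\en_j$ is monotone, the claim then follows by $\eta \to 0$. To construct $w_j^\eta$, I first reduce to the bounded setting: the envelopes $v^M := \env(\max(\f, -M), \max(\p, -M)) \searrow v$ as $M \to \infty$, with $\en(v^M) \to \en(v)$; an analogous truncation produces uniformly bounded sequences $\f_j^M \to \f^M := \max(\f, -M)$ and $\p_j^M \to \p^M := \max(\p, -M)$ converging strongly (combining Proposition~\ref{prop:weakusc} with the monotonicity of $\en_j$ under the $\max$ operation). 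In this bounded regime, strong convergence upgrades to convergence in capacity via classical Bedford--Taylor theory, so for $j$ large one has $\f_j^M \ge v^M - \eta$ and $\p_j^M \ge v^M - \eta$ outside sets of arbitrarily small capacity. A capacity-bump construction in the spirit of~\cite{BBGZ} will then produce $w_j^\eta$, a small perturbation of $(1-\e_j)(v^M - \eta)$ (which is $\om_j$-psh by~\eqref{equ:ommin}) forced to lie below both $\f_j$ and $\p_j$ globally at the cost of an arbitrarily small energy adjustment.

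The main obstacle is this lower bound: the given strong convergence is an $L^1$-type statement that does not by itself provide pointwise control, and the envelope operation is only semicontinuous in the ``easy'' direction. Overcoming this requires first truncating to the bounded regime, then upgrading strong convergence to convergence in capacity, and finally implementing a controlled capacity-bump perturbation, which is the technically most delicate piece of the argument.
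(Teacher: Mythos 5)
Your reduction via the Darvas identity is correct, and your ``upper bound'' $\limsup_j\en_j(u_j)\le\en(v)$ (equivalently $\liminf_j\dd_{1,j}(\f_j,\p_j)\ge\dd_1(\f,\p)$) is essentially the paper's argument for that direction: extract a weak limit $u\le\f,\p$ of the envelopes and apply Proposition~\ref{prop:weakusc} together with monotonicity. (One small imprecision: the boundedness of $\sup_X u_j$ from below does not follow from $u_j\le\min(\f_j,\p_j)$; you should instead invoke~\eqref{equ:denv} to bound $\dd_{1,j}(u_j,0)$ and then use Lemma~\ref{lem:weakcomp}.)

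The genuine gap is in the other direction, $\liminf_j\en_j(u_j)\ge\en(v)$, which is where all the difficulty sits and which you leave as a plan rather than a proof. The decisive step --- the ``capacity-bump'' producing a global competitor $w_j^\eta\le\f_j,\p_j$ with $\en_j(w_j^\eta)\ge\en(v)-\eta+o(1)$ --- is not carried out, and it is not routine: you would need (a) capacity and energy estimates that are uniform in $j$ for the varying forms $\om_j$, (b) a quantitative lemma saying that forcing an $\om_j$-psh function below another one on a set of small capacity costs little energy, and (c) to handle the fact that the competitor must lie below $\f_j$ itself, not merely below the truncation $\f_j^M$; on $\{\f_j<-M\}$ it must therefore dip below $-M$, so the bad set includes $\{\f_j\le -M\}\cup\{\p_j\le -M\}$ as well as the set where $(1-\e_j)(v^M-\eta)$ exceeds the truncations. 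Relatedly, your claim that the truncations $\f_j^M=\max(\f_j,-M)$ converge strongly to $\f^M$ does not follow from monotonicity of $\en_j$ plus Proposition~\ref{prop:weakusc} alone: these only give $\en(\f)\le\liminf_j\en_j(\f_j^M)\le\limsup_j\en_j(\f_j^M)\le\en(\f^M)$, which is an approximate statement (adequate for your purposes since $\en(\f^M)-\en(\f)\to 0$ as $M\to\infty$, but it should be stated that way). The paper avoids all of this by never comparing envelopes directly in this direction: it bounds $\dd_{1,j}(\f_j,\p_j)$ by the triangle inequality through the intermediate points $(1-\e_j)\f^k$ and $(1-\e_j)\p^k$, controlling the leg $\dd_{1,j}(\f_j,(1-\e_j)\f^k)$ via the auxiliary function $\tf_j^k=\max\{\f_j,(1-\e_j)\f^k\}$, which dominates both endpoints (so the distances are explicit differences of energies) and converges weakly to $\f^k$, so that Proposition~\ref{prop:weakusc} alone suffices; the middle leg is handled with the rooftop envelope $\env(\f^k,\p^k)$ of the \emph{bounded limits} and Lemma~\ref{lem:weakusc}. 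If you want to salvage your route you must actually prove the capacity-bump lemma with uniform constants; otherwise I recommend switching to the triangle-inequality argument.
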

We recall that $\dd_{1,j}$ denotes the Darvas metric on $\cE^1_j$ (see subsection~\ref{sec:E1}).
\begin{proof} By~\eqref{equ:d0}, $\dd_{1,j}(\f_j,0)\le 2|\sup\f_j|-\en_j(\f_j)$ remains bounded. The same holds for $\p_j$, and $\dd_{1,j}(\f_j,\p_j)\le \dd_{1,j}(\f_j,0)+\dd_{1,j}(\p_j,0)$ is thus bounded as well. Fix $k$ and set $\f^k:=\max\{\f,-k\}$, so that $(1-\e_j)\f^k\in\cE^1_j$ converges strongly to $\f^k$ (see Example~\ref{exam:strong}). We claim that 
\begin{equation}\label{equ:limsupd}
\limsup_j \dd_{1,j}(\f_j,(1-\e_j)\f^k)\le \dd_1(\f,\f^k).
\end{equation} 
To see this, set
$$
\tf_j^k:=\max\{\f_j,(1-\e_j)\f^k\}\in\cE^1_j.
$$
Since $\f_j\to\f$ weakly and $\e_j\to 0$, we have 
$$
\tf_j^k\to\max\{\f,\f^k\}=\f^k
$$
weakly as $j\to\infty$, and hence 
\begin{equation}\label{equ:lmsupjk}
\limsup_j\en_j(\tf_j^k)\le\en(\f^k), 
\end{equation}
by Proposition~\ref{prop:weakusc}. As $\tf_j^k\ge\f_j$ and $\tf^k_j\ge (1-\e_j)\f^k$, we have 
$$
\dd_{1,j}(\f_j,\tf_j^k)=\en_j(\tf_j^k)-\en_j(\f_j),\quad \dd_{1,j}(\tf_j^k,(1-\e_j)\f^k)=\en_j(\tf^k_j)-\en_j((1-\e_j)\f^k), 
$$
and hence 
$$
\dd_{1,j}(\f_j,(1-\e_j)\f^k)\le \dd_{1,j}(\f_j,\tf_j^k)+\dd_{1,j}(\tf_j^k,(1-\e_j)\f^k)=2\en_j(\tf_j^k)-\en_j(\f_j)-\en_j((1-\e_j)\f^k). 
$$
Now $\en_j(\f_j)\to\en(\f)$ by assumption, and $\en_j((1-\e_j)\f^k)\to\en(\f^k)$ by Lemma~\ref{lem:weakusc}. Thus~\eqref{equ:lmsupjk} yields
$$
\limsup_j \dd_{1,j}(\f_j,(1-\e_j)\f^k)\le\en(\f^k)-\en(\f)=\dd_1(\f,\f^k),
$$
which proves~\eqref{equ:limsupd}. Next, set 
$$
\p^k:=\max\{\p,-k\},\quad\tau^k=\env(\f^k,\p^k).
$$
For all $j$, we then have 
$$
\dd_{1,j}((1-\e_j)\f^k,(1-\e_j)\p^k)\le \dd_{1,j}((1-\e_j)\f^k,(1-\e_j)\tau^k)+\dd_{1,j}((1-\e_j)\tau^k,(1-\e_j)\p^k)
$$
$$
=\en_j((1-\e_j)\f^k)+\en_j((1-\e_j)\p^k)-2\en_j((1-\e_j)\tau^k),
$$
and hence 
$$
\limsup_j \dd_{1,j}((1-\e_j)\f^k,(1-\e_j)\p^k)\le \en(\f^k)+\en(\p^k)-2\en(\tau^k)=\dd_1(\f^k,\p^k), 
$$
by Lemma~\ref{lem:weakusc}. Injecting this into
$$
\dd_{1,j}(\f_j,\p_j)\le \dd_{1,j}(\f_j,(1-\e_j)\f^k)+\dd_{1,j}((1-\e_j)\f^k,(1-\e_j)\p^k)+\dd_{1,j}(\p_j,(1-\e_j)\p^k)
$$
and using~\eqref{equ:limsupd} together with its analogue for $\p_j$, we infer 
$$
\limsup_j \dd_{1,j}(\f_j,\p_j)\le \dd_1(\f^k,\p^k)+\dd_1(\f,\f^k)+\dd_1(\p,\p^k),
$$
and hence $\limsup_j \dd_{1,j}(\f_j,\p_j)\le \dd_1(\f,\p)$ since $\lim_k \dd_1(\f^k,\p^k)=\dd_1(\f,\p)$ while $\lim_k \dd_1(\f,\f^k)=\lim_k \dd_1(\p,\p^k)=0$. 

In order to conversely prove $\liminf_j \dd_{1,j}(\f_j,\p_j)\ge \dd_1(\f,\p)$, we may assume, after passing to a subsequence, that $\dd_{1,j}(\f_j,\p_j)$ converges. For each $j$ set $\tau_j:=\env(\f_j,\p_j)$. As noted above, $\dd_{1,j}(\f_j,0)$ and $\dd_{1,j}(\p_j,0)$ are bounded, and $\dd_1(\tau_j,0)$ is bounded as well, by~\eqref{equ:denv}. Thus $\int\tau_j\om_j^n$ is bounded, and we may thus assume, after passing to a subsequence, that $\tau_j$ converges weakly to $\tau\in\PSH(\om)$ (see Lemma~\ref{lem:weakcomp}). As $\en_j(\tau_j)$ is bounded, Proposition~\ref{prop:weakusc} further yields $\en(\tau)\ge\limsup_j\en_j(\tau_j)>-\infty$, and hence $\tau\in\cE^1$. Since $\tau_j\le\f_j,\p_j$, we have $\tau\le\f,\p$, and hence
$$
\dd_1(\f,\p)\le \dd_1(\f,\tau)+\dd_1(\tau,\p)=\en(\f)+\en(\p)-2\en(\tau), 
$$
where $\en(\f)=\lim_j \en_j(\f_j)$, $\en(\p)=\lim_j\en_j(\p_j)$ and $\en(\tau)\ge\limsup\en(\tau_j)$. Thus
$$
\dd_1(\f,\p)\le\liminf_j(\en_j(\f_j)+\en_j(\p_j)-2\en_j(\tau_j))=\liminf_j \dd_{1,j}(\f_j,\p_j),
$$
which concludes the proof. 
\end{proof}

\subsection{Strong compactness}
Adapting the proof of~\cite[Theorem~2.17]{BBEGZ}, we next establish the following strong compactness result. 

\begin{thm}\label{thm:entcomp} Any sequence $\f_j\in\cE^1_j$ such that $\sup_X\f_j$ and the entropy $\ent_j(\f_j)$ are both bounded admits a subsequence that converges strongly to some $\f\in\cE^1$. 
\end{thm}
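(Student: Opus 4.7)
The plan is to follow the strategy of the Berman--Boucksom--Eyssidieux--Guedj--Zeriahi Theorem~2.17 referenced just before the statement, adapted to the varying-reference-form setting. First, by Lemma~\ref{lem:weakcomp} and the uniform bound on $\sup_X\f_j$, extract a subsequence with $\f_j\to\f$ weakly in $L^1$ for some $\f\in\PSH(\om)$; after translating one may assume $\sup_X\f_j\le 0$ uniformly. Since $\f_j\in\PSH(C\om_X)$ for a fixed $C$ with bounded sup, the Moser--Trudinger--Zeriahi inequality gives a uniform exponential integrability $\int_X e^{-\a\f_j}\,\nu\le C'$ for some $\a,C'>0$ independent of $j$. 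Combining this with the entropy bound $\ent_j(\f_j)\le C$ via the Donsker--Varadhan variational formula for relative entropy, applied with $g=-\a\f_j$ and $\mu=\MA_j(\f_j)$, yields a uniform first-moment bound $\int|\f_j|\,\MA_j(\f_j)\le C_1$. Together with $\int|\f_j|\,\om_j^n\le C_2$ (from the sup bound and Lemma~\ref{lem:weakcomp}), this controls $\ii_{1,j}(\f_j,0)$, hence by~\eqref{equ:dI1} gives $\dd_{1,j}(\f_j,0)\le R$ uniformly, so in particular $\en_j(\f_j)\ge -C_3$. Proposition~\ref{prop:weakusc} then yields $\en(\f)\ge\limsup_j\en_j(\f_j)>-\infty$, and thus $\f\in\cE^1$.

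Strong convergence amounts to showing $\en_j(\f_j)\to\en(\f)$. Proposition~\ref{prop:weakusc} already gives the upper bound $\limsup_j\en_j(\f_j)\le\en(\f)$, so only the matching liminf remains. Mimicking the truncation in the proof of Proposition~\ref{prop:dcont}, set $\f^N:=\max\{\f,-N\}$ and $\tf_j^N:=\max\{\f_j,(1-\e_j)\f^N\}\in\cE^1_j$. Then $\tf_j^N\to\f^N$ weakly as $j\to\infty$, and combining Lemma~\ref{lem:weakusc} with Proposition~\ref{prop:weakusc} shows $\lim_j\en_j(\tf_j^N)=\en(\f^N)$. Since $\tf_j^N\ge\f_j$, the concavity inequality~\eqref{equ:enconc} yields
$$0\le\dd_{1,j}(\tf_j^N,\f_j)=\en_j(\tf_j^N)-\en_j(\f_j)\le\int_{\{\f_j<(1-\e_j)\f^N\}}|\f_j|\,\MA_j(\f_j).$$

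The \textbf{main obstacle} is to show that this last integral tends to $0$ uniformly in $j$ as $N\to\infty$: a uniform integrability statement for the family $\{|\f_j|\}$ against the measures $\{\MA_j(\f_j)\}$. I would upgrade the first-moment bound above to a uniform $L^p$-bound $\int|\f_j|^p\,\MA_j(\f_j)\le C(p)$ for some $p>1$ via a Kolodziej--Dinew-type argument: bounded entropy of the normalized Monge--Amp\`ere measures against a fixed smooth volume, together with the smooth convergence $\om_j\to\om$, translates uniformly into a capacity domination, which yields higher integrability of the potentials. Combined with the Chebyshev estimate $\MA_j(\f_j)(\{|\f_j|>N/2\})\le 2C_1/N$ and H\"older's inequality, this gives the desired uniform vanishing. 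Granting it,
$$\liminf_j\en_j(\f_j)\ge\lim_N\lim_j\bigl(\en_j(\tf_j^N)-\dd_{1,j}(\tf_j^N,\f_j)\bigr)=\lim_N\en(\f^N)=\en(\f),$$
completing the proof.
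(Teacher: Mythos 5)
Your skeleton matches the paper's: weak compactness via Lemma~\ref{lem:weakcomp}, a lower bound on $\en_j(\f_j)$ from the entropy bound and Zeriahi's inequality, Proposition~\ref{prop:weakusc} to get $\f\in\cE^1$, and then the liminf inequality via truncation and the concavity estimate~\eqref{equ:enconc}. You also correctly isolate the crux. But your reduction of that crux is flawed: the claim that $\int_{\{\f_j<(1-\e_j)\f^N\}}|\f_j|\,\MA_j(\f_j)\to0$ uniformly in $j$ as $N\to\infty$ is false. For fixed $j$ the sets $\{\f_j<(1-\e_j)\f^N\}$ \emph{decrease} as $N$ grows, to $\{\f_j<(1-\e_j)\f\}$, which can carry full $\MA_j(\f_j)$-mass (it contains $\{\f_j<\f\}$ wherever $\f>-N$, since $\f\le0$), and on it $|\f_j|$ need not be large. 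Hence neither the Chebyshev bound on $\{|\f_j|>N/2\}$ nor any uniform $L^p(\MA_j(\f_j))$ moment bound can make this integral small: bounding $\max\{\f_j,(1-\e_j)\f^N\}-\f_j$ by $|\f_j|$ on that set is simply too lossy. The quantity one must estimate is $\int(\max\{\f_j,(1-\e_j)\f^N\}-\f_j)\,d\mu_j$ with $\mu_j=\MA_j(\f_j)$, and the workable decomposition (used in the paper's Lemma~\ref{lem:entcomp2}, applied there to $(1-\e_j)\f^N$ directly rather than to the max) is into $\int(\f_j-\f)\,d\mu_j+\int(\f-\f^N)\,d\mu_j+\e_j\int\f^N\,d\mu_j$; the first term is an $L^1(\mu_j)$-convergence statement about the \emph{difference} $\f_j-\f$, which no tail bound on $|\f_j|$ alone can deliver.

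The second gap is the mechanism you propose for uniform integrability. The paper handles both critical terms with Lemma~\ref{lem:ent} from the appendix: since $\{\f_j\}\cup\{\f\}$ is a weakly compact family in $\PSH(C\om_X)$ all of whose members have zero Lelong numbers (they lie in finite-energy classes, by~\cite[Theorem~1.1]{DDL2}), Zeriahi's theorem gives $\sup_j\int e^{p|\f_j|}\,\nu<\infty$ for \emph{every} $p<\infty$ --- not merely for a single exponent $\a$ as in your argument --- and feeding this together with the entropy bound into the Legendre-duality estimate of Lemma~\ref{lem:ent} yields both $\int|\f_j-\f|\,d\mu_j\to0$ as $j\to\infty$ and $\sup_j\int|\f-\f^N|\,d\mu_j\to0$ as $N\to\infty$. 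Your substitute --- a ``Kol\-o\-dziej--Dinew-type'' capacity-domination argument producing uniform $L^p(\mu_j)$ bounds on $|\f_j|$ --- is not carried out, and even if granted it targets the wrong quantity, so it would not close the argument. To repair the proof, replace the set-theoretic reduction by the three-term decomposition above and invoke Lemma~\ref{lem:ent}, whose hypotheses are exactly what the zero-Lelong-number observation supplies.
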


\begin{lem}\label{lem:entcomp} There exists a uniform constant $C>0$ such that 
$$
\en_j(\f)\ge V_j\sup_X\f-C(\ent_j(\f)+1)
$$
for all $\f\in\cE^1_j$. 
\end{lem}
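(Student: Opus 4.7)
The plan is to reduce, via translation equivariance of both sides, to the case $\sup_X \f = 0$. Indeed $\MA_j(\f) = \MA_j(\f - \sup_X\f)$ (so $\ent_j$ is unchanged by translation), while $\en_j(\f) = \en_j(\f - \sup_X\f) + V_j\sup_X\f$ by~\eqref{equ:equivariance}; hence the lemma is equivalent to the bound $\en_j(\f) \ge -C(\ent_j(\f) + 1)$ for $\f \in \cE^1_j$ with $\sup_X\f = 0$.

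Under this normalization, the proof rests on two ingredients. First, the elementary energy-against-MA inequality $\en_j(\f) \ge \int_X \f \, \MA_j(\f)$: for bounded $\f \le 0$ this follows from integration by parts, since the consecutive differences
\begin{equation*}
\int_X \f\,\om_{j,\f}^{p+1}\wedge\om_j^{n-p-1} - \int_X \f\,\om_{j,\f}^p\wedge\om_j^{n-p} = -\int_X \de\f\wedge\dc\f\wedge\om_{j,\f}^p\wedge\om_j^{n-p-1}
\end{equation*}
are nonpositive, so $p \mapsto \int_X \f\,\om_{j,\f}^p \wedge \om_j^{n-p}$ is decreasing and each of the $n+1$ terms averaged in the formula for $\en_j(\f)$ dominates $\int_X\f\,\MA_j(\f)$; the general case $\f \in \cE^1_j$ follows by approximation with $\max\{\f,-k\}$ and monotone convergence.

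Second, a uniform entropy-vs-integral estimate: writing $\mu_j := V_j^{-1}\MA_j(\f) = f_j\,\nu$ as a probability measure and applying Young's inequality $uv \le u\log u - u + e^v$ with $u = f_j$, $v = -t\f$, then integrating against $\nu$, yields
\begin{equation*}
-t\int_X \f\,d\mu_j \le \Ent(\mu_j|\nu) - 1 + \int_X e^{-t\f}\,d\nu.
\end{equation*}
The key input here is a \emph{uniform} Skoda integrability bound: since $\om_j \to \om$ smoothly, we have $\om_j \le C\om_X$ for a uniform $C > 0$, hence $\PSH(\om_j) \subset \PSH(C\om_X)$, and the classical Skoda theorem applied to the fixed class $C\om_X$ produces uniform constants $t, K > 0$ with $\int_X e^{-t\f}\,d\nu \le K$ for all $j$ and all $\f \in \PSH(\om_j)$ with $\sup_X\f = 0$. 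Translating via the identity $\Ent(\MA_j(\f)|\nu) = V_j\log V_j + V_j\Ent(\mu_j|\nu)$ (together with $V_j \to V > 0$ being bounded away from $0$ and $\infty$) converts this into $\int_X \f\,\MA_j(\f) \ge -C'(\ent_j(\f) + 1)$, and combining with the first ingredient yields the result. I expect the only subtle step to be the uniformity of the Skoda estimate in $j$, which however reduces cleanly to the single-class case via the inclusion $\PSH(\om_j) \subset \PSH(C\om_X)$.
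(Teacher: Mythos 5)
Your proof is correct and follows essentially the same route as the paper's: normalize to $\sup_X\f=0$, use the uniform Skoda/Zeriahi bound via the inclusion $\PSH(\om_j)\subset\PSH(C\om_X)$, bound $\en_j(\f)$ below by $\int_X\f\,\MA_j(\f)$ (the paper cites~\eqref{equ:enconc} where you rederive it), and control $-\int_X\f\,\MA_j(\f)$ by the entropy via convex duality (the paper invokes the Legendre-transform formula~\eqref{equ:entleg}, which is exactly your pointwise Young inequality integrated against $\nu$). The mass renormalization $\Ent(\MA_j(\f)|\nu)=V_j\log V_j+V_j\Ent(\mu_j|\nu)$ and the boundedness of $V_j$ away from $0$ and $\infty$ are handled the same way in both arguments.
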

\begin{proof} Pick $C>0$ such that $\om_j\le C\om_X$ for all $j$. By~\cite{Zer}, there exists $\a,B>0$ such that $\int_X e^{-\a\f}\,\nu\le B$ for all $\f\in\PSH(C\om_X)$ such that $\sup_X \f=0$. For any $\f\in\cE^1_j\subset\PSH(C\om_X)$ with $\sup_X\f=0$, \eqref{equ:enconc} and~\eqref{equ:entleg} thus yield
\begin{align*}
    -\a\en_j(\f)\le\int(-\a\f)\MA_j(\f)&\le2\ent_j(\f)+V_j\log\int e^{-\a\f}\,\nu-V_j \log V_j\\
    &\le2\ent_j(\f)+C
\end{align*}
for a uniform constant $C>0$, since $V_j$ is bounded above. The result follows, by translation equivariance of $\en_j$. 
\end{proof}

\begin{lem}\label{lem:entcomp2} Assume $\cE^1_j\ni\f_j\to\f\in\cE^1$ weakly. If $\ent_j(\f_j)$ is bounded, then $\f_j\to\f$ strongly. 
\end{lem}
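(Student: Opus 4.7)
The upper bound $\limsup_j \en_j(\f_j) \le \en(\f)$ is immediate from Proposition~\ref{prop:weakusc}, so the real content is the reverse inequality $\liminf_j \en_j(\f_j) \ge \en(\f)$, for which the entropy hypothesis is essential.

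For each $k \in \N$, set $\f^k := \max\{\f, -k\}$ (bounded and $\om$-psh) and $\p_j^k := (1-\e_j)\f^k \in \cE^1_j$. By Example~\ref{exam:strong}, $\p_j^k \to \f^k$ strongly, so $\en_j(\p_j^k) \to \en(\f^k)$ as $j \to \infty$, while $\en(\f^k) \searrow \en(\f)$ as $k \to \infty$. The concavity estimate~\eqref{equ:enconc} in $\cE^1_j$ gives
\[
\en_j(\f_j) \ge \en_j(\p_j^k) - \int (\p_j^k - \f_j)\,\MA_j(\f_j),
\]
so, after taking $\liminf_{j \to \infty}$ and then $k \to \infty$, the lower bound reduces to showing
\[
\lim_{k \to \infty}\,\limsup_{j\to\infty}\,\int (\p_j^k - \f_j)\,\MA_j(\f_j) \le 0.
\]

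I decompose $\p_j^k - \f_j = -\e_j \f^k + (\f^k - \f) + (\f - \f_j)$. The first piece contributes at most $\e_j\|\f^k\|_\infty V_j \to 0$ as $j \to \infty$ for fixed $k$. For the other two pieces the entropy hypothesis is used in two complementary ways. First, Lemma~\ref{lem:weak} bounds $\sup_X \f_j$ uniformly, so Zeriahi's integrability theorem provides $\int e^{-\alpha \f_j}\,d\nu \le B$ uniformly in $j$, whence $\f_j \to \f$ in every $L^p(\nu)$ by Vitali. Second, the bound $\ent_j(\f_j) \le C$ combined with de la Vall\'ee--Poussin makes the densities $h_j := d\MA_j(\f_j)/d\nu$ uniformly integrable in $L^1(\nu)$. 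A splitting argument across $\{h_j \le M\}$ and $\{h_j > M\}$, adapting~\cite[Theorem~2.17]{BBEGZ} to the varying K\"ahler form $\om_j$, then yields both $\int |\f - \f_j|\,\MA_j(\f_j) \to 0$ as $j \to \infty$ and $\sup_j \int (\f+k)_-\,\MA_j(\f_j) \to 0$ as $k \to \infty$.

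The main obstacle is this quantitative entropy-to-concentration step: the entropy bound alone provides only a logarithmic modulus of absolute continuity $\MA_j(\f_j)(A) \lesssim M\nu(A) + (\log M)^{-1}$ (via the elementary inequality $h\,\mathbf{1}_{\{h > M\}} \le (\log M)^{-1}\, h\log h$ valid for $h > 1$), which must be combined with the exponential decay of $\nu(\{\f_j < -s\})$ and $\nu(\{\f < -s\})$ coming from Zeriahi, through a careful coordinated choice of $M$ and the Legendre parameter in Fenchel--Young, to obtain the required vanishing rate uniformly in $j$.
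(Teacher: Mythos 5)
Your reduction and decomposition are exactly those of the paper: upper semicontinuity from Proposition~\ref{prop:weakusc}, then~\eqref{equ:enconc} against $(1-\e_j)\f^k$ and the splitting $\f_j-(1-\e_j)\f^k=(\f_j-\f)+(\f-\f^k)+\e_j\f^k$, leaving the two limits $\int|\f-\f_j|\,d\mu_j\to0$ (as $j\to\infty$) and $\sup_j\int|\f-\f^k|\,d\mu_j\to0$ (as $k\to\infty$), with $\mu_j=\MA_j(\f_j)$. The gap is in how you propose to prove these. You invoke Zeriahi only in the form $\int e^{-\a\f_j}\,d\nu\le B$ for a \emph{single} fixed $\a>0$, and your stated strategy — the logarithmic modulus of absolute continuity $\mu_j(A)\le M\nu(A)+C(\log M)^{-1}$ combined with exponential decay of $\nu(\{\f_j<-s\})$ at the fixed rate $\a$ — cannot close. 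Quantitatively: writing $\int(\f+k)_-\,d\mu_j=\int_k^\infty\mu_j(\{\f<-s\})\,ds$ and optimizing $M=M(s)$ in $MBe^{-\a s}+C(\log M)^{-1}$ gives a bound of order $1/s$, whose tail is not integrable; equivalently, the Fenchel--Young step $t|f|h\le e^{t|f|}+h\log h$ forces $t\le\a$ and leaves a residual term $\a^{-1}\Ent(\mu_j|\nu)$ that does not vanish. This is not a presentational issue: bounded entropy together with exponential integrability at one fixed order genuinely does \emph{not} imply $\int|f_j|\,d\mu_j\to0$ (borderline examples with $f_j\sim j$ on a set of $\nu$-measure $e^{-\a j}$ and $h_j\sim(\a j)^{-1}e^{\a j}$ there have bounded entropy and $\int f_jh_j\,d\nu\sim\a^{-1}$).

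The missing ingredient is that one has exponential integrability at \emph{every} order: $\{\f_j\}_j\cup\{\f\}$ is a weakly compact family in $\PSH(C\om_X)$ all of whose members have zero Lelong numbers (finite energy in a big class, \cite[Theorem~1.1]{DDL2}), so the strong form of Zeriahi's theorem gives $\sup_j\int_X\exp(p|\f_j|)\,d\nu<+\infty$ for \emph{each} $p<\infty$, and likewise $\sup_k\int_X\exp(p|\f^k|)\,d\nu<\infty$. With this, the Young-inequality argument works because the Legendre parameter can be sent to infinity: $|f|g\le|f|e^{p|f|}+p^{-1}\chi(g)$ with $\chi(x)=(x+1)\log(x+1)-x$ gives $\int|f|\,d\mu\le\|f\|_{L^2(\nu)}C_{2p}^{1/2}+p^{-1}C'$ uniformly over $\{\Ent(\cdot|\nu)\le C\}$, and one first lets $j\to\infty$ (or $k\to\infty$) at fixed $p$, then $p\to\infty$. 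This is exactly Lemma~\ref{lem:ent} of the paper, which is the tool you should cite (or reprove) in place of the coordinated-choice argument; the rest of your proof then goes through as written.
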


\begin{proof} By Proposition~\ref{prop:weakusc}, it is enough to show $\liminf_j\en_j(\f_j)\ge\en(\f)$. Set $\mu_j:=\MA_j(\f_j)$ and $\f^k=\max\{\f,-k\}$. By~\eqref{equ:enconc}, we have 
\begin{align*}
\en_j(\f_j)-\en_j((1-\e_j)\f^k)
&\ge\int(\f_j-(1-\e_j)\f^k)d\mu_j\\
&=\int(\f_j-\f)d\mu_j+\int(\f-\f^k)d\mu_j+(1-(1-\e_j))\int\f^k\,d\mu_j. 
\end{align*}
Since $\{\f_j\mid j\in\N\}\cup\{\f\}$ is a (weakly) compact subset of $\PSH(X,C\om_X)$ all of whose elements have zero Lelong numbers (by~\cite[Theorem~1.1]{DDL2}), \cite{Zer} shows that
$$
\sup_j\int\exp\left(p|\f_j|\right)\nu<+\infty
$$
for each $p<\infty$. As $\Ent(\mu_j|\nu)=2\ent_j(\f_j)$ is bounded, Lemma~\ref{lem:ent} thus yields $\lim_j\int(\f_j-\f)d\mu_j=0$. Since we further have $\lim_j\en_j((1-\e_j)\f^k)=\en(\f^k)$ and $\lim_j(1-(1-\e_j))\int\f^k\,d\mu_j=0$ (as $\e_j\to 0$ while $|\int\f^k\,\mu_j|\le\sup |\f^k|$), the above inequality yields 
$$
\liminf_j\en_j(\f_j)\ge \en(\f^k)-\sup_j\int|\f-\f^k|\,d\mu_j. 
$$
Since $\f\le\f^k\le\f_0$ and $\f$ has zero Lelong numbers, we have as above $\sup_k\int\exp(p|\f^k|)\nu<+\infty$. Further,  $\Ent(\mu_j|\nu)$ is bounded, so Lemma~\ref{lem:ent} yields $\sup_j\int|\f-\f^k|\,d\mu_j\to 0$ as $k\to+\infty$. Using $\lim_k\en(\f^k)=\en(\f)$, we conclude, as desired, that $\liminf_j\en_j(\f_j)\ge\en(\f)$. 
\end{proof}

\begin{proof}[Proof of Theorem~\ref{thm:entcomp}] By Lemma~\ref{lem:weakcomp}, we may assume, after passing to a subsequence, that $(\f_j)$ converges weakly to $\f\in\PSH(\om)$. By Lemma~\ref{lem:entcomp}, $\en_j(\f_j)$ is further bounded below, and Proposition~\ref{prop:weakusc} thus yields $\en(\f)>-\infty$, \ie $\f\in\cE^1$. Finally,  Lemma~\ref{lem:entcomp2} shows that $\f_j\to\f$ strongly, and we are done. 
\end{proof}

%
%
\subsection{Strong and weak (semi)continuity criteria}\label{sec:criteria} 
In what follows, we assume given, for each $j$, a functional
$$
F_j\colon\cF_j\to\R\cup\{-\infty\}
$$
defined on a subset $\cF_j\subset\cE^1_j$. We also assume given subsets $\cR\subset\cF\subset\cE^1$, and a functional 
$$
F\colon\cR\to\R, 
$$
such that 
\begin{itemize}
\item[(R)] $\cR$ consists of bounded functions, and each $\f\in\cF$ can be written as the limit of some decreasing sequence $(\f^k)$ in $\cR$; 
\item[(F)] for each $\f\in\cR$, $(1-\e_j)\f\in\cE^1_j$ lies in $\cF_j$, and $F_j((1-\e_j)\f)\to F(\f)$. 
\end{itemize}

We first establish a general strong continuity criterion. 

\begin{lem}\label{lem:strongcont} In the setup above, assume that each $F_j$ is finite valued, and satisfies the uniform H\"older estimate 
\begin{equation}\label{equ:holdFj}
|F_j(\f)-F_j(\p)|\le C \dd_{1,j}(\f,\p)^\a\max\{\dd_{1,j}(\f,0),\dd_{1,j}(\p,0)\}^{1-\a}
\end{equation}
for all $\f,\p\in\cF_j$, where $\a\in (0,1]$ and $C>0$ are uniform constants. Then $F$ admits a unique continuous extension $F\colon\cF\to\R$, which satisfies the analogue of~\eqref{equ:holdFj}. For each strongly convergent sequence $\cF_j\ni\f_j\to\f\in\cF$, we further have $F_j(\f_j)\to F(\f)$. 
\end{lem}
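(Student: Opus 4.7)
The plan is to establish the H\"older estimate for $F$ first on $\cR$, then extend by continuity to $\cF$, and finally prove the main approximation claim by interposing the approximants $(1-\e_j)\f^k$ between $\f_j$ and the limit $\f$. For the first step, fix $\f,\p\in\cR$. By assumption (F), $(1-\e_j)\f,(1-\e_j)\p\in\cF_j$ with $F_j((1-\e_j)\f)\to F(\f)$ and similarly for $\p$. By Example~\ref{exam:strong}, $(1-\e_j)\f\to\f$ strongly (and trivially $0\to 0$ strongly since $\en_j(0)=0$), so Proposition~\ref{prop:dcont} gives
$$
\dd_{1,j}((1-\e_j)\f,(1-\e_j)\p)\to \dd_1(\f,\p),\quad \dd_{1,j}((1-\e_j)\f,0)\to \dd_1(\f,0),
$$
and similarly for $\p$. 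Passing the assumed uniform H\"older estimate~\eqref{equ:holdFj} to the limit in $j$ then yields
$$
|F(\f)-F(\p)|\le C\,\dd_1(\f,\p)^\a\max\{\dd_1(\f,0),\dd_1(\p,0)\}^{1-\a}
$$
for all $\f,\p\in\cR$.

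Next, to extend $F$ to $\cF$, pick $\f\in\cF$ together with a decreasing sequence $\cR\ni\f^k\searrow\f$ provided by (R). By continuity of $\en$ along decreasing sequences, $\en(\f^k)\to\en(\f)$, so $\f^k\to\f$ strongly in $\cE^1$; in particular $\dd_1(\f^k,\f)\to 0$ and $\dd_1(\f^k,0)$ is bounded. The H\"older estimate on $\cR$ then shows that $(F(\f^k))_k$ is Cauchy, so we may define $F(\f):=\lim_k F(\f^k)$; independence of the defining sequence, as well as the extension of the H\"older estimate to $\cF$, follows by applying the H\"older bound to two approximating sequences $\f^k\searrow\f$ and $\p^k\searrow\p$ and letting $k\to\infty$, using continuity of $\dd_1$ under strong convergence. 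The resulting H\"older bound on $\cF$ automatically makes $F\colon\cF\to\R$ continuous in the strong topology, and the extension is unique.

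Finally, for the main claim, let $\cF_j\ni\f_j\to\f\in\cF$ strongly and pick $\f^k\searrow\f$ in $\cR$. For each fixed $k$, Example~\ref{exam:strong} gives $(1-\e_j)\f^k\to\f^k$ strongly as $j\to\infty$, and Proposition~\ref{prop:dcont} yields
$$
\dd_{1,j}(\f_j,(1-\e_j)\f^k)\to \dd_1(\f,\f^k),\quad \dd_{1,j}(\f_j,0)\to \dd_1(\f,0),\quad \dd_{1,j}((1-\e_j)\f^k,0)\to \dd_1(\f^k,0).
$$
Applying~\eqref{equ:holdFj} to $(\f_j,(1-\e_j)\f^k)$ and combining with $F_j((1-\e_j)\f^k)\to F(\f^k)$ from (F), we obtain
$$
\limsup_{j\to\infty}|F_j(\f_j)-F(\f^k)|\le C\,\dd_1(\f,\f^k)^\a\max\{\dd_1(\f,0),\dd_1(\f^k,0)\}^{1-\a}.
$$
Since $\f^k\to\f$ strongly, the right-hand side tends to $0$ as $k\to\infty$, while $F(\f^k)\to F(\f)$ by continuity of $F$ on $\cF$ proved in the previous paragraph; the triangle inequality then gives $F_j(\f_j)\to F(\f)$. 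The only real subtlety is bookkeeping the two independent limits in $j$ and $k$, which is handled cleanly by the uniform H\"older estimate together with the continuity of $\dd_{1,j}$ under strong convergence.
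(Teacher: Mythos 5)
Your proof is correct and follows essentially the same route as the paper's: establish the Hölder estimate on $\cR$ by passing \eqref{equ:holdFj} to the limit along $(1-\e_j)\f\to\f$ using Example~\ref{exam:strong} and Proposition~\ref{prop:dcont}, extend by uniform continuity and density of $\cR$ (via condition (R)), and prove the final claim by the three-term triangle inequality through $(1-\e_j)\f^k$ with the two limits $j\to\infty$ then $k\to\infty$. The only difference is that you spell out the density/Cauchy argument for the extension explicitly where the paper invokes it abstractly; the content is identical.
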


\begin{proof} Assume first $\f,\p\in\cR$. By condition (F) above, we have $F_j((1-\e_j)\f)\to F(\f)$ and $F_j((1-\e_j)\p)\to F(\p)$. Since $\f,\p$ are bounded, $(1-\e_j)\f\in\cF_j$ converges strongly to $\f$, and similarly for $\p$ (see Example~\ref{exam:strong}). By Proposition~\ref{prop:dcont}, we thus have 
$$
\dd_{1,j}((1-\e_j)\f,(1-\e_j)\p)\to \dd_1(\f,\p),\quad \dd_{1,j}((1-\e_j)\f,0)\to \dd_1(\f,0),\quad \dd_1((1-\e_j)\p,0)\to \dd_1(\p,0),
$$
and~\eqref{equ:holdFj} thus yields
$$
|F(\f)-F(\p)|\le C \dd_1(\f,\p)^\a\max\{\dd_1(\f,0),\dd_1(\p,0)^{1-\a}. 
$$
This shows that $F$ is uniformly continuous on $\cR$, which is dense in $\cF$ by (R), so that $F$ admits a unique continuous extension to $\cF$, satisfying the same H\"older estimate. 

Next pick any $\f\in\cF$, and write it as the limit of a decreasing sequence $(\f^k)$ in $\cR$. Fix $k\in\N$, and write  
\begin{equation}\label{equ:3terms}
|F_j(\f_j)-F(\f)|\le |F_j(\f_j)-F_j((1-\e_j)\f^k)|+|F_j((1-\e_j)\f^k)-F(\f^k)|+|F(\f^k)-F(\f)|. 
\end{equation}
By (F), the first term on the right satisfies
$$
|F_j(\f_j)-F_j((1-\e_j)\f^k)|\le C \dd_{1,j}(\f_j,(1-\e_j)\f^k)^\a\max\{\dd_{1,j}(\f_j,0),\dd_{1,j}((1-\e_j)\f^k,0)\}^{1-\a}. 
$$
By strong continuity of the Darvas metrics (see Proposition~\ref{prop:dcont}), this yields 
$$
\limsup_j|F_j(\f_j)-F_j((1-\e_j)\f^k)|\le C \dd_1(\f,\f^k)^\a\max\{\dd_1(\f,0),\dd_1(\f^k,0)\}.
$$
Since $\lim_j|F_j((1-\e_j)\f^k)-F(\f^k)|=0$, by (F), we infer  
$$
\limsup_j|F_j(\f_j)-F(\f)|\le C \dd_1(\f,\f^k)^\a\max\{\dd_1(\f,0),\dd_1(\f^k,0)\}+|F(\f^k)-F(\f)|, 
$$
which tends to $0$ as $k\to\infty$, by continuity of $F$. 
\end{proof}

We next establish a general weak semicontinuity criterion. 

\begin{lem}\label{lem:weakext} Assume that: 
\begin{itemize}
\item[(i)] $\cR$ consists of continuous functions, and contains the constants; 
\item[(ii)] each $\cF_j$ is translation invariant, and $F_j$ is monotone increasing and translation equivariant.
\end{itemize}
Then $F$ admits a unique extension $F\colon\cF\to\R\cup\{-\infty\}$ that is continuous along decreasing sequences. It is further weakly usc, monotone increasing and translation equivariant, and for any weakly convergent sequence $\cF_j\ni\f_j\to\f\in\cF$, we have 
$$
\limsup_j F_j(\f_j)\le F(\f).
$$
\end{lem}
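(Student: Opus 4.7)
The plan is to define the extension $F$ on $\cF$ via decreasing limits, transfer monotonicity, translation equivariance and continuity along decreasing sequences from the approximations provided by (F), then obtain the weak $\limsup$ bound by a Hartogs-type comparison of $\f_j$ with continuous majorants in $\cR$.

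First I would verify that $F$ is monotone increasing and translation equivariant on $\cR$. If $\f\le\psi$ lie in $\cR$ then $(1-\e_j)\f\le(1-\e_j)\psi$ in $\cF_j$, and the monotonicity of $F_j$ together with (F) yields $F(\f)\le F(\psi)$. Writing the translation equivariance of $F_j$ as $F_j(\cdot+c)=F_j(\cdot)+\bar\mu_j c$, condition (F) applied to the constants $0,1\in\cR$ (available by (i)) gives $(1-\e_j)\bar\mu_j=F_j(1-\e_j)-F_j(0)\to F(1)-F(0)=:\bar\mu$, and the translation equivariance of $F_j$ then passes to the limit to give $F(\f+c)=F(\f)+c\bar\mu$ on $\cR$.

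Next, for $\f\in\cF$ I use (R) to fix a decreasing sequence $\f^k\searrow\f$ in $\cR$ and set $F(\f):=\lim_k F(\f^k)\in\R\cup\{-\infty\}$ (the limit exists by monotonicity of $F$ on $\cR$). Independence of the choice is a Dini-style compactness argument: for another decreasing sequence $\psi^l\searrow\f$ in $\cR$, the continuous function $\psi^l-\f^k$ has compact super-level sets $\{\psi^l-\f^k\ge\eta\}$ decreasing in $l$ to the empty set, so $\psi^l\le\f^k+\eta$ for $l$ large; using that $(1-\e_j)\psi^l\le(1-\e_j)\f^k+\eta$ lies in $\cF_j$ by translation invariance of $\cF_j$ and applying the monotonicity and translation equivariance of $F_j$ before passing to the limit yields $F(\psi^l)\le F(\f^k)+\bar\mu\eta$, and symmetry in $(k,l)$ together with $\eta\to 0$ gives equality of the two limits. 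The same kind of compactness argument, now comparing a decreasing sequence $\f_m\searrow\f$ in $\cF$ with majorants $\f^k\in\cR$, delivers monotonicity, translation equivariance and continuity along decreasing sequences of the extension $F$ on $\cF$; uniqueness is then automatic since any other extension continuous along decreasing sequences must agree with this one on each $\f=\lim\f^k$.

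The crux is the weak $\limsup$ bound. Given $\cF_j\ni\f_j\to\f\in\cF$ weakly and $\f^k\searrow\f$ in $\cR$, the $\f_j$ all belong to $\PSH(C\om_X)$ for a uniform constant $C$ and $\f^k$ is continuous with $\f^k\ge\f$; since $\f_j\to\f$ in $L^1$, the classical Hartogs lemma gives, for any $\eta>0$, $\f_j\le\f^k+\eta$ for $j$ large. Because $\f^k$ is bounded, $(1-\e_j)\f^k\to\f^k$ uniformly, so $\f_j\le(1-\e_j)\f^k+\eta_j$ with $\eta_j\to\eta$; translation invariance of $\cF_j$ places the right-hand side in $\cF_j$, and monotonicity and translation equivariance of $F_j$ yield
\[
F_j(\f_j)\le F_j((1-\e_j)\f^k)+\bar\mu_j\eta_j.
\]
Passing to $\limsup_j$ using (F) and $\bar\mu_j\to\bar\mu$ gives $\limsup_j F_j(\f_j)\le F(\f^k)+\bar\mu\eta$; sending $\eta\to 0$ and then $k\to\infty$ (using continuity of $F$ along decreasing sequences) yields $\limsup_j F_j(\f_j)\le F(\f)$. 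Weak upper semicontinuity of $F$ on $\cF$ follows by specializing to the constant sequence $F_j=F$, $\om_j=\om$, $\cF_j=\cF$. The main obstacle is this final step: it must simultaneously handle that the $\f_j$ live in different finite-energy spaces $\cE^1(\om_j)$ and convert $L^1$-convergence into a uniform pointwise upper bound, a difficulty resolved by the $(1-\e_j)\f^k$ device, which replaces the continuous barrier $\f^k$ by an element of $\cF_j$ at a uniform error $O(\e_j)$ that the translation equivariance of $F_j$ can absorb.
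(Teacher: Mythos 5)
Your proposal is correct and follows essentially the same route as the paper: the constants $a_j\to a$ from translation equivariance, a monotone extension of $F$ to $\cF$ (you use decreasing limits with a Dini compactness argument for well-definedness, the paper uses the infimum over majorants in $\cR$ and the same Dini argument for continuity along decreasing sequences — these are equivalent), and the weak $\limsup$ bound via Hartogs' lemma combined with the $(1-\e_j)\p$ comparison absorbed by monotonicity and translation equivariance of $F_j$. No substantive difference.
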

\begin{proof} Uniqueness is clear, by condition (R) above. By translation equivariance of $F_j$, there exists $a_j\in\R$ such that
\begin{equation}\label{equ:transFj}
F_j(\f+c)=F_j(\f)+c a_j
\end{equation}
for all $\f\in\cF_j$ and $c\in\R$. In particular, $F_j(1-\e_j)-F_j(0)=(1-\e_j)a_j$, and hence $a_j\to a:=F(1)-F(0)$. By (F), we conclude that $F(\f+c)=F(\f)+ac$ for $\f\in\cR$ and $c\in\R$, and that $F$ is also monotone increasing. Setting
\begin{equation}\label{equ:Fext}
F(\f):=\inf\{F(\p)\mid\p\in\cR,\,\p\ge\f\}
\end{equation}
for $\f\in\cF$ thus yields an extension $F\colon\cF\to\R\cup\{-\infty\}$, which is clearly also monotone increasing and translation equivariant. Assume $\f\in\cF$ is the limit of a decreasing sequence $(\f^k)$ in $\cF$. Pick $\p\in\cR$ such that $\f\le\p$. For each $\e>0$, we have $\f<\p+\e$ pointwise on $X$. Since $\f^k-\p$ is usc  and decreases to $\f-\p<\e$, a Dini type argument yields $\f^k<\p+\e$ for all $k$ large enough. Thus $\lim_k F(\f^k)\le F(\p)+a\e$, and hence $\lim_k F(\f^k)\le F(\f)$ by letting $\e\to 0$ and taking the infimum over $\p$. 

Now assume $\f\in\cF$ is the weak limit of $\f_j\in\cF_j$, and pick $\p\in\cR$ such that $\f\le\p$. Since $\p$ is continuous and $\f_j\to\f$ in $\PSH(C\om_X)$, Hartogs' lemma yields $\sup_X(\f_j-\p)\to\sup_X(\f-\p)\le 0$, and hence 
$$
\f_j\le\p+o(1)\le (1-\e_j)\p+o(1)
$$ 
as $j\to\infty$. Since $F_j$ is increasing and satisfies~\eqref{equ:transFj}, we infer
$F_j(\f_j)\le F_j((1-\e_j)\p)+o(1)$, and hence $\limsup_j F_j(\f_j)\le F(\p)$, by (F). Taking the infimum over $\p$ yields, as desired, $\limsup_j F_j(\f_j)\le F(\f)$. Finally, the very same argument shows that $F$ is weakly usc on $\cF$, and the proof is complete. 
\end{proof}

%
%
\subsection{Openness of coercivity}\label{sec:opencoer} 
In this section, we fix a closed subgroup $G\subset\Aut_0(X)$, and consider functionals
$$
M\colon\cF\to\R\cup\{+\infty\},\quad M_j\colon \cF_j\to\R\cup\{+\infty\},\quad j\in\N,  
$$
respectively defined on subsets of $\cE^1_j$ and $\cE^1$, and satisfying the following conditions:

\begin{itemize}
\item\textbf{invariance}: both $\cF_j,\cF$ and $M_j,M$ are invariant under translation and under the action of $G$;  
\item\textbf{normalization}: $0$ lies in $\cF_j$ and $\cF$, and $M_j(0)\to M(0)$; 
\item\textbf{lower semicontinuity}: if $\f_j\in\cF_j$ converges strongly to $\f\in\cE^1$, then $\f\in\cF$, and $\liminf_j M_j(\f_j)\ge M(\f)$; 
\item\textbf{convexity}:  the subset $\cF_j$ is convex with respect to psh geodesics, and the function $M_j$ is convex along such geodesics; 
\item\textbf{entropy growth}: there exist $\d,C>0$ such that 
$$
M_j(\f)\ge\d\ent_j(\f)-C(\dd_{1,j}(\f,0)+1)
$$ 
for all $j$ and all $\f\in\cE^1_j$; 
\item\textbf{properness}: for each $j$, the action of $G$ on $\cE^1_j$ is proper. 
\end{itemize} 
Recall that this last condition holds as soon as $\om_j$ is K\"ahler, see Proposition~\ref{prop:isom}.

\begin{thm}\label{thm:coeropen} Set 
$$
\cF^0:=\{\f\in \cF\: |\: \en(\f)=0\},\quad\cF^0_j=\{\f\in \cF\: |\, \en_j(\f)=0 \}, 
$$
and suppose given $\sigma,C\in\R$ such that 
$$
M(\f)\geq \sigma\, \dd_{1,G}(\f,0)-C
$$
for all $\f\in \cF^0$. Then, for any $\sigma'<\sigma$, there exist $C'\in\R$ and $j_0\in\Z_{\ge0}$ such that
$$
M_j(\f)\geq \sigma' \, d_{j,G}(\f,0)-C'
$$
for any $\f\in\cF_j^0$ and any $j\geq j_0$.
In particular if $M$ is coercive modulo $G$ (see Definition~\ref{defi:coerG}), then so is $M_j$ for all $j$ large enough.
\end{thm}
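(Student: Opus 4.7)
The strategy is to argue by contradiction. Suppose the conclusion fails for some fixed $\sigma' < \sigma$. Then, after passing to a subsequence, there exist $\f_j\in\cF_j^0$ with
\[
  M_j(\f_j) < \sigma' \, d_{j,G}(\f_j,0) - j.
\]
Since $\en_j$ is $G$-invariant (Proposition~\ref{prop:isom}), the set $\cF_j^0$ is $G$-invariant, and properness of the $G$-action on $\cE^1_j$ lets one realize the infimum defining $d_{j,G}(\f_j,0)$. After replacing $\f_j$ by a suitable $G$-translate, I thus assume that
\[
  R_j := d_{1,j}(\f_j,0) = d_{j,G}(\f_j,0).
\]
If $R_j$ stays bounded along a further subsequence, then $M_j(\f_j)\to -\infty$; but the entropy growth hypothesis combined with the uniform lower bound $\ent_j(\f_j)\geq \tfrac12 V_j\log(V_j/\nu(X))$ (Jensen's inequality on $\MA_j(\f_j)/V_j$ versus $\nu/\nu(X)$) keeps $M_j(\f_j)$ bounded below, a contradiction. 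Hence $R_j\to +\infty$.

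Fix $T>0$ large, to be chosen at the very end, and for $j$ with $R_j>T$ let $\f_j^T$ denote the point at $d_{1,j}$-distance $T$ from $0$ along the unit-speed psh geodesic from $0$ to $\f_j$. Convexity of $M_j$ along this geodesic gives
\[
  M_j(\f_j^T) \leq \tfrac{R_j-T}{R_j}\,M_j(0) + \tfrac{T}{R_j}\,M_j(\f_j) \leq M_j(0) + T\sigma' - \tfrac{Tj}{R_j},
\]
so $\limsup_j M_j(\f_j^T) \leq M(0) + T\sigma'$ by the normalization $M_j(0)\to M(0)$. Since the Monge--Ampère energy is affine along psh geodesics and $\en_j(0)=\en_j(\f_j)=0$, one has $\en_j(\f_j^T)=0$; Lemma~\ref{lem:supd} (with uniform bounds on $V_j$ and on $T_{\om_j}$ coming from the smooth convergence $\om_j\to\om$) then bounds $\sup_X \f_j^T$ uniformly, while the entropy growth hypothesis yields a uniform upper bound on $\ent_j(\f_j^T)$. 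Theorem~\ref{thm:entcomp} extracts a subsequence $\f_j^T \to \f^T$ strongly in $\cE^1$, with $\en(\f^T)=0$ and $d_1(\f^T,0)=T$ by Proposition~\ref{prop:dcont}; the convexity hypothesis on $\cF_j$ places $\f_j^T\in\cF_j$, and the strong lower semicontinuity of $M$ then forces $\f^T\in\cF^0$ with
\[
  M(\f^T) \leq \liminf_j M_j(\f_j^T) \leq M(0) + T\sigma'.
\]

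The crucial remaining step is to show $d_{1,G}(\f^T,0) = T$. Since $\{0,\f_j\}$ is $G$-minimal by construction, Lemma~\ref{lem:Gmin} shows that the whole geodesic from $0$ to $\f_j$ is $G$-minimal, so $d_{1,j}(\f_j^T,\tau_{j,g}) \geq T$ for every $g\in G$, where $\tau_{j,g}$ denotes the normalized cocycle associated to $\om_j$ as in Section~\ref{sec:auto}. Since $\om_j\to\om$ smoothly, $\tau_{j,g}$ converges smoothly, hence strongly, to the analogous cocycle $\tau_g$ for $\om$; Proposition~\ref{prop:dcont} then passes to the limit to yield $d_1(\f^T,\tau_g) \geq T$ for every $g$, whence $d_{1,G}(\f^T,0)\geq T$, with equality from the trivial bound $d_{1,G}(\f^T,0) \leq d_1(\f^T,0)=T$.

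Applying the coercivity hypothesis on $M$ to $\f^T\in\cF^0$ now yields $M(\f^T) \geq \sigma T - C$; combined with the previous upper bound, this forces $T(\sigma-\sigma') \leq M(0) + C$. Choosing $T$ from the outset larger than $(M(0)+C)/(\sigma-\sigma')$ produces the desired contradiction. The delicate step is the identification $d_{1,G}(\f^T,0)=T$ in the limit, which combines Lemma~\ref{lem:Gmin} (to propagate $G$-minimality from the endpoint $\f_j$ to each intermediate $\f_j^T$) with the strong convergence of the cocycle $\tau_{j,g}$ under the varying Kähler forms $\om_j$, thereby allowing the $G$-quotient geometry to pass to the strong limit.
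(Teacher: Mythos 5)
Your proof is correct and follows essentially the same route as the paper's: contradiction, $G$-minimal normalization via properness, unit-speed psh geodesic with convexity of $M_j$, strong compactness from the entropy growth bound, lower semicontinuity, propagation of $G$-minimality to the limit via Lemma~\ref{lem:Gmin} and the strong convergence of the cocycles, and finally the coercivity of $M$ to force a contradiction for $T$ large. The only cosmetic difference is that the paper normalizes $M_j(0)=M(0)=0$ at the outset and deduces $T_j\to\infty$ in one line from the two-sided bound $-C'(T_j+1)\le M_j(\f_j)\le\sigma'T_j-C_j$, whereas you treat the bounded-$R_j$ case separately; both are fine.
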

We are using the notation $d_{j,G}(\f,\p):=\inf_{g\in G}d_{1,j}(\f^g,\p)$, similarly to~\eqref{eqn:d_G}. 
\begin{rmk}\label{rmk:sigmalsc1}
    If we introduce the coercivity thresholds    
    \begin{gather*}
    \sigma^G:=\sup\left\{\sigma\in \R\: |\: M\geq \sigma\, \dd_{1,G}(\cdot,0)+O(1)\ \text{on $\cF^0$}\right\},\\
    \sigma^G_j:=\sup\left\{\sigma\in \R\: |\: M_j\geq \sigma\, d_{j,G}(\cdot,0)+O(1)\ \text{on $\cF^0_j$}\right\},
    \end{gather*}
    then Theorem~\ref{thm:coeropen} yields the semicontinuity property $\liminf_j \sigma^G_j\geq \sigma^G $.
\end{rmk}
\begin{lem}\label{lem:coeropen} If $\cE^1_j\ni\f_j\to\f\in\cE^1$ strongly, then $\f_j^g\to\f^g$ strongly for any $g\in\Aut_0(X)$. 
\end{lem}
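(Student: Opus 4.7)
The plan is to reduce the strong convergence $\f_j^g \to \f^g$ to the smooth convergence of the cocycle potentials associated to the varying reference forms. Let $\tau_{g,j}\in C^\infty(X)$ denote the normalized cocycle associated to $\om_j$, so $\ddc\tau_{g,j} = g^\star\om_j - \om_j$ and $\en_j(\tau_{g,j}) = 0$, so that $\f_j^g = \tau_{g,j} + g^\star\f_j$. Writing
$$
\f_j^g - \f^g = (\tau_{g,j} - \tau_g) + g^\star(\f_j - \f),
$$
once the smooth convergence $\tau_{g,j}\to\tau_g$ is in hand, the weak convergence $\f_j^g\to\f^g$ will follow: the first summand tends to zero smoothly, hence in $L^1$, while the second tends to zero in $L^1$ by continuity of pullback under the fixed biholomorphism $g$. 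The energy convergence comes for free: by Proposition~\ref{prop:isom} the $\Aut_0(X)$-action preserves energy for each reference form, so $\en_j(\f_j^g) = \en_j(\f_j) \to \en(\f) = \en(\f^g)$ using only the strong convergence hypothesis $\en_j(\f_j)\to\en(\f)$.

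The one substantive step is thus the smooth convergence $\tau_{g,j}\to\tau_g$. Since $\om_j\to\om$ smoothly and $g$ is fixed, $g^\star\om_j\to g^\star\om$ smoothly, giving
$$
\ddc(\tau_{g,j} - \tau_g) = g^\star(\om_j - \om) - (\om_j - \om) \longrightarrow 0 \quad\text{smoothly.}
$$
Taking the trace with respect to the fixed background Kähler form $\om_X$, we get $\Delta_{\om_X}(\tau_{g,j} - \tau_g)\to 0$ smoothly on the compact manifold $X$. Elliptic regularity for the scalar Laplacian on a compact Riemannian manifold then yields that the $\om_X^n$-mean-zero component of $\tau_{g,j} - \tau_g$ tends to zero smoothly.

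It remains to show that the mean $\bar u_j := \int (\tau_{g,j} - \tau_g)\,\om_X^n / \int\om_X^n$ also tends to zero. By translation equivariance of $\en_j$ (see \eqref{equ:equivariance}) and the normalizations,
$$
0 = \en_j(\tau_{g,j}) = \en_j\bigl(\tau_g + (\tau_{g,j} - \tau_g - \bar u_j)\bigr) + V_j\,\bar u_j.
$$
Since $\tau_g + (\tau_{g,j} - \tau_g - \bar u_j)\to\tau_g$ smoothly and $\om_j\to\om$ smoothly, the smooth expression for $\en_j$ on smooth functions gives $\en_j\bigl(\tau_g + (\tau_{g,j} - \tau_g - \bar u_j)\bigr)\to\en(\tau_g) = 0$. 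Combined with $V_j\to V > 0$, this forces $\bar u_j\to 0$, completing the smooth convergence $\tau_{g,j}\to\tau_g$ and thereby the lemma.

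The main (and essentially only) obstacle is this smooth convergence of cocycles, which however boils down to routine elliptic theory plus the normalization bookkeeping just outlined; everything else is a direct consequence of Proposition~\ref{prop:isom} and continuity of $g^\star$ on $L^1$.
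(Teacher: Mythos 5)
Your proposal is correct and follows essentially the same route as the paper: reduce to weak convergence via Proposition~\ref{prop:isom}, then establish the smooth convergence $\tau_{j,g}\to\tau_g$ of the normalized cocycles by ellipticity (the paper invokes ellipticity of $\ddc$ directly where you take the trace and apply elliptic regularity) and kill the remaining constant using translation equivariance of $\en_j$ together with $V_j\to V>0$. The only difference is that you spell out the elliptic step and the mean-normalization bookkeeping slightly more explicitly.
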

\begin{proof} Since $\en_j(\f_j^g)=\en_j(\f_j)$ converges to $\en(\f)=\en(\f^g)$ (see Proposition~\ref{prop:isom}), it suffices to show that $\f_j^g\to\f^g$ weakly. Recall from~\S\ref{sec:auto} that $\f^g=\tau_g+g^\star\f$ (resp.~$\f_j^g=\tau_{j,g}+g^\star\f_j$) where $\tau_g$ (resp.~$\tau_{j,g}$) denotes the unique smooth function such that $g^\star\om=\om+\ddc\tau_g$ and $\en(\tau_g)=0$ (resp.~$g^\star\om_j=\om_j+\ddc\tau_{j,g}$ and $\en_j(\tau_{j,g})=0$). It thus suffices to show $\tau_{j,g}\to\tau_g$ smoothly. Since $g^\star\om_j=\om_j+\ddc\tau_{j,g}$ converges smoothly to $g^\star\om=\om+\ddc\tau_g$, we can find constants $a_j\in\R$ such that $\tau_{j,g}+a_j\to\tau_g$ smoothly. By~\eqref{equ:equivariance}, $V_ja_j=\en_j(\tau_{j,g}+a_j)$ thus converges to $\en(\tau_g)=0$. Since $V_j\to V>0$, this shows $a_j\to 0$, and hence $\tau_{j,g}\to\tau_g$ smoothly, as desired. 
\end{proof}

\begin{proof}[Proof of Theorem~\ref{thm:coeropen}] Since $M_j(0)\to M(0)$, we may replace $M_j,M$ by $M_j-M_j(0),M-M(0)$ and assume without loss that $M_j(0)=M(0)=0$. Pick any $\sigma'<\sigma$. Arguing by contradiction, and passing to a subsequence, if necessary, we can find $\f_j\in\cF_j^0$ such that $M_j(\f_j)\le\sigma'\, d_{j,G}(\f_j,0)-C_j$ where $C_j\to+\infty$.

As $G$ acts properly on $\cE^1_j$, Lemma \ref{lem:dJ} implies that we can find $g_j\in G$ such that $d_{j,G}(\f_j,0)=\dd_{1,j}(\f_j^{g_j},0)$ for any $j$. By $G$-invariance of $M_j$ we may further assume $d_{j,G}(\f_j,0)=\dd_{1,j}(\f_j,0)$, \ie $\{\f_j,0\}$ is $G$-minimal (see Definition~\ref{defi:min}), so that
\begin{equation}\label{equ:Fmup}
    M_j(\f_j)\le\sigma' \dd_{1,j}(\f_j,0)-C_j.
\end{equation}

Denote by $(\f_{j,t})_{t\in [0,T_j]}$ the unit speed psh geodesic joining $0$ to $\f_j$ in $\cE^1_j$, where $T_j:=\dd_{1,j}(\f_j,0)$. The entropy growth condition combined with~\eqref{equ:Fmup} yield 
$$
-C'(T_j+1)\le M_j(\f_j)\le\sigma' T_j-C_j,
$$
where $C'>0$ can be chosen such that $C'+\sigma'>0$,
and $C_j\to+\infty$ thus implies $T_j\to+\infty$. Since $t\mapsto M_j(\f_{j,t})$ is convex and vanishes at $t=0$, it satisfies
\begin{equation}\label{equ:Fmbd}
M_j(\f_{j,t})\le\frac{t}{T_j}M_j(\f_j)<t\sigma',
\end{equation}
by~\eqref{equ:Fmup}. Fix $t>0$, to be determined later on.
Since $\dd_{1,j}(\f_{j,t},0)=t$ is bounded, $|\sup_X\f_{j,t}|$ remains bounded (see Lemmas~\ref{lem:supd} and~\ref{lem:weakcomp}), and the entropy growth condition implies that $\sup_j\ent_j(\f_{j,t})<+\infty$. By Theorem~\ref{thm:entcomp}, we may thus assume, after passing to a subsquence, that $\f_{j,t}$ converges strongly to $\f_t\in\cE^1$ (recall that $t$ is fixed here). By~\eqref{equ:Fmbd} and the lower semicontinuity condition, we get $\f_t\in\cF^0$, $\dd_1(\f_t,0)=t$ and $M(\f_t)\le t\sigma'$. 

We claim that $\dd_{1,G}(\f_t,0)=\dd_1(\f_t,0)=t$. Granted this, the coercivity of $M$ modulo $G$ yields
$$
M(\f_t)\ge \sigma\, \dd_{1,G}(\f_t,0)-C =t\sigma - C
$$
for some $C>0$ independent of $t$, and it only remains to chose $t>C/(\sigma-\sigma')$ to reach a contradiction. To show the claim, note that Lemma~\ref{lem:Gmin} implies that the geodesic $(\f_{j,t})_{t\in [0,T_j]}$ is $G$-minimal, and hence $\dd_{1,j}(\f_{j,t},0)\le \dd_{1,j}(\f_{j,t}^g,0)$ for all $g\in G$. By Lemma~\ref{lem:coeropen}, $\f_{j,t}^g\to\f_t^g$ strongly, and Proposition~\ref{prop:dcont} thus yields $\dd_1(\f_t,0)\le \dd_1(\f_t^g,0)$ for all $g\in G$, which proves the claim.
\end{proof}

\section{The weighted formalism}\label{sec:weighted}
The purpose of this section is to review material from~\cite{Lah,AJL}, and explain how to extend it to spaces with log terminal singularities. Along the way, we provide some slightly more explicit estimates required for our later applications, together with a small generalization of some of the constructions to equivariant currents. 

%
%

\subsection{Equivariant forms and currents}\label{sec:equiv}
Until further notice, $X$ denotes a compact K\"ahler manifold, of dimension $n=\dim X$. 
\subsubsection{} The \emph{linear automorphism group}\footnote{This is sometimes known as the \emph{reduced automorphism group}, but we prefer this terminology which goes back to \cite{Fuj}.} 
$$
\Autr(X)\subset\Aut_0(X)
$$
is defined as the identity component of the subgroup of automorphisms acting trivially on the Albanese torus 
$$
\Alb(X)=\Hnot(X,\Om^1_X)^\vee/\mathrm{H}_1(X,\Z).
$$ 
The Lie algebra of $\Autr(X)$ thus consists of all holomorphic vector fields $\xi\in\Hnot(X,T_X)$ such that $\a(\xi)=0$ for any holomorphic $1$-form $\a\in\Hnot(X,\Om^1_X)$. 
\begin{exam} Assume $X$ is projective, and pick an ample line bundle $L$. Then $\Autr(X)$ coincides with the image of the map $\Aut_0(X,L)\to\Aut_0(X)$, which thus induces an isomorphism 
$$
\Autr(X)\simeq\Aut_0(X,L)/\C^\times. 
$$
\end{exam}
In the general K\"ahler case, $\Autr(X)$ still carries a canonical structure of linear algebraic group~\cite[Corollary~5.8]{Fuj} (which explains the chosen terminology), and $\Aut_0(X)/\Autr(X)$ is a compact complex torus~\cite[Theorem~5.5]{Fuj}. 

\subsubsection{Moment maps} From now on, we fix a compact torus 
$$
T\subset\Autr(X),
$$
with Lie algebra $\ft$. As is well-known, 
any closed, real, $T$-invariant $(1,1)$-form $\om$ on $X$ admits a \emph{moment map} 
$$
m\colon X\to\ft^\vee,
$$ 
\ie a $T$-invariant smooth map such that, for each $\xi\in\ft$, $m^\xi:=\langle m,\xi\rangle$ satisfies 
\begin{equation}\label{equ:moment}
-\de m^\xi=i(\xi)\om=\om(\xi,\cdot). 
\end{equation}
Note that $m$ is uniquely determined up to an additive constant in $\ft^\vee$, and satisfies
\begin{equation}\label{equ:dcm}
-\dc m^\xi=i(J\xi)\om,\quad-\ddc m^\xi=\cL_{J\xi}\om,
\end{equation}
where $J$ denotes the complex structure of $X$. Using the language 
of equivariant de Rham cohomology, we will refer to the pair 
$$
\Om=(\om,m)
$$
as a \emph{(closed, real) equivariant form} with moment map $m_\Om:=m$. 

More generally, the above definitions make sense when $\om$ is a closed, real, $T$-invariant $(1,1)$-current and $m$ a $\ft^\vee$-valued distribution; we then say that $\Om=(\om,m)$ is a (closed, real) \emph{equivariant current}. Since any closed $(1,1)$-current $\om$ is $\ddc$-cohomologous to a closed $(1,1)$-form, the existence of a moment map for $\om$ reduces to the following fact: 

\begin{exam}\label{exam:ddcT} For any $T$-invariant distribution $f$ on $X$, setting 
\begin{equation}\label{equ:mddc}
m_f^\xi:=\dc f(\xi)=-\de f(J\xi)
\end{equation}
for $\xi\in\ft$ defines a moment map $m_f$ for the $T$-invariant $(1,1)$-current $\ddc f$. We denote by
$$
\ddcT f=(\ddc f,m_f)
$$
the resulting equivariant current. 
\end{exam}
For later use, we record the identity
\begin{equation}\label{equ:dcf}
m_{f}^\xi\,\om^n=n\,\dc f\wedge \om^{n-1}\wedge i(\xi)\om
\end{equation}
for any closed $(1,1)$-form $\om$, which follows from applying the antiderivation $i(\xi)$ to the trivial relation $\dc f\wedge\om^n=0$.

\subsubsection{Equivariant curvature forms} Let $L$ be a $T$-equivariant holomorphic line bundle on $X$, and $\phi$ a (smooth, Hermitian) $T$-invariant metric on $L$, with curvature form $\ddc\phi\in 2\pi c_1(L)$ (using additive notation). The algebraic torus $T_\C$ acts on $L$, and hence on metrics on $L$, and setting
\begin{equation}\label{equ:mphi}
m_{\phi}^\xi:=-\frac{d}{dt}\bigg|_{t=0} (e^{tJ\xi})^\star\phi
\end{equation}
for $\xi\in\ft$ defines a moment map for $\ddc\phi$. We denote by 
$$
\ddcT\phi:=(\ddc\phi,m_{\phi})
$$
the resulting equivariant form, which we call the \emph{equivariant curvature form} of $(L,\phi)$. 

Conversely, any $T$-invariant form $\om\in 2\pi c_1(L)$ can be written as the curvature form of a $T$-invariant metric $\phi$ on $L$, unique up to an additive constant. By~\eqref{equ:mphi}, the corresponding equivariant form $\Om$ only depends on $\om$ and the equivariant line bundle $L$, and will be called the \emph{$L$-normalized lift} of $\om$. 

More generally, one can define the \emph{equivariant curvature current} $\ddcT\phi$ of any singular $T$-invariant metric $\phi$ on $L$, thanks to Example~\ref{exam:ddcT}.

\begin{exam}\label{exam:Ricci} There is a 1--1 correspondence between (smooth, positive) $T$-invariant volume forms $\nu$ on $X$ and (smooth) $T$-invariant metrics $\tfrac 12\log \nu$ on the canonical bundle $K_X$. The \emph{equivariant Ricci form} 
$$
\Ric^T(\nu)=(\Ric(\nu),m_{\nu})
$$
is defined as the equivariant curvature form of the anticanonical bundle $(-K_X,-\tfrac 12\log \nu)$, \ie the $(-K_X)$-normalized lift of the usual Ricci form
$$
\Ric(\nu)=-\ddc\tfrac 12\log \nu\in 2\pi c_1(X). 
$$
The corresponding moment map satisfies 
\begin{equation}\label{equ:momu}
m_{\nu}^\xi=\frac{\cL_{J\xi}\nu}{2 \nu}, 
\end{equation} 
and is thus characterized by the normalization $\int_X m_{\nu}\,\nu=0$. If $\nu:=\om^n$ is the volume form of a K\"ahler form $\om$,  then $\Ric(\om):=\Ric(\om^n)$ is the usual Ricci curvature form of $\om$, and one easily checks using~\eqref{equ:dcm} that 
\begin{equation}\label{equ:momRic}
m_{\om^n}=-\tfrac 12\Delta_\om m
\end{equation}
for any choice of moment map $m$ for $\om$. 
\end{exam} 

\begin{exam}\label{exam:intequ} Let $D$ be a $T$-invariant divisor on $X$, and denote by $[D]$ the associated integration current. The canonical meromorphic section $s_D$ of $\cO_X(D)$ defines a $T$-invariant singular metric $\log|s_D|$ on $\cO_X(D)$, such that $\ddc\log|s_D|=2\pi[D]$ by the Lelong--Poincar\'e formula. We define the \emph{equivariant integration current} of $D$ as 
\begin{equation}\label{equ:intequ}
[D]_T:=\frac{1}{2\pi}\ddcT\log|s_D|.
\end{equation}
\end{exam}
%
%
\subsection{Moment polytopes}\label{sec:moments}

Recall first that the set $X^T\subset X$ of $T$-fixed points is a (finite) disjoint union of connected closed submanifolds, along which any moment map is clearly constant by~\eqref{equ:moment}. 
For any equivariant form $\Om=(\om,m_\Om)$, the image $m_\Om(X^T)\subset\ft^\vee$ is thus finite, and depends continuously on $\Om$. 

\begin{defi} The \emph{Duistermaat--Heckman measure} of an equivariant form $\Om=(\om,m_\Om)$ is defined as the compactly supported signed measure 
$$
\DH_\Om:=(m_\Om)_\star\om^n
$$
on $\ft^\vee$. We say that $\Om$ (or the moment $m_\Om$) is \emph{centered} if the barycenter of $\DH_\Om$ is at the origin, \ie 
$\int_X m_\Om^\xi\,\om^n=0$ for all $\xi\in\ft$. 
\end{defi}
If $\{\om\}^n=\int_X\om^n\ne 0$, this last condition uniquely determines $m_\Om$ in terms of $\om$. We then call it the \emph{centered moment} of $\om$, and write $m_\om:=m_\Om$. 

\begin{lem}\label{lem:smoothmom} If a sequence $(\om_j)$ of closed $(1,1)$-forms converges smoothly to $\om$ such that $\{\om\}^n\ne 0$, then their centered moments satisfy $m_{\om_j}\to m_\om$ smoothly. 
\end{lem}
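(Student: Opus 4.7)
The plan is to reduce the statement to a componentwise check and then produce smoothly varying (non-centered) moment maps via Hodge theory before correcting for the centering normalization. Fix $\xi\in\ft$; by linearity in $\xi$ it suffices to prove that $m_{\om_j}^\xi\to m_\om^\xi$ smoothly.

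First I would note that $i(\xi)\om_j$ is a closed $T$-invariant $1$-form: Cartan's formula gives $d\, i(\xi)\om_j=\cL_\xi\om_j-i(\xi)d\om_j=0$ since $\om_j$ is closed and $T$-invariant. Moreover $i(\xi)\om_j$ is exact, because $\om_j$ admits a moment map by hypothesis. Fix once and for all a $T$-invariant Riemannian background metric on $X$, and let $G$ denote the associated Green's operator. Setting
$$
\tilde m_j^\xi\=-d^\star G\, i(\xi)\om_j,
$$
elliptic regularity and Hodge decomposition give that $\tilde m_j^\xi$ is a smooth $T$-invariant function satisfying $-d\tilde m_j^\xi=i(\xi)\om_j$ (since an exact form has no harmonic part, so $dd^\star G$ recovers it); thus $\tilde m_j^\xi$ is a moment map for $\om_j$ in the direction $\xi$. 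Since $i(\xi)\om_j\to i(\xi)\om$ smoothly and $d^\star G$ is continuous in the $C^\infty$-topology, we get $\tilde m_j^\xi\to \tilde m^\xi:=-d^\star G\,i(\xi)\om$ smoothly.

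It remains to fix the centering normalization. Writing
$$
m_{\om_j}^\xi\=\tilde m_j^\xi-c_j,\quad c_j\=\frac{1}{\{\om_j\}^n}\int_X\tilde m_j^\xi\,\om_j^n,
$$
and similarly $m_\om^\xi=\tilde m^\xi-c$ with $c=\{\om\}^{-n}\int_X\tilde m^\xi\,\om^n$, smooth convergence $\tilde m_j^\xi\to\tilde m^\xi$ together with $\om_j^n\to\om^n$ smoothly yield $\int_X\tilde m_j^\xi\,\om_j^n\to\int_X\tilde m^\xi\,\om^n$. Since $\{\om_j\}^n\to\{\om\}^n\ne 0$ by assumption, we conclude $c_j\to c$ and hence $m_{\om_j}^\xi\to m_\om^\xi$ smoothly.

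There is no real obstacle: the only ingredient beyond bookkeeping is the standard smooth continuity of the Hodge-theoretic right inverse $d^\star G$ to $d$ on exact smooth forms, which is why the assumption $\{\om\}^n\ne 0$ (needed only to invert the mean against $\om_j^n$ in the centering step) is the sole nontrivial hypothesis entering the argument.
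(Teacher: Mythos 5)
Your proof is correct and is essentially the paper's argument: elliptic/Hodge theory produces smooth convergence of the moment maps up to additive constants, and the hypothesis $\{\om\}^n\ne 0$ is used only to pin down those constants via the centering normalization. The sole (cosmetic) difference is that the paper applies ellipticity of $\ddc$ directly to the centered moments via $\ddc m^\xi_{\om_j}=-\cL_{J\xi}\om_j$, whereas you invert the first-order equation $-dm^\xi=i(\xi)\om_j$ with the Green's operator and correct afterwards; both routes are equally valid.
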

  \begin{proof} Fix $\xi\in\ft$. By~\eqref{equ:dcm}, $\ddc m^\xi_{\om_j}=-\cL_{J\xi}\om_j$ converges smoothly to $\ddc m^\xi_\om=-\cL_{J\xi}\om$. By ellipticity of $\ddc$, it follows that $m^\xi_{\om_j}+c_j\to m^\xi_\om$ smoothly for some constants $c_j\in\R$. In particular, $c_j\{\om_j\}^n=\int_X(m^\xi_{\om_j}+c_j)\om_j^n\to\int_X m^\xi_\om\om^n=0$. Since $\{\om_j\}^n\to \{\om\}^n\ne 0$, we get $c_j\to 0$, and the result follows. 
\end{proof}

The above discussion applies in particular when $\om$ is semipositive and big. In that case, we further have:

\begin{lem}\label{lem:poly} Assume $\om$ is semipositive and big, with centered moment $m_\om$. Then: 
\begin{itemize} 
\item[(i)] $m_\om(X)$ coincides with the convex envelope of the finite set $m_\om(X^T)$, and only depends on $\{\om\}\in H^{1,1}(X)$; 
\item[(ii)] consider the cocyle $(\tau_g)_{g\in\Aut_0(X)}$ of~\S\ref{sec:auto}. For each $\xi\in\ft$ we then have
\begin{equation}\label{equ:mtau}
-m_\om^\xi=\dot\tau_{J\xi}:=\frac{d}{dt}\bigg|_{t=0}\tau_{\exp(tJ\xi)}. 
\end{equation} 
\end{itemize}
\end{lem}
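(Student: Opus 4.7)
The plan is to reduce (i) to the classical Atiyah--Guillemin--Sternberg (AGS) convexity theorem via approximation, and to deduce (ii) from the defining relations of the cocycle $(\tau_g)$ by differentiation.

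For (i), the first step is to establish $m_\om(X)=\Conv(m_\om(X^T))$ for semipositive big $\om$. After averaging the reference form $\om_X$ over the compact torus $T$, I may assume $\om_X$ is $T$-invariant, so that $\om_\e:=\om+\e\om_X$ is a $T$-invariant K\"ahler form with $\{\om_\e\}^n>0$ for all small $\e>0$. AGS yields $m_{\om_\e}(X)=\Conv(m_{\om_\e}(X^T))$, and Lemma~\ref{lem:smoothmom} provides smooth convergence $m_{\om_\e}\to m_\om$ as $\e\to 0$, which is enough to pass the identity to the limit: uniform convergence on the compact $X$ yields Hausdorff convergence of the images, and pointwise convergence of the finite sets of fixed-point values gives Hausdorff convergence of the convex hulls.

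For the cohomology invariance of $m_\om(X)$, I would write any other representative as $\om'=\om+\ddc f$ with $f$ $T$-invariant (again by averaging) and interpolate via $\om_t:=\om+t\ddc f$, $t\in[0,1]$. A direct check using $T$-invariance of $f$ together with Cartan's formula shows that $M_t^\xi:=m_\om^\xi+t\,\dc f(\xi)$ is a moment map for $\om_t$. The crux is to prove that $\int_X M_t^\xi\,\om_t^n$ is independent of $t$. Differentiating and integrating by parts (using that $\om_t^{n-1}$ is closed and $\ddc M_t^\xi=-\cL_{J\xi}\om_t$ by~\eqref{equ:dcm}) reduces the vanishing of the derivative to the identity
\begin{equation*}
\int_X f\,\cL_{J\xi}\om_t^n=\int_X\dc f(\xi)\,\om_t^n,
\end{equation*}
which follows from Stokes' theorem applied to $\cL_{J\xi}\om_t^n=d(i(J\xi)\om_t^n)$, combined with Leibniz for $i(J\xi)$ on $df\wedge\om_t^n=0$ and the identity $df(J\xi)=-\dc f(\xi)$. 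Hence $M_t^\xi$ remains centered for all $t$, so $m_{\om'}^\xi=m_\om^\xi+\dc f(\xi)$; since $J\xi$ vanishes at every $T$-fixed point, this equality restricts to $m_{\om'}|_{X^T}=m_\om|_{X^T}$, and the convex-hull description just proved completes the argument.

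For (ii), I would differentiate the cocycle identity $g^\star\om=\om+\ddc\tau_g$ along the one-parameter subgroup $g_t=\exp(tJ\xi)$ at $t=0$, obtaining $\cL_{J\xi}\om=\ddc\dot\tau_{J\xi}$; combined with $\cL_{J\xi}\om=-\ddc m_\om^\xi$ from~\eqref{equ:dcm}, this forces $\dot\tau_{J\xi}+m_\om^\xi$ to be a (spatial) constant. Differentiating the normalization $\en(\tau_{g_t})\equiv 0$ at $t=0$ yields $\int_X\dot\tau_{J\xi}\,\om^n=0$, and combined with the centering condition $\int_X m_\om^\xi\,\om^n=0$ this forces the constant to vanish, giving $-m_\om^\xi=\dot\tau_{J\xi}$. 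The main obstacle is the integration-by-parts cancellation underlying the centering statement in (i); once that is in hand, the rest is routine bookkeeping.
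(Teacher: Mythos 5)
Your proof is correct. Part (ii) and the first half of part (i) (reduction to Atiyah--Guillemin--Sternberg via $\om_\e=\om+\e\om_X$ and Lemma~\ref{lem:smoothmom}) coincide with the paper's argument. Where you genuinely diverge is the cohomology invariance of $m_\om(X)$: the paper deduces it from the invariance of the Duistermaat--Heckman measure (Corollary~\ref{cor:DH}, a forward reference, itself obtained from the Euler--Lagrange formalism for the weighted energy $\en_{\Om,v}$ applied to arbitrary weights $v$), whereas you prove directly by an interpolation and integration-by-parts computation that the centered moment transforms as $m_{\om+\ddc f}^\xi=m_\om^\xi+\dc f(\xi)$, then observe that $\dc f(\xi)=-\de f(J\xi)$ vanishes on $X^T$ and conclude via the convex-hull description. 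Your computation is in effect a hands-on derivation of the first-moment (barycenter) part of Corollary~\ref{cor:DH}, which is all that is needed here; it buys a self-contained argument with no forward reference and yields the explicit transformation rule for centered moments as a byproduct, at the cost of redoing by hand a cancellation that the paper gets for free from the general fact that $\int_X v(m_{\Om_\f})\om_\f^n$ is independent of $\f$ for every weight $v$. Both routes are sound; the only point worth making explicit in your write-up is that the interpolating forms $\om_t$ need not be semipositive for $0<t<1$, which is harmless since the identity $\frac{d}{dt}\int_X M_t^\xi\,\om_t^n=0$ is purely formal and positivity is only invoked at the endpoints.
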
 
\begin{proof} To prove (i), assume first that $\om>0$ is K\"ahler. Then $m_\om(X)$ coincides with the support of the Duistermaat-Heckman measure $(m_\om)_\star\om^n$, which only depends on $\{\om\}$ (see Corollary~\ref{cor:DH} below). The Atiyah--Guillemin--Sternberg theorem further implies that $m_\om(X)$ is the convex envelope of the finite set $m_\om(X^T)$. These properties still hold in the general case, since the centered moments satisfy $\lim_{\e\to 0_+} m_{\om+\e\om_X}=m_\om$ smoothly, by Lemma~\ref{lem:smoothmom}. 


To see (ii), recall that $g^\star\om=\om+\ddc\tau_g$ and $\en(\tau_g)=0$. Applying this with $g=\exp(tJ\xi)$ and differentiating at $t=0$ yields $\cL_{J\xi}\om=\ddc\dot\tau_{J\xi}$ and $\int_X\dot\tau_{J\xi}\om^n=0$. On the other hand, $m_\om^\xi$ also satisfies $\int_X m_\om^\xi\,\om^n=0$ and $-\ddc m_\om^\xi=\cL_{J\xi}\om$, by~\eqref{equ:dcm}, and~\eqref{equ:mtau} follows. 
\end{proof}

\begin{defi} For any equivariant form $\Om=(\om,m_\Om)$ with $\om$ semipositive and big, 
$$
P_\Om:=m_\Om(X)\subset\ft^\vee
$$
is called the \emph{moment polytope} of $\Om$.
\end{defi}
By Lemma~\ref{lem:poly}~(i), $P_\Om$ is indeed a polytope, being a translate of the image of the centered moment map. 
%
%
\subsection{Weighted Monge--Amp\`ere operators and weighted energy}\label{sec:wen}
\subsubsection{Weighted Monge--Amp\`ere operators} Pick any equivariant form $\Om=(\om,m_\Om)$. For any $\f\in C^\infty(X)^T$ we set
$$
\Om_\f:=\Om+\ddcT\f=(\om_\f,m_\Om+m_\f), 
$$
see Example~\ref{exam:ddcT}. 

\begin{defi} Given any smooth function $v\in C^\infty(\ft^\vee)$ and equivariant current $\Theta=(\theta,m_\Theta)$, we define the following operators on $C^\infty(X)^T$: 
\begin{itemize}
\item[(i)] the \emph{$v$-weighted Monge--Amp\`ere operator} 
\begin{equation}\label{equ:MAv}
\MA_{\Om,v}(\f):=v(m_{\Om_\f})\om_\f^n;
\end{equation}
\item[(ii)] the \emph{$\Theta$-twisted, $v$-weighted Monge--Amp\`ere operator} 
\begin{equation}\label{equ:twMA}
\MA_{\Om,v}^\Theta(\f):=v(m_{\Om_\f}) n\,\theta\wedge\om_\f^{n-1}+\langle v'(m_{\Om_\f}),m_\Theta\rangle\om_\f^n. 
\end{equation}
\end{itemize}
\end{defi}
Here $\MA_{\Om,v}(\f)$ is a smooth $(n,n)$-form, while $\MA_{\Om,v}^\Theta(\f)$ is an $(n,n)$-current. When $\Theta$ is smooth, we can more simply write 
\begin{equation}\label{equ:twMAder}
\MA_{\Om,v}^\Theta(\f)=\frac{d}{dt}\bigg|_{t=0}\MA_{\Om+t\Theta,v}(\f).
\end{equation}
In the general case, both $\MA_{\Om,v}(\f)$ and $\MA_{\Om,v}^\Theta(\f)$ are linear with respect to $v$, the latter being further linear with respect to $\Theta$. Note also that
\begin{equation}\label{equ:MAtrans}
\MA_{\Om,v}(\f+f)=\MA_{\Om_f,v}(\f),\quad\MA_{\Om,v}^\Theta(\f+f)=\MA_{\Om_f,v}^\Theta(\f)
\end{equation}
for all $\f,f\in C^\infty(X)^T$, and hence
\begin{equation}\label{equ:MAdiff}
\frac{d}{dt}\bigg|_{t=0}\MA_{\Om,v}(\f+t f)=\MA_{\Om,v}^{\ddcT f}(\f),
\end{equation}
by~\eqref{equ:twMAder}. 

The following symmetry property will be crucial for what follows: 

\begin{lem}\label{lem:symm} Pick $\f\in C^\infty(X)^T$, and let $f,g$ be distributions, at least one of which is smooth. 
\begin{itemize}
\item[(i)] If $f$ is $T$-invariant, then 

\begin{equation}\label{equ:twMAddc}
\int_X g\MA_{\Om,v}^{\ddcT f}(\f)=-n\int_X v(m_{\Om_\f}) \de g\wedge \dc f\wedge\om_\f^{n-1}.
\end{equation}
\item[(ii)] If $g$ is also $T$-invariant, then 
\begin{equation}\label{equ:twMAsymm}
\int_X g\MA_{\Om,v}^{\ddcT f}(\f)=\int_X f\MA_{\Om,v}^{\ddcT g}(\f). 
\end{equation}
\end{itemize}
\end{lem}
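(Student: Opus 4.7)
The plan is to prove (i) by expanding the definition of the twisted weighted Monge--Amp\`ere operator, integrating by parts on the $\ddc f$ factor, and then identifying a cancellation produced by the moment-map relation. Explicitly, from~\eqref{equ:twMA} applied with $\Theta=\ddcT f=(\ddc f,m_f)$, I would first write
\begin{equation*}
\int_X g\,\MA_{\Om,v}^{\ddcT f}(\f) = n\int_X g\, v(m_{\Om_\f})\,\ddc f\wedge\om_\f^{n-1} + \int_X g\,\langle v'(m_{\Om_\f}),m_f\rangle\,\om_\f^n.
\end{equation*}
Stokes' theorem applied to $g\,v(m_{\Om_\f})\,\dc f\wedge\om_\f^{n-1}$ (using $d\om_\f^{n-1}=0$, $v(m_{\Om_\f})\,\om_\f^{n-1}$ smooth, and at least one of $f,g$ smooth) would then give
\begin{equation*}
n\!\int_X g\,v(m_{\Om_\f})\,\ddc f\wedge\om_\f^{n-1} = -n\!\int_X v(m_{\Om_\f})\,\de g\wedge\dc f\wedge\om_\f^{n-1} - n\!\int_X g\,\de v(m_{\Om_\f})\wedge\dc f\wedge\om_\f^{n-1}.
\end{equation*}

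The crux is the pointwise identity
\begin{equation*}
n\,\de v(m_{\Om_\f})\wedge\dc f\wedge\om_\f^{n-1} = \langle v'(m_{\Om_\f}),m_f\rangle\,\om_\f^n,
\end{equation*}
which would annihilate the last term above, leaving exactly the expression in (i). To derive it, I would pick a basis $(\xi_a)$ of $\ft$, write $v'(m_{\Om_\f})=\sum_a v^a(m_{\Om_\f})\,\xi_a$ for smooth functions $v^a$ on $\ft^\vee$, and apply the chain rule $\de v(m_{\Om_\f})=\sum_a v^a(m_{\Om_\f})\,\de m_{\Om_\f}^{\xi_a}$. The moment map relation~\eqref{equ:moment} gives $\de m_{\Om_\f}^{\xi_a}=-i(\xi_a)\om_\f$; a sign check commuting the 1-forms $i(\xi_a)\om_\f$ and $\dc f$ past $\om_\f^{n-1}$, combined with~\eqref{equ:dcf} applied to $\om_\f$ in the form $m_f^{\xi_a}\om_\f^n=n\,\dc f\wedge\om_\f^{n-1}\wedge i(\xi_a)\om_\f$, then yields $n\,\de m_{\Om_\f}^{\xi_a}\wedge\dc f\wedge\om_\f^{n-1}=m_f^{\xi_a}\om_\f^n$; summing in $a$ concludes (i). Note that $T$-invariance of $f$ enters precisely to make $m_f$ a genuine moment map for $\ddc f$ in the sense of Example~\ref{exam:ddcT}.

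For (ii) I would apply (i) twice and reduce the symmetry to the equality
\begin{equation*}
\de g\wedge\dc f\wedge\om_\f^{n-1}=\de f\wedge\dc g\wedge\om_\f^{n-1}
\end{equation*}
of $(n,n)$-forms. A bidegree decomposition shows that the $(1,1)$-part of $\de u\wedge\dc v$ equals $i(\partial u\wedge\dbar v-\dbar u\wedge\partial v)$, which is symmetric under $u\leftrightarrow v$, while the asymmetric components are of pure type $(2,0)$ and $(0,2)$; after wedging with $\om_\f^{n-1}$, of type $(n-1,n-1)$, those contributions lie in bidegrees $(n+1,n-1)$ and $(n-1,n+1)$ and hence vanish identically on the complex $n$-fold $X$. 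This yields (ii).

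I expect the main obstacle to be the cancellation identity in (i), where both the $T$-invariance of $f$ and the interplay between $\de m_{\Om_\f}^\xi=-i(\xi)\om_\f$ and identity~\eqref{equ:dcf} are essential. Some routine bookkeeping is needed when $f$ or $g$ is merely distributional, but since $v(m_{\Om_\f})\om_\f^{n-1}$ is always smooth and one of $f,g$ is smooth by hypothesis, every pairing is well defined and the integration by parts is legitimate in the distributional sense.
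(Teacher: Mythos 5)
Your proof is correct and follows essentially the same route as the paper: expand $\MA_{\Om,v}^{\ddcT f}$ via~\eqref{equ:twMA}, integrate by parts, cancel the extra term using $\de m_{\Om_\f}^{\xi}=-i(\xi)\om_\f$ together with~\eqref{equ:dcf}, and deduce (ii) from the symmetry of the $(1,1)$-part of $\de g\wedge\dc f$. The only (immaterial) difference is that the paper first reduces to $\f=0$ by replacing $\Om$ with $\Om_\f$ via~\eqref{equ:MAtrans}, whereas you carry $\f$ through the computation.
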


\begin{proof} By~\eqref{equ:MAtrans}, we may replace $\Om$ with $\Om_\f$ and assume $\f=0$, for notational simplicity.  By~\eqref{equ:twMA}, we have
$$
\int_X g\MA_{\Om,v}^{\ddcT f}(0)=n\int_X g\,v(m_\Om)\ddc f\wedge\om^{n-1}+\int_X g\langle v'(m_\Om), m_{f}\rangle\om^n
$$
$$
=-n\int_X v(m_\Om) \de g\wedge\dc f\wedge\om^{n-1}-n\int_X g\,\de v(m_\Om)\wedge\dc f\wedge\om^{n-1}+\int_X g\langle v'(m_\Om), m_{ f}\rangle\om^n,
$$
by integration-by-parts. We thus need to show
\begin{equation}\label{equ:twMA1}
n\int_X g\,\de v(m_\Om)\wedge\dc f\wedge\om^{n-1}=\int_X g\langle v'(m_\Om), m_{ f}\rangle\om^n.
\end{equation}
To see this, pick a basis $(\xi_\a)$ of $\ft$, and denote by $v'_\a$ the partial derivatives of $v$ in the dual basis. Then 
\begin{equation}\label{equ:dv}
\de v(m_\Om)=\sum_\a v'_\a(m_\Om) d m_\Om^{\xi_\a}=-\sum_\a v'_\a(m_\Om) i(\xi_\a)\om,
\end{equation}
since $m_\Om$ is a moment for $\om$. On the other hand, 
$$
\langle v'(m_\Om), m_{ f}\rangle=\sum_\a v'_\a(m_\Om) m_{ f}^{\xi_\a}, 
$$
and~\eqref{equ:twMA1} now follows from~\eqref{equ:dcf}. Finally, we get \eqref{equ:twMAsymm} by symmetry of the $(1,1)$-part of $\de g\wedge \dc f$ with respect to $(f,g)$. 
\end{proof}
\subsubsection{Weighted energy} In view of~\eqref{equ:MAdiff} and the symmetry property~\eqref{equ:twMAsymm}, we may introduce (see~\S\ref{sec:func}):

\begin{defi}\label{defi:wen} The \emph{$v$-weighted Monge--Amp\`ere energy} with respect to $\Om$ 
$$
\en_{\Om,v}\colon C^\infty(X)^T\to\R
$$
is defined as the Euler--Lagrange functional of the $v$-weighted Monge--Amp\`ere operator, normalized by $\en_{\Om,v}(0)=0$. 
\end{defi}
Thus 
\begin{equation}\label{equ:wen}
\langle\en_{\Om,v}'(\f),f\rangle:=\frac{d}{dt}\bigg|_{t=0}\en_{\Om,v}(\f+t f)=\int_X f\,\MA_{\Om,v}(\f)
\end{equation}
for all $\f,f\in C^\infty(X)^T$, and hence
\begin{equation}\label{equ:wenexp}
\en_{\Om,v}(\f)-\en_{\Om,v}(\p)=\int_0^1 dt\int_X(\f-\p)\MA_{\Om,v}(t\f+(1-t)\p).
\end{equation}
for all $\f,\p\in C^\infty(X)^T$. 

Note further that $\en_{\Om,v}$ is linear with respect to $v$. By general principles (see~\S\ref{sec:func}), the translation invariance of the $v$-weighted Monge--Amp\`ere operator implies that 
$$
\int_X\MA_{\Om,v}(\f)=\int_X v(m_{\Om_\f})\,\om_\f^n=\int_{\ft^\vee} v\,\DH_{\Om_\f}
$$ 
is independent of $\f$, and yields the translation equivariance property
\begin{equation}\label{equ:wenequ}
\en_{\Om,v}(\f+c)=\en_{\Om,v}(\f)+c\int_X v(m_\Om)\om^n
\end{equation}
for $\f\in C^\infty(X)^T$ and $c\in\R$. Since this holds for all $v\in C^\infty(\ft^\vee)$, we infer: 

\begin{cor}\label{cor:DH} For any equivariant form $\Om$, the Duistermaat--Heckman measure $\DH_{\Om_\f}=(m_{\Om_\f})_\star\om_\f^n$ is independent of $\f\in C^\infty(X)^T$. In particular, if $\Om$ is centered, then so is $\Om_\f$ for all $\f\in C^\infty(X)^T$. 
\end{cor}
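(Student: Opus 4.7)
The plan is quite short, because the content of the corollary is essentially a rephrasing of the translation-invariance statement derived in the paragraph immediately preceding it. The key identity to exploit is
\begin{equation*}
\int_{\ft^\vee} v \, \DH_{\Om_\f} = \int_X v(m_{\Om_\f}) \, \om_\f^n = \int_X \MA_{\Om,v}(\f),
\end{equation*}
whose right-hand side has just been observed to be independent of $\f\in C^\infty(X)^T$ for every $v\in C^\infty(\ft^\vee)$.

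First, for the assertion that $\DH_{\Om_\f}$ is $\f$-independent, I would fix $\f$ and let $v$ range over all of $C^\infty(\ft^\vee)$. The identity above then shows that the compactly supported signed measures $\DH_{\Om_\f}$ and $\DH_{\Om}$ pair identically against every smooth function on $\ft^\vee$. Since both measures live on the finite-dimensional real vector space $\ft^\vee$ and have support contained in the image of a continuous moment map on the compact manifold $X$, testing against $C^\infty(\ft^\vee)$ (or even $C_c^\infty(\ft^\vee)$, after cutting off outside a large ball containing both supports) is enough to force $\DH_{\Om_\f} = \DH_\Om$.

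For the second assertion, I would specialize the identity to the linear functional $v_\xi := \langle \cdot, \xi \rangle \in C^\infty(\ft^\vee)$ for an arbitrary $\xi\in\ft$, obtaining
\begin{equation*}
\int_X m_{\Om_\f}^\xi \, \om_\f^n = \int_X m_\Om^\xi \, \om^n.
\end{equation*}
If $\Om$ is centered, the right-hand side vanishes for all $\xi$, whence so does the left-hand side, so $\Om_\f$ is centered as well. Alternatively, this step may be deduced from the first by integrating the coordinate function $v_\xi$ against the identity $\DH_{\Om_\f}=\DH_\Om$.

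There is no real obstacle here, since the work has been done in the paragraph preceding the corollary; the one mild technical point is the justification that smooth functions separate compactly supported signed measures on a finite-dimensional real vector space, which is standard.
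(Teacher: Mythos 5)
Your proposal is correct and is exactly the paper's argument: the corollary is inferred directly from the preceding observation that $\int_{\ft^\vee}v\,\DH_{\Om_\f}=\int_X\MA_{\Om,v}(\f)$ is independent of $\f$ for every $v\in C^\infty(\ft^\vee)$, which determines the compactly supported measure $\DH_{\Om_\f}$; the centeredness claim follows by testing against affine (linear) functions, as you do.
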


\subsubsection{The twisted Monge--Amp\`ere energy} 

We next pass to the twisted case and show: 

\begin{prop}\label{prop:twEL} For any equivariant current $\Theta$, the twisted weighted Monge--Amp\`ere operator $C^\infty(X)^T\ni\f\mapsto\MA_{\Om,v}^\Theta(\f)$ admits an Euler--Lagrange functional. 
\end{prop}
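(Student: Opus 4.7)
The plan is to exploit the linearity of $\MA_{\Om,v}^\Theta$ in $\Theta$, together with an equivariant $\ddc$-lemma that decomposes $\Theta = \Theta_0+\ddcT h$ with $\Theta_0$ a smooth equivariant form and $h$ a $T$-invariant distribution, and then to exhibit an explicit Euler--Lagrange functional for each summand separately.

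For the decomposition, I would start from the standard $\ddc$-lemma to write $\theta=\theta_0+\ddc h$ with $\theta_0$ smooth; averaging $h$ over $T$ preserves this identity (since $\theta$ and $\theta_0$ are $T$-invariant) and makes $h$ a $T$-invariant distribution. Setting $m_0:=m_\Theta-m_h$ with $m_h^\xi=\dc h(\xi)$, the relation $-\de m_0^\xi=i(\xi)\theta_0$ shows that $m_0$ is a moment map for $\theta_0$; smoothness of $\theta_0$ then forces $m_0$ to differ from any smooth moment map for $\theta_0$ by an additive constant in $\ft^\vee$, and hence to be smooth itself. Thus $\Theta_0:=(\theta_0,m_0)$ is a smooth equivariant form with $\Theta=\Theta_0+\ddcT h$.

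For the smooth piece $\Theta_0$, I would take
$$
F_{\Theta_0}(\f):=\frac{d}{ds}\bigg|_{s=0}\en_{\Om+s\Theta_0,v}(\f),
$$
which is smooth in $\f$ (as is visible from the equivalent formula $F_{\Theta_0}(\f)=\int_0^1 dt\int_X\f\,\MA_{\Om,v}^{\Theta_0}(t\f)$ obtained by differentiating \eqref{equ:wenexp} under $\Om\leadsto\Om+s\Theta_0$). By \eqref{equ:twMAder} and the commutation of partial derivatives,
$$
\langle F_{\Theta_0}'(\f),f\rangle=\frac{d}{ds}\bigg|_{s=0}\int_X f\,\MA_{\Om+s\Theta_0,v}(\f)=\int_X f\,\MA_{\Om,v}^{\Theta_0}(\f),
$$
using that $\en_{\Om+s\Theta_0,v}$ is the Euler--Lagrange functional of $\MA_{\Om+s\Theta_0,v}$ for each $s$.

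For the distributional piece $\ddcT h$, I would simply take
$$
F_{\ddcT h}(\f):=\int_X h\,\MA_{\Om,v}(\f),
$$
viewed as the distributional pairing of $h$ against the smooth form $\MA_{\Om,v}(\f)$. By \eqref{equ:MAdiff} and the symmetry identity \eqref{equ:twMAsymm} from Lemma~\ref{lem:symm}(ii) (applied with the two arguments being the smooth $T$-invariant function $f$ and the $T$-invariant distribution $h$, which is admissible since at least one is smooth),
$$
\langle F_{\ddcT h}'(\f),f\rangle=\int_X h\,\MA_{\Om,v}^{\ddcT f}(\f)=\int_X f\,\MA_{\Om,v}^{\ddcT h}(\f).
$$
Summing, $F_\Theta:=F_{\Theta_0}+F_{\ddcT h}$ is the sought Euler--Lagrange functional. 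The main obstacle is checking that the equivariant $\ddc$-decomposition can be arranged cleanly at the level of moment maps (the smoothness of $m_0$ being the crux); once this is in hand, the rest is a direct assembly of \eqref{equ:MAdiff}, \eqref{equ:twMAsymm}, and \eqref{equ:twMAder}.
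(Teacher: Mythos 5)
Your proof is correct and follows essentially the same route as the paper: decompose $\Theta=\tilde\Theta+\ddcT g$ with $\tilde\Theta$ a smooth equivariant form, take $\frac{d}{ds}\big|_{s=0}\en_{\Om+s\tilde\Theta,v}$ for the smooth piece via~\eqref{equ:twMAder}, and take $\f\mapsto\int_X g\,\MA_{\Om,v}(\f)$ for the distributional piece via~\eqref{equ:MAdiff} and the symmetry~\eqref{equ:twMAsymm} (this is exactly Lemma~\ref{lem:twEL}). The only difference is that you spell out why the equivariant decomposition can be arranged with a smooth moment map $m_0$, a point the paper leaves implicit; your elliptic-regularity argument for that step is fine.
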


\begin{lem}\label{lem:twEL} For any $T$-invariant distribution $g$, $C^\infty(X)^T\ni\f\mapsto\int_X g\MA_{\Om,v}(\f)$ is an Euler--Lagrange functional for $\MA_{\Om,v}^{\ddcT g}(\f)$. 
\end{lem}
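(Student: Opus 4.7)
The plan is to show that Lemma~\ref{lem:twEL} is a direct consequence of the differentiation formula~\eqref{equ:MAdiff} combined with the symmetry identity~\eqref{equ:twMAsymm} of Lemma~\ref{lem:symm}. Let $F(\f) := \int_X g\,\MA_{\Om,v}(\f)$. Since $g$ is a fixed $T$-invariant distribution and $\MA_{\Om,v}(\f)=v(m_{\Om_\f})\om_\f^n$ is a smooth $(n,n)$-form depending smoothly on $\f\in C^\infty(X)^T$ (in fact polynomially in finitely many derivatives of $\f$), the functional $F$ is smooth on $C^\infty(X)^T$, and we may differentiate under the distributional pairing.

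Concretely, for any $f\in C^\infty(X)^T$, the smoothness of $t\mapsto\MA_{\Om,v}(\f+tf)$ (with values in smooth forms) together with~\eqref{equ:MAdiff} yields
\begin{equation*}
\frac{d}{dt}\bigg|_{t=0} F(\f+tf) = \int_X g\,\frac{d}{dt}\bigg|_{t=0}\MA_{\Om,v}(\f+tf) = \int_X g\,\MA_{\Om,v}^{\ddcT f}(\f).
\end{equation*}
Now apply the symmetry property~\eqref{equ:twMAsymm} of Lemma~\ref{lem:symm}: it requires both $f$ and $g$ to be $T$-invariant, with at least one of them smooth, and here $f\in C^\infty(X)^T$ provides the required smoothness. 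We obtain
\begin{equation*}
\int_X g\,\MA_{\Om,v}^{\ddcT f}(\f) = \int_X f\,\MA_{\Om,v}^{\ddcT g}(\f),
\end{equation*}
which is precisely the Euler--Lagrange relation identifying $F$ as an antiderivative of $\f\mapsto\MA_{\Om,v}^{\ddcT g}(\f)$ in the sense of~\S\ref{sec:func}.

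There is no real obstacle here: the hard work has already been done in establishing the symmetry identity~\eqref{equ:twMAsymm} (where the weighted structure contributes the cross-term $\langle v'(m_{\Om_\f}), m_f\rangle\om_\f^n$ that conspires with the integration by parts to produce a symmetric expression), and the present lemma is just a formal consequence via differentiation of a smooth one-parameter family of smooth forms paired against a distribution. The only minor point to keep in mind is the exchange of $d/dt$ and the distributional pairing with $g$, which is standard given that $\MA_{\Om,v}(\f+tf)$ depends smoothly on $t$ in the Fr\'echet topology of smooth forms.
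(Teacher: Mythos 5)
Your proposal is correct and follows essentially the same route as the paper: differentiate $\int_X g\,\MA_{\Om,v}(\f+tf)$ at $t=0$ using~\eqref{equ:MAdiff}, then apply the symmetry identity~\eqref{equ:twMAsymm} with the smooth function $f$ playing the role required by Lemma~\ref{lem:symm}. The extra remarks on smooth dependence and exchanging $d/dt$ with the distributional pairing are harmless elaborations of what the paper leaves implicit.
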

\begin{proof} For any $f\in C^\infty(X)^T$, \eqref{equ:MAdiff} and~\eqref{equ:twMAsymm} yield
$$
\frac{d}{dt}\bigg|_{t=0}\int_X g\MA_{\Om,v}(\f+tf)=\int_X g\MA_{\Om,v}^{\ddcT f}(\f)=\int_X f\MA_{\Om,v}^{\ddcT g}(\f),
$$
which proves the result. 
\end{proof}

\begin{proof}[Proof of Proposition~\ref{prop:twEL}] Assume first that $\Theta$ is smooth. Since $\en_{\Om,v}(\f)$ is an Euler--Lagrange functional for $\MA_{\Om,v}(\f)$, 
$\frac{d}{dt}\big|_{t=0}\en_{\Om+t\Theta,v}(\f)$
is then an Euler--Lagrange equation for 
$$
\MA_{\Om,v}^\Theta(\f)=\frac{d}{dt}\bigg|_{t=0}\MA_{\Om+t\Theta,v}(\f),
$$
see~\eqref{equ:twMAder}. In the general case, we can find a equivariant (smooth) $(1,1)$-form $\tilde\Theta$ and a distribution $g$ such that $\Theta=\tilde\Theta+\ddcT g$. By the previous step and Lemma~\ref{lem:twEL}, $\MA_{\Om,v}^{\tilde\Theta}(\f)$ and $\MA_{\Om,v}^{\ddcT g}(\f)$ both admit an Euler--Lagrange functional, and hence so does 
$$
\MA_{\Om,v}^\Theta(\f)=\MA_{\Om,v}^{\tilde\Theta}(\f)+\MA_{\Om,v}^{\ddcT g}(\f),
$$
completing the proof.
\end{proof}

\begin{defi} For any equivariant current $\Theta$, the \emph{$\Theta$-twisted, $v$-weighted Monge--Amp\`ere energy} 
$$
\en_{\Om,v}^\Theta\colon C^\infty(X)^T\to\R
$$ 
is defined as the Euler--Lagrange functional of $\MA_{\Om,v}^\Theta(\f)$, normalized by $\en_{\Om,v}^\Theta(0)=0$. 
\end{defi}
For all $\f,\p\in C^\infty(X)^T$ we thus have
\begin{equation}\label{equ:twenexp}
\en_{\Om,v}^\Theta(\f)-\en_{\Om,v}^\Theta(\p)=\int_0^1 dt\int_X(\f-\p)\MA_{\Om,v}^\Theta(t\f+(1-t)\p),
\end{equation}
and $\en_{\Om,v}^\Theta$ is linear in both $\Theta$ and $v$. When $\Theta$ is smooth, we further have 
$$
\en_{\Om,v}^\Theta(\f)=\frac{d}{dt}\bigg|_{t=0}\en_{\Om+t\Theta,v}(\f),
$$
as noted in the proof of Proposition~\ref{prop:twEL}. As a direct consequence of Lemma \ref{lem:twEL}, we further have: 
\begin{lem}\label{lem:3.14revisited}
    For any $T$-invariant distribution $g$ and $\f,\psi\in C^{\infty}(X)^T$ we have
    $$
    \en_{\Om,v}^{\ddcT g}(\f)-\en_{\Om,v}^{\ddcT g}(\psi)=\int_X g\big(\MA_{\Om,v}(\f)-\MA_{\Om,v}(\psi)\big).
    $$
\end{lem}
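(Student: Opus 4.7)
The plan is simply to invoke Lemma~\ref{lem:twEL} together with the uniqueness (up to an additive constant) of Euler--Lagrange functionals discussed in Section~\ref{sec:func}. Concretely, set
$$
F(\f):=\int_X g\,\MA_{\Om,v}(\f),\qquad \f\in C^{\infty}(X)^T.
$$
By Lemma~\ref{lem:twEL}, $F$ is an Euler--Lagrange functional for the operator $\f\mapsto \MA_{\Om,v}^{\ddcT g}(\f)$ on the Fr\'echet space $C^\infty(X)^T$. By definition, $\en_{\Om,v}^{\ddcT g}$ is also an Euler--Lagrange functional for the same operator. As recalled in Section~\ref{sec:func} (following~\eqref{equ:symm}), any two Euler--Lagrange functionals for the same operator on the convex set $C^\infty(X)^T$ differ by an additive constant.

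Thus there exists $c\in\R$ such that $\en_{\Om,v}^{\ddcT g}(\f)=F(\f)+c$ for all $\f\in C^\infty(X)^T$, and forming the difference at $\f$ and $\psi$ kills the constant:
$$
\en_{\Om,v}^{\ddcT g}(\f)-\en_{\Om,v}^{\ddcT g}(\psi)=F(\f)-F(\psi)=\int_X g\bigl(\MA_{\Om,v}(\f)-\MA_{\Om,v}(\psi)\bigr),
$$
which is exactly the claimed identity. There is essentially no obstacle here: the content of the lemma is already packaged in Lemma~\ref{lem:twEL}, itself a direct application of the symmetry identity~\eqref{equ:twMAsymm} of Lemma~\ref{lem:symm}~(ii), and the present statement only records the integrated form of that fact.
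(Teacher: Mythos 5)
Your argument is correct and is exactly the paper's (implicit) proof: the lemma is stated there as "a direct consequence of Lemma~\ref{lem:twEL}," relying on the same uniqueness-up-to-constant property of Euler--Lagrange functionals from Section~\ref{sec:func}. Nothing to add.
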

%
%
%
%
\subsection{The weighted scalar curvature}\label{sec:scal}

\subsubsection{Notation} From now on, we fix a $T$-equivariant K\"ahler form\footnote{Note that, while only centered moments are considered in~\cite{Lah}, this is no longer the case in~\cite{AJL}, especially in Section 2 thereof.} $\Om=(\om,m_\Om)$, \ie an equivariant form such that $\om>0$ is K\"ahler, and denote by 
$$
P=P_\Om=m_\Om(X)\subset\ft^\vee
$$ 
its moment polytope. We also fix a smooth function $v\in C^\infty(\ft^\vee)$. 

For any $\f\in C^\infty(X)^T$ and any equivariant current $\Theta$, we set for simplicity
\begin{equation*}
\MA_v(\f):=\MA_{\Om,v}(\f),\quad \MA_v^\Theta(\f):=\MA_{\Om,v}^\Theta(\f)
\end{equation*}
and 
$$
\en_v(\f):=\en_{\Om,v}(\f),\quad\en_v^\Theta(\f):=\en_{\Om,v}^\Theta(\f). 
$$
We denote by 
$$
\cH^T=\left\{\f\in C^\infty(X)^T\mid\om_\f>0\right\}
$$ 
the space of $T$-invariant K\"ahler potentials. As a consequence of Corollary~\ref{cor:DH} we note:

\begin{lem}\label{lem:polyinde} For any $\f\in\cH^T$ the moment polytope of $\Om_\f$ coincides with $P$.
\end{lem}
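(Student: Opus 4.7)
The plan is to reduce the equality $P_{\Om_\f} = P_\Om$ to the identity $m_{\Om_\f}|_{X^T} = m_\Om|_{X^T}$ on the (finite union of components of the) $T$-fixed locus, and then verify that identity from the explicit formula for $m_\f$.

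Since $\f \in \cH^T$, the form $\om_\f > 0$ is Kähler, and so is $\om$. In this setting, the argument in Lemma~\ref{lem:poly}(i) (based on Atiyah--Guillemin--Sternberg) identifies the moment polytope of any $T$-equivariant Kähler form with the convex envelope of the image of $X^T$ under the chosen moment map. The statement of Lemma~\ref{lem:poly}(i) is phrased for centered moments, but the conclusion extends by translation to any moment map, since choosing a different moment amounts to shifting by a constant in $\ft^\vee$. Thus
$$
P_\Om = \conv\bigl(m_\Om(X^T)\bigr), \qquad P_{\Om_\f} = \conv\bigl(m_{\Om_\f}(X^T)\bigr),
$$
and it remains to show $m_{\Om_\f}|_{X^T} = m_\Om|_{X^T}$, i.e.\ that $m_\f$ vanishes on $X^T$.

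For any $x \in X^T$ and $\xi \in \ft$, the generating vector field $\xi_X$ on $X$ satisfies $\xi_X(x) = 0$ by definition of a fixed point, so formula~\eqref{equ:mddc} gives
$$
m_\f^\xi(x) = (\dc\f)(\xi_X)(x) = (\dc\f)_x\bigl(\xi_X(x)\bigr) = 0.
$$
Since this holds for every $\xi \in \ft$, we get $m_\f|_{X^T} = 0$, hence $m_{\Om_\f}|_{X^T} = (m_\Om + m_\f)|_{X^T} = m_\Om|_{X^T}$, and the two polytopes agree.

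I do not anticipate a serious obstacle: the statement is a formal consequence of the convex-envelope description of moment polytopes together with the vanishing of $m_\f$ on the fixed locus. As an alternative route, one could invoke Corollary~\ref{cor:DH}: the Kählerity of $\om_\f$ ensures $\om_\f^n$ has full support on $X$, so $\supp \DH_{\Om_\f} = m_{\Om_\f}(X) = P_{\Om_\f}$ and similarly for $\Om$; the identity $\DH_{\Om_\f} = \DH_\Om$ then yields $P_{\Om_\f} = P_\Om$ at once.
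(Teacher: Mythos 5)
Your proposal is correct, and in fact it contains two valid proofs. Your ``alternative route'' at the end is precisely the paper's argument: the lemma is stated there as an immediate consequence of Corollary~\ref{cor:DH}, since $\om_\f^n$ has full support for $\f\in\cH^T$, so $P_{\Om_\f}=m_{\Om_\f}(X)=\supp\DH_{\Om_\f}=\supp\DH_{\Om}=P$. Your primary route is a genuinely different (and equally correct) argument: it combines the Atiyah--Guillemin--Sternberg description $P_{\Om_\f}=\conv\bigl(m_{\Om_\f}(X^T)\bigr)$ from Lemma~\ref{lem:poly}(i) (which does extend from centered to arbitrary moments by translation, as you say) with the pointwise vanishing of $m_\f$ on $X^T$, which follows directly from \eqref{equ:mddc} since the generating vector fields vanish at fixed points. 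The fixed-point argument is more hands-on and localizes the statement on $X^T$; the Duistermaat--Heckman argument is softer and is the one that scales to the degenerate (semipositive big) setting used elsewhere in the paper, where it is combined with a smooth approximation $\om+\e\om_X$ rather than a direct appeal to AGS. Either way the proof is complete.
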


The centralizer 
$$
\Aut_0^T(X)\subset\Aut_0(X)
$$
of $T$ in $\Aut_0(X)$ acts on equivariant forms and currents, and the right-action of $\Aut_0(X)$ on $\cH$ (see~\S\ref{sec:auto}) restricts to a right-action of $\Aut_0^T(X)$ on $\cH^T$. 

\begin{lem}\label{lem:quasiinv} For all $\f,\p\in\cH^T$ and $g\in\Aut_0^T(X)$ we have
\begin{equation}\label{equ:invOm}
\Om_{\f^g}=g^\star\Om_\f; 
\end{equation}
\begin{equation}\label{equ:invMA}
\MA_v(\f^g)=g^\star\MA_v(\f); 
\end{equation}
\begin{equation}\label{equ:invtwMA}
\MA_v^\Theta(\f^g)=g^\star\MA_v^{g_\star\Theta}(\f); 
\end{equation}
\begin{equation}\label{equ:quasien}
\en_v(\f^g)-\en_v(\p^g)=\en_v(\f)-\en_v(\p); 
\end{equation}
\begin{equation}\label{equ:quasitwen}
\en_v^\Theta(\f^g)-\en_v^\Theta(\p^g)=\en_v^{g_\star\Theta}(\f)-\en_v^{g_\star\Theta}(\p). 
\end{equation}
\end{lem}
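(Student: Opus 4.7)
The plan is to prove \eqref{equ:invOm} first and then obtain \eqref{equ:invMA}--\eqref{equ:quasitwen} as essentially formal consequences, using the Euler--Lagrange characterization of the energy functionals. The single nontrivial input is that, because $g \in \Aut_0^T(X)$ commutes with the $T$-action, pullback by $g$ preserves moment maps: if $m$ is a moment for a $T$-invariant form (or current) $\theta$, then $g^\star m$ is a moment for $g^\star\theta$, since $g_\star\xi = \xi$ for $\xi\in\ft$ yields $g^\star(i(\xi)\theta) = i(\xi)(g^\star\theta)$.

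To establish \eqref{equ:invOm}, the $(1,1)$-part already follows from the construction of the action as in \eqref{equ:omphig}, so only the moment-map part needs attention. Writing $\f^g = \tau_g + g^\star\f$, I would decompose
$$
m_{\Om_{\f^g}} = m_\Om + m_{\tau_g} + m_{g^\star\f},
$$
and check two points: first that $g^\star m_\Om - m_\Om = m_{\tau_g}$, which follows from $g^\star\om - \om = \ddc\tau_g$ together with the uniqueness of moments up to a constant (here the constant is zero because both sides pair to zero against any constant test vector, as can be read off from \eqref{equ:mddc}); and second that $m_{g^\star\f} = g^\star m_\f$, which is immediate from \eqref{equ:mddc} and the fact that $g$ is holomorphic and commutes with the flow of $J\xi$. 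Combining these two identities gives exactly \eqref{equ:invOm}.

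Given \eqref{equ:invOm}, the identity \eqref{equ:invMA} is an immediate substitution into \eqref{equ:MAv}, since $v(g^\star m_{\Om_\f}) = g^\star(v(m_{\Om_\f}))$ and $g^\star$ commutes with wedge products and with evaluation against $v'$. Similarly, \eqref{equ:invtwMA} follows by writing $g_\star\Theta = (g_\star\theta_\Theta, g_\star m_\Theta)$ and inserting this into \eqref{equ:twMA}: the two terms transform naturally since $g^\star\circ g_\star = \mathrm{id}$ on forms and distributions, and again $g^\star$ commutes with $v$ and $v'$ evaluated at moments.

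Finally, \eqref{equ:quasien} and \eqref{equ:quasitwen} follow from the integral representations \eqref{equ:wenexp} and \eqref{equ:twenexp} applied along the affine segment from $\p$ to $\f$ and its image $t\mapsto (t\f + (1-t)\p)^g = t\f^g + (1-t)\p^g$ (the cocycle term $\tau_g$ cancels in differences). Then $\f^g - \p^g = g^\star(\f - \p)$, and \eqref{equ:invMA} respectively \eqref{equ:invtwMA} turn the integrand at parameter $t$ into $g^\star$ applied to the integrand for $\f,\p$ (with $\Theta$ replaced by $g_\star\Theta$ in the twisted case). Since $g$ is a diffeomorphism, the integral over $X$ is preserved, and integrating in $t$ yields both identities. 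The only step that requires any care is the moment-map bookkeeping in \eqref{equ:invOm}; everything else is formal manipulation of the definitions.
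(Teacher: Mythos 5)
Your overall architecture is sound, and for everything except the moment-map identity it matches what one has to do: given \eqref{equ:invOm}, the identities \eqref{equ:invMA}--\eqref{equ:quasitwen} do follow formally from \eqref{equ:MAv}, \eqref{equ:twMA}, \eqref{equ:wenexp} and \eqref{equ:twenexp}, exactly as you say (and as the paper does). Your route to \eqref{equ:invOm} itself, however, differs from the paper's: you reduce it to the exact identity $m_{\tau_g}=g^\star m_\Om-m_\Om$, whereas the paper first translates $m_\Om$ so that $\Om$ is centered and then observes that $\Om_{\f^g}$ and $g^\star\Om_\f$ are two equivariant lifts of the same form $g^\star\om_\f$ which are both centered (by Corollary~\ref{cor:DH} and the fact that $g^\star$ preserves Duistermaat--Heckman barycenters), hence equal.

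The gap is precisely at the point you yourself identify as the only nontrivial one. You correctly note that $m_{\tau_g}$ and $g^\star m_\Om-m_\Om$ are both moments for $\ddc\tau_g$, hence differ by a constant in $\ft^\vee$, but your reason for the constant vanishing --- ``both sides pair to zero against any constant test vector, as can be read off from \eqref{equ:mddc}'' --- is not an argument. There is no pairing against which both sides are visibly normalized: $m_{\tau_g}^\xi=-\de\tau_g(J\xi)$ carries no evident integral normalization, and $g^\star m_\Om-m_\Om$ carries a different one, so a priori the constant could be nonzero; and since $v$ is an arbitrary smooth weight, a nonzero constant shift of the moment would already destroy \eqref{equ:invMA}. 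The gap is fixable in (at least) two ways. One is the paper's centering argument just described. The other stays within your framework: evaluate both candidate moments at a point $p$ of the fixed locus $X^T$. There $J\xi(p)=0$, so $m_{\tau_g}^\xi(p)=0$ by \eqref{equ:mddc}; on the other hand $\de m_\Om^\xi=-i(\xi)\om$ vanishes on $X^T$, so $m_\Om^\xi$ is locally constant on $X^T$, and since $g\in\Aut_0^T(X)$ is isotopic to the identity and preserves $X^T$, it preserves each connected component of $X^T$, whence $(g^\star m_\Om-m_\Om)^\xi(p)=m_\Om^\xi(g(p))-m_\Om^\xi(p)=0$ as well. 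Either repair closes the argument; as written, the crucial normalization is asserted rather than proved.
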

\begin{proof} After translating $m_\Om$ by a vector in $\ft^\vee$, we may assume without loss that $\Om$ is centered. 
By definition of the right-action of $\Aut_0(X)$ on $\cH^T$, we have $\om_{\f^g}=g^\star\om_\f$. Since $\Om$ is centered, $\Om_{\f^g}$, $\Om_\f$, and hence also $g^\star\Om_\f$, are centered as well (see Corollary~\ref{cor:DH}). This implies~\eqref{equ:invOm} Using~\eqref{equ:MAv}, \eqref{equ:twMA}, we infer~\eqref{equ:invMA}, \eqref{equ:invtwMA}, and~\eqref{equ:wenexp}, \eqref{equ:twenexp} now yield~\eqref{equ:quasien}, \eqref{equ:quasitwen}. 
\end{proof} 

\subsubsection{Weighted trace and weighted Laplacian} 
From now on we assume $v>0$ on $P_\Om$, \ie $v(m_\Om)>0$ on $X$. 

\begin{defi} For any equivariant current $\Theta=(\theta,m_\Theta)$ and any $T$-invariant distribution $f$, we introduce
\begin{itemize}
\item the \emph{$v$-weighted trace} 
\begin{equation}\label{equ:wtr}
\tr_{\Om,v}(\Theta):=\frac{\MA_v^\Theta(0)}{\MA_v(0)}=\tr_\om(\theta)+\langle (\log v)'(m_\Om),m_\Theta\rangle; 
\end{equation}
\item the \emph{$v$-weighted Laplacian} 
\begin{equation}\label{equ:wlap}
\D_{\Om,v} f:=\tr_{\Om,v}(\ddcT f)=\D_\om f+\langle (\log v)'(m_\Om),m_{ f}\rangle. 
\end{equation}
\end{itemize}
\end{defi}
When $\Theta$ is smooth, its weighted trace computes the logarithmic directional derivative 
\begin{equation}\label{equ:wtrdiff} 
\tr_{\Om,v}(\Theta)=\frac{d}{dt}\bigg|_{t=0}\log \MA_{\Om+t\Theta,v}(0)
\end{equation}
of the weighted volume form. Since
\begin{equation}\label{equ:lapMA}
\left(\D_{\Om,v} f\right)\MA_v(0)=\MA^{\ddcT f}_{\Om,v}(0), 
\end{equation}
Lemma~\ref{lem:symm} yields the integration-by-parts formula 

\begin{equation}\label{equ:wlap1}
\int_X g\left(\D_{\Om,v} f\right) \MA_v(0)=\int_X f\left(\D_{\Om,v} g\right) \MA_v(0)=-\int_X\langle \de f,\de g\rangle_\om \MA_v(0)
\end{equation}
for all $T$-invariant distributions $f,g$, at least one of which is smooth. Here 
$$
\langle \de f,\de g\rangle_\om:=\tr_\om(\de f\wedge \dc g), 
$$
which computes the pointwise scalar product of the $1$-forms $\de f,\de g$ when $f,g$ are smooth. In other words, we have 
$$
-\D_{\Om,v}=\de^{\star,v}\de,
$$
where $\de^{\star,v}$ denotes the adjoint of $\de$ with respect to the weighted volume form $\MA_v(0)$, which shows that $-\D_{\Om,v}$ coincides with the (geometers') weighted Laplacian of~\cite[Appendix~A]{AJL}. Applying~\eqref{equ:wlap1} with $f=g$ further yields: 
\begin{lem}\label{lem:wlap} The kernel of $\D_{\Om,v}$ in $C^\infty(X)^T$ consists of constants.
\end{lem}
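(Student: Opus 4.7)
The plan is to apply the integration-by-parts formula \eqref{equ:wlap1} with $g=f$. Suppose $f\in C^\infty(X)^T$ satisfies $\D_{\Om,v}f=0$. Taking $g=f$ (which is allowed since $f$ is smooth and $T$-invariant), the left-hand side of \eqref{equ:wlap1} vanishes, so
$$
0=\int_X\langle\de f,\de f\rangle_\om\,\MA_v(0)=\int_X|\de f|_\om^2\,v(m_\Om)\,\om^n,
$$
where $|\de f|_\om^2:=\tr_\om(\de f\wedge\dc f)\ge 0$ is the squared pointwise norm of the real $1$-form $\de f$ with respect to the Kähler metric associated to $\om$.

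Since $v>0$ on the moment polytope $P_\Om$ by our standing assumption, we have $v(m_\Om)>0$ on $X$, and $\om^n$ is a positive volume form. The integrand is therefore a nonnegative continuous function, and the vanishing of its integral forces $|\de f|_\om^2\equiv 0$ on $X$, i.e.\ $\de f\equiv 0$. By connectedness of $X$ (implicit in the standing convention that $X$ is a compact Kähler manifold), this means $f$ is constant. Conversely, constants clearly lie in the kernel of $\D_{\Om,v}$, since both $\D_\om c=0$ and $m_c=0$ by \eqref{equ:mddc}. The main (and only) thing to check carefully is that the integration-by-parts formula \eqref{equ:wlap1} is genuinely applicable with $g=f$, but this is immediate from the hypothesis that $f\in C^\infty(X)^T$ is smooth and $T$-invariant. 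No obstacle is anticipated.
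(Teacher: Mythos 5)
Your proof is correct and is exactly the paper's argument: the paper derives the lemma in one line by applying the integration-by-parts formula \eqref{equ:wlap1} with $f=g$, and your write-up simply fills in the (correct) details that $\MA_v(0)=v(m_\Om)\,\om^n$ is a positive volume form under the standing assumptions $\om>0$ and $v>0$ on $P_\Om$, so the vanishing of $\int_X|\de f|_\om^2\,\MA_v(0)$ forces $f$ to be constant.
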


\subsubsection{Weighted scalar curvature} We next introduce: 
\begin{defi} The \emph{$v$-weighted equivariant Ricci curvature} and \emph{$v$-weighted scalar curvature} of $\Om$ are defined as 
$$
\Ric^T_v(\Om):=\Ric^T(\MA_v(0)),\quad S_v(\Om):=\tr_{\Om,v}(\Ric^T_v(\Om)).
$$
\end{defi}

\begin{exam}\label{exam:sol} Assume $\om\in 2\pi c_1(X)=2\pi c_1(-K_X)$, with $(-K_X)$-normalized lift $\Om$ (see~\S\ref{sec:moments}). Then $\om$ is a \emph{$v$-soliton} in the sense~\cite[Definition~2.1]{AJL} (see also~\cite{BWN}) iff $\Om$ satisfies the weighted K\"ahler--Einstein equation
$$
\Ric^T_v(\Om)=\Om. 
$$
\end{exam}

We next provide a more explicit expression for the weighted scalar curvature, and compare it with the one originally introduced by Lahdili\footnote{Note that this author uses a different normalization of the scalar curvature and Laplacian.} in~\cite{Lah}, here denoted by
\begin{equation}\label{equ:Slah0}
S^{\Lah}_v(\Om):= v(m_\Om) S(\om)-\Delta_\om v(m_\Om)+\tfrac 12\langle v''(m_\Om),g_\om\rangle, 
\end{equation}
where $g_\om$ denotes the Riemannian metric defined by $\om$. More concretely, 
$$
\langle v''(m_\Om),g_\om\rangle=\sum_{\a,\b}v''_{\a\b}(m_\Om)g_\om(\xi_\a,\xi_\b)
$$
in terms of a basis $(\xi_\a)$ of $\ft$.

\begin{lem}\label{lem:Sv} We have  
\begin{equation}\label{equ:Sv1}
S_v(\Om)=S(\om)-\frac{\langle v'(m_\Om),\D_\om m_\Om\rangle+\Delta_\om v(m_\Om)}{2 v(m_\Om)},
\end{equation}
and 
\begin{equation}\label{equ:SvLah}
S^{\Lah}_v(\Om)=v(m_\Om) S_v(\Om). 
\end{equation}
\end{lem}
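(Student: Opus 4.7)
The plan is to compute each ingredient of $S_v(\Om)=\tr_{\Om,v}\Ric^T_v(\Om)$ explicitly via the weighted trace formula~\eqref{equ:wtr}, and then to expand $\Delta_\om v(m_\Om)$ through the chain rule to reconcile with Lahdili's expression~\eqref{equ:Slah0}.

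\emph{Step 1 (curvature part).} Writing $\MA_v(0)=v(m_\Om)\om^n$ and using that $\Ric(\nu_1)-\Ric(\nu_2)=-\tfrac12\ddc\log(\nu_1/\nu_2)$ for two volume forms, I obtain
\[
\Ric(\MA_v(0))=\Ric(\om)-\tfrac12\ddc\log v(m_\Om),
\]
whose trace with $\om$ contributes $S(\om)-\tfrac12\Delta_\om\log v(m_\Om)$ to $S_v(\Om)$.

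\emph{Step 2 (moment part).} I compute $m_{\MA_v(0)}^\xi=\cL_{J\xi}(v(m_\Om)\om^n)/(2v(m_\Om)\om^n)$ via~\eqref{equ:momu}. On the one hand $\cL_{J\xi}\om=-\ddc m_\Om^\xi$ by~\eqref{equ:dcm}, so $\cL_{J\xi}\om^n=-(\Delta_\om m_\Om^\xi)\om^n$. On the other hand, using the identity $\de v(m_\Om)=-\sum_\a v'_\a(m_\Om)\,i(\xi_\a)\om$ already exploited in the proof of Lemma \ref{lem:symm}, one applies $i(J\xi)$ and uses $i(J\xi)\om=-\dc m_\Om^\xi$ to get
\[
\cL_{J\xi}v(m_\Om)=-\sum_\a v'_\a(m_\Om)\,g_\om(\xi_\a,\xi).
\]
Combining these and contracting with $(\log v)'(m_\Om)$ in the weighted trace yields
\[
\langle(\log v)'(m_\Om),m_{\MA_v(0)}\rangle=-\frac{g_\om(v'(m_\Om),v'(m_\Om))}{2v(m_\Om)^2}-\frac{\langle v'(m_\Om),\Delta_\om m_\Om\rangle}{2v(m_\Om)}.
\]

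\emph{Step 3 (assembling \eqref{equ:Sv1}).} Expanding $(\log v)''=v''/v-v'\otimes v'/v^2$ in $\Delta_\om\log v(m_\Om)$ via the chain rule, and using that (for any $\eta,\zeta\in\ft$) $\tr_\om(dm_\Om^\eta\wedge \dc m_\Om^\zeta)=g_\om(\eta,\zeta)$ — which follows from $dm_\Om^\eta=-i(\eta)\om$, $\dc m_\Om^\zeta=-i(J\zeta)\om$ — gives
\[
\Delta_\om\log v(m_\Om)=\frac{\langle v''(m_\Om),g_\om\rangle}{v(m_\Om)}-\frac{g_\om(v'(m_\Om),v'(m_\Om))}{v(m_\Om)^2}+\frac{\langle v'(m_\Om),\Delta_\om m_\Om\rangle}{v(m_\Om)}.
\]
Adding the contributions of Steps~1 and~2, the $g_\om(v'(m_\Om),v'(m_\Om))/v(m_\Om)^2$ terms cancel, and one is left with
\[
S_v(\Om)=S(\om)-\frac{\langle v''(m_\Om),g_\om\rangle}{2v(m_\Om)}-\frac{\langle v'(m_\Om),\Delta_\om m_\Om\rangle}{v(m_\Om)},
\]
which, upon rewriting $\langle v''(m_\Om),g_\om\rangle+\langle v'(m_\Om),\Delta_\om m_\Om\rangle=\Delta_\om v(m_\Om)$ by the same chain-rule computation applied to $v(m_\Om)$ itself, is exactly~\eqref{equ:Sv1}.

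\emph{Step 4 (comparison with $S_v^{\Lah}$).} Substituting $\Delta_\om v(m_\Om)=\langle v''(m_\Om),g_\om\rangle+\langle v'(m_\Om),\Delta_\om m_\Om\rangle$ in the definition~\eqref{equ:Slah0} of $S_v^{\Lah}(\Om)$ produces
\[
S_v^{\Lah}(\Om)=v(m_\Om)S(\om)-\tfrac12\langle v''(m_\Om),g_\om\rangle-\langle v'(m_\Om),\Delta_\om m_\Om\rangle,
\]
which coincides with $v(m_\Om)S_v(\Om)$ read off from~\eqref{equ:Sv1}, proving~\eqref{equ:SvLah}.

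The main obstacle is Step~2: carefully tracking signs and conventions for the moment map of $\MA_v(0)$, given that $J\xi\notin\ft$, so the Lie derivative $\cL_{J\xi}$ does not annihilate $v(m_\Om)$ and must be computed via the K\"ahler relation between $\dc m_\Om^\xi$ and $g_\om$. Once this auxiliary identity is in hand, everything reduces to bookkeeping with the chain rule and the cancellation observed in Step~3.
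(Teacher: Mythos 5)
Your proof is correct and follows essentially the same route as the paper: both reduce to the identity $\tr_\om(\de m_\Om^{\xi_\a}\wedge\dc m_\Om^{\xi_\b})=g_\om(\xi_\a,\xi_\b)$ and the cancellation of the $g_\om(v'(m_\Om),v'(m_\Om))/v(m_\Om)^2$ terms, followed by the chain-rule expansion $\Delta_\om v(m_\Om)=\langle v'(m_\Om),\Delta_\om m_\Om\rangle+\langle v''(m_\Om),g_\om\rangle$. The only (cosmetic) difference is that the paper writes $\Ric_v^T(\Om)=\Ric^T(\om^n)-\ddcT f$ with $f=\tfrac12\log v(m_\Om)$ and packages your Steps 2--3 as the single identity $\D_{\Om,v}f=\Delta_\om v(m_\Om)/(2v(m_\Om))$ for the weighted Laplacian, whereas you compute the moment map $m_{\MA_v(0)}$ directly from~\eqref{equ:momu}.
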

\begin{proof} Set $f:=\tfrac 12\log v(m_\Om)$. Then
$$
\Ric_v^T(\Om)=\Ric^T(e^{2f} \om^n)=\Ric^T(\om^n)-\ddcT f,
$$
and hence
\begin{equation}\label{equ:Sv2}
S_v(\Om)=\tr_{\Om,v}(\Ric^T(\om^n))-\D_{\Om,v} f=S(\om)+\langle (\log v)'(m_\Om),-\tfrac 12\D_\om m_\Om\rangle-\D_{\Om,v} f, 
\end{equation}
by~\eqref{equ:momRic} and~\eqref{equ:wtr}. Now
$$
2\D_\om f=\frac{1}{v(m_\Om)}\D_\om v(m_\Om)-\frac{1}{v(m_\Om)^2} \tr_\om(\de v(m_\Om)\wedge \dc v(m_\Om)), 
$$
where  
$$
\de v(m_\Om)\wedge \dc v(m_\Om)=\sum_{\a,\b} v'_\a(m_\Om) v'_\b(m_\Om) i(\xi_\a)\om\wedge i(J\xi_\b)\om, 
$$
by~\eqref{equ:dcm}, and hence 
$$
\tr_\om (\de v(m_\Om)\wedge \dc v(m_\Om))=\sum_{\a,\b} v'_\a(m_\Om) v'_\b(m_\Om) g_\om(\xi_\a,\xi_\b).
$$
On the other hand, \eqref{equ:mddc} and~\eqref{equ:dcm} yield
\begin{align*}
2\langle (\log v)'(m_\Om),m_{ f}\rangle & =-\frac{1}{v(m_\Om)^2}\sum_\a v'_\a(m_\Om) i(J\xi_\a) \de v(m_\Om) \\
& =\frac{1}{v(m_\Om)^2}\sum_{\a,\b} v'_\a(m_\Om) v'_\b(m_\Om) g_\om(\xi_\a,\xi_\b).
\end{align*}
We conclude that
$$
\D_{\Om,v} f=\D_\om f+\langle (\log v)'(m_\Om),m_{ f}\rangle=\frac{\D_\om v(m_\Om)}{2 v(m_\Om)},
$$
and injecting this into~\eqref{equ:Sv2} proves~\eqref{equ:Sv1}. Finally, a further simple computation yields
$$
\D_\om v(m_\Om)=\langle v'(m_\Om),\D_\om m_\Om\rangle+\langle v''(m_\Om),g_\om\rangle.
$$
This implies~\eqref{equ:SvLah} and concludes the proof. 
\end{proof}

\begin{defi}\label{defi:cscK} Given $w\in C^\infty(\ft^\vee)$, we say that $\Om$ is a \emph{$(v,w)$-cscK} metric if $S_v(\Om)=w(m_\Om)$. 
\end{defi}

\begin{rmk} By~\eqref{equ:SvLah}, $\Om$ is a $(v,w)$-cscK metric in the present sense iff $\om$ is a $(v,vw)$-cscK metric in the sense of~\cite{Lah,AJL} (for the given choice of moment map $m_\Om$). 
\end{rmk}

In order to illustrate the present formalism, we provide a simple proof of \cite[Proposition~1]{AJL}. 

\begin{prop} \label{Prop:Ricci_Solitons} Assume that $\om\in 2\pi c_1(X)$, with $(-K_X)$-normalized lift $\Om$. The following are equivalent:
\begin{itemize}
\item[(i)] $\om$ is a $v$-soliton, \ie $\Ric^T_v(\Om)=\Om$ (see Example~\ref{exam:sol}); 
\item[(ii)] $\Om$ is $(v,w)$-cscK with $w(\a):=n+\langle(\log v)'(\a),\a\rangle$.
\end{itemize}
\end{prop}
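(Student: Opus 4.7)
The direction (i)$\Rightarrow$(ii) is an immediate tautology once one computes $\tr_{\Om,v}(\Om)$: by the definition of the weighted trace \eqref{equ:wtr} applied to $\Theta = \Om$, we get
\[
\tr_{\Om,v}(\Om) = \tr_\om(\om) + \langle (\log v)'(m_\Om), m_\Om\rangle = n + \langle(\log v)'(m_\Om), m_\Om\rangle = w(m_\Om).
\]
Hence if $\Ric^T_v(\Om) = \Om$, applying $\tr_{\Om,v}$ to both sides yields $S_v(\Om) = w(m_\Om)$, which is (ii).

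For the converse, the plan is to exploit the fact that both $\Ric^T_v(\Om)$ and $\Om$ are equivariant curvature forms of $T$-invariant Hermitian metrics on the same line bundle $-K_X$: by hypothesis $\Om$ is the $(-K_X)$-normalized lift of $\om$, while $\Ric^T_v(\Om) = \Ric^T(v(m_\Om)\om^n)$ is the equivariant curvature form of $-\tfrac12\log(v(m_\Om)\om^n)$ (see Example~\ref{exam:Ricci}). Since two $T$-invariant metrics on $-K_X$ differ by a $T$-invariant smooth function, we can write
\[
\Ric^T_v(\Om) = \Om + \ddcT h
\]
for some $h \in C^\infty(X)^T$. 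Taking the weighted trace of both sides and using \eqref{equ:wlap} together with the computation above gives
\[
S_v(\Om) = w(m_\Om) + \D_{\Om,v} h.
\]

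Now if (ii) holds, this forces $\D_{\Om,v} h = 0$, and Lemma~\ref{lem:wlap} then implies that $h$ is constant. But a constant function $h$ has vanishing equivariant $\ddcT h$ (both the $(1,1)$-component $\ddc h$ and the moment $m_h^\xi = \dc h(\xi)$ vanish), so $\Ric^T_v(\Om) = \Om$, which is (i).

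The only mildly subtle point is ensuring that the difference of two equivariant curvature forms on $-K_X$ is genuinely of the form $\ddcT h$ for a \emph{single} $T$-invariant function $h$, with no residual additive constant affecting the moment component; this follows from the fact that $\ddcT$ (as introduced in Example~\ref{exam:ddcT}) is linear and commutes with passing between a metric and its curvature, so the ambiguity in the metric by an additive constant disappears upon taking $\ddcT$. No other technical difficulty arises.
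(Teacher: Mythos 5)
Your proof is correct and follows essentially the same route as the paper: compute $\tr_{\Om,v}(\Om)=n+\langle(\log v)'(m_\Om),m_\Om\rangle=w(m_\Om)$, write $\Ric^T_v(\Om)=\Om+\ddcT\rho$ for some $\rho\in C^\infty(X)^T$ since both are equivariant curvature forms of $T$-invariant metrics on $-K_X$, and conclude via Lemma~\ref{lem:wlap} that the weighted trace of the difference vanishes iff $\rho$ is constant. The paper phrases this as a single chain of equivalences rather than two separate implications, but the content is identical.
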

\begin{proof} By~\eqref{equ:wtr}, we have $\tr_{\Om,v}(\Om)=n+\langle(\log v)'(m_\Om),m_\Om\rangle$, and (ii) thus holds iff $\Ric_v^T(\Om)-\Om$ has $v$-weighted trace $0$. Since both $\Ric_v^T(\Om)$ and $\Om$ are equivariant curvature forms of $T$-invariant metrics on $-K_X$, we have $\Ric_v^T(\Om)=\Om+\ddcT\rho$ for some $\rho\in C^\infty(X)^T$, and hence 
$$
\tr_{\Om,v}(\Ric_v^T(\Om)-\Om)=\D_{\Om,v}\rho. 
$$
By Lemma~\ref{lem:wlap}, this vanishes iff $\rho$ is constant, and the desired equivalence follows. 
\end{proof}

%
%
%
%
\subsection{The weighted Mabuchi energy}\label{sec:mab}
Fix a $T$-invariant reference volume form $\nu_X$ on $X$, with associated relative entropy functional $\Ent(\cdot|\nu_X)$, see~Appendix~\ref{sec:ent}. 

\begin{defi} The \emph{$v$-weighted entropy} and \emph{$v$-weighted Ricci energy} 
$$
\ent_v\colon\cH^T\to\R,\quad\enR_v\colon\cH^T\to\R
$$
are respectively defined by setting for $\f\in\cH^T$
$$
\ent_v(\f):=\tfrac 12\Ent(\MA_v(\f)|\nu_X)=\tfrac12\int_X\log\left(\frac{\MA_v(\f)}{\nu_X}\right)\MA_v(\f)
$$
and
$$
\enR_v(\f):=\en_v^{-\Ric^T(\nu_X)}(\f).
$$
\end{defi}
The factor $1/2$ in the entropy owes to the fact that the metric on $K_X$ corresponding to $\nu_X$ is $\tfrac 12\log \nu_X$ in our convention. Recall also that its equivariant curvature form is $-\Ric^T(\nu_X)$. The key point is now: 

\begin{lem}\label{lem:mabscal} The functional $\cH^T\ni\f\mapsto\ent_v(\f)+\enR_v(\f)$
is an Euler--Lagrange functional for $\f\mapsto - S_v(\Om_\f)\MA_v(\f)$. 
\end{lem}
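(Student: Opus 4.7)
The plan is to identify the Euler--Lagrange equation by splitting $S_v(\Om_\f)\MA_v(\f)$ into two pieces, one coming from the reference measure $\nu_X$ and the other from the density of $\MA_v(\f)$ with respect to $\nu_X$, and match these against the derivatives of $\enR_v$ and $\ent_v$ respectively.

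First I would introduce the density function
$$h_\f:=\tfrac 12\log\bigl(\MA_v(\f)/\nu_X\bigr)\in C^\infty(X)^T,$$
which is smooth because $\f\in\cH^T$ and $v>0$ on $P$. By Example~\ref{exam:Ricci} and the fact that $\Ric^T$ is the equivariant curvature of $-\tfrac 12\log(\cdot)$ on $-K_X$, one has
$$\Ric_v^T(\Om_\f)=\Ric^T(\MA_v(\f))=\Ric^T(\nu_X)-\ddcT h_\f.$$
Applying $\tr_{\Om_\f,v}$ and using~\eqref{equ:lapMA} together with the translation identity $\MA_{\Om_\f,v}^\Theta(0)=\MA_v^\Theta(\f)$ from~\eqref{equ:MAtrans}, this gives
$$-S_v(\Om_\f)\,\MA_v(\f)=\MA_v^{\ddcT h_\f}(\f)-\MA_v^{\Ric^T(\nu_X)}(\f)=\MA_v^{\ddcT h_\f}(\f)+\MA_v^{-\Ric^T(\nu_X)}(\f),$$
using linearity of $\MA^{(\cdot)}_v$ in the equivariant-current slot.

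Next I would differentiate each of $\ent_v$ and $\enR_v$ separately. For the Ricci energy, by the very definition of $\enR_v$ as an Euler--Lagrange functional,
$$\langle\enR_v'(\f),f\rangle=\int_X f\,\MA_v^{-\Ric^T(\nu_X)}(\f),\qquad f\in C^\infty(X)^T.$$
For the entropy, write $\ent_v(\f)=\int_X h_\f\,\MA_v(\f)$ and differentiate to get two terms: the first involves $\dot h:=\tfrac12\,\MA_v^{\ddcT f}(\f)/\MA_v(\f)$ paired against $\MA_v(\f)$, and the second is $\int_X h_\f\,\MA_v^{\ddcT f}(\f)$. The first term equals $\tfrac12\int_X\MA_v^{\ddcT f}(\f)$, which vanishes because the total mass $\int_X\MA_v(\psi)=\int_Xv(m_\Om)\om^n$ is independent of $\psi$ (Corollary~\ref{cor:DH}). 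Hence
$$\langle\ent_v'(\f),f\rangle=\int_X h_\f\,\MA_v^{\ddcT f}(\f).$$

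The final step applies the symmetry of Lemma~\ref{lem:symm}(ii), which is applicable since both $f$ and $h_\f$ are smooth and $T$-invariant, giving
$$\int_X h_\f\,\MA_v^{\ddcT f}(\f)=\int_X f\,\MA_v^{\ddcT h_\f}(\f).$$
Summing the derivatives and comparing with the displayed formula for $-S_v(\Om_\f)\MA_v(\f)$ yields
$$\bigl\langle(\ent_v+\enR_v)'(\f),f\bigr\rangle=\int_X f\bigl[\MA_v^{\ddcT h_\f}(\f)+\MA_v^{-\Ric^T(\nu_X)}(\f)\bigr]=-\int_X f\,S_v(\Om_\f)\,\MA_v(\f),$$
which is exactly the Euler--Lagrange property claimed.

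The only subtle point is bookkeeping the sign in the definition of $\Ric^T$ (so that the residual $\ddcT h_\f$ term has the correct sign to cancel against the derivative of $\ent_v$), and verifying that the mass-constancy of $\MA_v$ kills the $\dot h$-term; both follow immediately from the formalism already in place, so no new estimate is needed.
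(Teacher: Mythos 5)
Your proposal is correct and follows essentially the same route as the paper: the same decomposition $\Ric_v^T(\Om_\f)=\Ric^T(\nu_X)-\ddcT h_\f$, the same splitting of the entropy derivative into a mass term and a symmetric term, with the symmetry of Lemma~\ref{lem:symm}(ii) doing the final transposition. The only cosmetic difference is that you kill the $\dot h$-term via mass constancy (Corollary~\ref{cor:DH}) and invoke Lemma~\ref{lem:symm} directly, whereas the paper phrases both steps through the weighted-Laplacian integration-by-parts formula~\eqref{equ:wlap1}, which is itself a special case of that symmetry.
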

\begin{proof} By~\eqref{equ:wtr} and~\eqref{equ:lapMA}, we have 
\begin{align*}
-S_v(\Om_\f)\MA_v(\f) & =\MA_v^{-\Ric_v^T(\Om_\f)}(\f)\\
& =\MA_v^{-\Ric^T(\nu_X)}(\f)+(\D_{\Om_\f,v}\rho_\f)\MA_v(\f)
\end{align*}
with
$$
\rho_\f:=\frac 12\log\left(\frac{\MA_v(\f)}{\nu_X}\right). 
$$
Since $\enR_v(\f)$ is, by definition, an Euler--Lagrange functional of $\f\mapsto\MA_v^{-\Ric_v^T(\nu_X)}(\f)$, we are reduced to showing that 
$$
\ent_v(\f)=\int_X \rho_\f\,\MA_v(\f)
$$
is an Euler--Lagrange functional for the operator $\f\mapsto (\D_{\Om_\f,v}\rho_\f)\MA_v(\f)$. To see this, pick $f\in C^\infty(X)^T$. By~\eqref{equ:MAdiff} and~\eqref{equ:lapMA}, we have 
$$
\frac{d}{dt}\bigg|_{t=0}\MA_v(\f+t f)=\left(\D_{\Om_\f,v} f\right)\MA_v(\f),\quad\frac{d}{dt}\bigg|_{t=0} \rho_{\f+t f}=\frac 12\D_{\Om_\f,v} f, 
$$
and hence 
$$
\frac{d}{dt}\bigg|_{t=0}\ent_v(\f+t f)=\frac 12\int_X\left(\D_{\Om_\f,v} f\right)\MA_v(\f)+\int_X \rho_\f\left(\D_{\Om_\f,v} f\right)\MA_v(\f). 
$$
Using~\eqref{equ:wlap1} we infer, as desired, 
\begin{equation}\label{equ:diffent}
\frac{d}{dt}\bigg|_{t=0}\ent_v(\f+t f)=\int_X f\left(\D_{\Om_\f,v} \rho_\f\right)\MA_v(\f). 
\end{equation}
\end{proof}

\begin{defi} Given any $w\in C^\infty(\ft^\vee)$, we define the \emph{$(v,w)$-weighted Mabuchi energy} $\mab_{v,w}\colon\cH^T\to\R$ by setting
\begin{equation}\label{equ:MCT}
\mab_{v,w}(\f):=\ent_v(\f)+\enR_v(\f)+\en_{vw}(\f).
\end{equation}
\end{defi}

As a consequence, of Lemma~\ref{lem:mabscal}, we get: 

\begin{prop}\label{prop:mabder} The weighted Mabuchi energy $\mab_{v,w}$ is an Euler--Lagrange functional of 
$$
\cH^T\ni\f\mapsto (w(m_{\Om_\f})-S_v(\Om_\f))\MA_v(\f).
$$
In particular, $\f\in\cH^T$ is a critical point of $\mab_{v,w}$ iff $\Om_\f$ is $(v,w)$-cscK. 
\end{prop}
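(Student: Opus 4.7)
The plan is to argue by linearity of the directional derivative, splitting $\mab_{v,w}(\f) = \ent_v(\f) + \enR_v(\f) + \en_{vw}(\f)$ into three summands and handling each separately.

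First I would dispatch the sum $\ent_v + \enR_v$ directly by invoking Lemma~\ref{lem:mabscal}, which already identifies it as an Euler--Lagrange functional for $\f \mapsto -S_v(\Om_\f)\MA_v(\f)$; no further work is needed there. The remaining summand is $\en_{vw}$. By Definition~\ref{defi:wen}, $\en_{vw}$ is an Euler--Lagrange functional for the weighted Monge--Amp\`ere operator $\MA_{vw}(\f) = (vw)(m_{\Om_\f})\,\om_\f^n$. The key elementary observation is that, since $v$ and $w$ enter multiplicatively and $\MA_v(\f) = v(m_{\Om_\f})\,\om_\f^n$, one has the pointwise identity
$$
\MA_{vw}(\f) = w(m_{\Om_\f})\,\MA_v(\f).
$$
Combining with~\eqref{equ:wen} I get
$$
\frac{d}{dt}\bigg|_{t=0}\en_{vw}(\f + t f) = \int_X f\, w(m_{\Om_\f})\,\MA_v(\f)
$$
for all $f \in C^\infty(X)^T$. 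Adding this to the directional derivative supplied by Lemma~\ref{lem:mabscal} yields
$$
\frac{d}{dt}\bigg|_{t=0}\mab_{v,w}(\f + t f) = \int_X f\,\bigl(w(m_{\Om_\f}) - S_v(\Om_\f)\bigr)\MA_v(\f),
$$
which is precisely the first assertion.

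For the second assertion, suppose $\f \in \cH^T$ is a critical point of $\mab_{v,w}$. Taking $f$ to range over $C^\infty(X)^T$, the vanishing of the first variation gives that the $T$-invariant signed measure $(w(m_{\Om_\f}) - S_v(\Om_\f))\MA_v(\f)$ annihilates every $T$-invariant smooth function. Since the density $(w(m_{\Om_\f}) - S_v(\Om_\f))v(m_{\Om_\f})$ is itself $T$-invariant and smooth, testing against it (or equivalently against its truncations) shows that it vanishes identically on $X$. The standing assumption $v > 0$ on the moment polytope $P = P_{\Om_\f}$ (cf.~Lemma~\ref{lem:polyinde}) implies $v(m_{\Om_\f}) > 0$ pointwise, so this forces $S_v(\Om_\f) = w(m_{\Om_\f})$, i.e.\ $\Om_\f$ is $(v,w)$-cscK in the sense of Definition~\ref{defi:cscK}; the converse implication is immediate from the same formula for the first variation.

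I do not expect any step to be a serious obstacle: the decomposition is built into the definition~\eqref{equ:MCT}, Lemma~\ref{lem:mabscal} does the differential-geometric heavy lifting, and the new contribution reduces to the tautology $\MA_{vw} = w(m_{\Om_\cdot})\,\MA_v$ together with the Euler--Lagrange property of $\en_{vw}$. The only minor care point is the pointwise vanishing argument in the final step, which must use the $T$-invariance of the integrand to match the test-function space $C^\infty(X)^T$ appearing in~\eqref{equ:wen}; this is handled automatically because $S_v(\Om_\f)$ and $w(m_{\Om_\f})$ are both $T$-invariant.
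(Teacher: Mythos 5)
Your proposal is correct and follows essentially the same route as the paper, which states the proposition as an immediate consequence of Lemma~\ref{lem:mabscal} together with the definition $\mab_{v,w}=\ent_v+\enR_v+\en_{vw}$ and the identity $\MA_{vw}(\f)=w(m_{\Om_\f})\MA_v(\f)$. Your additional care in the critical-point step (testing the $T$-invariant density against itself, using $v>0$ on $P$) is a valid way to fill in the detail the paper leaves implicit.
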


\begin{rmk} By~\eqref{equ:SvLah}, $S_v(\Om_\f)\MA_v(\f)=S_v^{\Lah}(\Om_\f)\om_\f^n$, and Proposition~\ref{prop:mabder} is thus equivalent to~\cite[Theorem~5]{Lah}.
\end{rmk}

\begin{rmk}\label{rmk:invvol} In~\cite{AJL}, the volume form $\nu_X=\om^n$ is used in the definition of the weighted Mabuchi energy. Any other volume form $\nu_X$ can be used as well, as Lemma~\ref{lem:mabscal} implies that $\ent_v(\f)+\enR_v(\f)$, and hence $\mab_{v,w}(\f)$, only depend on $\nu_X$ by an overall additive constant. This can also be directly checked using Lemma~\ref{lem:twEL}, see Lemma~\ref{lem:indep} below. 
\end{rmk}

%
%
\subsection{The relative weighted Mabuchi energy}\label{sec:relmab}

\subsubsection{Invariance properties}
Assume, as above, that $v,w$ are smooth functions on $\ft^\vee$, with $v>0$ on the moment polytope $P$. As recalled in~\S\ref{sec:func}, the translation invariance of 
$$
\cH^T\ni\f\mapsto\mab'_{v,w}(\f)=\left(w(m_{\Om_\f})-S_v(\om_\f)\right)\MA_v(\f)
$$ 
implies that
$$
\int_X\mab'_{v,w}(\f)=\int_X\mab'_{v,w}(0)
$$
is independent of $\f$, and yields the translation equivariance property
\begin{equation}\label{equ:mabtrans}
\mab_{v,w}(\f+c)=\mab_{v,w}(\f)+c\int_X\mab'_{v,w}(0),\quad \f\in\cH^T,\,c\in\R. 
\end{equation} 
On the other hand, \eqref{equ:invOm} implies that $\mab'_{v,w}$ is invariant under the right-action of $\Aut_0^T(X)$ on $\cH^T$, which yields the quasi-invariance property
\begin{equation}\label{equ:quasimab}
\mab_{v,w}(\f^g)-\mab_{v,w}(\p^g)=\mab_{v,w}(\f)-\mab_{v,w}(\p).
\end{equation}
for all $\f,\p\in\cH^T$ and $g\in\Aut_0^T(X)$. Futhermore, $\mab_{v,w}$ is $T_\C$ -invariant iff the Futaki invariant
$$
\Fut_{v,w}(\xi)=\int_X\dot\tau_\xi\mab'_{v,w}(0)
$$
vanishes for all $\xi\in\ft_\C$ (we recall the notation $\dot\tau_\xi:=\frac{d}{dt}\big|_{t=0}\tau_{\exp(t\xi)}$). Since $T$ acts trivially on $\cH^T$, the Futaki invariant vanishes for $\xi\in\ft\subset\ft_\C$, while~\eqref{equ:mtau} yields
$$
\Fut_{v,w}(J\xi)=-\int_X m_\Om^\xi\mab'_{v,w}(0)+\langle\a,\xi\rangle\int_X \mab'_{v,w}(0), 
$$
where $\a\in\ft^\vee$ is such that the moment map $m_\Om-\a$ is centered. We conclude: 

\begin{lem}\label{lem:mabinv} The following properties are equivalent: 
\begin{itemize}
\item[(i)] $\mab_{v,w}$ is translation invariant and $T_\C$-invariant;
\item[(ii)] for all affine functions $\ell=\xi+c\in\ft\oplus\R$ on $\ft^\vee$ we have $\int_X\ell(m_\Om)\mab'_{v,w}(0)=0$, \ie
$$
\int_X\ell(m_\Om)w(m_\Om)\MA_v(0)=\int_X\ell(m_\Om) S_v(\Om)\MA_v(0). 
$$ 
\end{itemize}
\end{lem}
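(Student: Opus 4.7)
The plan is to read off the equivalence directly from the general ``Futaki formalism'' of \S\ref{sec:func} and the three formulas already compiled in the paper: the translation equivariance \eqref{equ:mabtrans}, the quasi-invariance \eqref{equ:quasimab}, and the cocycle identity \eqref{equ:mtau}. Nothing more than bookkeeping is required; the main subtlety is that $m_\Om$ need not be centered, so that Lemma~\ref{lem:poly}~(ii) only applies up to a translation in $\ft^\vee$.

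First, by \eqref{equ:mabtrans}, $\mab_{v,w}$ is translation invariant if and only if $\int_X\mab'_{v,w}(0)=0$, which is exactly condition (ii) for the constant affine function $\ell\equiv 1$. Next, I invoke the quasi-invariance \eqref{equ:quasimab}, which (as recalled in~\S\ref{sec:func}) means that $g\mapsto \chi_{v,w}(g):=\mab_{v,w}(\f^g)-\mab_{v,w}(\f)$ is a well-defined group character on $\Aut_0^T(X)$, whose differential is the Futaki character $\Fut_{v,w}$ on the Lie algebra. So $\mab_{v,w}$ is $T_\C$-invariant iff $\Fut_{v,w}\equiv 0$ on $\ft_\C$.

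Since $\Om$ is $T$-invariant, every element $t\in T$ satisfies $t^\star\om=\om$, so $\tau_t$ is a constant and the normalization $\en(\tau_t)=0$ forces $\tau_t=0$; combined with $t^\star\f=\f$ for $\f\in\cH^T$, this gives $\f^t=\f$, and hence $\Fut_{v,w}$ automatically vanishes on $\ft$. It thus remains to analyze $\Fut_{v,w}$ on $J\ft$. Pick $\xi\in\ft$ and write $\a\in\ft^\vee$ for the shift such that $m_\Om-\a$ is centered. Lemma~\ref{lem:poly}~(ii) applied to the centered moment gives $\dot\tau_{J\xi}=-(m_\Om^\xi-\langle\a,\xi\rangle)$, so that
\begin{equation*}
\Fut_{v,w}(J\xi)=\int_X\dot\tau_{J\xi}\,\mab'_{v,w}(0)=-\int_X m_\Om^\xi\,\mab'_{v,w}(0)+\langle\a,\xi\rangle\int_X\mab'_{v,w}(0),
\end{equation*}
which is the formula displayed before the statement.

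Conjunction of the two vanishings then reads: $\int_X \ell(m_\Om)\,\mab'_{v,w}(0)=0$ for every affine $\ell=\xi+c\in\ft\oplus\R$. Indeed, given (i), apply the above to $c=1,\xi=0$ (translation invariance) and then to the linear part $\xi$ (using that the second term in the expression for $\Fut_{v,w}(J\xi)$ already vanishes), to recover (ii); conversely, given (ii), taking $\ell=1$ yields translation invariance, and taking $\ell=\xi$ yields $\Fut_{v,w}(J\xi)=0$, hence (i). Substituting the explicit shape $\mab'_{v,w}(0)=(w(m_\Om)-S_v(\Om))\MA_v(0)$ from Proposition~\ref{prop:mabder} puts condition (ii) in the form stated. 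The only potential pitfall is the shift $\a$; but it enters (ii) only through the constant part of $\ell$ and therefore drops out once translation invariance is included, which is why it is harmless to write the condition directly in terms of $m_\Om$ rather than the centered moment.
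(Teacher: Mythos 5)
Your argument is correct and is essentially identical to the paper's: the paper derives the lemma from exactly the same ingredients (translation equivariance \eqref{equ:mabtrans}, quasi-invariance \eqref{equ:quasimab} and the resulting Futaki character, automatic vanishing on $\ft$, and the formula for $\Fut_{v,w}(J\xi)$ via \eqref{equ:mtau}) in the discussion immediately preceding the statement. Your extra remark about the shift $\a$ dropping out once translation invariance is imposed is the same observation the paper encodes by keeping the $\langle\a,\xi\rangle\int_X\mab'_{v,w}(0)$ term in its displayed formula.
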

\subsubsection{Extremal function and relative Mabuchi energy}
From now on, we assume $w>0$ on $P$. Extending~\cite{FM}, we introduce as in~\cite{Lah}: 

\begin{defi} The \emph{weighted Futaki--Mabuchi pairing} on the space $\ft\oplus\R$ of affine functions $\ell$ on $\ft^\vee$ is defined by setting
$$
\langle\ell,\ell'\rangle:=\int_X\ell(m_\Om)\ell'(m_\Om) w(m_\Om)\MA_v(0)=\int_X\ell(m_\Om)\ell'(m_\Om) \MA_{vw}(0). 
$$
\end{defi}

\begin{lem}\label{lem:FM} The weighted Futaki--Mabuchi pairing is positive definite.
\end{lem}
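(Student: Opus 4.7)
The strategy is to verify positive semidefiniteness and non-degeneracy separately. For the first, I would observe that
$$
w(m_\Om)\,\MA_v(0) \;=\; v(m_\Om)\,w(m_\Om)\,\om^n
$$
is a smooth, everywhere strictly positive measure on $X$: by assumption $v, w > 0$ on the moment polytope $P = m_\Om(X)$, and $\om^n$ is the volume form of a K\"ahler metric. This immediately gives
$$
\langle \ell, \ell\rangle \;=\; \int_X \ell(m_\Om)^2\, v(m_\Om) w(m_\Om)\, \om^n \;\ge\; 0
$$
for every affine function $\ell \in \ft \oplus \R$, with equality forcing $\ell \circ m_\Om \equiv 0$ on $X$ by continuity of the integrand.

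For non-degeneracy, I would therefore need to show that an affine function on $\ft^\vee$ vanishing on $P$ must be identically zero, equivalently that $P$ affinely spans $\ft^\vee$ (has non-empty relative interior in $\ft^\vee$). This is the only substantive step. Since $T \subset \Autr(X) \subset \Aut(X)$, the $T$-action on $X$ is faithful, hence effective. For a Hamiltonian action of a torus on a connected symplectic manifold with moment map $m_\Om$, the annihilator in $\ft$ of $\im(dm_\Om|_x) \subset \ft^\vee$ is the Lie algebra of the $T$-stabilizer of $x$, so effectiveness forces $dm_\Om$ to be surjective at some (in fact generic) point. Hence $P$ has non-empty interior in $\ft^\vee$, no non-zero affine function can vanish on it, and the proof is complete.

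The main (mild) obstacle is the last effectiveness argument; everything else is a direct unwinding of the definition.
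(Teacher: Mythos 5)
Your proof is correct, and the semipositivity part is identical to the paper's. For the non-degeneracy, however, you take a genuinely different route. The paper argues directly and pointwise: if $\ell=\xi+c$ satisfies $\ell(m_\Om)\equiv 0$, then $m_\Om^\xi\equiv -c$ is constant, so $i(\xi)\om=-\de m_\Om^\xi=0$ everywhere, whence $|\xi|^2=\om(\xi,J\xi)=0$ and $\xi=0$ (using only that $\om>0$ is K\"ahler), and then $c=0$. You instead reduce to showing that the moment polytope $P=m_\Om(X)$ affinely spans $\ft^\vee$, and deduce this from effectiveness of the $T$-action via the fact that the annihilator of $\im(\de m_\Om|_x)$ is the stabilizer algebra of $x$. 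This is valid, but the step ``effectiveness forces $\de m_\Om$ to be surjective at some (generic) point'' is exactly the substantive content, and as written you only assert it: passing from effectiveness of the action to the existence of a point with discrete stabilizer requires the principal orbit type theorem (or the finiteness of orbit types) for the compact torus action, not merely the pointwise identification of stabilizer algebras. The paper's computation sidesteps this entirely --- constancy of $m_\Om^\xi$ kills $i(\xi)\om$ at \emph{every} point, and non-degeneracy of $\om$ does the rest --- so it is both shorter and more elementary. Your approach does buy a slightly stronger geometric statement (non-empty interior of $P$), which the paper only records later via Atiyah--Guillemin--Sternberg in Lemma~\ref{lem:poly}, but for the purposes of this lemma the direct argument is preferable.
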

\begin{proof} The pairing is clearly symmetric and semipositive. Furthermore, any $\ell=\xi+c\in\ft\oplus\R$ in the kernel satisfies $\ell(m_\Om)=0$, \ie $m_\Om^\xi=-c$ on $X$. Thus $-d m_\Om^\xi=i(\xi)\om=0$, and hence $|\xi|^2=\om(\xi,J\xi)=0$, which yields $\xi=0$, and then $c=0$ as well.  
\end{proof}
We may thus introduce: 

\begin{defi}\label{defi:Mrel} Assume $v,w>0$ on $P$. Then: 
\begin{itemize}
\item[(i)] the \emph{weighted extremal function} is defined as the unique affine function $\ell^\ext=\ell^\ext_{\Om,v,w}$ on $\ft^\vee$ such that 
\begin{equation}\label{equ:ellext}
\langle\ell,\ell^\ext\rangle=\int_X\ell(m_\Om)S_v(\Om)\MA_v(0)
\end{equation}
for all affine functions $\ell$; 
\item[(ii)] $\Om$ is called a \emph{$(v,w)$-extremal metric} if it is $(v,w\ell^\ext)$-cscK, \ie
$$
S_v(\Om)=w(m_\Om)\ell^\ext(m_\Om).
$$
\end{itemize}
\end{defi}
By construction, $\ell^\ext$ is the unique affine function such $\mab_{v,w\ell^\ext}$ satisfies the equivalent conditions of Lemma~\ref{lem:mabinv}. The functional 
$$
\mab^\rel_{v,w}:=\mab_{v,w\ell^\ext}\colon\cH^T\to\R
$$
is called the \emph{relative weighted Mabuchi energy}. Explicitly, 
$$
\mab^\rel_{v,w}(\f)=\ent_v(\f)+\enR_v(\f)+\en_{v w\ell^\ext}(f). 
$$
Note that $\f\in\cH^T$ is a critical point of $\mab_{v,w}^\rel$ iff $\Om_\f$ is $(v,w)$-extremal. 

\begin{exam} For any given $\f\in\cH^T$, $\Om_\f$ is $(v,w)$-cscK iff it is $(v,w)$-extremal and the equivalent conditions of Lemma~\ref{lem:mabinv} hold. In that case, $\ell^\ext=1$, and hence $\mab^\rel_{v,w}=\mab_{v,w}$. 
\end{exam}


%
%
\subsection{Extension to $\cE^1$}\label{sec:mabext}

In what follows, we extend the weighted Monge--Amp\`ere operator and weighted energy functionals to the space
$$
\cET:=\cE^1(X,\om)^T
$$
of $T$-invariant potentials of finite energy. Our treatment provides a slightly simpler approach compared to~\cite{BWN,HL,AJL}, along with more explicit estimates that are crucial for our later purposes. 

\subsubsection{Weighted Monge--Amp\`ere operator} We first recall:  

\begin{lem}\label{lem:ETcomp} Each $\f\in\cET$ can be written as the limit of a decreasing sequence in $\cH^T$. In particular, $\cET$ coincides with the closure of $\cH^T$ in $\cE^1$. 
\end{lem}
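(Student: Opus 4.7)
The plan is to reduce to the known non-equivariant case and then symmetrize via averaging over $T$. Recall the well-known fact (a consequence of Demailly's regularization, or of the Blocki--Kołodziej smoothing on compact Kähler manifolds) that any $\f\in\PSH(\om)$ is the pointwise decreasing limit of a sequence $\psi_k\in\cH$ of smooth Kähler potentials. Starting from such a sequence for our given $\f\in\cET$, the first step is to produce $T$-invariant approximants by setting
\[
\f_k(x):=\int_T(t^\star\psi_k)(x)\,d\mu(t),
\]
where $\mu$ is the normalized Haar measure on the compact torus $T$.

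I then need to verify that $\f_k\in\cH^T$ and that $\f_k\searrow\f$. $T$-invariance of $\f_k$ is immediate from the invariance of $\mu$, and smoothness follows because $\psi_k$ is smooth and $T$ acts smoothly. Since $\om$ is $T$-invariant, each $t^\star\psi_k$ satisfies $\om+\ddc(t^\star\psi_k)=t^\star(\om+\ddc\psi_k)>0$, so averaging yields $\om+\ddc\f_k>0$, \ie $\f_k\in\cH^T$. The sequence is monotone: if $\psi_{k+1}\le\psi_k$ pointwise then $t^\star\psi_{k+1}\le t^\star\psi_k$ for every $t\in T$, which is preserved under averaging. To identify the limit, fix $x\in X$; since $t\mapsto t^\star\psi_k(x)$ is uniformly bounded above (the $\psi_k$'s are decreasing and $X$ is compact), monotone convergence gives
\[
\lim_{k\to\infty}\f_k(x)=\int_T\lim_{k\to\infty}\psi_k(tx)\,d\mu(t)=\int_T\f(tx)\,d\mu(t)=\f(x),
\]
using the $T$-invariance of $\f$ in the last step.

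Finally, to deduce the second assertion, recall from~\S\ref{sec:E1} that $\en\colon\PSH(\om)\to\R\cup\{-\infty\}$ is continuous along decreasing sequences. Hence $\en(\f_k)\to\en(\f)$, so $\f_k$ converges to $\f$ not merely weakly but strongly in $\cE^1$, showing that $\cET$ is contained in the $\cE^1$-closure of $\cH^T$. Conversely, any strong limit of a sequence in $\cH^T$ lies in $\cE^1$ by definition, and it is $T$-invariant since strong convergence implies $L^1$-convergence and $T$-invariance passes to the $L^1$-limit of $T$-invariant functions. This gives the converse inclusion and completes the proof.

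The main obstacle is essentially bookkeeping: one has to make sure that the averaging operation commutes with the relevant operations (decreasing limits, positivity of $\om+\ddc(\cdot)$, $T$-invariance) and that the limit identification is justified, but all of this is straightforward given that $T$ is compact and acts smoothly while preserving $\om$.
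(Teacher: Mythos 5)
Your proof is correct and follows essentially the same route as the paper: regularize by a decreasing sequence in $\cH$ via Blocki--Kołodziej (the paper's reference \cite{BK}) and then average over $T$ with respect to Haar measure, using that $T$ preserves $\om$ so the average stays in $\cH^T$ and the decreasing limit is unchanged by $T$-invariance of $\f$. The extra verifications you spell out (positivity of the averaged form, monotone convergence for the limit identification, strong convergence via continuity of $\en$ along decreasing sequences) are exactly the implicit steps in the paper's terser argument.
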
 
\begin{proof} By~\cite{BK}, $\f$ can be written as the limit of a decreasing sequence $(\f_j)$ in $\cH$. For each $j$, the average $\tf_j:=\int_T \f_j^\tau d\tau$  with respect to the Haar measure of $T$ lies in $\cH^T$, and $(\tf_j)$ is a decreasing sequence that also converges to $\f$, since the latter is $T$-invariant. 
\end{proof}

\begin{prop}\label{prop:MAvE1} For any weight $v\in C^\infty(\ft^\vee)$, the weighted Monge--Amp\`ere operator 
$$
\cH^T\ni\f\mapsto\MA_v(\f)=v(m_{\Om_\f})\om_\f^n
$$
admits a unique continuous extension to $\cET$ with values in the space of $T$-invariant signed Radon measures. For each $\f\in\cET$, the measure $\MA_v(\f)$ further integrates all functions in $\cE^1$, and the H\"older estimate
\begin{equation}\label{equ:MAvhold}
\left|\int\tau(\MA_v(\f)-\MA_v(\p))\right|\lesssim\|v\|_{C^0(P)} \dd_1(\f,\p)^{1/2}\max\{\dd_1(\f,0),\dd_1(\p,0),\dd_1(\tau,0)\}^{1/2}
\end{equation}
holds for all $\f,\p\in\cET$ and $\tau\in\cE^1$. 
\end{prop}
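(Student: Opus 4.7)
My plan is to extend $\MA_v$ from $\cH^T$ to $\cET$ by first approximating the weight $v$ uniformly on the compact moment polytope by polynomials (Stone--Weierstrass), and then reducing the polynomial case to mixed Monge--Amp\`ere currents of the form treated in~\S\ref{sec:E1}. By Lemma~\ref{lem:polyinde}, the moment polytope $m_{\Om_\f}(X) = P$ is independent of $\f \in \cH^T$, so $\MA_v(\f)$ depends only on $v|_P$, and I can approximate $v$ uniformly on $P$ by polynomials $v_\e$ with $\|v - v_\e\|_{C^0(P)} \le \e$. For a polynomial $v$, multilinearity reduces the problem to weights of the form $v(m) = \ell_1(m) \cdots \ell_k(m)$ with $\ell_i = \xi_i + c_i$ affine.

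Writing $\ell_i(m_{\Om_\f}) = h_i + m_\f^{\xi_i}$ with $h_i := m_\Om^{\xi_i} + c_i \in C^\infty(X)^T$ and expanding, $\MA_v(\f)$ becomes a finite sum of terms of the shape $\bigl(\prod_{i \notin I} h_i\bigr)\bigl(\prod_{i \in I} m_\f^{\xi_i}\bigr)\om_\f^n$ indexed by $I \subset \{1,\dots,k\}$. The crux is to show, using the identity~\eqref{equ:dcf} iteratively and integrating by parts against any smooth test function, that each such term can be converted into a finite linear combination of integrals of the form $\int \beta \wedge \om_\f^{p_1}\wedge \om^{p_2}$, with $\beta$ a smooth form built from $\om$, the $h_i$'s, and the contractions $i(\xi_j)\om$. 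These mixed Monge--Amp\`ere expressions extend continuously from $\cH^T$ to $\cET$ via the Bedford--Taylor/BEGZ formalism recalled in~\S\ref{sec:E1}, and the H\"older bound~\eqref{equ:MAvhold} follows termwise from Theorem~\ref{thm:MAcont} and Corollary~\ref{cor:MAhold}. Density of $\cH^T$ in $\cET$ (Lemma~\ref{lem:ETcomp}) together with a uniform Cauchy argument on $\dd_1$-balls then delivers the continuous extension of $\MA_v$ to $\cET$ and the H\"older estimate in full generality; integrability of $\MA_v(\f)$ against unbounded $\tau \in \cE^1$ comes by truncation $\tau^j := \max\{\tau,-j\}$ and passage to the limit.

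The main obstacle I anticipate is twofold. First, the bookkeeping of the iterated integration by parts: applying~\eqref{equ:dcf} to a factor $m_\f^{\xi_i}$ produces a wedge with $d^c\f$ and with $i(\xi_i)\om_\f = i(\xi_i)\om - d m_{\Om_\f}^{\xi_i}$, and the $d m_\f^{\xi_i}$ piece reintroduces derivatives of $\f$ that behave badly for unbounded $\f\in\cE^1$; the right scheme appears to be to integrate each emerging $d^c\f$ by parts onto the test function or smooth factors before expanding the next $i(\xi)\om_\f$, so that only genuine Bedford--Taylor products of $\om_\f$ with smooth data survive. Second, securing the \emph{linear} dependence of the constant on $\|v\|_{C^0(P)}$ requires some care, since a naive expansion produces constants depending on the polynomial coefficients of $v$; the reduction must be organized so that the final bound factors through the sup norm on $P$, which should follow by controlling the mass of the approximating signed measures $\MA_{v-v_\e}(\f)$ uniformly in $\f$ on $\dd_1$-balls via Corollary~\ref{cor:DH} and a uniform total-variation estimate.
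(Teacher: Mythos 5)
Your strategy (Stone--Weierstrass on $P$, reduction to products of affine weights, iterated use of \eqref{equ:dcf}) is genuinely different from the paper's, but as written it has two gaps, one structural and one quantitative. The structural gap is that the core reduction is only announced, not performed: for a weight $\ell_1\cdots\ell_k$ the terms $\bigl(\prod_{i\in I}m_\f^{\xi_i}\bigr)\om_\f^n$ are polynomial in $\dc\f$, and applying \eqref{equ:dcf} to one factor produces $\dc\f\wedge\om_\f^{n-1}\wedge i(\xi)\om_\f$ with $i(\xi)\om_\f=-\de(m_\Om^\xi+m_\f^\xi)$, so that $\de m_\f^\xi=\de\,\dc\f(\xi)$ reintroduces second derivatives of $\f$; you acknowledge this but only gesture at "the right scheme". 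Nothing in \S\ref{sec:E1} extends such gradient-polynomial expressions to unbounded $\f\in\cE^1$, so without an explicit closed-form reduction to genuine Bedford--Taylor products $\int\b\wedge\om_{\f_1}\winter\om_{\f_n}$ with smooth $\b$, the extension step is unproved. The quantitative gap is the constant in \eqref{equ:MAvhold}: the polynomial case yields a constant depending on the coefficients of $v_\e$, which blows up as $\e\to0$, while your error control on $\MA_{v-v_\e}$ only gives an additive term $O(\e R)$ with no factor $\dd_1(\f,\p)^{1/2}$. Combining the two gives uniform continuity on $\dd_1$-balls (enough for existence of the extension) but not the stated H\"older estimate with constant $\|v\|_{C^0(P)}$ and exponent $1/2$ --- and that precise form is what is later fed into Lemma~\ref{lem:strongcont} and Lemma~\ref{lem:entlsc}, where uniformity over the varying forms $\Om_j$ is essential.

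For comparison, the paper's proof avoids decomposing $v$ altogether. After reducing to $v\ge0$ via $v=(v+C)-C$ with $C=\|v\|_{C^0(P)}$, it differentiates $\MA_v(\f_t)$ along the segment $\f_t=t\f+(1-t)\p$ and invokes the symmetry identity of Lemma~\ref{lem:symm}, whose whole point is that the $\langle v'(m),m_f\rangle\om_\f^n$ correction in \eqref{equ:twMA} exactly cancels the derivative of $v(m_{\Om_{\f_t}})$ arising from integration by parts, leaving the single clean term $-n\int v(m_{\Om_{\f_t}})\,\de\tau\wedge\dc(\f-\p)\wedge\om_{\f_t}^{n-1}$ in which $v$ enters only through its sup norm as a nonnegative density. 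Cauchy--Schwarz for the semipositive forms $(f,g)\mapsto\int a_p\,\de f\wedge\dc g\wedge\om_\f^p\wedge\om_\p^{n-1-p}$ then gives \eqref{equ:MAvhold} exactly as in Corollary~\ref{cor:MAhold}, and the extension to $\cET$ follows from mass conservation (Corollary~\ref{cor:DH}) plus density. If you want to salvage your route, the equivariant cancellation encoded in Lemma~\ref{lem:symm} is the ingredient you are missing; with it, the polynomial approximation becomes unnecessary.
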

By continuity, the equivariance property~\eqref{equ:invMA} remains valid on $\cET$. 

\begin{proof} Set $C:=\|v\|_{C^0(P)}$. Writing $v=(v+C)-C$, we may assume without loss $v\ge 0$ on $P$, and hence $\MA_v(\f)\ge 0$ for $\f\in\cH^T$. We first establish~\eqref{equ:MAvhold} for $\f,\p\in\cH^T$ and $\tau\in\cH$. For $t\in [0,1]$ set $\f_t:=t\f+(1-t)\p$. By  Lemma~\ref{lem:symm}, we get
$$
\int\tau(\MA_v(\f)-\MA_v(\p))=\int_0^1 dt\int_X\tau\frac{d}{dt}\MA_v(\f_t)
$$
$$
=-n\int_0^1 dt\int_X v(m_{\Om_{\f_t}})\de\tau\wedge\dc (\f-\p)\wedge\om_{\f_t}^{n-1}
$$
$$
=-\sum_{p=0}^{n-1}\int_X a_p\,\de\tau\wedge\dc (\f-\p)\wedge\om_\f^p\wedge\om_\p^{n-1-p},
$$
$$
a_p= \binom{n-1}{p}\int_0^1 t^p(1-t)^{n-1-p}v(m_{\Om_{\f_t}})dt\in C^\infty(X). 
$$
Since $v\ge 0$, the bilinear forms
$$
(f,g)\mapsto\int a_p\,\de f\wedge\dc g\wedge\om_\f^p\wedge\om_\p^{n-1-p}
$$
are semipositive, and the Cauchy--Schwarz inequality thus yields 
$$
\left|\int_X a_p\,\de\tau\wedge\dc (\f-\p)\wedge\om_\f^p\wedge\om_\p^{n-1-p}\right|\le\left(\int_X a_p\,\de\tau\wedge\dc\tau\wedge\om_\f^p\wedge\om_\p^{n-1-p}\right)^{1/2}
$$
$$
\times\left(\int_X a_p\,\de (\f-\p)\wedge\dc (\f-\p)\wedge\om_\f^p\wedge\om_\p^{n-1-p}\right)^{1/2}. 
$$
By Lemma~\ref{lem:poly}, we have $m_{\Om_{\f_t}}(X)=P$ for all $t$, thus $0\le v(m_{\Om_{\f_t}})\le C$, and hence $0\le a_p\lesssim C$ for all $p$ and $t$. We have thus proved 
$$
\left|\int\tau(\MA_v(\f)-\MA_v(\p))\right|\lesssim C\sum_{p=0}^{n-1} \left(\int_X\de\tau\wedge\dc\tau\wedge\om_\f^p\wedge\om_\p^{n-1-p}\right)^{1/2}
$$
$$
\times\left(\int_X \de (\f-\p)\wedge\dc (\f-\p)\wedge\om_\f^p\wedge\om_\p^{n-1-p}\right)^{1/2},
$$
and~\eqref{equ:MAvhold} now follows just as in the proof of Corollary~\ref{cor:MAhold}. 

Since $v\ge 0$, $\MA_v(\f)$ is a positive Radon measure, of mass independent of $\f\in\cH^T$ (see Corollary~\ref{cor:DH}). In order to extend $\MA_v$ to $\cET$, it therefore suffices to extend $\f\mapsto\int_X f\,\MA_v(\f)$ for any $f\in C^\infty(X)$. Since $\om$ is K\"ahler, $f$ can be written as a linear combination of functions in $\cH$, and~\eqref{equ:MAvhold} thus shows that $\f\mapsto\int_Xf\,\MA_v(\f)$ is uniformly continuous on $\cH^T$ with respect to the Darvas metric $\dd_1$, and we conclude by Lemma~\ref{lem:ETcomp}.

Finally~\eqref{equ:MAvhold} remains valid for all $\f,\p\in\cET$ and $\tau\in\cH$, by approximation by functions in $\cH^T$, and then for all $\tau$ in $\cE^1$, by monotone approximation by functions in $\cH$. This completes the proof. 
\end{proof}

\subsubsection{Weighted functionals} We next consider a equivariant form $\Theta=(\theta,m_\Theta)$. 
We choose a norm $\|\cdot\|$ on $\ft$, and use the same notation for the dual norm on $\ft^\vee$.

\begin{prop}\label{prop:wenE1} For any $v\in C^\infty(\ft^\vee)$, the functionals $\en_v$, $\en_v^\Theta$ admit unique continuous extensions
$$
\en_v\colon\cET\to\R,\quad \en_v^\Theta\colon\cET\to\R, 
$$
which further respectively satisfy the Lipschitz estimate
\begin{equation}\label{equ:wenlip}
\left|\en_v(\f)-\en_v(\p)\right|\le A(v)\,\dd_1(\f,\p)
\end{equation}
and the H\"older estimate
\begin{equation}\label{equ:twenhold}
\left|\en_v^\Theta(\f)-\en_v^\Theta(\p)\right|\lesssim B(v,\Theta)\,\dd_1(\f,\p)^\a\max\{\dd_1(\f,0),\dd_1(\p,0)\}^{1-\a},
\end{equation}
for all $\f,\p\in\cET$, where $\a:=2^{-n}$ and 
$$
A(v):=\sup_P|v|,\quad B(v,\Theta):=\|\theta\|_\om\sup_P|v|+(\sup_X\|m_\Theta\|)(\sup_P\|v'\|). 
$$
\end{prop}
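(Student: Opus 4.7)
My plan is to extend both $\en_v$ and $\en_v^\Theta$ from $\cH^T$ to $\cET$ by uniform continuity on $\dd_1$-bounded subsets, using Lemma~\ref{lem:ETcomp} for density. Both bounds rest on the Euler--Lagrange formulas~\eqref{equ:wenexp} and~\eqref{equ:twenexp} applied to the linear interpolation $\f_t:=t\f+(1-t)\p$, together with the key uniform control $m_{\Om_{\f_t}}(X)=P$ for all $t\in[0,1]$ (by Corollary~\ref{cor:DH} and Lemma~\ref{lem:polyinde}), which yields the pointwise bounds $|v(m_{\Om_{\f_t}})|\le A(v)$ and $\|v'(m_{\Om_{\f_t}})\|\le\sup_P\|v'\|$ on $X$ uniformly in $t$.

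For $\en_v$: on $\cH^T$ the pointwise bound and~\eqref{equ:wenexp} already give
\begin{equation*}
|\en_v(\f)-\en_v(\p)|\le A(v)\int_0^1 dt\int|\f-\p|\,\om_{\f_t}^n,
\end{equation*}
which via Theorem~\ref{thm:MAcont} is uniformly Hölder on $\dd_1$-bounded sets and thus extends $\en_v$ continuously to $\cE^{1,T}$ (the identity~\eqref{equ:wenexp} itself then extends, by continuity of the weighted Monge--Amp\`ere operator in Proposition~\ref{prop:MAvE1}). To upgrade to the sharp Lipschitz bound~\eqref{equ:wenlip}, I first handle the monotone case: for $\f\ge\p$ in $\cE^{1,T}$ the absolute value disappears, so
\begin{equation*}
|\en_v(\f)-\en_v(\p)|\le A(v)\int_0^1 dt\int(\f-\p)\om_{\f_t}^n=A(v)(\en(\f)-\en(\p))=A(v)\,\dd_1(\f,\p),
\end{equation*}
where the middle equality is~\eqref{equ:wenexp} applied with $v\equiv 1$. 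For arbitrary $\f,\p\in\cE^{1,T}$ I reduce to this case via the $T$-invariant rooftop envelope $\tau:=\env(\f,\p)\in\cE^{1,T}$, which satisfies $\tau\le\f,\p$ and $\dd_1(\f,\p)=\dd_1(\f,\tau)+\dd_1(\tau,\p)$ by~\eqref{equ:Darvas}, so adding the two monotone bounds gives~\eqref{equ:wenlip}.

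For $\en_v^\Theta$: the explicit formula~\eqref{equ:twMA} splits $\MA_v^\Theta(\f_t)$ as $v(m_{\Om_{\f_t}})n\theta\wedge\om_{\f_t}^{n-1}+\langle v'(m_{\Om_{\f_t}}),m_\Theta\rangle\om_{\f_t}^n$. The second summand contributes at most $(\sup_X\|m_\Theta\|)(\sup_P\|v'\|)\int|\f-\p|\om_{\f_t}^n$ to $|\int(\f-\p)\MA_v^\Theta(\f_t)|$, bounded by Theorem~\ref{thm:MAcont} with exponent $\alpha=2^{-n}$ (using $\dd_1(\f_t,0)\lesssim\max\{\dd_1(\f,0),\dd_1(\p,0)\}$, which follows from Lemma~\ref{lem:mean} or directly from the triangle inequality). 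For the first summand, the pointwise domination $\pm\theta\le\|\theta\|_\om\,\om$ yields $|\theta\wedge\om_{\f_t}^{n-1}|\le\|\theta\|_\om\,\om\wedge\om_{\f_t}^{n-1}$ as measures, so $\int|\f-\p|\,\theta\wedge\om_{\f_t}^{n-1}$ reduces to a mixed Monge--Amp\`ere integral also controlled by Theorem~\ref{thm:MAcont} with the same exponent $\alpha=2^{-n}$. Summing the two contributions, integrating in $t$, and then extending to $\cE^{1,T}$ by density give~\eqref{equ:twenhold} with the claimed constant $B(v,\Theta)$.

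The main subtlety is securing the \emph{sharp} Lipschitz constant $A(v)$ in~\eqref{equ:wenlip} rather than merely a universal multiple: a direct application of Theorem~\ref{thm:MAcont} to the formula~\eqref{equ:wenexp} produces only a Hölder estimate of the twisted type, and isolating the exact constant $A(v)$ requires both the monotone reduction (which kills the absolute value inside the $t$-integral) and the rooftop-envelope decomposition, relying ultimately on the elementary identity $\int_0^1 dt\int(\f-\p)\om_{\f_t}^n=\en(\f)-\en(\p)$ for the unweighted energy.
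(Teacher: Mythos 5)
Your proof is correct, and for the H\"older estimate~\eqref{equ:twenhold} it coincides in substance with the paper's argument: the same splitting of $\MA_v^\Theta(\f_t)$ via~\eqref{equ:twMA}, the same pointwise domination $\pm\theta\le\|\theta\|_\om\,\om$, the same appeal to Theorem~\ref{thm:MAcont}, and the same uniform control of $m_{\Om_{\f_t}}$ through Lemma~\ref{lem:polyinde}. Where you genuinely diverge is the Lipschitz estimate~\eqref{equ:wenlip}. The paper obtains the sharp constant $A(v)$ by invoking Lemma~\ref{lem:wenE1}, i.e.\ Darvas's characterization of $\dd_1(\f,\p)$ as the infimum of the path lengths $\int_0^1 dt\int_X|\dot\f_t|\,\om_{\f_t}^n$ over smooth paths in $\cH^T$ (which rests on Chen's $\e$-geodesics); the bound $|v(m_{\Om_{\f_t}})|\le A(v)$ along any such path then gives~\eqref{equ:wenlip} on $\cH^T$ at once, and one concludes by density. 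You instead bootstrap: first extend $\en_v$ using the cruder H\"older bound, transport the Euler--Lagrange identity~\eqref{equ:wenexp} to $\cET$ by continuity of $\MA_v$ (Proposition~\ref{prop:MAvE1}), prove the sharp bound in the monotone case $\f\ge\p$ where $\dd_1(\f,\p)=\en(\f)-\en(\p)$ and the absolute value is harmless, and then split a general pair through the rooftop envelope using the additivity~\eqref{equ:Darvas}. This route is more elementary (no $\e$-geodesics) at the cost of one extra step, the extension of~\eqref{equ:wenexp} to $\cET$, which requires the joint strong continuity of $(\tau,\f)\mapsto\int\tau\,\MA_v(\f)$; this is indeed available from~\eqref{equ:MAvhold} together with the domination $\pm\MA_v(\f)\le A(v)\,\om_\f^n$, the latter persisting on $\cET$ by weak limits. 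One cosmetic slip: the inequality $\dd_1(\f_t,0)\lesssim\max\{\dd_1(\f,0),\dd_1(\p,0)\}$ does not follow \emph{directly from the triangle inequality} (convex combinations are not $\dd_1$-geodesics); it follows from the argument of Lemma~\ref{lem:mean}, whose proof adapts to unequal convex weights, or can be bypassed entirely by expanding $\om_{\f_t}^n$ binomially into the mixed terms $\om_\f^p\wedge\om_\p^{n-p}$ and applying Theorem~\ref{thm:MAcont} to each, as the paper does.
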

Here again, \eqref{equ:quasien}, \eqref{equ:quasitwen} remain valid on $\cET$, by continuity. 

\begin{lem}\label{lem:wenE1} For all $\f,\p\in\cH^T$ we have $\dd_1(\f,\p)=\inf\{\int_0^1dt\int_X|\dot\f_t|\om_{\f_t}^n\}$, where the infimum ranges over all smooth paths $(\f_t)_{t\in [0,1]}$ in $\cH^T$ joining $\f$ to $\p$.
\end{lem}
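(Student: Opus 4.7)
The statement is the $T$-equivariant version of Darvas's well-known Finsler length-metric description of $\dd_1$ on $\cH$, which asserts that $\dd_1(\f,\p)$ equals the infimum of $\int_0^1\int_X|\dot\f_t|\om_{\f_t}^n\,dt$ over all smooth paths in $\cH$ joining $\f$ to $\p$. Granted this, what remains is to check that the infimum is not increased when the competing paths are constrained to lie in the $T$-invariant subspace $\cH^T$.

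For the inequality $\dd_1(\f,\p)\le\int_0^1\int_X|\dot\f_t|\om_{\f_t}^n\,dt$ along an arbitrary smooth path in $\cH$, the standard route is to show that $s\mapsto\dd_1(\f_0,\f_s)$ is absolutely continuous with
$\bigl|\tfrac{d}{ds}\dd_1(\f_0,\f_s)\bigr|\le\int_X|\dot\f_s|\om_{\f_s}^n$,
and then integrate. This rests on differentiation of $\en$ along smooth paths, $\tfrac{d}{ds}\en(\f_s)=\int_X\dot\f_s\,\om_{\f_s}^n$, together with the rooftop description~\eqref{equ:Darvas} of $\dd_1$ in terms of $\en$ and $\env(\cdot,\cdot)$. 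This direction requires no $T$-invariance and gives $\dd_1(\f,\p)\le$ the infimum over paths in $\cH^T$.

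For the reverse inequality, I would approximate the weak $C^{1,\bar1}$-geodesic from $\f$ to $\p$ by smooth paths $(\f^\e_t)\subset\cH$ obtained from solutions of the $\e$-regularized homogeneous complex Monge--Amp\`ere equation on a cylinder $X\times\overline{\D\setminus\{0\}}$ with boundary data $\f,\p$. By Chen's theorem together with Darvas's work, the Finsler lengths $\int_0^1\int_X|\dot\f^\e_t|\om_{\f^\e_t}^n\,dt$ converge to $\dd_1(\f,\p)$ as $\e\to 0^+$. The only additional point in the $T$-equivariant setting is that, when $\f,\p\in\cH^T$, uniqueness of the smooth solution to the Dirichlet problem forces each $\f^\e_t$ to be $T$-invariant (replace $\f^\e_t$ by its $T$-average to compare), so the approximating paths lie in $\cH^T$, yielding the lower bound.

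\textbf{Main obstacle.} The genuinely hard input (existence and regularity of the $\e$-geodesics and convergence of their lengths) is Darvas's and Chen's; the only new issue in the equivariant statement is the $T$-invariance of these approximating paths, which follows at once from uniqueness of the regularized solutions.
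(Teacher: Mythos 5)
Your proposal is correct and follows essentially the same route as the paper: both reduce to Darvas's Finsler length description of $\dd_1$ via Chen's smooth $\e$-geodesics, and both observe that uniqueness of the solution to the regularized Monge--Amp\`ere Dirichlet problem forces the $\e$-geodesics joining $T$-invariant endpoints to be $T$-invariant, so the approximating paths lie in $\cH^T$.
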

\begin{proof} When $T$ is trivial, the result is proved in~\cite[Theorem~3.5]{Dar}, by showing that X.X.~Chen's $\e$-geodesics $(\f^\e_t)_{t\in [0,1]}$ joining $\f$ to $\p$, which are smooth for $\e>0$, satisfy $\dd_1(\f,\p)=\inf_{\e>0}\int_0^1 dt\int_X|\dot\f^\e_t|\om_{\f^\e_t}^n$. The same argument applies in the general case, since the $\e$-geodesic, being the unique solution of a complex Monge--Amp\`ere equation with boundary data induced by $\f,\p\in\cH^T$, is automatically $T$-invariant. 
\end{proof}

\begin{proof}[Proof of Proposition~\ref{prop:wenE1}] Arguing as in the proof of Proposition~\ref{prop:MAvE1}, it is enough to show~\eqref{equ:wenlip} and~\eqref{equ:twenhold} when $\f,\p\in\cH^T$ are K\"ahler potentials. Pick any smooth path $(\f_t)_{t\in [0,1]}$ in $\cH^T$ joining $\f$ to $\p$. Since $\en_v'(\f)=\MA_v(\f)=v(m_{\Om_{\f}})\om_\f^n$, we get
$$
\en_v(\f)-\en_v(\p)=\int_0^1 dt\int_X\dot\f_t v(m_{\Om_{\f_t}})\om_{\f_t}^n, 
$$
and hence 
$$
\left|\en_v(\f)-\en_v(\p)\right|\le A(v)\int_0^1 dt\int_X|\dot\f_t|\om_{\f_t}^n.
$$
Taking the infimum over $(\f_t)$ yields~\eqref{equ:wenlip}, by Lemma~\ref{lem:wenE1}. Next we consider the path $\f_t:=t\f+(1-t)\p$, and use~\eqref{equ:twenexp}, \eqref{equ:twMA} to get
\begin{align*}
\en_v^\Theta(\f)-\en_v^\Theta(\p)&=\int_0^1 dt\int_X(\f-\p)\left[v(m_{\Om_{\f_t}}) n\theta\wedge\om_{\f_t}^{n-1}+\langle v'(m_{\Om_{\f_t}}),m_\Theta\rangle\om_{\f_t}^n\right]\\
&=\sum_{p=0}^{n-1}\int_X a_p(\f-\p)\theta\wedge\om_\f^p\wedge\om_\p^{n-1-p}+\sum_{p=0}^n\int_X b_p (\f-\p)\om_\f^p\wedge\om_\p^{n-1-p}
\end{align*}
where $a_p,b_p\in C^\infty(X)$ are defined by 
$$
a_p:=n{n-1\choose p}\int_0^1t^p(1-t)^{n-1-p} v(m_{\Om_{\f_t}})dt,\quad b_p:={n\choose p}\int_0^1 t^p(1-t)^{n-p}\langle v'(m_{\Om_{\f_t}}),m_\Theta\rangle dt.
$$
Since $m_{\Om_{\f_t}}$ takes values in $P$, we have $|a_p|\lesssim \sup_P|v|$. Since $\pm\theta\le\|\theta\|_\om\om$, we also have
$$
\pm\theta\wedge\om_\f^p\wedge\om_\p^{n-1-p}\le\|\theta\|_\om\om\wedge\om_\f^p\wedge\om_\p^{n-1-p},
$$
which yields 
$$
\left|\int_X a_p(\f-\p)\theta\wedge\om_\f^p\wedge\om_\p^{n-1-p}\right|\lesssim\|\theta\|_\om(\sup_P|v|)\int_X|\f-\p|\,\om\wedge\om_\f^p\wedge\om_\p^{n-1-p}
$$
$$
\lesssim B(v,\Theta) \dd_1(\f,\p)^\a\max\{\dd_1(\f,0),\dd_1(\p,0\}^{1-\a},
$$
where the last estimate follows from Theorem~\ref{thm:MAcont}. Similarly, $|b_p|\lesssim(\sup_X\|m_\Theta\|)(\sup_P\|v'\|)\le B(v,\Theta)$, and hence 
$$
\left|\int_X b_p (\f-\p)\om_\f^p\wedge\om_\p^{n-1-p}\right|\lesssim B(v,\Theta)\int_X|\f-\p|\,\om_\f^p\wedge\om_\p^{n-1-p}
$$
$$
\lesssim B(v,\Theta) \dd_1(\f,\p)^\a \max\{\dd_1(\f,0),\dd_1(\p,0\}^{1-\a},
$$
using again Theorem~\ref{thm:MAcont}. Putting these estimates together proves~\eqref{equ:twenhold}. 
\end{proof}

By continuity, Lemma~\ref{lem:3.14revisited} yields: 
\begin{exam}\label{exam:twEL} For all $f\in C^\infty(X)^T$ and $\f,\p\in\cET$ we have 
$$
\en_v^{\ddcT f}(\f)-\en_v^{\ddcT f}(\p)=\int_X f\left(\MA_v(\f)-\MA_v(\p)\right). 
$$
\end{exam}

As we next show, this identity also provides a natural extension of $\en_v^{\ddcT f}$ when the function $f$ is merely quasi-psh. 

\begin{prop}\label{prop:twenext} Pick a weight $v\in C^\infty(\ft^\vee)$ such that $v>0$ on $P$, and a quasi-psh function $f$ on $X$. Then $\en_v^{\ddcT f}\colon\cH^T\to\R$ admits a unique extension
    $$
      \en_v^{\ddcT f}: \cE^{1,T}\to \R\cup \{-\infty\}
    $$
    that is continuous along decreasing sequences. It is further weakly usc, and satisfies 
$$
\en_v^{\ddcT f}(\f)-\en_v^{\ddcT f}(\p)=\int_X f\left(\MA_v(\f)-\MA_v(\p)\right)
$$ 
for all $\f\in\cET$ and $\p\in\cH^T$. 
\end{prop}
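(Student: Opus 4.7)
The plan is to define the extension via the explicit formula suggested by Example~\ref{exam:twEL} and Lemma~\ref{lem:twEL}, and then verify each claimed property in turn. Since $v>0$ on the compact polytope $P$, there exists $c>0$ with $v\geq c$ there, so for each $\f\in\cET$ the measure $\MA_v(\f)$ is sandwiched between positive multiples of $\om_\f^n$ and therefore charges no pluripolar set. As $f$ is quasi-psh, hence bounded above and finite off a pluripolar set, the integrals $\int_X f\,\MA_v(\f)\in[-\infty,+\infty)$ and $\int_X f\,\MA_v(0)\in\R$ are well defined, and I set
$$
\en_v^{\ddcT f}(\f) := \int_X f\,\bigl(\MA_v(\f)-\MA_v(0)\bigr).
$$
On $\cH^T$ this coincides with the original functional by Lemma~\ref{lem:twEL} (applied in the decomposition $\Theta=\ddcT f$ used in the proof of Proposition~\ref{prop:twEL}) with the normalization $\en_v^{\ddcT f}(0)=0$; the displayed formula for $\p\in\cH^T$ then follows by subtracting the defining identities, and uniqueness of any continuous-along-decreasing-sequence extension is immediate from Lemma~\ref{lem:ETcomp}.

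For continuity along decreasing sequences $\f_j\searrow\f$ in $\cET$, I approximate $f$ by a decreasing sequence of smooth $T$-invariant quasi-psh functions $f_k\searrow f$ (Demailly regularization followed by averaging over $T$). For each fixed $k$, $\en_v^{\ddcT f_k}$ is strongly continuous on $\cET$ by Proposition~\ref{prop:wenE1}, so $\int f_k\,\MA_v(\f_j)\to\int f_k\,\MA_v(\f)$; monotone convergence then yields $\int f_k\,\MA_v(\f)\searrow\int f\,\MA_v(\f)$ and similarly for each $\f_j$. To exchange the two limits, I invoke a uniform-integrability estimate: the sandwich $\MA_v(\f)\lesssim\om_\f^n$ together with the classical exponential integrability bounds for quasi-psh functions against Monge--Amp\`ere measures of potentials in a bounded subset of $\cE^1$ gives $\sup_j\bigl|\int(f-f_k)\,\MA_v(\f_j)\bigr|\to 0$ as $k\to\infty$, which closes the argument.

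The main obstacle is the weak upper semicontinuity. Given $\f_j\to\f$ weakly in $\cET$, Proposition~\ref{prop:weakusc} forces $\en(\f_j)$ to remain bounded, so after passing to a subsequence the positive measures $\MA_v(\f_j)$ converge weakly to some $\mu\geq 0$; since $f$ is upper semicontinuous and bounded above, Portmanteau yields $\limsup_j\int_X f\,\MA_v(\f_j)\leq\int_X f\,d\mu$. The crux is then the pluripotential-theoretic fact that any ``excess mass'' of $\mu$ over $\MA_v(\f)$ concentrates on a pluripolar set --- precisely where the quasi-psh $f$ equals $-\infty$ --- so this excess contributes non-positively to $\int f\,d\mu$, giving $\limsup_j\en_v^{\ddcT f}(\f_j)\leq\en_v^{\ddcT f}(\f)$. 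Justifying this concentration statement rigorously for the weighted measure $\MA_v$ is the main technical point, to be handled via the two-sided bound $c\,\om_\f^n\leq\MA_v(\f)\leq C\,\om_\f^n$ reducing to the classical unweighted case (Bedford--Taylor, Guedj--Zeriahi).
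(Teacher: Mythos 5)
Your candidate formula $\en_v^{\ddcT f}(\f)=\int_X f\,\bigl(\MA_v(\f)-\MA_v(0)\bigr)$ does identify the correct object (it is the identity the paper ultimately proves, specialized to $\p=0$), but the two verifications that carry all the weight are not sound as written. For continuity along decreasing sequences you need $\sup_j\bigl|\int_X(f-f_k)\,\MA_v(\f_j)\bigr|\to 0$ as $k\to\infty$, uniformly in $j$. No such uniform integrability holds for a general quasi-psh $f$ against the family $\{\MA_v(\f_j)\}$: the extension is allowed to take the value $-\infty$ on $\cET$ (so the quantity you are bounding can be $+\infty$ for every $k$), and even on the finite locus, exponential integrability of quasi-psh functions is a statement against a fixed volume form (Zeriahi/Skoda); transferring it to Monge--Amp\`ere measures of a $\dd_1$-bounded family requires an entropy bound on those measures (this is precisely how Lemma~\ref{lem:ent} is deployed in Lemma~\ref{lem:entcomp2}), which is not available here.

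The weak upper semicontinuity step is the more serious problem. If $\f_j\to\f$ only weakly, a weak limit $\mu$ of $\MA_v(\f_j)$ has no reason to dominate $\MA_v(\f)$ or to differ from it by a measure carried on a pluripolar set; since all these measures have the same total mass (Corollary~\ref{cor:DH}), $\mu\ge\MA_v(\f)$ would force $\mu=\MA_v(\f)$, which fails for weakly but not strongly convergent sequences --- $\mu$ can be an entirely different non-pluripolar measure. Even granting a pluripolar excess, a pluripolar set need not be contained in $\{f=-\infty\}$, so its contribution to $\int_X f\,d\mu$ is not automatically non-positive in the quantitative sense you need. The device you are missing is the monotonicity trick of Lemma~\ref{lem:enmono}: choosing $A,B>0$ with $A\om+\ddc f\ge 0$ and $\langle v'(\a),\b\rangle+Bv(\a)\ge 0$ for $\a\in P$, $\b\in K$ makes $F:=\en_v^{\ddcT f}+A\en_v^\Om+B\en_v$ monotone increasing on $\cH^T$; one then extends $F$ by $F(\f)=\inf\{F(\p)\mid\p\in\cH^T,\ \p\ge\f\}$, obtains continuity along decreasing sequences by a Dini argument, and gets weak usc for free from $(\sup_{l\ge j}\f_l)^\star\searrow\f$. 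Both of your problematic limit interchanges are then replaced by monotonicity, and the displayed identity for $\p\in\cH^T$ is recovered by a two-sided approximation (regularizing $f$ from above and using monotonicity of the corresponding $F_k$), not by Portmanteau alone.
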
 

The proof is based on the following monotonicity criterion (which will also be put to use in~\S\ref{sec:semicontfunc}). 

\begin{lem}\label{lem:enmono} Given any equivariant form $\Theta=(\theta,m_\Theta)$, there exists a compact subset $K\subset\ft^\vee$ only depending on $\theta$ such that the following holds: if $v,w\in C^\infty(\ft^\vee)$ satisfy 
\begin{equation}\label{equ:condmono}
v(\a)\ge 0,\quad\langle v'(\a),\b\rangle+w(\a)\ge 0\quad\text{for all}\quad\a\in P,\,\b\in K,
\end{equation}
then $\en_v^{\Theta+\ddcT f}+\en_w$ is monotone increasing on $\cH^T$ for any $\theta$-psh function $f$. 
\end{lem}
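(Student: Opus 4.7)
The plan is to reduce the monotonicity claim to an infinitesimal statement. By Definition~\ref{defi:wen} and Proposition~\ref{prop:twEL}, for any $\f\in\cH^T$ and any $\psi\in C^\infty(X)^T$,
\[
\frac{d}{dt}\bigg|_{t=0}\bigl(\en_v^{\Theta+\ddcT f}+\en_w\bigr)(\f+t\psi)=\int_X\psi\bigl[\MA_v^{\Theta+\ddcT f}(\f)+\MA_w(\f)\bigr].
\]
Since the linear segment $t\psi+(1-t)\f$ between two ordered potentials $\f\le\psi$ in $\cH^T$ stays in $\cH^T$ (convex combinations of positive $(1,1)$-forms remain positive), integrating this derivative along that segment via~\eqref{equ:twenexp} and~\eqref{equ:wenexp} reduces the lemma to the pointwise inequality
\[
\MA_v^{\Theta+\ddcT f}(\f)+\MA_w(\f)\ge 0
\]
as a signed measure on $X$, for every $\f\in\cH^T$.

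Expanding via~\eqref{equ:twMA} this measure reads
\[
v(m_{\Om_\f})\,n(\theta+\ddc f)\wedge\om_\f^{n-1}+\bigl[\langle v'(m_{\Om_\f}),m_\Theta+m_f\rangle+w(m_{\Om_\f})\bigr]\om_\f^n.
\]
The first summand is a nonnegative current: by Lemma~\ref{lem:polyinde} one has $m_{\Om_\f}(X)=P$, so $v(m_{\Om_\f})\ge 0$ by~\eqref{equ:condmono}, while $\theta+\ddc f\ge 0$ because $f$ is $\theta$-psh. Consequently the whole proof reduces to the nonnegativity (almost everywhere with respect to $\om_\f^n$) of the scalar
\[
\bigl\langle v'(m_{\Om_\f}(x)),\,m_\Theta(x)+m_f(x)\bigr\rangle+w(m_{\Om_\f}(x)),
\]
which by~\eqref{equ:condmono} will follow as soon as $(m_\Theta+m_f)(x)$ lies in the fixed compact set $K$ for a.e.\ $x\in X$.

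The main obstacle is therefore to produce a compact $K\subset\ft^\vee$ depending only on $\theta$ which controls the moment map $m_\Theta+m_f$ of the positive $(1,1)$-current $\theta+\ddc f$, uniformly in $f$. For smooth $T$-invariant $\theta$-psh $f$, the Atiyah--Guillemin--Sternberg convexity theorem applied to the semipositive form $\theta+\ddc f$ (after the harmless K\"ahler perturbation $\theta+\ddc f+\delta\om_X$, with $\delta\to 0$) shows that the image of $m_\Theta+m_f$ is contained in $\conv\bigl((m_\Theta+m_f)(X^T)\bigr)$; since $J\xi$ vanishes on $X^T$ and $f$ is smooth, $m_f|_{X^T}=0$, so the image sits in the compact polytope $K_0:=\conv\bigl(m_\Theta(X^T)\bigr)$, which depends only on $\theta$. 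The general case would be handled by $T$-equivariant Demailly regularization $f_\e\searrow f$ with $\theta+\delta_\e\om_X+\ddc f_\e>0$, $\delta_\e\to 0$; the only subtlety is that in the distributional limit $m_f$ may acquire extra contributions at fixed points proportional to the Lelong numbers of $f$, but these are uniformly bounded over all $\theta$-psh $f$ in terms of $\{\theta\}$ alone by Skoda--Demailly, so enlarging $K_0$ by this uniform shift produces the required $K$. Combining this with~\eqref{equ:condmono} yields the nonnegativity of the bracketed coefficient, hence of the whole measure, and the lemma follows.
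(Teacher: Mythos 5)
Your reduction to the infinitesimal statement, the splitting of the derivative measure into the two terms of~\eqref{equ:twMA}, and the treatment of the first term (nonnegativity of $v(m_{\Om_\f})\,n(\theta+\ddc f)\wedge\om_\f^{n-1}$ from $v\ge 0$ on $P=m_{\Om_\f}(X)$ and $\theta+\ddc f\ge 0$) all match the paper's proof. Your identification of the compact set for \emph{smooth} $f$ is a legitimate variant: the paper instead writes $\theta=\theta^+-\theta^-$ with $\theta^\pm$ K\"ahler, regularizes $f$ by a decreasing sequence of genuine $\theta^+$-K\"ahler potentials $f_k$, and takes $K$ to be (moment polytope of $\Theta^+$) translated by $-m_{\Theta^-}(X)$, invoking only Lemma~\ref{lem:polyinde}; your Atiyah--Guillemin--Sternberg argument with the perturbation $\theta+\ddc f+\d\om_X$ produces the potentially smaller set $\conv(m_\Theta(X^T))$, using that $m_f$ vanishes on $X^T$ for smooth $f$. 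Either choice of $K$ proves the lemma as stated.

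The genuine problem is your passage from smooth to arbitrary $\theta$-psh $f$. What must be proved in the end is the \emph{distributional} inequality $\langle v'(m_{\Om_\f}),m_\Theta+m_f\rangle+w(m_{\Om_\f})\ge 0$, since for singular $f$ the pairing $\langle v'(m_{\Om_\f}),m_f\rangle$ only makes sense as a distribution and ``the image of $m_\Theta+m_f$ lies in $K$'' is not a meaningful statement to feed into~\eqref{equ:condmono}. Your proposed mechanism --- that $m_f$ ``acquires extra contributions at fixed points proportional to Lelong numbers,'' uniformly bounded by Skoda--Demailly, so that one can ``enlarge $K_0$ by this uniform shift'' --- does not repair this: there are no such atomic contributions ($m_f^\xi=-df(J\xi)$ with $df\in L^p_{\mathrm{loc}}$, $p<2$), and even if there were, enlarging $K$ changes the hypothesis on $(v,w)$ rather than delivering the required inequality for the distribution $m_f$. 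The correct (and easy) fix, which is what the paper does, is to take a decreasing smooth approximation $f_k\searrow f$ with the appropriate positivity (Lemma~\ref{lem:ETcomp}), note that the \emph{pointwise} inequality $\langle v'(m_{\Om_\f}),m_\Theta+m_{f_k}\rangle+w(m_{\Om_\f})\ge 0$ holds for each $k$ by your smooth-case argument, and then pass to the limit using $m_{\ddcT f_k}\to m_{\ddcT f}$ weakly (a consequence of $f_k\to f$ in $L^1$). You should replace your last few sentences by this limiting argument; with that change the proof is complete.
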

\begin{proof} Pick $v,w\in C^\infty(\ft^\vee)$ and a $\theta$-psh function $f$, and write $\theta=\theta^+-\theta^-$ with $\theta^\pm>0$. Since $\theta^++\ddc f\ge 0$, $f$ can be written as the limit of a decreasing sequence $(f_k)$ with $f_k\in C^\infty(X)^T$ and $\theta^++\ddc f_k>0$ (see Lemma \ref{lem:ETcomp}). 

By Lemma~\ref{lem:polyinde}, the moment polytope $m_{\Theta^++\ddcT f_k}(X)$ is independent of $k$. Writing 
$$
m_{\Theta+\ddcT f_k}=m_{\Theta^++\ddcT f_k}-m_\Theta^-,
$$
this shows that $m_{\Theta+\ddcT f_k}(X)$ is contained in a fixed compact set $K\subset\ft^\vee$ only depending on $\theta$. Assume~\eqref{equ:condmono} holds. For any $\f\in\cH^T$ we have $m_{\Om_\f}(X)=P$, again by Lemma~\ref{lem:polyinde}, and hence 
$$
\langle v'(m_{\Om_\f}),m_{\Theta+\ddcT f_k}\rangle+w(m_{\Om_\f})\ge 0
$$
on $X$ for all $k$. Since $m_{\ddcT f_k}\to m_{\ddcT f}$ weakly, we infer 
$$
\langle v'(m_{\Om_\f}),m_{\Theta+\ddcT f}\rangle+w(m_{\Om_\f})\ge 0
$$ as a distribution on $X$. Thus
$$
(\en_v^{\Theta+\ddcT f}+\en_w)'(\f)=\MA_v^{\Theta+\ddcT f}(\f)+\MA_w(\f)
$$
$$
=v(m_{\Om_\f}) n(\theta+\ddc f)\wedge\om_\f^{n-1}+\left(\langle v'(m_{\Om_\f}),m_{\Theta+\ddcT f}\rangle+w(m_{\Om_\f})\right)\om_\f^n\ge 0
$$
and the result follows. 
\end{proof}

\begin{proof}[Proof of Proposition~\ref{prop:twenext}] Pick $A>0$ such that $A\om+\ddc f\ge 0$. Since $v>0$ on $P$, we can choose $B>0$ such that 
$\langle v'(\a),\b\rangle+B v(\a)\ge 0$ for all $\a\in P$, $\b\in K$, where $K\subset\ft^\vee$ denotes the compact subset provided by Lemma~\ref{lem:enmono} applied to $\Theta:=A\Om$ (which can in fact be taken as $K=AP$ in that case). Since $f$ is $A\om$-psh, it follows that the functional
$$
F:=\en_v^{\ddcT f}+A\en_v^\Om+B\en_v
$$
is monotone increasing on $\cH^T$. We extend it by monotonicity to $\cE^{1,T}$ by setting
$$
F(\f):=\inf \left\{F(\psi)\, |\, \psi \in \cH^T, \psi\geq \f\right\}
$$
for $\f\in\cET$. A Dini type argument as in Lemma~\ref{lem:weakext} leads to the continuity of $F$ along decreasing sequences. Since, for any weakly convergent sequence $\cE^{1,T}\ni \f_j\to \f\in \cE^{1,T}$, $(\sup_{l\geq j}\f_l)^\star$ decreases to $\f$ as $j\to\infty$, it follows that $F$ is weakly usc. Setting 
$$
\en_v^{\ddcT f}(\f):=F(\f)-A\en_v^\Om(\f)-B\en_v(\f)
$$
for $\f\in\cET$ yields the desired extension of $\en_v^{\ddcT f}$ to $\cET$. 

Next pick $\f\in\cET$, $\p\in\cH^T$, and choose a decreasing sequence $\f_j\in\cH^T$ such that $\f_j\searrow\f\in\cET$. For each $j$, Lemma~\ref{lem:3.14revisited} yields  
$$
\en_v^{\ddcT f}(\f_j)-\en_v^{\ddcT f}(\p)=\int f\left(\MA_v(\f_j)-\MA_v(\p)\right). 
$$
By continuity of $\en_v^{\ddcT f}$ along decreasing sequences we have $\en_v^{\ddcT f}(\f_j)\to\en_v^{\ddcT f}(\f)$, while the weak convergence of measures $\MA_v(\f_j)\to\MA_v(\f)$ (see Proposition~\ref{prop:MAvE1}) implies
$$
\limsup_j\int f\,\MA_v(\f_j)\le\int f\,\MA_v(\f)
$$
since $f$ is usc. We infer 
$$
\en_v^{\ddcT f}(\f)-\en_v^{\ddcT f}(\p)\le\int f\left(\MA_v(\f)-\MA_v(\p)\right). 
$$
To get the converse inequality, pick further a decreasing sequence $f_k\in C^\infty(X)^T$ such that $A\om+\ddc f_k>0$ and $f_k\searrow f$. Since $\ddcT f_k\to\ddcT f$ as currents, we have $\en_v^{\ddcT f_k}\to\en_v^{\ddcT f}$ pointwise on $C^\infty(X)^T$. 

By Lemma~\ref{lem:enmono}, $F_k:=\en_v^{\ddcT f_k}+A\en_v^\Om+B\en_v$ is monotone increasing for each $k$, and we thus get for all $k,j$
$$
\int_X f_k\left(\MA_v(\f)-\MA_v(\p)\right)=\en_v^{\ddcT f_k}(\f)-\en_v^{\ddcT f_k}(\p)
$$
$$
=F_k(\f)-A\en_v^\Om(\f)-B\en_v(\f)-\en_v^{\ddcT f_k}(\p)\le F_k(\f_j)-A\en_v^\Om(\f)-B\en_v(\f)-\en_v^{\ddcT f_k}(\p)
$$
$$
=\en_v^{\ddcT f_k}(\f_j)-\en_v^{\ddcT f_k}(\p)+A\left(\en_v^\Om(\f_j)-\en_v^\Om(\f)\right)+B\left(\en_v(\f_j)-\en_v(\f)\right). 
$$
Letting first $k\to\infty$ yields
$$
\int_X f\left(\MA_v(\f)-\MA_v(\p)\right)\le\en_v^{\ddcT f}(\f_j)-\en_v^{\ddcT f}(\p)+A\left(\en_v^\Om(\f_j)-\en_v^\Om(\f)\right)+B\left(\en_v(\f_j)-\en_v(\f)\right), 
$$
by monotone convergence and using $\lim_k\en_v^{\ddcT f_k}(\f_j)=\en_v^{\ddcT f}(\f_j)$ since $\f_j$ is smooth. Letting now $j\to\infty$ we conclude 
$$
\int f\left(\MA_v(\f)-\MA_v(\p)\right)\le\en_v^{\ddcT f}(\f)-\en_v^{\ddcT f}(\p),
$$
by continuity along decreasing sequences of $\en_v^{\ddcT f}$, $\en_v^\Om$ and $\en_v$. 
\end{proof}

\subsubsection{Weighted Mabuchi energy} 
Pick smooth functions $v,w$ on $\ft^\vee$ with $v>0$ on $P$. 

\begin{defi} The \emph{weighted entropy, weighted Ricci energy and weighted Mabuchi energy} 
$$
\ent_v\colon\cET\to\R\cup\{+\infty\},\quad\enR_v\colon\cET\to\R,\quad\mab_{v,w}\colon\cET\to\R\cup\{+\infty\}
$$
are defined by setting for $\f\in\cET$
$$
\ent_v(\f):=\tfrac12\Ent(\MA_v(\f)|\nu_X),\quad\enR_v(\f):=\en_v^{-\Ric^T(\nu_X)}(\f)
$$
and
$$
\mab_{v,w}(\f)=\ent_v(\f)+\enR_v(\f)+\en_{vw}(\f).
$$
\end{defi}
By continuity of $\MA_v$ and lower semicontinuity of the relative entropy, $\ent_v$ and $\mab_{v,w}$ are both lsc on $\cET$. 

\begin{rmk}\label{rmk:AJL} The entropy $\ent_v$, and hence also $\mab_{v,w}$, are in fact characterized as the largest lsc extensions from $\cH^T$ to $\cET$ (see~\cite[Lemma~6.16]{AJL}). Equivalently, each $\f\in\cET$ can be written as the limit in $\cET$ of a sequence $\f_j\in\cH^T$ such that $\ent_v(\f_j)\to\ent_v(\f)$ and (hence) $\mab_{v,w}(\f_j)\to\mab_{v,w}(\f)$. However, we prefer to avoid relying on this fact, which is based on Yau's theorem~\cite{Yau} and is not available in the singular case (see~\S\ref{sec:normal}). 
\end{rmk}

\begin{lem}\label{lem:indep} The weighted Mabuchi energy $\mab_{v,w}\colon\cET\to\R\cup\{+\infty\}$ only depends on the choice of reference volume form $\nu_X$ by an overall additive constant.
\end{lem}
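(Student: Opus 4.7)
The plan is to verify, by direct computation, that the only summands of $\mab_{v,w}$ that depend on the reference volume form, namely $\ent_v$ and $\enR_v$, change by opposite quantities up to a constant when $\nu_X$ is replaced by another smooth $T$-invariant positive volume form $\nu_X'$.

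More precisely, write $\nu_X' = e^{2f}\nu_X$ with $f\in C^\infty(X)^T$ ($T$-invariance being automatic). First, I would compute the change in entropy: since $\frac{d\MA_v(\f)}{d\nu_X'} = e^{-2f}\frac{d\MA_v(\f)}{d\nu_X}$, a direct calculation (assuming $\MA_v(\f)$ is absolutely continuous, and using the definition of relative entropy in general) yields
$$
\ent_v^{\nu_X'}(\f)-\ent_v^{\nu_X}(\f)=-\int_X f\,\MA_v(\f).
$$
Next, from Example~\ref{exam:Ricci} and $\Ric(\nu_X')=\Ric(\nu_X)-\ddc f$, we get $\Ric^T(\nu_X')=\Ric^T(\nu_X)-\ddcT f$. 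By linearity of $\en_v^\Theta$ in $\Theta$ (which persists after extension to $\cET$ by Proposition~\ref{prop:wenE1}, since $f$ is smooth), this gives
$$
\enR_v^{\nu_X'}(\f)-\enR_v^{\nu_X}(\f)=\en_v^{\ddcT f}(\f).
$$

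Now I would apply the key identity from Example~\ref{exam:twEL} (valid on $\cET$ since $f\in C^\infty(X)^T$) with $\p=0$, together with the normalization $\en_v^{\ddcT f}(0)=0$:
$$
\en_v^{\ddcT f}(\f)=\int_X f\bigl(\MA_v(\f)-\MA_v(0)\bigr).
$$
Adding the two contributions, the terms involving $\int_X f\,\MA_v(\f)$ cancel, leaving
$$
\mab_{v,w}^{\nu_X'}(\f)-\mab_{v,w}^{\nu_X}(\f)=-\int_X f\,\MA_v(0),
$$
which is indeed a constant independent of $\f\in\cET$; the $\en_{vw}$ term does not depend on $\nu_X$ and contributes nothing.

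There is essentially no obstacle here beyond bookkeeping: the nontrivial ingredient is the identity from Example~\ref{exam:twEL}, which provides precisely the cocycle relation needed to offset the entropy variation against the Ricci energy variation. The only point requiring mild care is to ensure the computation passes to $\cET$, which is guaranteed by the continuous extensions of $\MA_v$ and $\en_v^{\ddcT f}$ established in Propositions~\ref{prop:MAvE1} and~\ref{prop:wenE1} (and Example~\ref{exam:twEL} for the identity itself).
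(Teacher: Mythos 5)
Your proof is correct and follows essentially the same route as the paper: both write the new measure as $e^{2\rho}\nu_X$, use $\Ric^T(e^{2\rho}\nu_X)=\Ric^T(\nu_X)-\ddcT\rho$ together with the identity of Example~\ref{exam:twEL} for the Ricci energy, and the entropy change formula (Lemma~\ref{lem:entexp}) to see that the $\int\rho\,\MA_v(\f)$ terms cancel, leaving a constant. The only cosmetic difference is that the paper quotes \eqref{equ:entexp} directly, which dispenses with your aside about absolute continuity.
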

\begin{proof} Pick another $T$-invariant volume form $\tilde\nu_X$, and denote by 
$$
\tilde\ent_v(\f)=\tfrac 12\Ent(\MA_v(\f)|\tilde\nu_X),\quad\tilde\enR_v(\f):=\en_v^{-\Ric^T(\tilde\nu_X)}(\f)
$$
the associated entropy and Ricci energy functionals. We need to show that $\tilde\ent_v+\tilde\enR_v$ and $\ent_v+\enR_v$ coincide on $\cET$ up to an additive constant. Writing 
$\tilde\nu_X=e^{2\rho}\nu_X$ with $\rho\in C^\infty(X)^T$ we have
$$
-\Ric^T(\tilde\nu_X)=-\Ric^T(\nu_X)+\ddcT\rho. 
$$
By Example~\ref{exam:twEL}, we have $\en_v^{\ddcT\rho}(\f)=\int_X\rho\MA_v(\f)+c$ for an overall constant $c\in\R$, and hence 
$$
\tilde\enR_v(\f)=\enR_v(\f)+\int_X\rho\MA_v(\f)+c
$$
On the other hand, \eqref{equ:entexp} yields
$$
\tilde\ent_v(\f)=\ent_v(\f)-\int_X\rho\MA_v(\f),  
$$
and the result follows. 
\end{proof}

\begin{lem}\label{lem:quasimab} The weighted Mabuchi energy $\mab_{v,w}$ is quasi-invariant with respect to the action of $\Aut_0^T(X)$ on $\cET$. 
\end{lem}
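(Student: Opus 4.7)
The plan is to exploit the splitting $\mab_{v,w}=\ent_v+\enR_v+\en_{vw}$, prove quasi-invariance summand by summand, and use the exact change-of-reference formula to absorb the one piece that is merely lsc (namely, $\ent_v$). So fix $g\in\Aut_0^T(X)$ and $\f,\psi\in\cET$, and aim at
$$
\mab_{v,w}(\f^g)-\mab_{v,w}(\psi^g)=\mab_{v,w}(\f)-\mab_{v,w}(\psi).
$$

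First I would handle the $\en_{vw}$ term: by Proposition~\ref{prop:wenE1}, $\en_{vw}$ is Lipschitz for $\dd_1$, and by Proposition~\ref{prop:isom}, $g$ acts as a $\dd_1$-isometry on $\cE^1$; combined with the density of $\cH^T$ in $\cET$ (Lemma~\ref{lem:ETcomp}), the identity~\eqref{equ:quasien} extends verbatim to $\cET$.

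For the remaining pieces I would set $\tilde\nu_X:=g_\star\nu_X$, which is still a smooth $T$-invariant volume form because $g$ centralizes $T$, and write $\tilde\nu_X=e^{2\rho}\nu_X$ with $\rho\in C^\infty(X)^T$. Using $\MA_v(\f^g)=g^\star\MA_v(\f)$ (the continuous extension of~\eqref{equ:invMA} to $\cET$ provided by Proposition~\ref{prop:MAvE1}) together with the invariance of relative entropy under a simultaneous diffeomorphism and the change-of-reference identity~\eqref{equ:entexp}, I would compute
$$
\ent_v(\f^g)=\tfrac12\Ent(g^\star\MA_v(\f)\,|\,\nu_X)=\tfrac12\Ent(\MA_v(\f)\,|\,\tilde\nu_X)=\ent_v(\f)-\int_X\rho\,\MA_v(\f),
$$
whence
$$
\ent_v(\f^g)-\ent_v(\psi^g)=\ent_v(\f)-\ent_v(\psi)-\int_X\rho\bigl(\MA_v(\f)-\MA_v(\psi)\bigr).
$$
On the $\enR_v$ side, I would extend~\eqref{equ:quasitwen} with $\Theta=-\Ric^T(\nu_X)$ from $\cH^T$ to $\cET$ (Proposition~\ref{prop:wenE1} makes $\en_v^\Theta$ Hölder continuous for smooth $\Theta$, and $g$ is an isometry), use naturality of the equivariant Ricci form to identify $g_\star\Ric^T(\nu_X)=\Ric^T(\tilde\nu_X)$, and use the decomposition $-\Ric^T(\tilde\nu_X)=-\Ric^T(\nu_X)+\ddcT\rho$ together with the linearity of $\Theta\mapsto\en_v^\Theta$ to get
$$
\enR_v(\f^g)-\enR_v(\psi^g)=\enR_v(\f)-\enR_v(\psi)+\en_v^{\ddcT\rho}(\f)-\en_v^{\ddcT\rho}(\psi).
$$
By Example~\ref{exam:twEL} the last difference equals $\int_X\rho(\MA_v(\f)-\MA_v(\psi))$, so it exactly cancels the correction on the entropy side. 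Summing the three quasi-invariance identities yields the claim.

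The main technical obstacle is that $\ent_v$ is only weakly lsc on $\cET$, which prevents a direct density argument from $\cH^T$. The plan circumvents this by never approximating $\ent_v$: instead one computes $\ent_v(\f^g)-\ent_v(\f)$ exactly, observing that the obstruction is of the form $-\int\rho\,\MA_v(\f)$, which is precisely what the extra $\ddcT\rho$ appearing when rewriting $g_\star(-\Ric^T(\nu_X))$ as $-\Ric^T(\nu_X)+\ddcT\rho$ contributes to $\enR_v$. In other words, the cancellation is a pluripotential-theoretic incarnation of the statement (Lemma~\ref{lem:indep}) that the reference volume form only affects $\ent_v+\enR_v$ by an additive constant, applied to the specific volume form $g_\star\nu_X$.
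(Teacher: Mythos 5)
Your proposal is correct and follows essentially the same route as the paper: both reduce the claim to $\ent_v+\enR_v$ after handling $\en_{vw}$ by continuity, and both recognize that precomposing with $g$ amounts to replacing the reference volume form $\nu_X$ by $g_\star\nu_X$. The only difference is cosmetic: you inline the change-of-reference computation (the exact cancellation between $-\int\rho\,\MA_v$ from \eqref{equ:entexp} and $\en_v^{\ddcT\rho}$ from Example~\ref{exam:twEL}), whereas the paper simply cites Lemma~\ref{lem:indep}, whose proof is precisely that computation.
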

As a consequence, $\mab_{v,w}$ is invariant under a subgroup $G\subset\Aut_0^T(X)$ iff $\mab_{v,w}(\f^g)=\mab_{v,w}(\f)$ for \emph{some} $\f\in\cET$ and all $g\in G$. 

\begin{proof}
By~\eqref{equ:quasien}, the weighted Monge--Amp\`ere energy $\en_{vw}$ is quasi-invariant on $\cH^T$, and hence on $\cET$, by continuity. As a result, it suffices to show the result for $\mab_{v,0}=\ent_v+\enR_v$. For all $\f\in\cET$, \eqref{equ:invMA} yields
$$
\MA_v(\f^g)=g^\star\MA_v(\f),
$$
and hence 
$$
\ent_v(\f^g)=\tfrac 12\Ent(g^\star\MA_v(\f)|\nu_X)=\tfrac 12\Ent(\MA_v(\f)|g_\star\nu_X). 
$$
On the other hand, \eqref{equ:quasitwen} yields 
$$
\enR_v(\f^g)=\en_v^{-g_\star\Ric^T(\nu_X)}(\f)=\en_v^{-\Ric^T(g_\star\nu_X)}(\f). 
$$
It follows that $\f\mapsto\ent_v(\f^g)+\enR_v(\f^g)$ coincides with the weighted Mabuchi energy $\tilde\mab_{v,0}$ computed with respect to the volume form $\tilde\nu_X:=g_\star\nu_X$. The result now follows from Lemma~\ref{lem:indep}. 
\end{proof}

For later use, we also note:

\begin{lem}\label{lem:entvgr} For all $\f\in\cET$ we have 
$$
(\inf_P v)\ent(\f)-C\le\ent_v(\f)\le(\sup_P v)\ent(\f)+C
$$
for a constant $C>0$ only depending on $\sup_P|v|$, $\int_X\nu_X$ and $\int_X\om^n$. 
\end{lem}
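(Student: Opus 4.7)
The plan is to reduce both inequalities to a pointwise comparison of densities. Set $a:=\inf_P v$, $b:=\sup_P v$, $\mu:=\MA(\f)$, and $V:=\int_X\om^n$. I would first show that, as positive Radon measures on $X$,
$$
a\,\mu\le\MA_v(\f)\le b\,\mu,
$$
so that $\MA_v(\f)=h\,\mu$ for a measurable density $h$ with $a\le h\le b$ for $\mu$-a.e.\ $x\in X$. For $\f\in\cH^T$ this is immediate since $h=v(m_{\Om_\f})$ and $m_{\Om_\f}(X)=P$ by Lemma~\ref{lem:polyinde}. For general $\f\in\cET$, I would approximate by a decreasing sequence $\f_j\in\cH^T$ from Lemma~\ref{lem:ETcomp}: by Proposition~\ref{prop:MAvE1} both $\MA_v(\f_j)\to\MA_v(\f)$ and $\MA(\f_j)\to\MA(\f)$ strongly and hence weakly, and weak convergence of positive measures preserves inequalities tested against nonnegative continuous functions.

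With this density bound in hand, note that since $h\ge a>0$, $\Ent(\mu|\nu_X)=+\infty$ if and only if $\Ent(h\mu|\nu_X)=+\infty$, so the case $\ent(\f)=+\infty$ is trivial (both sides equal $+\infty$). Otherwise $\mu=g\,\nu_X$ with $g\in L^1(\nu_X)$, and the chain rule for relative entropies gives
$$
\Ent(h\mu|\nu_X)=\int_X h\log h\,d\mu+\int_X h\log g\,d\mu.
$$
The first term is bounded uniformly in $\f$: since $|t\log t|\le\max(b\log b,e^{-1})=:M_b$ for $t\in(0,b]$, it has absolute value at most $M_b V$. For the second term, I would split $X$ into $\{g\ge 1\}$ (where $\log g\ge 0$) and $\{g<1\}$ (where $\log g<0$ and $g\log g\ge-e^{-1}$), apply $a\le h\le b$ with the appropriate sign on each piece, and use $\int_{\{g<1\}}g\log g\,d\nu_X\in[-|\nu_X|/e,0]$. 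This yields
$$
a\,\Ent(\mu|\nu_X)-b\tfrac{|\nu_X|}{e}\le\int_X h\log g\,d\mu\le b\,\Ent(\mu|\nu_X)+b\tfrac{|\nu_X|}{e},
$$
where the crude bound $b-a\le b$ is used to eliminate dependence on $a$. Summing the two contributions and dividing by $2$ produces the claimed estimates, with $C$ depending only on $b=\sup_P|v|$, $V$, and $|\nu_X|$.

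The only mildly delicate step is upgrading the two-sided density bound from $\cH^T$ to $\cET$; once this is in place, the rest is a standard chain-rule computation. The key observation that keeps $\inf_P v$ out of the final constant is simply that $b-a\le b$ (since $a\ge 0$), which lets us absorb all error terms into a constant depending only on the data listed.
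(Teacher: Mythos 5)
Your proof is correct and follows essentially the same route as the paper: establish the two-sided bound $(\inf_P v)\MA(\f)\le\MA_v(\f)\le(\sup_P v)\MA(\f)$ on $\cH^T$ via Lemma~\ref{lem:polyinde}, extend it to $\cET$ by continuity of the (weighted) Monge--Amp\`ere operators, and then compare entropies. The only difference is cosmetic: the paper concludes by citing Lemma~\ref{lem:entcst} from the appendix (applied in each direction), whereas you re-derive that entropy comparison inline via the same chain-rule decomposition and the $x\log x\ge -e^{-1}$ bound.
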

Here $\ent(\f)=\tfrac12\Ent(\MA(\f)|\nu_X)$ is the (unweighted) entropy. 

\begin{proof} For any $\f\in\cH^T$, the positive measure
$\MA_v(\f)=v(m_{\Om_\f})\MA(\f)$ satisfies
$$
(\inf_P v)\MA(\f)\le\MA_v(\f)\le(\sup_P v)\MA(\f).
$$
By continuity of $\MA$ and $\MA_v$, this remains true for $\f\in\cET$, and the result now follows from Lemma~\ref{lem:entcst}. 
\end{proof}

Finally, generalizing the fundamental result of Berman--Berndtsson~\cite{BerBer} and its finite energy extension in~\cite{BDL17}, it was shown in~\cite[Theorem~5]{Lah} and~\cite[Theorem~6.1]{AJL} that $\mab_{v,w}$ is convex along all psh geodesics in $\cET$.

\subsubsection{Relative weighted Mabuchi energy}
Assuming also $w>0$ on $P$, we define the \emph{relative weighted Mabuchi energy}
$$
\mab^\rel_{v,w}\colon\cET\to\R\cup\{+\infty\}
$$
as $\mab^\rel_{v,w}:=\mab_{v,w\ell^\ext}$ (see~\S\ref{sec:relmab}). The above results imply:
\begin{prop}\label{prop:relmabE1} The relative weighted Mabuchi energy $\mab^\rel_{v,w}$ is translation invariant, quasi-invariant under the action of $\Aut_0^T(X)$, and invariant under that of $T_\C\subset\Aut_0^T(X)$. 

Furthermore, it is convex along all psh geodesics in $\cET$. 
\end{prop}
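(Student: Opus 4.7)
My plan is to exploit the definition $\mab^\rel_{v,w}=\mab_{v,w\ell^\ext}$ and reduce the four claimed properties to material already established for $\mab_{v,w}$ with the specific weight $w\ell^\ext$. The starting observation is that the defining condition \eqref{equ:ellext} of the extremal function, specialized to $\ell=1$ and to $\ell=\xi$ with $\xi\in\ft$, is precisely condition (ii) of Lemma~\ref{lem:mabinv} applied with $w\ell^\ext$ in place of $w$. By that lemma, $\mab^\rel_{v,w}$ is therefore both translation invariant and $T_\C$-invariant on the smooth subspace $\cH^T$.

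Next I would extend translation invariance to $\cET$ via the decomposition $\mab^\rel_{v,w}=\ent_v+\enR_v+\en_{vw\ell^\ext}$. Since $\om_{\f+c}=\om_\f$ and $m_{\Om_{\f+c}}=m_{\Om_\f}$ for any constant $c$, the weighted Monge--Amp\`ere measure $\MA_v(\f+c)=\MA_v(\f)$ is unchanged, so the entropy $\ent_v$ is translation invariant on $\cET$ tautologically. The remaining two summands are continuous on $\cET$ by Proposition~\ref{prop:wenE1}, and translation equivariant by continuity from their smooth counterparts; the two corresponding constants are opposite, as already verified on $\cH^T$ via Lemma~\ref{lem:mabinv} combined with the $\ell=1$ instance of~\eqref{equ:ellext}. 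Hence $\mab^\rel_{v,w}(\f+c)=\mab^\rel_{v,w}(\f)$ on all of $\cET$.

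For $T_\C$-invariance on $\cET$, I would combine Lemma~\ref{lem:quasimab} (applied to the weight $w\ell^\ext$) with the fact that the associated quasi-invariance character $\chi(g):=\mab^\rel_{v,w}(0^g)-\mab^\rel_{v,w}(0)$, which is well-defined since $0,0^g\in\cH^T$, already vanishes on $T_\C$ by the smooth case. Quasi-invariance then immediately upgrades this vanishing to full $T_\C$-invariance on $\cET$. Convexity along psh geodesics is inherited directly from the corresponding property of $\mab_{v,w\ell^\ext}$, which is the content of~\cite[Theorem~5]{Lah} and~\cite[Theorem~6.1]{AJL} as recalled at the end of~\S\ref{sec:mabext}.

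The whole argument is essentially an assembly of material built up in the preceding section. The one point requiring mild care, and where a naive proof would stumble, is in extending translation invariance from $\cH^T$ to $\cET$ without invoking the strong approximation result of Remark~\ref{rmk:AJL} (which the authors deliberately wish to avoid); the decomposition above sidesteps this by cleanly separating the lower-semicontinuous but strictly translation-invariant entropy term from the two continuous, translation-equivariant energy pieces.
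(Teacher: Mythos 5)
Your proposal is correct and follows essentially the same route as the paper: translation and $T_\C$-invariance on $\cH^T$ from the defining property of $\ell^\ext$ via Lemma~\ref{lem:mabinv}, upgrading $T_\C$-invariance to $\cET$ by quasi-invariance (Lemma~\ref{lem:quasimab}), extending translation invariance by isolating the tautologically invariant entropy term and using continuity of $\enR_v+\en_{vw\ell^\ext}$, and citing \cite{Lah,AJL} for geodesic convexity. No gaps.
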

\begin{proof} By construction of $\ell^\ext$, the restriction of $\mab_{v,w}$ to $\cH^T$ is invariant under translation and the action of $T_\C$. Since $\mab_{v,w}$ is quasi-invariant under $\Aut_0^T(X)$ on $\cET$, this already implies that $\mab_{v,w}$ is $T_\C$-invariant on $\cET$. 

Since $\ent_v(\f)$ only depends on $\MA_v(\f)$, it is translation invariant on $\cET$, and we thus need to show that $\enR_v+\en_{vw\ell^\ext}$ is translation invariant on $\cET$. By continuity, it suffices to see this on $\cH^T$, which holds by translation invariance of $\mab^\rel_{v,w}$ on $\cH^T$. 
\end{proof}

We finally recall the following results, respectively proved in~\cite[Theorem~1]{AJL}, \cite[Theorem~3.1]{He}, \cite[Theorem~1.2]{HL24} and~\cite{DJL24a,DJL24b} (building on the fundamental work of Chen--Cheng~\cite{CC1,CC2}). 

\begin{thm}\label{thm:proper} Assume that the torus $T\subset\Autr(X)$ is maximal. Then: 
\begin{itemize}
\item[(i)] the existence of a $(v,w)$-extremal K\"ahler metric in $\{\om\}$ implies that $\mab^\rel_{v,w}$ is coercive modulo $T_\C$; 
\item[(ii)] if $v$ is further log-concave on $P$, the converse holds as well. 
\end{itemize}
\end{thm}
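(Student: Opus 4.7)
The two implications require rather different techniques; my plan is to treat them separately, both leveraging the convexity of $\mab^\rel_{v,w}$ along psh geodesics in $\cET$ (Proposition~\ref{prop:relmabE1}).

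For (i), the existence of a $(v,w)$-extremal metric in $\{\om\}$ produces a critical point $\f_0\in\cH^T$ of $\mab^\rel_{v,w}$, which by geodesic convexity is automatically a global minimizer. The crux is a quantitative rigidity statement: every unit-speed $T_\C$-minimal psh geodesic ray $(\f_t)_{t\ge 0}$ emanating from $\f_0$ in $\cET$ has strictly positive asymptotic slope
\[
\lim_{t\to\infty}\tfrac{1}{t}\bigl(\mab^\rel_{v,w}(\f_t)-\mab^\rel_{v,w}(\f_0)\bigr)>0
\]
unless it is constant. I would prove this by adapting the Berman--Berndtsson rigidity to the weighted, relative setting: vanishing slope forces the ray to be generated by a holomorphic vector field, and maximality of $T\subset\Autr(X)$ pushes that field into $\ft_\C$, contradicting $T_\C$-minimality. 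A standard compactness argument over unit $\dd_{1,T_\C}$-spheres (\cf~\cite{DR}) then upgrades the pointwise rigidity to a uniform lower slope, which is exactly coercivity modulo $T_\C$.

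For (ii), I would run a continuity method through a family of weighted equations interpolating between a trivial reference and $S_v(\Om_\f)=w(m_{\Om_\f})\ell^{\ext}(m_{\Om_\f})$. Openness is a Fredholm statement for the weighted linearized operator, whose kernel is controlled by $\ft_\C$ thanks to maximality of $T$. Closedness is the heart of the matter: the coercivity hypothesis, combined with the entropy-growth property of the weighted Mabuchi energy (Lemma~\ref{lem:entvgr} together with the entropy control in Section~\ref{sec:open}), yields uniform $\dd_{1,T_\C}$- and entropy-bounds on $T_\C$-minimal solutions $\f_s$ along the path. The weighted Chen--Cheng a priori estimates established in~\cite{HL24,DJL24a} then upgrade these weak bounds to uniform smooth bounds, producing a genuine $(v,w)$-extremal metric at the endpoint.

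The main obstacle is the weighted a priori estimates feeding into the closedness step. The unweighted Chen--Cheng argument combines an Aleksandrov--Bakelman--Pucci style $C^0$ bound with a Yau-type $C^2$ bound for $\log\tr_\om(\om_\f)$; in the weighted case, the maximum principle computations spawn reaction terms involving $v''$ whose sign can only be absorbed when $\log v$ is concave. The log-concavity hypothesis in (ii) enters at precisely this point, which is the technical reason why (ii) is stated under this restriction while (i) holds unconditionally. All remaining ingredients of the plan---the convexity, the rigidity, the openness, and the reduction of coercivity to $\dd_1$-entropy bounds---extend to arbitrary smooth positive $v$.
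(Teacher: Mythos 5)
The first thing to note is that the paper itself gives no proof of this theorem: it is explicitly \emph{recalled} from the literature, with (i) attributed to \cite[Theorem~1]{AJL} and (ii) to \cite[Theorem~3.1]{He}, \cite[Theorem~1.2]{HL24} and \cite{DJL24a,DJL24b}, all building on Chen--Cheng \cite{CC1,CC2}. So there is no in-paper argument to compare against; the relevant comparison is with the strategy of those references, and your sketch does track it at a high level. For (i), the cited route is the Darvas--Rubinstein properness principle: existence of a minimizer, geodesic convexity, uniqueness of minimizers modulo the group action (the weighted Berman--Berndtsson rigidity), and properness of the $T_\C$-action together yield coercivity. Your variant phrased in terms of strictly positive asymptotic slopes of $T_\C$-minimal rays is in the right spirit, but the claim that a ray of vanishing slope is ``generated by a holomorphic vector field'' is a geodesic-stability-type rigidity statement that is strictly stronger than what the DR principle needs, and you give no argument for it beyond invoking Berman--Berndtsson; the actual rigidity available there concerns geodesic \emph{segments} joining two minimizers, not arbitrary rays of zero slope. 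For (ii), your outline (continuity path, openness via the weighted Lichnerowicz-type operator with kernel controlled by maximality of $T$, closedness via coercivity $\Rightarrow$ uniform $\dd_{1,T_\C}$ and entropy bounds $\Rightarrow$ weighted Chen--Cheng estimates) is exactly the architecture of \cite{HL24,DJL24a,DJL24b}, and you correctly locate log-concavity of $v$ in the a priori estimates. As a self-contained proof, however, the proposal has genuine gaps at precisely the hard points --- the weighted a priori estimates and the ray rigidity are asserted, not proved --- so it should be read as a correct roadmap to the cited proofs rather than a proof; this is consistent with the paper, which treats the theorem as a black box.
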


%
%
\subsection{Extension to K\"ahler spaces with klt singularities}\label{sec:normal}
From now on we assume that $X$ is a normal compact K\"ahler space, and explain how to adapt the above discussion to that setting. We refer to~\cite{SMF} for the basic properties of differential calculus and psh functions on complex spaces. 
%
%
\subsubsection{Automorphisms and vector fields}
 
The automorphism group $\Aut(X)$ is a complex Lie group, with Lie algebra the space $\Hnot(X,T_X)$ of holomorphic vector fields, \ie global sections of the tangent sheaf $T_X=\mathrm{Hom}(\Om^1_X,\cO_X)$. Since $X$ is normal, restriction to $X_\regu$ induces an isomorphism
$$
\Hnot(X,T_X)\simeq\Hnot(X_\regu,T_{X_\regu}). 
$$
The flow of each $\xi\in\Hnot(X,T_X)$ defines a global holomorphic action of $(\C,+)$ on $X$. For any differential form $\om$ on $X$, the Lie derivatives $\cL_\xi\om$, $\cL_{J\xi}\om$ can then be defined as differential forms on $X$ by differentiating along the real and imaginary directions of the flow of $\xi$. While the interior derivative $i(\xi)$ is a priori only defined on $X_\regu$, we similarly have: 

\begin{lem} For any $\xi\in\Hnot(X,T_X)$ and any (smooth) differential form $\om$ on $X$, $i(\xi)\om$ is a (smooth) differential form on $X$. 
\end{lem}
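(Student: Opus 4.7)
The plan is to express $i(\xi)\om$ locally through derivations of the wedge product, reducing everything to the smoothness of the scalar Lie derivatives $\xi(g)=\cL_\xi g$ for smooth $g\colon X\to\C$, which is more or less built into the discussion preceding the statement.

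First I would observe that the claim is local on $X$, so we may work in a neighborhood of a point $x_0\in X$. By definition of smooth forms on a complex space, after possibly shrinking this neighborhood we have a local embedding $X\hookrightarrow U\subset\C^N$, and $\om$ is the restriction of a smooth $k$-form on $U$. Writing such an ambient form in standard real (or mixed holomorphic/antiholomorphic) coordinates on $\C^N$ and restricting to $X$, we obtain a finite expression
\[
\om=\sum_{I}f_I\,dg_{I,1}\wedge\cdots\wedge dg_{I,k},
\]
where each $f_I$ and each $g_{I,j}$ is a smooth function $X\to\C$ in the sense of the paper.

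Next, on the regular locus $X_\regu$ the interior product with $\xi$ is a derivation of degree $-1$ with respect to $\wedge$, so
\[
i(\xi)\om=\sum_{I}f_I\sum_{j=1}^{k}(-1)^{j-1}\,\xi(g_{I,j})\,dg_{I,1}\wedge\cdots\widehat{dg_{I,j}}\cdots\wedge dg_{I,k}
\]
on $X_\regu$. The right-hand side is constructed from smooth functions on $X$ and differentials of smooth functions on $X$, hence defines a smooth form in the local chart on all of $X$, provided each $\xi(g_{I,j})=\cL_\xi g_{I,j}$ is smooth on $X$. Granting that, and using density of $X_\regu$, this is the desired smooth extension of $i(\xi)\om$; uniqueness of the extension also ensures independence from the chosen presentation of $\om$.

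The only remaining ingredient is precisely the smoothness of $\xi(g)=\cL_\xi g$ on $X$ for any smooth $g$, which follows immediately from the fact recalled just above the lemma: the holomorphic flow $\phi_t=\exp(t\xi)\colon\C\times X\to X$ is globally defined, so $(t,x)\mapsto g(\phi_t(x))$ is smooth on $\R\times X$, and its $t$-derivative at $0$ is smooth on $X$. The main subtle point is conceptual rather than computational: one has to take seriously that "smooth form on $X$" is defined by local embedding and not by intrinsic tangent data, so the argument proceeds by producing a candidate smooth form via the derivation formula and then matching it with $i(\xi)\om$ on the dense open $X_\regu$, where the intrinsic definition is unambiguous.
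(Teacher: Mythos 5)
Your proposal is correct and follows essentially the same route as the paper: decompose $\om$ locally into a finite sum of forms $f_0\,df_1\wedge\cdots\wedge df_p$ via a local embedding, use that $i(\xi)$ is an antiderivation to reduce to $i(\xi)df=\cL_\xi f$, and invoke the smoothness of $\cL_\xi f$ coming from the globally defined flow of $\xi$. The extra care you take in matching the candidate form with $i(\xi)\om$ on the dense regular locus is implicit in the paper's argument but adds nothing essentially different.
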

Note that the usual Cartan formula
$$
\cL_\xi\om=d i(\xi)\om+i(\xi) d\om
$$
then holds globally on $X$. 
\begin{proof} By definition, $\om$ is locally given as the restriction of an ambient form in a local embedding of $X$ into a smooth space, and hence is locally a finite sum of decomposable forms, of the form $f_0 df_1\winter df_p$ for some smooth functions $f_0,f_1,\dots,f_p$. Since $i(\xi)$ is an antiderivation, we are then reduced to the case $\om=df$ for a smooth function $f$, in which case $i(\xi)\om=\cL_\xi f$ is indeed smooth, as discussed above. 
\end{proof}

Denote by $\cZ=\cZ(X)$ (resp.~$\cZ'=\cZ'(X)$) the space of locally $\ddc$-exact real $(1,1)$-forms (resp.~currents) on $X$. Any current $\theta\in\cZ'$ can be written as 
$$
\theta=\om+\ddc f
$$
with $\om\in\cZ$ and $f$ a distribution, which is necessarily smooth when $\theta$ is (see \eg\cite[\S 4.6.1]{BG}). The group $\Aut(X)$ acts on $\cZ$ and $\cZ'$. 

\begin{lem}\label{lem:cocycle} For any $g\in\Aut_0(X)$ and $\om\in\cZ$, there exists $\tau\in C^\infty(X)$ such that $g^\star\om=\om+\ddc\tau$. 
\end{lem}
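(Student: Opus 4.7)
The plan is a path/Moser argument, reducing the conclusion to a global $\ddc$-lemma on the normal K\"ahler space $X$. Since $\Aut_0(X)$ is a connected Lie group, I would first choose a smooth path $(g_t)_{t\in[0,1]}$ from $\id$ to $g$ in $\Aut_0(X)$, generated by a smooth, time-dependent family of holomorphic vector fields $\xi_t\in\Hnot(X,T_X)$ with associated real vector fields $Y_t$, so that $\tfrac{d}{dt}g_t=Y_t\circ g_t$. Applying the Cartan formula, which is justified on the normal space $X$ by the smoothness of $i(\xi)\om$ proved in the preceding lemma, together with $d\om=0$, one obtains
\[
\frac{d}{dt}(g_t^\star\om)=g_t^\star\cL_{Y_t}\om=g_t^\star\,d(i_{Y_t}\om).
\]

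The crux is then to show that the smooth, $d$-exact, closed real $(1,1)$-form $\alpha_t:=d(i_{Y_t}\om)$ admits a smooth \emph{global} $\ddc$-potential $h_t$ on $X$, depending smoothly on $t$. Locally on a chart where $\om=\ddc u$, the holomorphy of $\xi_t$ gives $\cL_{Y_t}\ddc u=\ddc(\cL_{Y_t}u)$, which already shows $\alpha_t\in\cZ(X)$; what is missing is the globalization. This is the main obstacle: it amounts to a $\ddc$-lemma for the singular compact K\"ahler space $X$. On a smooth compact K\"ahler manifold it is immediate from Hodge theory, and for a normal compact K\"ahler space, which belongs to Fujiki's class $\cC$, one can invoke the corresponding $\ddc$-lemma going back to Varouchas. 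Smooth dependence of $h_t$ on $t$ can then be secured either by an explicit averaging scheme or by standard elliptic regularity for the linear $\ddc$-equation.

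With such a family $(h_t)$ in hand, pullback by the biholomorphism $g_t$ commutes with $\ddc$, so
\[
\frac{d}{dt}(g_t^\star\om)=\ddc(h_t\circ g_t),
\]
and integration over $t\in[0,1]$ yields
\[
g^\star\om-\om=\ddc\tau\qquad\text{with}\qquad\tau:=\int_0^1 h_t\circ g_t\,dt\ \in\ C^\infty(X),
\]
which establishes the claim.
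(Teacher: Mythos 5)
Your route is genuinely different from the paper's, and the comparison is instructive, but as written it defers rather than resolves the one step that actually carries the difficulty. The paper does not use a path at all: it picks an $\Aut_0(X)$-equivariant resolution $\pi\colon Y\to X$ with $Y$ K\"ahler, observes that the lift of $g$ lies in the identity component of $\Aut(Y)$ and hence that $\pi^\star g^\star\om$ and $\pi^\star\om$ are de Rham cohomologous on $Y$, applies the smooth $\ddc$-lemma there to get $\pi^\star g^\star\om=\pi^\star\om+\ddc\tilde\tau$, notes that $\tilde\tau$ is pluriharmonic, hence constant, on the connected fibers of $\pi$, so descends to a continuous $\tau$ on $X$, and finally upgrades $\tau$ to $C^\infty$ using the local structure of smooth currents in $\cZ'$ (the fact, cited from \cite[\S 4.6.1]{BG}, that a distribution with smooth $\ddc$ is smooth). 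This one-step argument makes the cohomological input explicit and avoids any time-dependent machinery.

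The gap in your version is the ``crux'' you flag yourself: you need a global $\ddc$-lemma with smooth potentials on the \emph{singular} space $X$, together with smooth dependence of $h_t$ on $t$. The Varouchas/class-$\cC$ $\partial\bar\partial$-lemma is a statement about compact complex \emph{manifolds}; for a normal compact K\"ahler space the assertion that a smooth $d$-exact form in $\cZ(X)$ admits a global smooth $\ddc$-potential is not an off-the-shelf citation, and the standard way to prove it is precisely the resolution-and-descent argument above (pull back to $Y$, solve there, descend along connected fibers, then invoke the smoothness-of-potentials fact on $X$). Likewise, ``standard elliptic regularity'' for the $t$-dependence is not available on $X_{\sing}$; you would again have to work on the resolution and normalize the potentials (say by integrating against a fixed volume form) to get a smooth family. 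Once you have done all that, the path and the integral over $t$ are superfluous: applying the same descent directly to the single $d$-exact form $\pi^\star(g^\star\om-\om)$ gives the conclusion immediately. So your argument is salvageable, but only by importing the paper's proof as the missing lemma, at which point the Moser-type superstructure buys nothing.
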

\begin{proof} Pick an $\Aut_0(X)$-equivariant resolution of singularities $\pi\colon Y\to X$ with $Y$ K\"ahler. Then $g^\star\om$ and $\om$ lie in the same (de Rham) cohomology class on $Y$, and the $\ddc$-lemma thus yields $\pi^\star g^\star\om=\pi^\star\om+\ddc\tilde\tau$ with $\tilde\tau\in C^\infty(Y)$. The function $\tilde\tau$ is then pluriharmonic, and hence constant, along the (connected) fibers of $\pi$. It thus descends to a continuous function $\tau\in C^0(X)$ such that $g^\star\om=\om+\ddc\tau$. As recalled above, since $\ddc\tau$ is smooth, $\tau$ is smooth as well, and we are done. 
\end{proof}

We define the \emph{linear automorphism group} 
$$
\Autr(X)\subset\Aut_0(X)
$$
as the identity component of the subgroup of $\Aut_0(X)$ that acts trivially on the Albanese torus of some $\Aut_0(X)$-equivariant resolution of singularities of $X$. This is independent of the choice of resolution, by bimeromorphic invariance of the Albanese torus.

%
\subsubsection{Moment maps} In what follows we fix a compact torus 
$$
T\subset\Autr(X),
$$
with Lie algebra $\ft$. 
\begin{defi} A \emph{moment map} for a $T$-invariant form $\om\in\cZ$ (resp.~current $\om\in\cZ'$) is defined as a $T$-invariant $\ft^\vee$-valued smooth map (resp.~distribution) $m$ on $X$ such that $-dm^\xi=i(\xi)\om$ for all $\xi\in\ft$, with $m^\xi:=\langle m,\xi\rangle$. 
\end{defi}
We call $\Om=(\om,m)$ an \emph{equivariant form} (resp.~\emph{equivariant current}), with moment map $m_\Om:=m$. For any $T$-invariant function/distribution $f$, setting 
$$
m_f^\xi:=d^c f(\xi)=-df(J\xi)
$$
for $\xi\in\ft$ defines a moment map for $dd^c f$, and hence an equivariant form/current 
$$
\ddcT f=(\ddc f,m_f).
$$
More generally: 

\begin{lem} Any $T$-invariant form $\om\in\cZ$ (resp.~current $\om\in\cZ'$) admits a moment map $m$, unique up to an additive constant in $\ft^\vee$. 
\end{lem}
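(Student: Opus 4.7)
For uniqueness, if $m_1,m_2$ are two moment maps for the same form (or current) $\om$, then $d(m_1^\xi-m_2^\xi)=0$ in the sense of distributions for every $\xi\in\ft$, so each component of $m_1-m_2$ is locally constant. Since $X$ is connected this forces $m_1-m_2$ to be a constant element of $\ft^\vee$.

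For existence I first handle the smooth case $\om\in\cZ$ and then reduce the case of currents to it. In the smooth case, pick a $T$-equivariant resolution of singularities $\pi\colon Y\to X$ with $Y$ a compact K\"ahler manifold, as in the proof of Lemma~\ref{lem:cocycle}. The pullback $\pi^\star\om$ is a smooth, closed, $T$-invariant real $(1,1)$-form on the compact K\"ahler manifold $Y$, and hence admits a smooth moment map $\tilde m\colon Y\to\ft^\vee$ by the classical construction recalled in~\S\ref{sec:equiv}. The key point is that $\tilde m$ is constant along each fiber of $\pi$: writing $\xi_Y$ (resp.\ $\xi_X$) for the real vector field induced by $\xi\in\ft$ on $Y$ (resp.\ $X$), the $T$-equivariance of $\pi$ yields $\pi_\star\xi_Y(p)=\xi_X(\pi(p))$ for all $p\in Y$, and $\pi_\star w=0$ for any $w$ tangent to a fiber, so that
\[
-d\tilde m^\xi(w)=(\pi^\star\om)(\xi_Y,w)=\om\bigl(\pi_\star\xi_Y,\pi_\star w\bigr)=0.
\]
Since the fibers of $\pi$ are connected ($\pi$ being a proper bimeromorphic morphism onto a normal space), $\tilde m$ descends to a continuous map $m\colon X\to\ft^\vee$ satisfying $-dm^\xi=i(\xi)\om$. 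Smoothness of $m^\xi$ then follows from the regularity argument used at the end of the proof of Lemma~\ref{lem:cocycle}: since $dm^\xi$, and hence $\ddc m^\xi$, is smooth, $m^\xi$ is itself smooth as a function on the complex space $X$.

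For the current case $\om\in\cZ'$ the plan is to write $\om=\om_0+\ddc f$ with $\om_0\in\cZ$ smooth and $f$ a $T$-invariant distribution on $X$, and then combine the smooth case with the explicit moment map $m_f^\xi=\dc f(\xi)$ recalled just before the statement (which makes sense verbatim for any distribution $f$). To construct such a decomposition, cover $X$ by open sets $U_i$ on which $\om=\ddc f_i$ for some distribution $f_i$; on overlaps $f_i-f_j$ is pluriharmonic, hence smooth. Pick a locally finite smooth partition of unity $(\chi_i)$ subordinate to $(U_i)$ and set $f:=\sum_i\chi_if_i$; then on each $U_i$ we have $\om-\ddc f=\ddc\sum_j\chi_j(f_i-f_j)$, which is smooth, so $\om_0:=\om-\ddc f\in\cZ$. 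Averaging $\om_0$ and $f$ over $T$ preserves the identity $\om=\om_0+\ddc f$ by $T$-invariance of $\om$, so we may further assume both are $T$-invariant. Then $m:=m_0+m_f$, where $m_0$ is the moment map of $\om_0$ provided by the smooth case, is the desired moment map of $\om$.

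The main technical obstacle is the descent/regularity step in the smooth case: fiberwise constancy of $\tilde m$ is immediate from the moment-map equation and $T$-equivariance of $\pi$, but transferring smoothness from $Y$ to the (possibly singular) complex space $X$ relies on the characterization of smooth functions on a normal complex space via the smoothness of their $\ddc$, as already invoked at the end of the proof of Lemma~\ref{lem:cocycle}.
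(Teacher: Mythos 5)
Your proof is correct and follows essentially the same route as the paper's: pull back to a $T$-equivariant resolution, descend the moment map along the (connected) fibers, and handle the current case by writing $\om=\om_0+\ddc f$ and adding $m_f$ (the paper simply cites this decomposition rather than building it by partition of unity, and likewise leaves the fiberwise constancy implicit). The only substantive difference is the final regularity step: you invoke the blanket principle that a distribution with smooth $\ddc$ on a normal complex space is smooth, which tacitly requires knowing that the distributional $\ddc m^\xi$ of the merely continuous descended function carries no extra mass on $X_{\sing}$ (this does follow, e.g.\ from $\ddc m^\xi=\pi_\star\ddc\tilde m^\xi$, but you should say so); the paper instead works with a local potential $\f$ for $\om$, observes that $h=m^\xi+\cL_{J\xi}\f$ is pluriharmonic on $U_{\reg}$ and locally bounded, and extends it across the singular set by Grauert--Remmert, which avoids this issue entirely.
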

\begin{proof} Assume first that $\om$ is smooth, and pick a $T$-equivariant resolution of singularities $\pi\colon Y\to X$. Then $\pi^\star\om$ admits a smooth moment map on $ Y$. It induces a smooth moment map $m$ for $\om$ on $X_\reg$, and we need to show that $m$ extends to a smooth map on $X$. Any point of $X$ admits an open neighborhood $U$ on which $\om=\ddc\f$ for some $\f\in C^\infty(U)$. For any $\xi\in\ft$, \eqref{equ:dcm} yields 
$$
-\ddc m^\xi=\cL_{J\xi}\om=\ddc\cL_{J\xi}\f
$$
on $U_\reg$. Thus $h:=m^\xi+\cL_{J\xi}\f$ is pluriharmonic on $U_\reg$, and locally bounded near each point of $U$ since $m$ extends to a smooth function on $ Y$ and $\cL_{J\xi}\f$ is smooth on $U$. As a result, $h$ uniquely extends to a pluriharmonic (and hence smooth) function on $U$, and it follows that $m$ is smooth on $U$. 

Assume now $\om$ is a current. Write $\om=\om'+\ddc f$ with $\om'\in\cZ^T$ and $f$ a $T$-invariant distribution. By the previous step, $\om'$ admits a (smooth) moment map $m'$, and $m:=m'+m_f$ is thus a moment map for $\om$. 
\end{proof}
%
\subsubsection{Weighted Monge--Amp\`ere operators and weighted energy} 
With the above preliminaries in our hands, \S\ref{sec:wen} extends to the present setting without change. Given an equivariant form $\Om=(\om,m_\Om)$, an equivariant current $\Theta=(\theta,m_\Theta)$ and a smooth weight $v\in C^\infty(\ft^\vee)$, we set 
$$
\Om_\f:=\Om+\ddcT\f=(\om_\f,m_{\Om}+m_\f)
$$
for $\f\in C^\infty(X)^T$, and introduce the weighted Monge--Amp\`ere operator 
$$
\MA_{\Om,v}(\f):=v(m_{\Om_\f})\om_\f^n
$$
and twisted weighted Monge--Amp\`ere operator
$$
\MA_{\Om,v}^\Theta(\f):=v(m_{\Om_\f}) n\theta\wedge\om_\f^{n-1}+\langle v'(m_{\Om_\f}),m_\Theta\rangle\om_\f^n.
$$
The proof of Lemma~\ref{lem:symm} goes through and yields the key symmetry property
$$
\int_X g\MA_{\Om,v}^{\ddcT f}(\f)=\int_X f\MA_{\Om,v}^{\ddcT g}(\f)
$$
for all $\f\in C^\infty(X)^T$ and $T$-invariant distributions $f,g$, at least one of which is smooth. This allows us to define the weighted energy functionals
$$
\en_{\Om,v}\colon C^\infty(X)^T\to\R,\quad\en_{\Om,v}^\Theta\colon C^\infty(X)^T\to\R
$$
as Euler--Lagrange functionals of the weighted Monge--Amp\`ere operators.
%
\smallskip

For later use, we finally show: 
\begin{lem}\label{lem:LP} Let $\pi\colon Y\to X$ be a $T$-equivariant resolution of singularities, $D$ a $T$-invariant divisor on $Y$, with equivariant integration current $[D]_T$, and assume that $D$ is $\pi$-exceptional. For any $\f\in C^\infty(X)^T$ we then have  
$$
\en_{\pi^\star\Om,v}^{[D]_T}(\pi^\star\f)=0. 
$$
\end{lem}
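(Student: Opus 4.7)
My plan is to reduce the claim to a routine dimension-count on $D$, via the observation that the moment map of $[D]_T$ vanishes identically, so that $\MA_{\pi^\star\Om,v}^{[D]_T}$ collapses to its form contribution and is supported on $D$.

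The first step is to show $m_{[D]_T}\equiv 0$. The canonical section $s_D$ of $\cO_Y(D)$ is $T$-invariant with respect to the natural $T$-equivariant structure on $\cO_Y(D)$, and hence $T_\C$-invariant: a $T$-invariant holomorphic section of an equivariant line bundle has trivial character weight, and therefore extends to a $T_\C$-invariant section by analytic continuation. The canonical singular metric $\log|s_D|$ is characterized by $|s_D|_\phi\equiv 1$ on $Y\setminus D$, a condition that is manifestly $T_\C$-invariant; so is the metric itself, giving $(e^{tJ\xi})^\star\log|s_D|=\log|s_D|$ for all $\xi\in\ft$ and $t\in\R$, and differentiating at $t=0$ yields $m_{\log|s_D|}^\xi\equiv 0$. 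Consequently $[D]_T=([D],0)$ as equivariant currents.

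With the moment map gone, the twisted Monge--Amp\`ere operator~\eqref{equ:twMA} evaluated at $t\pi^\star\f$ simplifies, using the compatibility $m_{\pi^\star\Om+\ddcT(t\pi^\star\f)}=\pi^\star m_{\Om_{t\f}}$, to
\begin{equation*}
\MA_{\pi^\star\Om,v}^{[D]_T}(t\pi^\star\f)=v(\pi^\star m_{\Om_{t\f}})\cdot n\,[D]\wedge\pi^\star\om_{t\f}^{n-1}.
\end{equation*}
Plugging this into the integral formula~\eqref{equ:twenexp} with $\p=0$ and the normalization $\en_{\pi^\star\Om,v}^{[D]_T}(0)=0$ yields
\begin{equation*}
\en_{\pi^\star\Om,v}^{[D]_T}(\pi^\star\f)=n\int_0^1 dt\int_D (\pi|_D)^\star\alpha_t,\qquad \alpha_t:=\f\cdot v(m_{\Om_{t\f}})\cdot\om_{t\f}^{n-1},
\end{equation*}
where $\alpha_t$ is a smooth $(n-1,n-1)$-form on $X$.

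The last step exploits the $\pi$-exceptional hypothesis: $\pi|_D$ factors as $D\twoheadrightarrow\pi(D)\hookrightarrow X$ with $\dim_\C\pi(D)\le n-2$, and any smooth $(n-1,n-1)$-form on $X$ restricts to zero on the regular locus of $\pi(D)$ for bidegree reasons; hence $(\pi|_D)^\star\alpha_t\equiv 0$ on $D$ and the integral vanishes. The only delicate step is the initial claim $m_{[D]_T}\equiv 0$: it is crucial to interpret $\log|s_D|$ as a singular Hermitian metric on $\cO_Y(D)$---with the $T_\C$-lift acting nontrivially on fibers along $D$---rather than as a local quasi-psh function in a trivialization, since the two interpretations give different answers for the moment map.
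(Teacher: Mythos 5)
Your proof is correct and follows the same basic decomposition as the paper's: the twisted operator $\MA_{\pi^\star\Om,v}^{[D]_T}$ splits into a form part supported on $D$ and a moment-map part, and each is killed separately. For the form part the two arguments are interchangeable: your dimension count on $\pi(D)$ is an unwinding of the identity $\int_D(\pi|_D)^\star\alpha_t=\langle\pi_\star[D],\alpha_t\rangle=0$, which is how the paper phrases it. The real divergence is in the moment-map part. The paper proves only the weaker statement $\pi_\star m^\xi_{\log|s_D|}=0$, by observing that $\ddc\,\pi_\star m^\xi_{\log|s_D|}=-\cL_{J\xi}\pi_\star[D]=0$ (so the pushforward is pluriharmonic, hence constant) and that $m^\xi_{\log|s_D|}$ vanishes off $D$; you prove the stronger statement $m^\xi_{\log|s_D|}\equiv 0$ from the $T_\C$-invariance of the canonical metric, and this is where a little more care is needed. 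For the singular metric $\log|s_D|$ the moment map is defined distributionally, via \eqref{equ:mddc} applied to $f=\log|s_D|-\phi_0$ for a smooth invariant reference metric $\phi_0$; your ``differentiating at $t=0$'' only shows pointwise vanishing on $Y\setminus D$, which does not by itself exclude a singular contribution supported on $D$. This is easily repaired --- either note that $f$ has merely logarithmic poles, hence lies in $W^{1,1}_{\mathrm{loc}}$, so $\dc f$ carries no mass on $D$ and the a.e.\ computation is the distributional one; or interpret the invariance $(e^{tJ\xi})^\star\log|s_D|=\log|s_D|$ as $(e^{tJ\xi})^\star f=f-\big((e^{tJ\xi})^\star\phi_0-\phi_0\big)$ and differentiate the smooth right-hand side in the sense of distributions --- but it deserves a sentence, since it is precisely the point the paper's $\ddc$-argument is designed to sidestep. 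Your closing remark about the global metric versus the local weight is well taken: that is indeed where a naive trivialized computation produces a spurious nonzero constant. With the distributional point addressed, your argument is complete.
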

\begin{proof} Pick $f\in C^\infty(X)^T$. Then 
\begin{align*}
\frac{d}{dt}\bigg|_{t=0}\en_{\pi^\star\Om,v}^{[D]_T}(\pi^\star(\f+t f)) & =\int_Y \pi^\star f
\MA_{\pi^\star\Om,v}^{[D]_T}(\pi^\star\f)\\
& =\int_Y \pi^\star f\left[n v(\pi^\star m_{\Om_\f})[D]\wedge\pi^\star\om_\f^{n-1}+\sum_\a v'_\a(m_{\pi^\star\Om_\f}) m_{\log|s_D|}^{\xi_\a}\pi^\star\om_\f^n\right],
\end{align*}
where $(\xi_\a)$ is a given basis of $\ft$ and $v'_\a$ denotes the partial derivatives in the dual basis of $\ft^\vee$. 
Since $D$ is $\pi$-exceptional, it satisfies $\pi_\star[D]=0$, and hence 
$$
\int_Y \pi^\star f\left[n v(\pi^\star m_{\Om_\f})[D]\wedge\pi^\star\om_\f^{n-1}\right]=0. 
$$
For any $\xi\in\ft$, \eqref{equ:dcm} further yields 
$\ddc m^\xi_{\log|s_D|}=-\cL_{J\xi}[D]$, and hence
$$
\ddc\pi_\star m^{\xi}_{\log|s_D|}=-\cL_{J\xi}\pi_\star[D]=0. 
$$
This shows that the distribution $\pi_\star m^\xi_{\log|s_D|}$ is constant, necessarily equal to $0$ since $m_{\log|s_D|}^\xi$ vanishes outside $D$ by~\eqref{equ:mphi}. We thus get
$$
\int_Y \pi^\star f\sum_\a v'_\a(m_{\pi^\star\Om_\f}) m_{\log|s_D|}^{\xi_\a}\pi^\star\om_\f^n=0, 
$$
and hence $\frac{d}{dt}\big|_{t=0}\en_{\pi^\star\Om,v}^{[D]_T}(\pi^\star(\f+t f))=0$. The result follows since $\en_{\pi^\star\Om,v}^{[D]_T}(0)=0$. 
\end{proof}

\subsubsection{Extension to $\cE^1$}
From now on, we fix an equivariant K\"ahler form $\Om=(\om,m_\Om)$ on $X$, and denote by  
$$
\cH=\cH(X,\om):=\{\f\in C^\infty(X)\mid\om_\f>0\}
$$
the space of K\"ahler potentials. The following key regularization theorem is a consequence of the recent results of Cho--Choi~\cite{CC}. 
\begin{thm} Any $\om$-psh function $\f$ on $X$ can be written as the pointwise limit of a decreasing sequence in $\cH$. If $\f$ is further $T$-invariant, then the sequence can be chosen in $\cH^T$.  
\end{thm}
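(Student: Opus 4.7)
The plan is to decompose the proof into two essentially independent parts: (i) a non-equivariant regularization result, which is the genuinely hard input and will be quoted from Cho--Choi; (ii) a soft averaging argument that upgrades this to the $T$-invariant setting.

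For part (i), I would simply apply the Cho--Choi regularization theorem on the normal compact Kähler space $(X,\om)$. This yields, for any $\om$-psh function $\f$ on $X$, a decreasing sequence $\f_j \searrow \f$ pointwise with each $\f_j$ smooth and strictly $\om$-psh, i.e.\ $\f_j \in \cH$. There is nothing to add here beyond citing their result, since their framework is designed exactly for the singular Kähler setting considered in this section.

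For part (ii), assume $\f$ is in addition $T$-invariant, and let $(\f_j)$ be a decreasing sequence in $\cH$ produced by part (i). Since our fixed $\Om=(\om,m_\Om)$ is an equivariant Kähler form, $\om$ is $T$-invariant, and for each $\tau \in T$ the pullback $\tau^\star\om = \om$, so
$$
\om + \ddc(\f_j \circ \tau) = \tau^\star(\om + \ddc \f_j) > 0,
$$
hence $\f_j \circ \tau \in \cH$. I would then average over the normalized Haar measure $d\tau$ of $T$, setting
$$
\tilde\f_j(x) := \int_T \f_j(\tau \cdot x)\, d\tau,
$$
and use the convexity of the cone of smooth strictly $\om$-psh functions to conclude $\tilde\f_j \in \cH$. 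By invariance of the Haar measure, each $\tilde\f_j$ is $T$-invariant, so $\tilde\f_j \in \cH^T$. Monotonicity $\tilde\f_j \ge \tilde\f_{j+1}$ is immediate from the pointwise inequality $\f_j \circ \tau \ge \f_{j+1}\circ\tau$ for every $\tau$. Finally, $T$-invariance of $\f$ gives $\f_j \circ \tau \searrow \f \circ \tau = \f$, and monotone convergence yields $\tilde\f_j \searrow \f$ pointwise.

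The only genuinely delicate step is part (i): extending Demailly's regularization from the smooth setting to a normal compact Kähler space requires substantial work on local potentials near singularities, and this is precisely what Cho--Choi provide. Part (ii) is then a standard compact-group averaging argument, and no additional difficulty arises from working on a singular base since the biholomorphic $T$-action and the formula $\tau^\star\om=\om$ are available throughout $X$.
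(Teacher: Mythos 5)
There is a genuine gap in part (i): you attribute to Cho--Choi a stronger statement than they prove. Their result (Theorem~5.5 of \cite{CC}, as used in the paper) produces a decreasing sequence of \emph{continuous} $\om$-psh functions converging to $\f$; it does not produce smooth, strictly $\om$-psh approximants. To get from continuous to smooth one needs a second ingredient, namely the classical Richberg regularization theorem, which is valid on arbitrary complex spaces and shows that every continuous $\om$-psh function is a \emph{uniform} limit of functions in $\cH$. The paper's argument is precisely this two-step combination. Your proof, as written, skips the Richberg step entirely and so rests on a citation that does not deliver what you claim. There is also a small but real splicing issue you would then have to address: given a decreasing sequence of continuous $\om$-psh functions $u_j\searrow\f$ and, for each $j$, functions in $\cH$ uniformly close to $u_j$, one must add suitable small constants $\e_j\downarrow 0$ and choose the smooth approximants within $\e_j$ of $u_j+2\e_j$, say, so that the resulting sequence in $\cH$ is still decreasing and still converges pointwise to $\f$. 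This is routine but it is part of the proof, not automatic.

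Part (ii), the averaging over the Haar measure of $T$, is correct and is exactly what the paper does; the verification that $\f_j\circ\tau\in\cH$ for $\tau\in T$, the convexity of the cone of smooth strictly $\om$-psh functions under averaging, and the preservation of monotonicity and of the pointwise limit are all fine.
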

\begin{proof} By~\cite[Theorem~5.5]{CC}, any $\om$-psh function on $X$ can be written as the limit of a decreasing sequence of (finite-valued) continuous $\om$-psh functions. On the other hand, the classical Richberg regularization theorem~\cite{Ric} (which is valid on any complex space) implies that any continuous $\om$-psh function is the uniform limit of functions in $\cH$. The first point follows, and the second point as well, by averaging with respect to the Haar measure of $T$. 
\end{proof}
In the present setting, the usual properties of the space $\cE^1=\cE^1(X,\om)$ of potentials of finite energy remain valid. In particular, $\cE^1$ is the metric completion of $\cH$ with respect the Darvas metric $\dd_1$. Lemma~\ref{lem:cocycle} further allows us to define as in~\S\ref{sec:auto} an isometric right-action of $\Aut_0(X)$ on $\cET$ 

\begin{rmk}\label{rmk:proper} While this is certainly expected to hold, the action of $\Aut_0(X)$ on $\cET$ is not known to be proper when $X$ is singular. However, this ingredient does not enter the proof of Theorem~\ref{thm:opencoermab} below, where properness is only needed for K\"ahler forms on a resolution of singularities, which holds by Proposition~\ref{prop:isom}. 
\end{rmk}

The estimates of \S\ref{sec:mabext} apply without change, and yield a continuous extension of the weighted Monge--Amp\`ere operator $\MA_v=\MA_{\Om,v}$ to $\cET$, as well as continuous extensions
$$
\en_v\colon\cET\to\R,\quad\en_v^\Theta\colon\cET\to\R
$$
of the weighted energy functionals, for any equivariant form $\Theta$.

%
%
\subsubsection{Adapted measures vs.~volume forms} 

We further assume from now on that $X$ has \emph{log terminal singularities}. One main difference in the present context is that (smooth, positive) volume forms on $X$ correspond to \emph{singular} metrics on the $\Q$-line bundle $K_X$. On the other hand, any smooth metric $\p$ on $K_X$ defines a positive Radon measure $\nu$ on $X$, locally given by 
$$
\nu:=\frac{\left(i^{m n^2}\sigma\wedge\bar\sigma\right)^{2/m}}{|\sigma|_\p^{2/m}}
$$
for any choice of local trivialization $\sigma$ of the line bundle $mK_X$ with $m\in\Z_{>0}$ sufficiently divisible. We call $\nu$ an \emph{adapted measure}, and denote by $\tfrac12\log\nu:=\p$ the corresponding smooth metric on $K_X$. Any two adapted measures differ by a smooth positive density. 

We define the \emph{Ricci form} of an adapted measure $\nu$ as the curvature form 
$$
\Ric(\nu):=-\ddc\tfrac 12\log\nu, 
$$
and its \emph{equivariant Ricci form} as the equivariant curvature form 
$$
\Ric^T(\nu):=-\ddcT\tfrac 12\log\nu. 
$$

\begin{lem}\label{lem:adapted} Pick a (smooth, positive) volume form $\mu$ on $X$, and an adapted measure $\nu$. Then $\mu=e^{\rho}\nu$ for a quasi-psh function $\rho$ with analytic singularities, \ie locally of the form 
$$
\rho=c\log \max_\a|f_\a|+O(1)
$$
for a finite set $f_\a\in\cO_X$ and $c\in\Q_{>0}$. 
\end{lem}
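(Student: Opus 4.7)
The statement is local on $X$, so I would fix a point $x_0 \in X$ and work in a small neighborhood $U \ni x_0$. My strategy is to pass to a log resolution $\pi\colon Y \to U$ with $Y$ smooth and $\pi$-exceptional divisor $E = \sum_i E_i$ of simple normal crossing support; the klt hypothesis will give discrepancies $a_i > -1$ in the decomposition $K_Y - \pi^\star K_U = \sum_i a_i E_i$. I fix a smooth volume form $dV_Y$ on $Y$, and smooth Hermitian metrics on each $\cO_Y(E_i)$ with canonical sections $s_{E_i}$.

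First I would analyze $\pi^\star \nu$: since $\tfrac12 \log \nu$ is a smooth metric on $K_U$, its pullback to $\pi^\star K_U$ is smooth, and via $K_Y \cong \pi^\star K_U + \sum_i a_i E_i$ it translates into a metric on $K_Y$ differing from any smooth one by $-\sum_i a_i \log |s_{E_i}|^2$; exponentiating against $dV_Y$ produces
$\pi^\star \nu = u_\nu \cdot \prod_i |s_{E_i}|^{-2 a_i} \cdot dV_Y$ with $u_\nu$ smooth and positive on $Y$. Next I would turn to $\pi^\star\mu$: choose $m \in \Z_{>0}$ with $mK_U$ Cartier and let $\tau$ be a local trivializing section of $mK_U$ near $x_0$. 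Comparing $\mu^m$ against $i^{m n^2}\tau \wedge \bar\tau$ in local coordinates on $Y$ adapted to $E$, and using that $\mu$ is locally the restriction of a smooth ambient volume form via an embedding $U \hookrightarrow \C^N$, one obtains
$\pi^\star \mu = u_\mu \cdot \prod_i |s_{E_i}|^{2 b_i} \cdot dV_Y$ with $u_\mu$ smooth positive and $b_i \in \tfrac{1}{m}\Z$ recording the vanishing order along $E_i$ of a holomorphic Jacobian relating $\mu$ to $\tau$. Local finiteness of $\mu$ forces $b_i \geq 0$.

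Combining the two gives $\pi^\star \rho = \log(u_\mu/u_\nu) + \sum_i 2(b_i + a_i) \log |s_{E_i}|$. The crucial point is $b_i + a_i \geq 0$ for each $i$, which is equivalent to a local bound $\mu \leq C\nu$ and is the analytic incarnation of the klt hypothesis, namely that at klt singularities the adapted measure $\nu$ dominates any smooth ambient volume form up to a bounded factor. Granted this, $\pi^\star\rho$ is a smooth function plus a non-negative-coefficient combination of $\log|s_{E_i}|^2$, hence quasi-psh with analytic singularities on $Y$. To descend to $X$, I would note that for a sufficiently divisible integer $M$ the effective $\Q$-divisor $M \sum_i (b_i + a_i) E_i$ is $\pi$-exceptional and defines an ideal sheaf on $Y$ whose pushforward to $U$ is generated by finitely many holomorphic functions $f_\alpha \in \cO_U$; taking $c := 1/M \in \Q_{>0}$ then yields the desired local form $\rho = c \log \max_\alpha |f_\alpha| + O(1)$. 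The main obstacle will be establishing the sign inequality $b_i + a_i \geq 0$, which ultimately rests on the explicit domination properties of klt singularities.
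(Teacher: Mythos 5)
Your plan correctly isolates the crux --- the one-sided bound $\mu\le C\nu$, i.e.\ the inequality $a_i+b_i\ge 0$ on a log resolution --- but it leaves exactly that step unproved, and moreover attributes it to the wrong hypothesis. The inequality $a_i+b_i\ge 0$ has nothing to do with klt (which is the condition $a_i>-1$ and only governs the finiteness of $\nu$); it holds for any normal $\Q$-Gorenstein germ. Indeed, writing $\mu=\sum_I i^{n^2}\sigma_I\wedge\overline{\sigma_I}$ with $\sigma_I=dz_I|_X$ for a local embedding $X\hookrightarrow\C^N$, each $\sigma_I^{m}$ is a holomorphic section of $mK_X$ over $X_{\reg}$ and extends to a section over $X$ because $\cO_X(mK_X)$ is a line bundle equal to the pushforward of $\omega_{X_{\reg}}^{\otimes m}$ (normality); thus $\sigma_I^m=f_I\sigma$ with $f_I\in\cO_X$, and pulling back to your resolution gives $\ord_{E_i}(g_I)+a_i=\tfrac1m\ord_{E_i}(\pi^\star f_I)\ge 0$, whence $a_i+b_i\ge0$. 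But once the functions $f_I$ are in hand the resolution is superfluous: the paper's proof simply observes that $\mu=\bigl(\sum_I|f_I|^{2/m}\bigr)\nu$, so that $\rho=\log\sum_I|f_I|^{2/m}=\tfrac{2}{m}\log\max_I|f_I|+O(1)$ is, up to a smooth term, a positive multiple of the psh function $\log\sum_I|f_I|^2$, which is exactly the assertion with $c=2/m$.

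Two further steps of your route would also need patching: (i) the factorization $\pi^\star\mu=u_\mu\prod_i|s_{E_i}|^{2b_i}\,dV_Y$ with $u_\mu$ smooth and positive requires the resolution to principalize the ideal generated by the local Jacobians $g_I$, since a priori their common zero locus need not be a divisor supported on $E$; (ii) you only obtain quasi-psh-ness of $\pi^\star\rho$ on $Y$, with respect to a form that is not a pullback from $X$, so descending to a quasi-psh function on $X$ requires an extra extension argument across $X_{\sing}$, as in Lemma~\ref{lem:hqpsh}. Both points are fixable, but the direct computation bypasses them entirely.
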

\begin{proof} Any two volume forms on $X$ differ by a smooth positive conformal factor. Arguing locally, we may thus assume $\mu=\om^n$ with $\om=\sum_{j=1}^N idz_j\wedge d\bar z_j$ for some local embedding of $X$ into $\C^N$, and $\nu=|\sigma|^{2/m}$ for a local trivialization $\sigma$ of $mK_X$. Then $\mu=\sum_I i^{n^2}\sigma_I\wedge\overline{\sigma_I}$ where $I$ ranges over all subsets of $\{1,\dots,N\}$ of cardinal $n$ and 
$\sigma_I=\bigwedge_{i\in I} dz_i|_X$. For each $I$ we have $\sigma_I^m=f_I\sigma$ with $f_I\in\cO_X$, and hence $\mu=(\sum_I |f_I|^{2/m})\nu$. The result follows. 
\end{proof}

Any $T$-invariant volume form $\mu$ thus induces a $T$-invariant quasi-psh metric $\tfrac 12\log \mu$ on $K_X$, which is smooth on $X_\regu$ and has analytic singularities along $X_{\sing}$. We define the \emph{Ricci current} of $\mu$ as the curvature current
$$
\Ric(\mu):=-\ddc\tfrac 12\log\mu, 
$$
and its \emph{equivariant Ricci current} as the equivariant curvature current 
$$
\Ric^T(\mu):=-\ddcT\tfrac 12\log\mu. 
$$
Both $\Ric(\mu)$ and $\Ric^T(\mu)$ are smooth on $X_\regu$. Since $\tfrac 12\log\mu$ is quasi-psh, $\Ric(\mu)$ is bounded above on $X$, but it is bounded below only when $X$ is smooth, \cf~\cite[Theorem~1.3]{Li}. 

%
\subsubsection{Weighted scalar curvature and weighted Mabuchi energy}
Applying Lemma~\ref{lem:poly} to the pullback of the equivariant K\"ahler form $\Om$ to an equivariant resolution of singularities shows that 
$$
P:=m_\Om(X)\subset\ft^\vee
$$ 
is a convex polytope, called the \emph{moment polytope} of $\Om$. 

Pick a weight $v\in C^\infty(\ft^\vee)$, positive on $P$. For any $\f\in\cH^T$, we define the \emph{$v$-weighted equivariant Ricci current} of the equivariant K\"ahler form $\Om_\f$ as
$$
\Ric_v^T(\Om_\f):=\Ric^T(\MA_v(\f)),
$$
and the \emph{$v$-weighted scalar curvature} as the distribution
$$
S_v(\Om_\f):=\tr_{\Om_\f,v}(\Ric_v^T(\Om_\f))=\frac{\MA_v^{\Ric_v^T(\Om_\f)}(\f)}{\MA_v(\f)}.
$$
Given a reference $T$-invariant adapted measure $\nu_X$,  we define the \emph{weighted entropy} and \emph{weighted Ricci energy} 
$$
\ent_v\colon\cET\to\R\cup\{+\infty\},\quad\enR_v\colon\cET\to\R
$$
by 
$$
\ent_v(\f):=\tfrac12\Ent\left(\MA_v(\f)|\nu_X\right),\quad\enR_v(\f):=\en_v^{-\Ric_v^T(\nu_X)}(\f). 
$$
The entropy $\ent_v$ is lsc, while $\enR_v$ is continuous. 

Given any other weight $w\in C^\infty(\ft^\vee)$, the \emph{weighted Mabuchi energy} 
$$
\mab_{v,w}\colon\cET\to\R\cup\{+\infty\}
$$ 
is defined by 
$$
\mab_{v,w}(\f):=\ent_v(\f)+\enR_v(\f)+\en_{vw}(\f).
$$

The computation of Lemma~\ref{lem:mabscal} remains valid in our context, and shows that the restriction of $\mab_{v,w}$ to $\cH^T$ is an Euler--Lagrange functional for the operator
$$
\cH^T\ni\f\mapsto\left(w(m_{\Om_\f})-S_v(\Om_\f)\right)\MA_v(\f).
$$
The proofs of Lemma~\ref{lem:indep} and Lemma~\ref{lem:quasimab} also apply, and show that $\mab_{v,w}$ only depends on $\nu_X$ by an overall additive constant, and is quasi-invariant with respect to the action of $\Aut_0^T(X)$ on $\cET$. 


Assuming also $w>0$ on $P$, the \emph{extremal affine function} $\ell^\ext=\ell^\ext_{\Om,v,w}$ is defined as in~\S\ref{sec:relmab} as the unique affine function on $\ft$ such that 
$$
\int_X\ell(m_\Om)\ell^\ext(m_\Om)w(m_\Om)\MA_v(0)=\int_X\ell(m_\Om)S_v(\Om)\MA_v(0)
$$
for all affine functions $\ell\in\ft\oplus\R$. It is characterized as the unique affine function such that the \emph{relative weighted Mabuchi energy} 
$$
\mab^\rel_{v,w}:=\mab_{v,w\ell^\ext}\colon\cET\to\R\cup\{+\infty\}
$$
is invariant under translation and the action of $T_\C$, \cf Proposition~\ref{prop:relmabE1}. 
%
%
%
%
\section{Openness of coercivity on resolutions of singularities}\label{sec:main}
The goal of this section is to establish the main result of this paper, \ie that coercivity of the relative weighted Mabuchi energy is preserved under a large class of resolutions of singularities. To this end, we will use the general criterion in~\S\ref{sec:opencoer}.

%
%
%
\subsection{Setup and main result}\label{sec:setup}
In what follows $X$ is a compact K\"ahler space with log terminal singularities, $T\subset\Autr(X)$ is a compact torus, and $\pi\colon Y\to X$ is a $T$-equivariant \emph{resolution of singularities}, \ie a bimeromorphic morphism with $Y$ a compact K\"ahler manifold. 

\subsubsection{Setup on $X$} Fix a $T$-equivariant K\"ahler form $\Om_X=(\om_X,m_X)$ on $X$, a $T$-invariant adapted measure $\nu_X$, and set 
$$
\cH^T_X:=\cH(X,\om_X)^T,\quad\cET_X:=\cE^1(X,\om_X)^T.
$$
Pick also smooth functions $v,w\colon\ft^\vee\to\R$ such that $v,w>0$ on the moment polytope 
$$
P_X:=m_X(X)\subset\ft^\vee, 
$$
and denote by 
$$
\mab^\rel_X=\mab^\rel_{\Om_X,v,w}\colon\cET_X\to\R\cup\{+\infty\}
$$
the associated relative weighted Mabuchi energy. For each $\p\in\cET_X$ we thus have 
\begin{equation}\label{equ:mabX}
\mab^\rel_X(\p)=\ent_{X,v}(\p)+\enR_{X,v}(\p)+\en_{X,v w\ell_X}(\p), 
\end{equation}
where 
$$
\ent_{X,v}(\p):=\tfrac12\Ent\left(\MA_{X,v}(\p)|\nu_X\right)\quad\text{with}\quad\MA_{X,v}(\p):=\MA_{\Om_X,v}(\p), 
$$
$$
\enR_{X,v}(\p):=\en_{\Om_X,v}^{-\Ric^T(\nu_X)}(\p),\quad
\en_{X,v w\ell_X}(\p)=\en_{\Om_X,v w\ell_X}(\p)
\quad\text{with}\quad\ell_X:=\ell^\ext_{\Om_X,v,w},
$$
 see~\S\ref{sec:relmab}. 

\subsubsection{Setup on $Y$} We fix a $T$-invariant volume form $\nu_Y$ on $Y$, and suppose given a sequence of equivariant K\"ahler forms $\Om_j=(\om_j,m_j)$ on $Y$ such that
\begin{itemize}
\item[(i)] $\Om_j\to\pi^\star\Om_X$ smoothly, \ie $\om_j\to\pi^\star\om_X$ and $m_j\to\pi^\star m_X$; 
\item[(ii)] $\om_j\ge (1-\e_j)\pi^\star\om_X$ with $[0,1)\ni\e_j\to 0$. 
\end{itemize}

The simplest example is $\Om_j=\pi^\star\Om_X+\e_j\Om_Y$ with $\e_j\to 0_+$, for any choice of equivariant K\"ahler form $\Om_Y$ on $Y$. As another case of interest, we have: 

\begin{exam}\label{exam:blowup1} Assume given an effective divisor $E$ on $Y$ such that $-E$ is $\pi$-ample (which exists \eg if $\pi$ is a sequence of blowups with smooth centers). Then one can find a smooth $T$-invariant metric on $\cO(E)$ with equivariant curvature form $\Theta_E=(\theta_E,m_E)$ such that $\om_Y:=\pi^\star\om_X-\e\theta_E$ is positive for $0<\e\ll 1$. For any sequence $\e_j\in(0,\e)$ converging to $0$, $\Om_j:=\pi^\star\Om_X-\e_j\Theta_E$ is then positive for all $j$, and 
$$
\om_j=(1-\e_j/\e)\pi^\star\om_X+(\e_j/\e) \om_Y\ge (1-\e_j/\e)\pi^\star\om_X. 
$$
\end{exam}
Returning to the general setup above, the smooth convergence $m_j\to\pi^\star m_X$ implies that the moment polytope 
$$
P_j:=m_j(Y)\subset\ft^\vee
$$ 
of $\Om_j$ converges to $(\pi^\star m_X)(Y)=m_X(X)=P_X$, and hence that $v,w>0$ on $P_j$ for all $j\gg1$. We henceforth assume without loss that this holds for all $j$. We set
$$
\cH^T_j:=\cH(Y,\om_j)^T,\quad\cET_j:=\cE^1(Y,\om_j)^T, 
$$
and denote by 
$$
\mab^\rel_j=\mab^\rel_{\Om_j,v,w}\colon\cET_j\to\R\cup\{+\infty\}
$$
the relative weighted Mabuchi energy. For any $\f\in\cET_j$, we have 
\begin{equation}\label{equ:mabj}
\mab^\rel_j(\f)=\ent_{j,v}(\f)+\enR_{j,v}(\f)+\en_{j, v w\ell_j}(\f),
\end{equation}
where 
$$
\ent_{j,v}(\f):=\tfrac12\Ent\left(\MA_{j,v}(\f)|\nu_Y\right)\quad\text{with}\quad\MA_{j,v}(\f):=\MA_{\Om_j,v}(\f), 
$$
$$
\enR_{j,v}(\f):=\en_{\Om_j,v}^{-\Ric^T(\nu_Y)}(\f),\quad\en_{j,v w\ell_j}(\f):=\en_{\Om_j,v w\ell_j}(\f)
\quad\text{with}\quad\ell_j:=\ell^\ext_{\Om_j,v,w}.
$$
\subsubsection{Openness of coercivity on resolutions of Fano type}

In order to state our main result we introduce the following terminology. 

\begin{defi}\label{defi:pipos} We say that a closed $(1,1)$-current $\theta$ on $Y$ is \emph{$\pi$-semipositive} if $\theta+\pi^\star\a\ge 0$ for some (smooth) $(1,1)$-form $\a$ on $X$; equivalently, $\theta\ge -C\pi^\star\om_X$ for $C\gg 1$. 
\end{defi}

\begin{exam}\label{exam:amppos} Any $\pi$-ample line bundle on $Y$ admits a smooth metric $\phi$ with $\ddc\phi$ $\pi$-semipositive. 
\end{exam}

\begin{defi}\label{defi:Fanotype} We say that $\pi\colon Y\to X$ is \emph{of Fano type} if there exists a singular metric $\phi$ on $-K_Y$ such that
\begin{itemize}
\item the curvature current $\ddc\phi$ is $\pi$-semipositive; 
\item $\phi$ has trivial multiplier ideal.
\end{itemize}
\end{defi}
Equivalently, we require the existence of a quasi-psh function $f$ on $Y$ such that
\begin{itemize}
    \item the measure $\hnu_Y:=e^{-2f}\nu_Y$ on $Y$ has finite total mass; 
    \item the Ricci current $\Ric(\hnu_Y):=\Ric(\nu_Y)+\ddc f$ is $\pi$-semipositive. 
   \end{itemize}

The chosen terminology is justifies by Lemma~\ref{lem:logFano} below (compare for instance~\cite{Bir}). 

\begin{exam} If $X$ is smooth and $\pi\colon Y\to X$ is the blowup along a smooth submanifold $Z\subset X$, then $-K_Y$ is $\pi$-ample, and $\pi$ is thus of Fano type.
\end{exam}

\begin{lem}\label{lem:ft} Assume there exists an effective $\Q$-divisor $B$ on $Y$ such that 
\begin{itemize}
    \item[(i)] $-(K_Y+B)$ is $\pi$-nef; 
    \item[(ii)] the pair $(Y,B)$ is klt. 
\end{itemize}
Then $\pi$ is of Fano type. 
\end{lem}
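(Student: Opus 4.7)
The plan is to produce the required singular metric $\phi$ on $-K_Y$ from Definition~\ref{defi:Fanotype} by exploiting the $\Q$-linear decomposition $-K_Y = -(K_Y+B) + B$. Combining a smooth metric on the first summand (coming from the relative nefness hypothesis) with the canonical singular metric on $\cO_Y(B)$ cut out by $B$ (whose singularities are controlled exactly by the klt hypothesis) yields a metric of the required type.

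Concretely, I would first fix a sufficiently divisible integer $m$ such that $mB$ is Cartier and $-m(K_Y+B)$ is an honest line bundle. From the $\pi$-nefness of $-(K_Y+B)$, I would then produce a smooth Hermitian metric $\psi$ on the $\Q$-line bundle $-(K_Y+B)$ whose curvature satisfies
$$
\ddc\psi + C\pi^\star\om_X \ge 0
$$
for some $C>0$; this uses the standard fact that, in the K\"ahler setting, a $\pi$-nef $\Q$-class becomes K\"ahler on $Y$ after adding a sufficiently large multiple of $\{\pi^\star\om_X\}$. Letting $s_B$ denote the canonical section of the $\Q$-line bundle $\cO_Y(B)$, I would then set
$$
\phi := \psi + \log|s_B|,
$$
which defines a singular Hermitian metric on $-K_Y$.

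The verification then proceeds in two steps. For the curvature, the Lelong--Poincar\'e formula (compare~\eqref{equ:intequ}) gives $\ddc\phi = \ddc\psi + c[B]$ for a positive constant $c$; since $\ddc\psi$ is $\pi$-semipositive by construction and $[B]\ge 0$ because $B$ is effective, we conclude that $\ddc\phi$ is $\pi$-semipositive in the sense of Definition~\ref{defi:pipos}. For the multiplier ideal, since $\psi$ is smooth and hence locally bounded, triviality of $\cJ(\phi)$ reduces to the local integrability of $|s_B|^{-2}$ near every point, which is precisely the definition of $(Y,B)$ being klt.

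The main obstacle is the construction of the smooth metric $\psi$ with $\pi$-semipositive curvature. While this is immediate in the projective setting via the relative Kleiman criterion combined with openness of ampleness, some care is required in the general K\"ahler setting; one approach is to invoke a relative version of the Demailly--P\u{a}un theorem to conclude that the class $c_1(-m(K_Y+B)) + C\{\pi^\star\om_X\}$ is K\"ahler on $Y$ for $C$ sufficiently large, and then to extract a smooth K\"ahler representative and rescale by $1/m$.
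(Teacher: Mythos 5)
Your overall strategy is the paper's: decompose $-K_Y=-(K_Y+B)+B$, put the canonical singular metric $\log|s_B|$ on $\cO_Y(B)$ so that the klt hypothesis gives triviality of the multiplier ideal, and use Lelong--Poincar\'e for the curvature. The gap is exactly at the point you flag as "the main obstacle," and your proposed resolution does not work. It is not true that a $\pi$-nef class becomes K\"ahler after adding $C\pi^\star\{\om_X\}$: the form $\pi^\star\om_X$ is only semipositive and degenerates along the exceptional locus, so $c_1(-(K_Y+B))+C\pi^\star\{\om_X\}$ has zero intersection with every $\pi$-contracted curve whenever $-(K_Y+B)$ is numerically $\pi$-trivial (e.g.\ for a crepant resolution), and hence is never K\"ahler. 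Even the weaker statement you actually need --- a \emph{smooth} metric $\psi$ with $\ddc\psi\ge -C\pi^\star\om_X$ --- is not a consequence of $\pi$-nefness: nefness only yields, for each $\e>0$, a smooth metric $\phi_\e$ with $\ddc\phi_\e\ge-\e\om_Y-C_\e\pi^\star\om_X$, with no control on $C_\e$ as $\e\to0$; this is the classical gap between nef and (relatively) semipositive. With only such a $\phi_\e$, your metric $\phi=\phi_\e+\log|s_B|$ has curvature bounded below by $-\e\om_Y-C_\e\pi^\star\om_X$, which is not $\pi$-semipositive in the sense of Definition~\ref{defi:pipos}.

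The paper circumvents this by trading smoothness for positivity and then interpolating. Since $c_1(L)+C\pi^\star\{\om_X\}$ is \emph{big} for $C\gg1$ (where $L=-(K_Y+B)$), it contains a K\"ahler current $T\ge\e\om_Y$, giving a \emph{singular} quasi-psh metric $\phi_0$ on $L$ with $\ddc\phi_0\ge\e\om_Y-C\pi^\star\om_X$; nefness separately gives a \emph{smooth} metric $\phi_1$ with $\ddc\phi_1\ge-\e'\om_Y-C'\pi^\star\om_X$. The convex combination $\phi_t=\phi_B+(1-t)\phi_0+t\phi_1$ is arranged so that the strict $\om_Y$-positivity carried by the small $\phi_0$-component absorbs the $-\e'\om_Y$ loss from $\phi_1$, making $\ddc\phi_t$ genuinely $\pi$-semipositive, while the coefficient of the singular piece $\phi_0$ is small enough that the strong openness theorem~\cite{GuanZhou} preserves the triviality of $\cJ(\phi_B)$. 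This interpolation-plus-openness step is the essential content missing from your argument; without it (or some substitute), the proof does not close.
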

\begin{proof} Set $L:=-(K_Y+B)$ and pick K\"ahler forms $\om_X,\om_Y$ on $X,Y$. For $C\gg 1$, $c_1(L)+C\pi^\star\{\om_X\}$ is big, and hence contains a K\"ahler current $T\ge\e\om_Y$ for some $\e>0$. Write $T-C\pi^\star\om_X=\ddc\phi_0$ for a quasi-psh metric $\phi_0$ on $L$. Since $L$ is $\pi$-nef, it also admits a smooth metric $\phi_1$ such that $\ddc\phi_1+\e\om_Y$ is $\pi$-semipositive. Denote by $\phi_B$ the canonical singular metric on $B$. For $t\in [0,1]$, $\phi_t:=\phi_B+(1-t)\phi_0+t\phi_1$ is a quasi-psh metric on $-K_Y$ with $\ddc\phi_t$ $\pi$-semipositive. Since $\phi_B$ has trivial multiplier ideal, so does $\phi_t$ for $0<t\ll 1$, by opennes. This shows, as desired, that $\pi$ is of Fano type. 
\end{proof}

In the projective case we conversely have: 
\begin{lem}\label{lem:logFano} Assume $X,Y$ are projective Then $\pi\colon Y\to X$ is of Fano type iff there exists an effective $\Q$-divisor $B$ on $Y$ such that
\begin{itemize}
    \item[(i)] $-(K_Y+B)$ is $\pi$-ample; 
    \item[(ii)] the pair $(Y,B)$ is klt. 
\end{itemize}
\end{lem}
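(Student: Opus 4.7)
The plan is to invert Lemma~\ref{lem:ft}: starting from a quasi-psh metric $\phi$ on $-K_Y$ with $\ddc\phi+C\pi^\star\om_X\ge0$ and $\mathcal J(\phi)=\cO_Y$, I would produce an effective $\Q$-divisor $B$ on $Y$ with $(Y,B)$ klt and $-(K_Y+B)$ being $\pi$-ample. First I would algebraize the Kähler form on $X$: since $X$ is projective, compactness gives an ample $\Q$-line bundle $A_X$ on $X$ carrying a smooth Hermitian metric $h_X$ whose curvature form dominates $\om_X$, so after rescaling the constant we may assume $\om_X=\ddc h_X$. Then for $N\ge C$, the singular metric $\psi_N:=\phi+N\pi^\star h_X$ on the $\Q$-line bundle $M_N:=-K_Y+N\pi^\star A_X$ is psh, with $\ddc\psi_N\ge(N-C)\pi^\star\om_X\ge 0$, and has trivial multiplier ideal (as $\pi^\star h_X$ is bounded). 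Since $\pi^\star A_X$ is big and nef on $Y$ (because $A_X$ is ample on $X$ and $\pi$ is birational), writing $-K_Y=A'-B'$ with $A'$ ample and $B'$ effective $\Q$-divisors on $Y$ shows $M_N$ is big for $N$ large enough; I fix such an $N$.

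Next I would convert this analytic datum into a klt effective $\Q$-divisor in the class $M_N$. The existence of the psh metric $\psi_N$ on $M_N$ with trivial multiplier ideal forces triviality of the asymptotic multiplier ideal sheaf $\mathcal J(\|M_N\|)=\cO_Y$, so for $m$ sufficiently large and divisible one has $\mathcal J\!\left(\tfrac{1}{m}|mM_N|\right)=\cO_Y$. A Bertini-type theorem for multiplier ideals (see e.g.\ Lazarsfeld, \emph{Positivity in Algebraic Geometry}, Vol.~II) then yields a generic section $s\in H^0(Y,mM_N)$ for which $B_0:=\tfrac{1}{m}\mathrm{div}(s)\sim_\Q M_N$ satisfies $(Y,B_0)$ klt.

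Finally I would enforce $\pi$-ampleness. For any $\pi$-ample $\Q$-divisor $H$ on $Y$, the difference $H':=H-\pi^\star\pi_\star H$ is $\pi$-exceptional and $\pi$-ample; applying the negativity lemma to $-H'$ (with $-(-H')=H'$ being $\pi$-nef and $\pi_\star(-H')=0$) forces $-H'$ to be effective, so $F:=-H'$ is an effective $\pi$-exceptional $\Q$-divisor with $-F$ being $\pi$-ample. Setting $B:=B_0+\epsilon F$ for $\epsilon>0$ small, the pair $(Y,B)$ remains klt by openness of the klt condition, and
\[
-(K_Y+B)\sim_\Q -N\pi^\star A_X-\epsilon F
\]
becomes ample on $Y$ after adding sufficiently many copies of $\pi^\star A_X$ (as $-\epsilon F$ is $\pi$-ample), so $-(K_Y+B)$ is $\pi$-ample as required.

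The hard part is the middle step---extracting a klt effective $\Q$-divisor from a psh metric with trivial multiplier ideal---which relies on the identification of the analytic multiplier ideal with the asymptotic algebraic one, together with the Bertini theorem for multiplier ideals; both are nontrivial algebro-geometric inputs. The remaining steps (algebraization of $\om_X$, construction of $F$ via the negativity lemma, and the perturbation argument) are essentially formal.
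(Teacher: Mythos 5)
Your route is genuinely different from the paper's. The paper stays on $-K_Y$ itself: it uses Demailly's regularization theorem to replace $\phi$ by a metric with \emph{analytic singularities} and a controlled curvature lower bound, invokes the Guan--Zhou openness theorem to keep the multiplier ideal trivial after the perturbation, and only then extracts $B$ from the resulting ideal sheaf via Koll\'ar--Bertini. You instead twist by $N\pi^\star A_X$ so as to make the metric globally semipositively curved on the big class $M_N$, pass to the asymptotic multiplier ideal via the comparison $\mathcal{J}(h)\subseteq\mathcal{J}(\|M_N\|)$ for semipositively curved singular metrics on big classes, and then apply Koll\'ar--Bertini with coefficient $1/m<1$ to a general member of $|mM_N|$. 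This cleanly avoids both regularization and openness, at the price of the (equally nontrivial) analytic-versus-asymptotic comparison; both proofs converge on the same Bertini-type endgame. Your final perturbation $B=B_0+\epsilon F$ correctly repairs the fact that $-(K_Y+B_0)\equiv_\pi 0$ is only $\pi$-nef, not $\pi$-ample.

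Two steps are, however, wrong as written. First, your justification that $M_N$ is big does not work: $A'-B'+N\pi^\star A_X$ is ``ample minus effective plus nef-and-big,'' which is not big in general. The correct argument must use the hypothesis: $c_1(M_N)$ contains the closed positive current $\ddc\psi_N\ge(N-C)\pi^\star\om_X$, so it is the sum of a pseudoeffective class and the nef and big class $(N-C)\pi^\star\{\om_X\}$, hence big for $N>C$. Second, the construction $F=\pi^\star\pi_\star H-H$ presupposes that the Weil divisor $\pi_\star H$ is $\Q$-Cartier; a klt space $X$ need not be $\Q$-factorial, so $\pi^\star\pi_\star H$ is not defined in general. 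The existence of an effective $\pi$-exceptional $F$ with $-F$ $\pi$-ample should instead be deduced from the fact that the projective birational morphism $\pi$ is the blowup of a coherent ideal sheaf cosupported on the non-isomorphism locus, which has codimension at least $2$ in the normal space $X$. Both defects are local to their steps and fixable without changing your strategy.
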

\begin{proof} Pick an ample line bundle $A$ on $X$ and a smooth metric $\phi_A$ on $A$ with positive curvature $\ddc\phi_A>0$, and quasi-psh metric $\phi_L$ on $L:=-K_Y$ with trivial multiplier ideal and such that $\ddc\phi_L\ge-m\pi^\star\ddc\phi_A$ for some $m\gg 1$. Pick $C>0$ such that $L+C\pi^\star A$ is big, and $\Q$-divisors $H,E$ such that $L+C\pi^\star A=H+E$, $H$ ample $E$ effective. Pick a smooth metric $\phi_H$ with positive curvature, denote by $\phi_E$ the canonical singular metric of $E$, and let $\tilde\phi_L:=\phi_H+\phi_E-C\pi^\star\phi_A$ the induced metric on $L$. 

By Demailly's regularization theorem, for each $\e>0$ we can find a quasi-psh metric $\phi_\e$ on $L$ with analytic singularities that is less singular than $\phi_L$ (and hence with trivial multiplier ideal), and such that 
\begin{align*}
\ddc\phi_\e\ge-\ddc(m\pi^\star\phi_A+\e\phi_H) & =-m\ddc\pi^\star\phi_A-\e\ddc(\tilde\phi_L-\phi_E+C\pi^\star\phi_A)\\
& \ge-(m+\e C)\pi^\star\ddc\phi_A-\e\ddc\tilde\phi_L. 
\end{align*}
Then $\tilde\phi_\e:=(1+\e)^{-1}(\phi_\e+\e\tilde\phi_L)$ is a quasi-psh metric on $L$ with analytic singularities such that 
$$
\ddc\tilde\phi_\e\ge-(1+\e)^{-1}(m+\e C)\pi^\star\ddc\phi_A,
$$
and whose multiplier ideal is trivial for $0<\e\ll 1$, by the openness theorem~\cite{GuanZhou}. Since $\phi_\e$ has analytic singularities, it corresponds to an ideal sheaf on $Y$, and it is then easy to get the desired boundary divisor $B$ (cf.~\cite[Proposition I.9.2.26]{PAG}). 
\end{proof}

We do not know whether a resolution of Fano type always exists. However: 

\begin{exam} If $\dim X=2$, then the minimal resolution $\pi\colon Y\to X$ is of Fano type. This follows from Lemma~\ref{lem:ft}, since the unique $\pi$-exceptional $\Q$-divisor $B$ such that $K_Y+B=\pi^\star K_X$ is effective by the Negativity Lemma (because $-B\equiv_\pi K_Y$ is $\pi$-nef), and $(Y,B)$ is klt since it is crepant birational to $X$. 
\end{exam} 

\begin{exam} Assume $\dim X=3$ and $X$ has Gorenstein quotient singularities. Then $X$ admits a crepant resolution $\pi\colon Y\to X$ (see~\cite[Theorem~1.2]{BKR}), which is thus of Fano type. 
\end{exam}

As a consequence of Example~\ref{exam:amppos}, we also have:

\begin{exam}\label{exam:Fanocone} Assume that $X$ has isolated singularities, each locally isomorphic to an affine cone over a Fano manifold. Then $X$ has log terminal singularities, and blowing up the singularities yields a resolution of singularities $\pi\colon Y\to X$ of Fano type. 
\end{exam}

We may now state the main results of this article: 

\begin{thm}\label{thm:opencoermab} Let $X$ be compact K\"ahler space with klt singularities, $T\subset\Autr(X)$ a compact torus, and pick smooth weights $v,w\in C^\infty(\ft^\vee)$ such that $v,w>0$ on the moment polytope $P_X$. Let further $\pi\colon Y\to X$ be a $T$-equivariant resolution of singularities of Fano type. 
\begin{itemize}
\item[(i)] If $\mab^\rel_X$ is coercive modulo $T_\C$ (see Definition~\ref{defi:coerG}), then so is $\mab^\rel_j$ for all $j$ large enough.
\item[(ii)] More generally, suppose given $\sigma,C\in\R$ such that 
$$
\mab^\rel_X\geq \sigma\, \dd_{1,T}(\cdot,0)+C\quad\text{on}\quad\cET_{X,0}.
$$
Then for any $\sigma'<\sigma$ there exists $C'\in\R$ such that 
$$
\mab^\rel_j\geq \sigma' \, d_{j,T}(\cdot,0)+C'\quad\text{on}\quad\cET_{j,0}
$$ 
for all $j$ large enough.
\end{itemize}
\end{thm}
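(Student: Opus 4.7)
The plan is to apply the general openness criterion Theorem~\ref{thm:coeropen} on the compact Kähler manifold $Y$, in the setup of Section~\ref{sec:open} with degenerating semipositive big form $\om := \pi^\star\om_X$ and Kähler forms $\om_j$. Since $\pi$ is bimeromorphic and $\pi^\star\om_X$ vanishes on fibers, any $\pi^\star\om_X$-psh function is plurisubharmonic on each compact connected fiber and hence fiber-constant, so it descends to an $\om_X$-psh function on $X$; the pullback $\pi^\star$ therefore identifies $\cET_X$ isometrically with $\cE^1(Y,\pi^\star\om_X)^T$, respecting Monge--Ampère energy, Darvas metric, and the $T_\C$-action. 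Take $\cF_j := \cET_j$ with $M_j := \mab^\rel_j$ and $\cF := \pi^\star\cET_X$ with $M := \mab^\rel_X$ viewed on $Y$ via the identification; the normalization condition $M_j(0) \to M(0)$ is achieved, after a constant shift if needed, by noting that $\mab^\rel_j(0) = \ent_{j,v}(0)$ converges as $j \to \infty$ by dominated convergence (the densities $\MA_{j,v}(0)/\nu_Y$ converge smoothly to a nonnegative continuous function vanishing on $\Exc(\pi)$).

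The invariance, $T_\C$-invariance, and convexity-along-psh-geodesics conditions for both $M_j$ and $M$ come from Proposition~\ref{prop:relmabE1}, while properness of the $T_\C$-action on $\cET_j$ is Proposition~\ref{prop:isom} (using that $\om_j$ is Kähler). The entropy growth estimate $M_j(\f) \ge \delta\,\ent_j(\f) - C(\dd_{1,j}(\f,0) + 1)$ follows by combining Lemma~\ref{lem:entvgr} (bounding $\ent_{j,v}$ below by $\ent_j$ with constants uniform in $j$, since $\inf_{P_j} v > 0$ uniformly as $P_j \to P_X$) with the Lipschitz/Hölder estimates of Proposition~\ref{prop:wenE1} applied to $\enR_{j,v}(\f)$ and $\en_{j,vw\ell_j}(\f)$; the constants are uniform because $\Om_j \to \pi^\star\Om_X$ smoothly and $\|\pi^\star\om_X\|_{\om_j}\lesssim 1$ thanks to $\om_j \ge (1-\e_j)\pi^\star\om_X$.

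The main obstacle is the lower semicontinuity condition: for $\f_j \in \cET_j$ converging strongly to $\f = \pi^\star\p$, one must show $\liminf_j \mab^\rel_j(\f_j) \ge \mab^\rel_X(\p)$ up to a constant matching the normalization. The Fano type assumption supplies a quasi-psh $f$ on $Y$ such that $\hnu_Y := e^{-2f}\nu_Y$ has finite mass and $\Psi := C\pi^\star\Om_X + \Ric^T(\hnu_Y) \ge 0$ as an equivariant current, for some $C \gg 1$. Writing $\nu_Y = e^{2f}\hnu_Y$ and invoking Proposition~\ref{prop:twenext} to collapse the $f$-contributions on $\cET_j$ (the singular term $-\int f\,\MA_{j,v}(\f_j)$ from the entropy and the term $\en_{\Om_j,v}^{\ddcT f}(\f_j)$ from the Ricci energy combine into a $\f_j$-independent constant), the Mabuchi energy rewrites as
\begin{equation*}
\mab^\rel_j(\f_j) = \tfrac12\Ent\bigl(\MA_{j,v}(\f_j)\bigm|\hnu_Y\bigr) + \en_{\Om_j,v}^{-\Ric^T(\hnu_Y)}(\f_j) + \en_{j,vw\ell_j}(\f_j) + c_j,
\end{equation*}
where $c_j := \en_{\Om_j,v}^{\ddcT f}(0) - \int_Y f\,\MA_{j,v}(0)$ has a limit as $j\to\infty$ by smooth convergence of $\Om_j$. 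The relative entropy term is weakly lower semicontinuous; the weighted energy $\en_{j,vw\ell_j}(\f_j)$ is strongly continuous, with $\ell_j \to \ell_X$ following from smooth dependence of the extremal affine function; and for the Ricci term, decomposing the positive equivariant current $\Psi = \theta_0 + \ddcT g$ with $\theta_0$ a smooth equivariant representative of the cohomology class of $\Psi$ and $g$ a quasi-psh ($\theta_0$-psh) function yields
\begin{equation*}
\en_{\Om_j,v}^{-\Ric^T(\hnu_Y)}(\f_j) = C\,\en_{\Om_j,v}^{\pi^\star\Om_X}(\f_j) - \en_{\Om_j,v}^{\theta_0}(\f_j) - \en_{\Om_j,v}^{\ddcT g}(\f_j),
\end{equation*}
whose first two summands are strongly continuous (Proposition~\ref{prop:wenE1}) and whose last summand is weakly upper semicontinuous — by the extension of Proposition~\ref{prop:twenext} to the varying family $\Om_j$, obtained from the monotonicity criterion Lemma~\ref{lem:enmono} (with uniform auxiliary weight, since $P_j \to P_X$) combined with the weak semicontinuity criterion Lemma~\ref{lem:weakext}. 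Hence the Ricci energy is weakly lower semicontinuous.

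Passing to the limit, $\liminf_j \mab^\rel_j(\f_j)$ is bounded below by the analogous expression with $\Om_j$ replaced by $\pi^\star\Om_X$, which coincides with $\mab^\rel_X(\p)$ up to an additive constant via the pullback identities $\MA_{\pi^\star\Om_X,v}(\pi^\star\p) = \pi^\star\MA_{X,v}(\p)$ and $\Ent(\pi^\star\mu|\hnu_Y) = \Ent(\mu|\pi_\star\hnu_Y)$, combined with the invariance of the Mabuchi energy up to constant under change of reference measure (Lemma~\ref{lem:indep}). This constant is absorbed into the normalization shift, completing the lsc check. Theorem~\ref{thm:coeropen} then yields statement~(ii) directly, and statement~(i) follows via the equivalence \eqref{equ:JG} between $\jj_{T_\C}$- and $\dd_{1,T_\C}$-coercivity, using that $V_{\om_j}$ is bounded away from $0$ and $T_{\om_j}$ is uniformly bounded as $\om_j \to \pi^\star\om_X$ smoothly. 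The crux of the argument is precisely the $\pi$-semipositive bound from Fano type: without $\Psi \ge 0$, the positive decomposition of $-\Ric^T(\hnu_Y)$ fails, and the weak upper (rather than lower) semicontinuity of $\en^{\ddcT g}$ from Proposition~\ref{prop:twenext} would obstruct the Ricci-energy semicontinuity.
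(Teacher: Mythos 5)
Your overall architecture matches the paper's (apply Theorem~\ref{thm:coeropen} with $G=T_\C$, use the Fano type data to replace $\nu_Y$ by $\hnu_Y$ and regroup the entropy and Ricci terms), but two of the analytic verifications rest on the same false premise: that twisted energies $\en_{\Om_j,v}^{\Theta}$ with $\Theta$ a \emph{general} smooth form on $Y$ are uniformly controlled as $j\to\infty$. The Hölder/Lipschitz constant in Proposition~\ref{prop:wenE1} is $B(v,\Theta)\approx\|\theta\|_{\om_j}\sup_{P_j}|v|+\dots$, and $\|\theta\|_{\om_j}=\inf\{C:\pm\theta\le C\om_j\}$ blows up as $\om_j\to\pi^\star\om_X$, since $\pi^\star\om_X$ vanishes on the positive-dimensional fibers of $\pi$ while a form such as $\Ric(\nu_Y)$ or your smooth representative $\theta_0$ of $\{\Psi\}\supset c_1(Y)+C\pi^\star\{\om_X\}$ does not. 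The quantity you invoke, $\|\pi^\star\om_X\|_{\om_j}\lesssim1$, bounds the wrong norm. This breaks (a) your derivation of the entropy growth condition, which you base on a uniform bound for $\enR_{j,v}=\en_{j,v}^{-\Ric^T(\nu_Y)}$ via Proposition~\ref{prop:wenE1}, and (b) the claimed strong continuity of $\en_{\Om_j,v}^{\theta_0}(\f_j)$ in your decomposition $\Psi=\theta_0+\ddcT g$. For (b) the problem recurs in the $\ddcT g$ summand: since $g$ is only quasi-psh with respect to a genuine Kähler form $\om_Y$ on $Y$ (not with respect to $\pi^\star\om_X$), the monotonicity device of Proposition~\ref{prop:twenext} forces you to pair $\en_{j,v}^{\ddcT g}$ with $A\en_{j,v}^{\Om_Y}$, and you cannot peel off $\en_{j,v}^{\Om_Y}(\f_j)$ in the limit for the same reason. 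The only smooth twist that may legitimately be split off with two-sided control is $C\pi^\star\Om_X$ itself.

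The repair is exactly the paper's route, and it is why the statement of Lemma~\ref{lem:twenext} is formulated for the \emph{undecomposed} $\pi$-semipositive current: one shows that $F_j:=\en_{j,v}^{\Theta}+A\en_{j,v}$ is monotone increasing with $A$ uniform in $j$ (Lemma~\ref{lem:enmono} applied to the smooth part $\Theta'$ of $\Theta$, which only enters through the compact set $K$ and never as a standalone energy), and then applies the weak semicontinuity criterion Lemma~\ref{lem:weakext} to $F_j$, using only condition (F) on smooth decreasing approximants rather than any uniform Hölder bound. For the entropy growth one cannot avoid the Fano type hypothesis either: the paper combines the one-sided bound $\henR_{j,v}\ge-C(\dd_{1,j}(\cdot,0)+1)$ from Lemma~\ref{lem:twenext}~(iii) with the openness theorem, which gives $e^{-2pf}\in L^1(\nu_Y)$ for some $p>1$ and lets $\int_Y f\MA_{j,v}(\f)$ be absorbed by the fraction $p^{-1}$ of the entropy via the Legendre transform; with $p<1$ (all one gets without trivial multiplier ideal) the entropy coefficient has the wrong sign. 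Finally, a smaller point: your appeal to Lemma~\ref{lem:indep} to compare the $Y$-side and $X$-side functionals is not licit, since $\pi_\star\hnu_Y=e^{-2\rho}\nu_X$ is not an adapted measure; the paper's Lemma~\ref{lem:MYext} only yields an inequality for general $\p\in\cET_X$ (equality on $\cH^T_X$, which is what fixes the normalization via Lemma~\ref{lem:LP}), and it is fortunate that this inequality points in the direction needed for lower semicontinuity.
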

Here we have set
$$
\cET_{X,0}:=\{\p\in \cET_X\: |\: \en(\p)=0\},\quad\cET_{j,0}=\{\f\in \cET_j\: |\, \en_j(\f)=0 \}.
$$
Theorem~\ref{thm:opencoermab} will be proved in~\S\ref{sec:proofmain} below. Combined with Theorem~\ref{thm:proper}, it implies: 

\begin{cor}\label{cor:openextr} Assume further that
\begin{itemize}
\item $X$ is smooth;
\item the torus $T$ is maximal in $\Autr(X)$;
\item $v$ is log-concave on $P$.
\end{itemize}
If the class $\{\om_X\}$ admits a $(v,w)$-extremal Kähler metric, then the K\"ahler class $\{\om_j\}$ contains an $(v,w)$-extremal K\"ahler metric for all $j$ large enough.
\end{cor}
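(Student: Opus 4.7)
The plan is to chain three results: the existence-to-coercivity direction of Theorem~\ref{thm:proper} on $X$, the openness of coercivity Theorem~\ref{thm:opencoermab}, and the coercivity-to-existence direction of Theorem~\ref{thm:proper} on $Y$.

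First I would apply Theorem~\ref{thm:proper}(i) on the smooth Kähler manifold $X$: since $T\subset\Autr(X)$ is maximal and $\{\om_X\}$ contains a $(v,w)$-extremal Kähler metric, the relative weighted Mabuchi energy $\mab^\rel_{\Om_X,v,w}$ is coercive modulo $T_\C$ on $\cE^1(X,\om_X)^T$. The setting of Theorem~\ref{thm:opencoermab}(i) is then in force: $X$ has klt (in fact trivial) singularities, $\pi\colon Y\to X$ is $T$-equivariant and of Fano type, $v,w>0$ on $P_X$ by assumption, and $\om_j\to\pi^\star\om_X$ smoothly with $\om_j\ge(1-\e_j)\pi^\star\om_X$ and $\e_j\to 0$. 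The theorem then transfers coercivity modulo $T_\C$ to $\mab^\rel_{\Om_j,v,w}$ on $\cE^1(Y,\om_j)^T$ for all sufficiently large $j$.

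Finally I would invoke the converse Theorem~\ref{thm:proper}(ii) on the Kähler manifold $(Y,\om_j)$ for each such $j$. Log-concavity of $v$ is assumed and persists on the moment polytope $P_j$, which converges to $P_X$ as $j\to\infty$. The remaining hypothesis is that $T$ be maximal in $\Autr(Y)$: any compact torus $T'\subset\Autr(Y)$ containing $T$ commutes with $T$, hence preserves the exceptional locus of the $T$-equivariant morphism $\pi$, and therefore descends to a compact torus acting on $X$ by biholomorphisms; this image lies in $\Autr(X)$ and contains $T$, and maximality of $T$ in $\Autr(X)$ forces the equality $T'=T$. Applying Theorem~\ref{thm:proper}(ii) then delivers a $(v,w)$-extremal Kähler metric in $\{\om_j\}$ for all large $j$. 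The principal difficulty has already been handled in the paper, namely the middle step, the openness of coercivity; the outer two steps are formal applications of the Chen--Cheng--Di Nezza--Jubert--Lahdili--Han--Liu theory together with the bookkeeping argument for maximality of $T$ in $\Autr(Y)$.
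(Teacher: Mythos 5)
Your proof is correct and is precisely the paper's: the corollary is obtained there in one line by chaining Theorem~\ref{thm:proper}~(i) on $X$, Theorem~\ref{thm:opencoermab}~(i), and Theorem~\ref{thm:proper}~(ii) on $(Y,\om_j)$, and you have merely made explicit the verification that $T$ stays maximal in $\Autr(Y)$. One small remark: the reason a compact torus $T'\supset T$ in $\Autr(Y)$ preserves the exceptional locus and hence descends to $X$ is its connectedness (any element of $\Aut_0(Y)$ fixes each exceptional component, and the resulting holomorphic vector fields extend across the codimension~$\ge 2$ image of the exceptional locus), not the fact that it commutes with $T$ --- but the descent and the conclusion $T'=T$ go through exactly as you say.
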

Specializing to usual extremal Kähler metrics (\ie $(v,w)=(1,1)$), this extends~\cite[Corollary~2]{SSz20}, which considers the blowup $\pi\colon Y\to X$ along a submanifold of codimension at least three (see also~\cite{AP1,AP2,APS11,Hal} for blowups of points).

\begin{rmk}\label{rmk:NotMaxTorus} As in the above works, it is natural to ask for a version of Corollary~\ref{cor:openextr} when $T$ is not assumed to be maximal in $\Autr(X)$. By~\cite[Remark~7.7]{AJL}, $\mab^\rel_X$ should then be assumed to be coercive modulo (the identity component of) the centralizer $\Autr^T(X)$ of $T$ in $\Autr(X)$, and the conclusion should be (under appropriate assumptions) that $\mab^\rel_j$ is coercive modulo $\Autr^T(Y)$. However, under our approach relying on Theorem~\ref{thm:coeropen}, this seems only possible to achieve under the assumption that $\mab^\rel_X$ is already coercive modulo $\Autr^T(Y)\subset\Autr^T(X)$. By Lemma~\ref{lem:propersub}, the latter implies that $\Autr^T(X)/\Autr^T(Y)$ is compact, and hence trivial if for instance $\Autr^T(X)$ and $\Autr^T(Y)$ are both complex reductive, which holds if $X$ and $Y$ admit (weighted) extremal metrics by~\cite{Lah}. The conclusion is thus that $\pi$ should already be $\Autr^T(X)$-equivariant to begin with, and we are therefore not able to capture the more subtle conditions of the above works using our approach.
\end{rmk}

\begin{rmk}\label{rmk:sigmalsc2}
    As in Remark~\ref{rmk:sigmalsc1}, Theorem~\ref{thm:coeropen}~(ii) immediately implies the lower semicontinuity of coercivity thresholds.
\end{rmk}

%
%
%
%
\subsection{Smooth continuity of weighted functionals}
Until further notice we stick to the general setup of~\S\ref{sec:setup}, without yet assuming $\pi\colon Y\to X$ to be of Fano type. Since $\pi$ is an isomorphism outside Zariski closed subsets, any measure $\mu$ on $X$ that is nonpluripolar (\ie puts no mass on pluripolar sets) induces a nonpluripolar positive measure on $Y$ of same total mass, which we denote by $\pi^\star\mu$. As one easily checks, the map $\mu\mapsto\pi^\star\mu$ is continuous in the weak topology of measures. 

In particular, for any $\p\in\cET_X$ the nonpluripolar measure $\MA_{X,v}(\p)$ induces a nonpluripolar measure 
$$
\MA_{Y,v}(\pi^\star\p):=\pi^\star\MA_{X,v}(\p)
$$
on $Y$. When $\p\in\cH^T_X$ we simply have $\MA_{Y,v}(\pi^\star\p)=\MA_{\pi^\star\Om_X,v}(\pi^\star\p)$. By strong continuity of $\MA_{X,v}$ on $\cET_X$ (see Proposition~\ref{prop:MAvE1}), the map $\p\mapsto\MA_{Y,v}(\pi^\star\p)$ is continuous with respect to the strong topology of $\cET_X$ and the weak topology of measures on $Y$. 

Next denote by $D$ the unique $\pi$-exceptional $\Q$-divisor such that 
$$
K_Y=\pi^\star K_X+D, 
$$
and by $\phi_D$ the canonical singular metric on the $\Q$-line bundle $\cO_Y(D)=K_{Y/X}$ with curvature current $\ddc\phi_D=2\pi[D]$. Consider also the smooth metric 
$$
\phi_{X/Y}:=\tfrac 12\log\nu_Y-\pi^\star\tfrac 12\log\nu_X
$$
on $K_{Y/X}$ induced by the adapted measures $\nu_X$, $\nu_Y$. The function 
$$
\rho_{Y/X}:=\phi_D-\phi_{Y/X}\in L^1(Y)
$$
satisfies
\begin{equation}\label{equ:Jpi}
\pi^\star\nu_X=e^{2\rho_{Y/X}}\nu_Y. 
\end{equation}
After scaling the reference volume form $\nu_Y$, and hence the metric $\phi_{Y/X}$, we further assume without loss 
\begin{equation}\label{equ:normY}
\int_Y\rho_{Y/X}\,\MA_{Y,v}(0)=0. 
\end{equation}
Denote by 
$$
\Theta_{Y/X}:=\ddcT\phi_{Y/X}
$$
the equivariant curvature form of the smooth metric $\phi_{Y/X}$ on $K_{Y/X}$. Then 
\begin{equation}\label{equ:RicTh}
\Theta_{Y/X}=\pi^\star\Ric^T(\nu_X)-\Ric^T(\nu_Y) 
\end{equation}
and
\begin{equation}\label{equ:LPrho}
\Theta_{Y/X}+\ddcT\rho_{Y/X}=\ddcT\phi_D=2\pi[D]_T, 
\end{equation}
see~\eqref{equ:intequ}. For any $\p\in\cET_X$ we set
$$
\ent_{Y,v}(\pi^\star\p):=\tfrac12\Ent\left(\MA_{Y,v}(\pi^\star\p)|\nu_Y\right), 
$$
and 
$$
\enR_{Y,v}(\pi^\star\p):=\en_{\pi^\star\Om_X,v}^{-\Ric^T(\nu_Y)}(\pi^\star\p), 
$$
when $\p$ is further smooth (see~\S\ref{sec:wen}). Note that $\enR_{Y,v}$ cannot a priori be extended to $\pi^\star\cET_X$ (see however Lemma~\ref{lem:twenext} below, that will be put to use under the Fano type assumption to establish Theorem~\ref{thm:opencoermab}). 

\begin{lem}\label{lem:MY} For all $\p\in\cH^T_X$ we have 
$\ent_{X,v}(\p)+\enR_{X,v}(\p)=\ent_{Y,v}(\pi^\star\p)+\enR_{Y,v}(\pi^\star\p)$. 
\end{lem}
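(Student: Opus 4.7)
The plan is to prove the identity by computing the two sides separately and observing that a Jacobian term cancels. Since $\p\in\cH^T_X$ is smooth, both sides are concrete numbers, and the proof reduces to change-of-variables along $\pi$ plus the residual calculation controlled by Lemma~\ref{lem:LP}.

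First I would treat the entropy side. Since $\pi$ is a biholomorphism outside pluripolar loci and $\MA_{Y,v}(\pi^\star\p)=\pi^\star\MA_{X,v}(\p)$ by definition, the relation $\pi^\star\nu_X=e^{2\rho_{Y/X}}\nu_Y$ gives
$$\log\frac{d\MA_{Y,v}(\pi^\star\p)}{d\nu_Y}=\pi^\star\log\frac{d\MA_{X,v}(\p)}{d\nu_X}+2\rho_{Y/X}$$
almost everywhere. Integrating against $\MA_{Y,v}(\pi^\star\p)$ and using the equality of total masses yields
$$\ent_{Y,v}(\pi^\star\p)=\ent_{X,v}(\p)+\int_Y\rho_{Y/X}\,\MA_{Y,v}(\pi^\star\p). \qquad(\ast)$$

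Next I would turn to the Ricci energies. Using \eqref{equ:RicTh} to decompose $-\Ric^T(\nu_Y)=-\pi^\star\Ric^T(\nu_X)+\Theta_{Y/X}$ and invoking linearity of $\en_v^\Theta$ in the equivariant current $\Theta$, I get
$$\enR_{Y,v}(\pi^\star\p)=\en_{\pi^\star\Om_X,v}^{-\pi^\star\Ric^T(\nu_X)}(\pi^\star\p)+\en_{\pi^\star\Om_X,v}^{\Theta_{Y/X}}(\pi^\star\p).$$
At smooth potentials one has $\MA_{\pi^\star\Om_X,v}^{\pi^\star\Theta}(\pi^\star\p)=\pi^\star\MA_{\Om_X,v}^\Theta(\p)$ directly from the definitions, so evaluating the Euler--Lagrange functionals along the path $t\mapsto t\p$ and changing variables via $\pi$ identifies the first term with $\enR_{X,v}(\p)$.

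The residual term is $\en_{\pi^\star\Om_X,v}^{\Theta_{Y/X}}(\pi^\star\p)$. From \eqref{equ:LPrho} we write $\Theta_{Y/X}=2\pi[D]_T-\ddcT\rho_{Y/X}$. Since $D$ is $\pi$-exceptional, Lemma~\ref{lem:LP} kills the $[D]_T$-contribution. For the remaining piece, I would decompose $D=D^+-D^-$ into effective $\Q$-divisors, so that $\rho_{Y/X}=\phi_{D^+}-\phi_{D^-}-\phi_{Y/X}$ is a difference of quasi-psh functions. Applying Proposition~\ref{prop:twenext} to each quasi-psh summand (and using that the smooth summand falls under Example~\ref{exam:twEL}), combined with linearity in $f$ and the normalization \eqref{equ:normY} giving $\int_Y\rho_{Y/X}\MA_{Y,v}(0)=0$, yields
$$\en_{\pi^\star\Om_X,v}^{\ddcT\rho_{Y/X}}(\pi^\star\p)=\int_Y\rho_{Y/X}\,\MA_{Y,v}(\pi^\star\p),$$
and hence
$$\enR_{Y,v}(\pi^\star\p)=\enR_{X,v}(\p)-\int_Y\rho_{Y/X}\,\MA_{Y,v}(\pi^\star\p). \qquad(\ast\ast)$$

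Adding $(\ast)$ and $(\ast\ast)$ the Jacobian integrals cancel, which is exactly the claim. The main obstacle is invoking Proposition~\ref{prop:twenext} for $\rho_{Y/X}$: in the klt setting $D$ may have negative coefficients, so $\phi_D$ is only a difference of quasi-psh functions rather than genuinely quasi-psh. This is circumvented by the decomposition $D=D^+-D^-$ and bilinearity of the twisted energy in $(v,\Theta)$; the vanishing of the $[D]_T$-term via Lemma~\ref{lem:LP} is the structural ingredient that makes the otherwise mysterious cancellation between the Ricci and entropy sides transparent.
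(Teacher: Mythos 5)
Your proof is correct and follows essentially the same route as the paper: change of variables for the entropy via $\pi^\star\nu_X=e^{2\rho_{Y/X}}\nu_Y$, the decomposition \eqref{equ:RicTh} for the Ricci energies, and cancellation of the residual term through \eqref{equ:LPrho}, the normalization \eqref{equ:normY} and Lemma~\ref{lem:LP}. The only superfluous step is the detour through Proposition~\ref{prop:twenext} and the decomposition $D=D^+-D^-$: since $\p\in\cH^T_X$ is smooth, Lemma~\ref{lem:3.14revisited} applies directly to the $T$-invariant distribution $g=\rho_{Y/X}$ with no quasi-psh hypothesis needed.
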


\begin{proof} On the one hand, \eqref{equ:RicTh} yields 
$\enR_{Y,v}(\pi^\star\p)=\enR_{X,v}(\p)+\en_{\pi^\star\Om_X,v}^{\Theta_{Y/X}}(\pi^\star\p)$. On the other hand, write $\MA_{X,v}(\p)=f\,\nu_X$ for a smooth positive density $f$, so that 
$$
\ent_{X,v}(\p)=\int_X \log f\,\MA_{X,v}(\p).
$$
By~\eqref{equ:Jpi} we get $\MA_{Y,v}(\pi^\star\p)=(\pi^\star f) e^{\rho_{Y/X}}\nu_Y$, and hence 
\begin{align*}
\ent_{Y,v}(\pi^\star\p) & =\int_Y\log\left((\pi^\star f )e^{\rho_{Y/X}}\right)\MA_{Y,v}(\pi^\star\p)\\
&=\ent_{X,v}(\p)+\int_Y\rho_{Y/X}\MA_{Y,v}(\pi^\star\p). 
\end{align*}
Thus
$$
\left(\ent_{Y,v}(\pi^\star\p)+\enR_{Y,v}(\pi^\star\p)\right)-\left(\ent_{X,v}(\p)+\enR_{X,v}(\p)\right) 
$$
$$
=\en_{\pi^\star\Om_X,v}^{\Theta_{Y/X}+\ddcT\rho_{Y/X}}(\pi^\star\p)=2\pi\en_{\pi^\star\Om_X,v}^{[D]_T}(\pi^\star\p), 
$$
by~\eqref{equ:LPrho}, which vanishes by Lemma~\ref{lem:LP} since $D$ is $\pi$-exceptional.  
\end{proof}

Our next goal is to show: 

\begin{prop}\label{prop:cvmab} For any sequence $\f_j\in\cH^T_j$ converging smoothly to $\pi^\star\p$ with $\p\in\cH^T_X$, we have $\mab^\rel_j(\f_j)\to\mab^\rel_X(\p)$ as $j\to\infty$. 
\end{prop}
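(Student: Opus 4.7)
The plan is to decompose
$$
\mab^\rel_j(\f_j)=\ent_{j,v}(\f_j)+\enR_{j,v}(\f_j)+\en_{j,vw\ell_j}(\f_j)
$$
and, using Lemma~\ref{lem:MY} to rewrite the limit as
$$
\mab^\rel_X(\p)=\ent_{Y,v}(\pi^\star\p)+\enR_{Y,v}(\pi^\star\p)+\en_{X,vw\ell_X}(\p),
$$
to show termwise convergence. The smooth convergence $\f_j\to\pi^\star\p$ together with $\om_j\to\pi^\star\om_X$ and $m_j\to\pi^\star m_X$ smoothly on the fixed manifold $Y$ forces the weighted Monge--Amp\`ere form $\MA_{j,v}(\f_j)=v(m_{\Om_{j,\f_j}})\om_{j,\f_j}^n$ to converge smoothly to $\MA_{Y,v}(\pi^\star\p)$, a smooth semipositive top-degree form that vanishes exactly along the $\pi$-exceptional divisor.

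For the Ricci energy, the equivariant form $-\Ric^T(\nu_Y)$ is fixed and smooth on $Y$, so applying the integral formula~\eqref{equ:twenexp} along the segment $t\f_j$ reduces $\enR_{j,v}(\f_j)\to\enR_{Y,v}(\pi^\star\p)$ to pointwise smooth convergence under an integral with uniform $C^0$ bounds. For the $vw\ell_j$-term, I first establish $\ell_j\to\ell_X$ by inspecting the extremal defining equation: the weighted Futaki--Mabuchi pairings $\langle\cdot,\cdot\rangle_j$ converge smoothly to $\langle\cdot,\cdot\rangle_X$ and are uniformly positive definite on the finite-dimensional space of affine functions (Lemma~\ref{lem:FM}), so it suffices to check that $\int_Y\ell(m_j)S_v(\Om_j)\MA_{j,v}(0)$ converges to its counterpart on $X$. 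Decomposing $\Ric_v^T(\Om_j)=\Ric^T(\nu_Y)+\ddcT\rho_j$ with $\rho_j:=-\tfrac12\log(\MA_{j,v}(0)/\nu_Y)$ splits this integral into a smoothly convergent piece and a $\ddcT\rho_j$-twisted piece, which by Lemma~\ref{lem:symm} becomes
$$
\int_Y\rho_j\,\MA_{j,v}^{\ddcT\ell(m_j)}(0);
$$
the right-hand factor is smooth and smoothly convergent, while $\rho_j\to\rho_\infty$ in $L^1(Y)$ by dominated convergence, thanks to the two-sided bounds $(1-\e_j)^n(\pi^\star\om_X)^n\le\om_j^n\le C\om_Y^n$ (guaranteed by the assumption $\om_j\ge(1-\e_j)\pi^\star\om_X$ and smooth convergence), which sandwich $|\log\mu_j|$ between an integrable dominant and a uniform constant. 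Once $\ell_j\to\ell_X$, the convergence $\en_{j,vw\ell_j}(\f_j)\to\en_{X,vw\ell_X}(\p)=\en_{\pi^\star\Om_X,vw\ell_X}(\pi^\star\p)$ follows by the integral formula~\eqref{equ:wenexp} with all data converging smoothly.

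The main obstacle will be the entropy term $\ent_{j,v}(\f_j)\to\ent_{Y,v}(\pi^\star\p)$, since the densities $f_j:=\MA_{j,v}(\f_j)/\nu_Y$ converge smoothly to a limit $f$ that vanishes along the exceptional divisor, so $\log f_j\to-\infty$ pointwise there. Uniform convergence however gives $M:=\sup_j\|f_j\|_\infty<\infty$, and the function $x\mapsto x\log x$ extended continuously by $0$ at the origin is uniformly bounded on $[0,M]$. Pointwise convergence $f_j\log f_j\to f\log f$ (including at points where $f=0$, using $x\log x\to 0$) together with dominated convergence against the constant dominant $\sup_{[0,M]}|x\log x|$ and finiteness of $\nu_Y(Y)$ then yields $\int_Y f_j\log f_j\,d\nu_Y\to\int_Y f\log f\,d\nu_Y$, i.e.\ $\ent_{j,v}(\f_j)\to\ent_{Y,v}(\pi^\star\p)$. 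Summing the three convergences concludes the proof.
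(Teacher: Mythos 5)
Your overall route is the paper's: the same three-term decomposition, the same use of Lemma~\ref{lem:MY} to convert $\mab^\rel_X(\p)$ into data on $Y$, and your treatments of the entropy term (dominated convergence for $f_j\log f_j$ using the continuous extension of $x\log x$ at the origin) and of the Ricci energy term are correct; they essentially spell out what the paper dispatches by ``smooth convergence''.

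There is, however, a genuine gap in your argument for $\ell_j\to\ell_X$. You correctly reduce to showing that $\int_Y\ell(m_j)S_v(\Om_j)\MA_{j,v}(0)$ converges to $\int_X\ell(m_X)S_v(\Om_X)\MA_{X,v}(0)$, and your decomposition $\Ric_v^T(\Om_j)=\Ric^T(\nu_Y)+\ddcT\rho_j$ combined with Lemma~\ref{lem:symm} and the $L^1$ convergence of $\rho_j$ does prove that the sequence converges. But what it converges to is, a priori,
$$
\int_Y\pi^\star\big(\ell(m_X)\big)\,\MA_{\pi^\star\Om_X,v}^{\Ric^T(\nu_Y)+\ddcT\rho_\infty}(0),
$$
an integral over $Y$ against data ($\nu_Y$, $\rho_\infty$) living on $Y$, and you never identify this limit with the ``counterpart on $X$''. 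That identification is not automatic. One must compute $\rho_\infty$ explicitly: writing $\pi^\star\nu_X=e^{2\rho_{Y/X}}\nu_Y$ and $K_Y=\pi^\star K_X+D$ with $D$ the $\pi$-exceptional relative canonical divisor, one finds $\Ric^T(\nu_Y)+\ddcT\rho_\infty=\pi^\star\Ric_v^T(\Om_X)-2\pi[D]_T$. The first piece pushes forward to the desired $X$-integral, but the $[D]_T$-twisted piece must be shown to vanish, and this uses both $\pi_\star[D]=0$ and the vanishing of the distribution $\pi_\star m^\xi_{\log|s_D|}$ — precisely the content of Lemma~\ref{lem:LP}, and the same mechanism underlying Lemma~\ref{lem:MY}. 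This equivariant exceptional-divisor cancellation is the one genuinely geometric input of the convergence $\ell_j\to\ell_X$, and your proof skips it; until it is supplied, the claim that the limit equals $\int_X\ell(m_X)S_v(\Om_X)\MA_{X,v}(0)$ is unsupported.
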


\begin{lem}\label{lem:cvscal} Assume $f_j\in C^\infty(Y)$ converges smoothly to $\pi^\star f$ with $f\in C^\infty(X)$. Then 
$$
\int_Y f_j S_v(\Om_j)\MA_{j,v}(0)\to\int_X f S_v(\Om_X)\MA_{X,v}(0). 
$$
\end{lem}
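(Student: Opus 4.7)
The plan is to expand the integrand via Lahdili's formula~\eqref{equ:Slah0}--\eqref{equ:SvLah}:
\begin{equation*}
S_v(\Om_j)\MA_{j,v}(0)=v(m_j)\,n\Ric(\om_j)\wedge\om_j^{n-1}-n\,\ddc v(m_j)\wedge\om_j^{n-1}+\tfrac12\langle v''(m_j),g_{\om_j}\rangle\,\om_j^n.
\end{equation*}
The last two terms are first-order in $\Om_j$ and converge smoothly to their pullbacks from $X$; integrating against $f_j\to\pi^\star f$ and pushing forward via $\pi$ recovers the analogous two terms of $\int_X f\,S_v(\Om_X)\MA_{X,v}(0)$. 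Only the Ricci term requires real work.

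For that term, I would write $\Ric(\om_j)=\Ric(\nu_Y)-\ddc\tpsi_j$ with $\tpsi_j:=\tfrac12\log(\om_j^n/\nu_Y)\in C^\infty(Y)$ and integrate by parts (using closedness of $\om_j^{n-1}$) to get
\begin{equation*}
n\int_Y f_jv(m_j)\Ric(\om_j)\wedge\om_j^{n-1}=n\int_Y f_jv(m_j)\Ric(\nu_Y)\wedge\om_j^{n-1}-n\int_Y\tpsi_j\,\ddc(f_jv(m_j))\wedge\om_j^{n-1}.
\end{equation*}
The first summand is smooth in $j$ and converges immediately. For the second, the assumption $\om_j\ge(1-\e_j)\pi^\star\om_X$ gives $\tpsi_j\ge\tpsi_\infty+n\log(1-\e_j)$ with $\tpsi_\infty:=\tfrac12\log(\pi^\star\om_X^n/\nu_Y)$, while the smooth convergence of $\om_j^n$ bounds $\tpsi_j$ uniformly from above; and $\tpsi_\infty=\pi^\star\tfrac12\log(\om_X^n/\nu_X)+\rho_{Y/X}$ lies in $L^1(Y)$ by the klt hypothesis, so dominated convergence forces $\tpsi_j\to\tpsi_\infty$ in $L^1$. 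Combined with the uniform convergence of the smooth factor, this yields $-n\int_Y\tpsi_\infty\,\ddc\bigl(\pi^\star(fv(m_X))\bigr)\wedge\pi^\star\om_X^{n-1}$ as the limit.

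It then remains to identify the total limit with $\int_X f\,S_v(\Om_X)\MA_{X,v}(0)$. I would decompose $\tpsi_\infty$ into its pulled-back part and $\rho_{Y/X}$, use $\Ric(\nu_Y)=\pi^\star\Ric(\nu_X)-\ddc\phi_{Y/X}$, and push down the pulled-back contributions to $X$; after a reverse integration by parts on $X$ (via $\ddc\tfrac12\log(\om_X^n/\nu_X)=\Ric(\nu_X)-\Ric(\om_X)$), the $\Ric(\nu_X)$ pieces cancel and one recovers exactly $n\int_X fv(m_X)\Ric(\om_X)\wedge\om_X^{n-1}$, which together with the two easy terms completes the right-hand side. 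The residual contribution combines $\phi_{Y/X}+\rho_{Y/X}=\phi_D$, and a final integration by parts using $\ddc\phi_D=2\pi[D]$ turns it into $-2\pi n\int_Y\pi^\star(fv(m_X))\,[D]\wedge\pi^\star\om_X^{n-1}$; this vanishes because every component of the $\pi$-exceptional divisor $D$ maps to a subvariety of dimension $\le n-2$ in $X$, so that $\pi^\star\om_X^{n-1}$ restricts trivially to $D$---this is precisely the vanishing mechanism of Lemma~\ref{lem:LP}.

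The main obstacle is the accounting of the three quasi-psh correction potentials $\tpsi_j$, $\phi_{Y/X}$, $\rho_{Y/X}$ near $\Exc(\pi)$, where $\Ric(\om_j)$ itself blows up in the limit; the klt hypothesis is what guarantees their $L^1$-integrability, which simultaneously validates the dominated convergence step and each integration by parts involved in the final cancellation.
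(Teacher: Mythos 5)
Your argument is correct in its analytic core and runs on the same engine as the paper's: isolate the Ricci term, replace the Ricci form by $\Ric(\nu_Y)$ minus the $\ddc$ of a log-potential of a volume form, move that $\ddc$ onto the smooth test data by integration by parts, obtain $L^1$-convergence of the potential from the two-sided bound $(1-\e_j)\pi^\star\om_X\le\om_j\le C\om_Y$ via dominated convergence, and kill the exceptional contribution through $[D]\wedge\pi^\star\om_X^{n-1}=0$ (the form-part of Lemma~\ref{lem:LP}). The difference is the decomposition: you use the non-equivariant Lahdili formula~\eqref{equ:Slah0}--\eqref{equ:SvLah} with the potential $\tpsi_j=\tfrac12\log(\om_j^n/\nu_Y)$ of the \emph{unweighted} Ricci form, whereas the paper writes $S_v(\Om_j)\MA_{j,v}(0)=\MA_{j,v}^{\Ric^T(\nu_Y)}(0)-\MA_{j,v}^{\ddcT\rho_j}(0)$ with $\rho_j=\tfrac12\log(\MA_{j,v}(0)/\nu_Y)$ the potential of the \emph{weighted} Ricci current and moves $\ddcT\rho_j$ onto $f_j$ via the symmetry property~\eqref{equ:twMAsymm}. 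Your route trades the equivariant bookkeeping (moment maps of $\ddcT\rho_j$ and of $\log|s_D|$) for the extra terms $-\Delta_{\om_j}v(m_j)+\tfrac12\langle v''(m_j),g_{\om_j}\rangle$, which do converge smoothly; both are legitimate on the smooth manifold $Y$.

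The genuine gap is the last step, where you identify the limit with $\int_X fS_v(\Om_X)\MA_{X,v}(0)$. Your computation produces $\int_X fS^{\Lah}_v(\Om_X)\om_X^n$, i.e.
$$
n\int_X fv(m_X)\Ric(\om_X)\wedge\om_X^{n-1}-n\int_X f\,\ddc v(m_X)\wedge\om_X^{n-1}+\tfrac12\int_X f\langle v''(m_X),g_{\om_X}\rangle\,\om_X^n,
$$
and you implicitly invoke~\eqref{equ:SvLah} to equate this with the left-hand side. But Lemma~\ref{lem:Sv} is proved only for $X$ smooth; on the klt space, $S_v(\Om_X)\MA_{X,v}(0)$ is \emph{defined} as $\MA_{X,v}^{\Ric_v^T(\Om_X)}(0)$ with $\Ric_v^T(\Om_X)=\Ric^T(\nu_X)-\ddcT\rho_X$, where $\rho_X=\tfrac12\log(\MA_{X,v}(0)/\nu_X)$ is quasi-psh with analytic singularities along $X_{\sing}$ (Lemma~\ref{lem:adapted}), and the paper never asserts the Lahdili formula there. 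The pointwise identity holds on $X_\reg$, but promoting it to an identity of currents on $X$ requires checking that the currents $\Ric(\om_X)\wedge\om_X^{n-1}$, $\ddc\rho_X\wedge\om_X^{n-1}$, etc.\ put no mass on $X_{\sing}$ and that the integrations by parts across the singular set produce no boundary terms; this needs an argument (codimension $\ge 2$ of $X_{\sing}$ together with the analytic-singularity structure of the potentials, or a pullback to $Y$ and a further application of Lemma~\ref{lem:LP}). The paper's choice of decomposition is designed precisely to avoid ever writing $S_v(\Om_X)$ in Lahdili form on the singular space. A minor additional point: the $L^1(Y)$-integrability of $\tpsi_\infty$ uses only that $\rho_{Y/X}$ equals $\log|s_D|$ plus a smooth term for a divisor $D$, not the klt condition; klt enters elsewhere (entropy bounds), not here.
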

\begin{proof} Set
$$
\rho_X:=\frac{1}{2}\log\left(\frac{\MA_{X,v}(0)}{\nu_X}\right),\quad\rho_j:=\frac{1}{2}\log\left(\frac{\MA_{j,v}(0)}{\nu_Y}\right). 
$$
Here $\rho_X$ is a quasi-psh function on $X$ (see Lemma~\ref{lem:adapted}), while $\rho_j$ is smooth on $Y$. Then
$$
\Ric_v^T(\Om_X)=\Ric^T(\nu_X)-\ddcT\rho_X,
$$
and hence 
\begin{align*}
S_v(\Om_X)\MA_{X,v}(0) & =\MA_{X,v}^{\Ric_v^T(\Om_X)}(0)\\
& =\MA_{X,v}^{\Ric^T(\nu_X)}(0)-\MA_{X,v}^{\ddcT\rho_X}(0).
\end{align*}
Similarly, 
\begin{align*}
S_v(\Om_j)\MA_{j,v}(0) & =\MA_{j,v}^{\Ric^T(\nu_Y)}(0)-\MA_{j,v}^{\ddcT\rho_j}(0)
\\
& =\MA_{j,v}^{\pi^\star\Ric^T(\nu_X)}(0)-\MA_{j,v}^{\Theta_{Y/X}+\ddcT\rho_j}(0), 
\end{align*}
by~\eqref{equ:RicTh}. On the one hand, $f_j\to\pi^\star f$ and $\Om_j\to\pi^\star\Om_X$ smoothly implies
$$
\int_Y f_j\MA_{j,v}^{\pi^\star\Ric^T(\nu_X)}(0)\to\int_Y\pi^\star f\MA_{\pi^\star\Om_X,v}^{\pi^\star\Ric^T(\nu_X)}(0)=\int_X f\MA_{X,v}^{\Ric^T(\nu_X)}(0)
$$
and
$$
\int_Y f_j\MA_{j,v}^{\Theta_{Y/X}}(0)\to\int_Y\pi^\star f\MA_{\pi^\star\Om_X,v}^{\Theta_{Y/X}}(0). 
$$
On the other hand, \eqref{equ:Jpi} yields $\rho_j\to\pi^\star\rho_X+\rho_{Y/X}$ pointwise on $Y$. Using $(1-\e_j)\pi^\star\om_X\le\om_j\le C\om_Y$, we further have $\pi^\star\rho_X+\rho_{Y/X}-o(1)\le\rho_j\le O(1)$, and hence $\rho_j\to\pi^\star\rho_X+\rho_{Y/X}$ in $L^1(Y)$, by dominated convergence. Using the symmetry property~\eqref{equ:twMAsymm} it follows that
$$
\int_Y f_j\MA_{j,v}^{\ddcT\rho_j}(0)=\int_Y \rho_j\MA_{j,v}^{\ddcT f_j}(0), 
$$
converges to 
\begin{align*}
\int_Y(\pi^\star\rho_X+\rho_{Y/X})\MA_{\pi^\star\Om_X,v}^{\ddcT\pi^\star f}(0) & =\int_Y \pi^\star f\MA_{\pi^\star\Om_X,v}^{\ddcT(\pi^\star\rho_X+\rho_{Y/X})}(0)\\
& =\int_X f\MA_{X,v}^{\ddcT\rho_X}(0)+\int_Y\pi^\star f\MA_{\pi^\star\Om_X,v}^{\ddcT\rho_{Y/X}}(0).
\end{align*}
Combining all this we get
$$
\int_Y f_j\MA_{j,v}^{\Theta_{Y/X}+\ddcT\rho_j}(0)\to\int_X f\MA_{X,v}^{\ddcT\rho_X}(0)+\int_Y \pi^\star f\MA_{\pi^\star\Om_X,v}^{\Theta_{Y/X}+\ddcT\rho_{Y/X}}(0)
$$
where the second integral vanishes by~\eqref{equ:LPrho} and Lemma~\ref{lem:LP}, since $D$ is $\pi$-exceptional.

We conclude, as desired,  
$$
\int_Y f_j S_v(\Om_j)\MA_{j,v}(0)\to\int_X f\MA_{X,v}^{\Ric^T(\nu_X)}(0)-\int_X f\MA_{X,v}^{\ddcT\rho_X}(0)=\int_X f S_v(\Om_X)\MA_{X,v}(0). 
$$

\end{proof}

\begin{lem}\label{lem:ellextcv} The extremal affine functions $\ell_j$ converge to $\ell_X$ in $\ft\oplus\R$. 
\end{lem}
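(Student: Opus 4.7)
The plan is to show that the two ingredients defining $\ell_j$, namely the weighted Futaki--Mabuchi pairing and the linear functional $\ell \mapsto \int \ell(m_\Omega)S_v(\Omega)\MA_v(0)$, both converge to their counterparts on $X$; since $\ft\oplus\R$ is finite-dimensional and the limiting pairing is positive definite, continuity of matrix inversion will deliver the conclusion.

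First, I would fix a basis $(e_\alpha)_\alpha$ of $\ft\oplus\R$ and consider the symmetric matrices
$$
A_j := \bigl(\langle e_\alpha, e_\beta\rangle_j\bigr)_{\alpha,\beta}, \qquad A_X := \bigl(\langle e_\alpha, e_\beta\rangle_X\bigr)_{\alpha,\beta},
$$
where the pairings are computed with respect to $\Omega_j$ on $Y$ and $\Omega_X$ on $X$ respectively. Since $\Omega_j \to \pi^\star \Omega_X$ smoothly, the moment map $m_j$ converges smoothly to $\pi^\star m_X$, and the smooth function $\ell(m_j)\ell'(m_j) w(m_j)$ on $Y$ converges smoothly to $\pi^\star\bigl(\ell(m_X)\ell'(m_X) w(m_X)\bigr)$. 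Combined with smooth convergence $\om_j^n \to \pi^\star \om_X^n$ and the observation that $\int_Y \pi^\star \mu = \int_X \mu$ for any smooth measure $\mu$ on $X$, this yields
$$
\langle e_\alpha, e_\beta\rangle_j = \int_Y e_\alpha(m_j)\,e_\beta(m_j)\,w(m_j)\MA_{j,v}(0) \;\longrightarrow\; \langle e_\alpha, e_\beta\rangle_X,
$$
so $A_j \to A_X$.

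Next, I would treat the right-hand sides $b_{j,\alpha} := \int_Y e_\alpha(m_j) S_v(\Omega_j) \MA_{j,v}(0)$. Since $e_\alpha(m_j)$ converges smoothly to $\pi^\star\bigl(e_\alpha(m_X)\bigr)$ on $Y$, Lemma~\ref{lem:cvscal} applies directly with $f_j = e_\alpha(m_j)$ and $f = e_\alpha(m_X)$ and gives
$$
b_{j,\alpha} \;\longrightarrow\; \int_X e_\alpha(m_X) S_v(\Omega_X) \MA_{X,v}(0) =: b_{X,\alpha}.
$$

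Finally, the defining equation of the extremal affine function reads $A_j \ell_j = b_j$ and $A_X \ell_X = b_X$, where $\ell_j$, $\ell_X$ are the coordinate vectors in the chosen basis. By Lemma~\ref{lem:FM}, $A_X$ is positive definite, hence invertible; therefore $A_j$ is invertible for all $j$ large enough, and $A_j^{-1} \to A_X^{-1}$ by continuity of matrix inversion. Combined with $b_j \to b_X$, this gives $\ell_j = A_j^{-1} b_j \to A_X^{-1} b_X = \ell_X$, as desired. There is no serious obstacle: Lemma~\ref{lem:cvscal} does all the heavy lifting (in particular it absorbs the singular Ricci correction coming from $\pi$-exceptional divisors), and the remaining argument is purely finite-dimensional linear algebra.
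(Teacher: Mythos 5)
Your proof is correct and follows essentially the same route as the paper: smooth convergence $m_j\to\pi^\star m_X$ gives convergence of the Futaki--Mabuchi pairings, Lemma~\ref{lem:cvscal} applied to $\ell(m_j)$ gives convergence of the linear forms, and positive definiteness of the limit pairing (Lemma~\ref{lem:FM}) lets you invert. The paper leaves the final linear-algebra step implicit ("the result follows"), whereas you spell it out in matrix form, but the content is identical.
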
 
\begin{proof} By definition, $\ell_j\in\ft\oplus\R$ is the image of the linear form 
$$
[\ell\mapsto\int_Y\ell(m_j)S_v(\Om_j)\MA_{j,v}(0)]\in(\ft\oplus\R)^\star
$$
under the duality $(\ft\oplus\R)^\star\simto\ft\oplus\R$ induced by the weighted Futaki--Mabuchi pairing 
$$
\langle\ell,\ell'\rangle_j:=\int_Y\ell(m_j)\ell'(m_j)\MA_{j,vw}(0)
$$
Since $m_j\to\pi^\star m$ smoothly, $\langle\ell,\ell'\rangle_j$ converges to the weighted Futaki--Mabuchi pairing 
$$
\langle\ell,\ell'\rangle_X:=\int_X\ell(m_X)\ell'(m_X) \MA_{X,vw}(0),
$$
while Lemma~\ref{lem:cvscal} implies $\int_Y\ell(m_j)S_v(\Om_j)\MA_{j,v}(0)\to\int_X\ell(m_X)S_v(\Om_X)\MA_{X,v}(0)$. The result follows. 
\end{proof}

\begin{proof}[Proof of Proposition~\ref{prop:cvmab}] Assume $\f_j\in\cH^T_j$ converges smoothly to $\pi^\star\p$ with $\p\in\cH^T_X$. By~\eqref{equ:mabj}, we have 
$$
\mab_j^\rel(\f_j)=\ent_{j,v}(\f_j)+\enR_{j,v}(\f_j)+\en_{j,vw\ell_j}(\f_j). 
$$
Since $\f_j\to\pi^\star\p$ smoothly, we have  $\enR_{j,v}(\f_j)\to\enR_{Y,v}(\pi^\star\p)$, and $\MA_{j,v}(\f_j)\to\MA_{Y,v}(\pi^\star\p)$ smoothly, which implies 
$$
\ent_{j,v}(\f_j)=\tfrac12\Ent(\MA_{j,v}(\f_j)|\nu_Y)\to\ent_{Y,v}(\pi^\star\p)=\tfrac12\Ent(\MA_{Y,v}(\pi^\star\p)|\nu_Y).
$$
Since $\ell_j\to\ell_X$ (see Lemma~\ref{lem:ellextcv}), we further have 
$$
\en_{j,vw\ell_j}(\f_j)\to\en_{\pi^\star\Om_X,v w\ell_X}(\pi^\star\p)=\en_{X,v w\ell_X}(\p),
$$
and we conclude
$$
\mab_j^\rel(\f_j)\to\ent_{Y,v}(\pi^\star\p)+\enR_{Y,v}(\pi^\star\p)+\en_{X,v w\ell_X}(\p), 
$$
which is equal to
$$
\ent_{X,v}(\p)+\enR_{X,v}(\p)+\en_{X,v w\ell_X}(\p)=\mab^\rel_X(\p),
$$
thanks to Lemma~\ref{lem:MY}. 
\end{proof}

%
%
%
%
\subsection{Strong semicontinuity of weighted functionals}\label{sec:semicontfunc}
In what follows, we establish various semicontinuity results with respect to strong convergence in the sense of~\S\ref{sec:strong}. Recall that $\om_j\to\pi^\star\om_X$ smoothly with $\om_j\ge(1-\e_j)\pi^\star\om_X$, and that a sequence $\f_j\in\cET_j=\cE^1(Y,\om_j)^T$ converges strongly to $\pi^\star\p\in\cE^1(Y,\pi^\star\om_X)^T=\pi^\star\cET_X$ with $\p\in\cET_X$ iff 
$$
\f_j\to\pi^\star\p\text{ in }L^1(Y)\text{ and }\en_{\om_j}(\f_j)\to\en_{\pi^\star\om_X}(\pi^\star\p)=\en_{\om_X}(\p).
$$
We first establish:

\begin{lem}\label{lem:entlsc} Assume $\cET_j\ni\f_j\to\pi^\star\p$ strongly with $\p\in\cET_X$. Then $\MA_{j,v}(\f_j)\to\MA_{Y,v}(\pi^\star\p)=\pi^\star\MA_{X,v}(\p)$ weakly on $Y$. 
\end{lem}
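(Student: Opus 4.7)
The strategy is to exploit the $\dd_{1,j}$-Hölder continuity of the weighted Monge--Ampère operator from Proposition~\ref{prop:MAvE1} together with the strong continuity of Darvas distances under varying Kähler forms (Proposition~\ref{prop:dcont}). By $T$-invariance and density, it suffices to verify for each $\chi\in C^\infty(Y)^T$ that $I_j:=\int_Y\chi\,\MA_{j,v}(\f_j)$ converges to $I_\infty:=\int_Y\chi\,\pi^\star\MA_{X,v}(\p)$. I will truncate $\p$ by $\p_k:=\max\{\p,-k\}\in\cET_X$, which is bounded $\om_X$-psh and decreases strongly to $\p$, and consider the canonical lift $u_j^k:=(1-\e_j)\pi^\star\p_k$, which belongs to $\cET_j$ since $\om_j\ge(1-\e_j)\pi^\star\om_X$ and converges strongly to $\pi^\star\p_k$ as $j\to\infty$ by Example~\ref{exam:strong}. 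Setting $A_j^k:=\int_Y\chi\,\MA_{j,v}(u_j^k)$ and $B^k:=\int_Y\chi\,\pi^\star\MA_{X,v}(\p_k)$, the plan is to write
$$
I_j-I_\infty=(I_j-A_j^k)+(A_j^k-B^k)+(B^k-I_\infty)
$$
and send first $j\to\infty$, then $k\to\infty$.

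The outer terms are handled directly. For $|I_j-A_j^k|$, the Hölder bound~\eqref{equ:MAvhold} gives
$$
|I_j-A_j^k|\lesssim\|v\|_{C^0}\,\dd_{1,j}(\f_j,u_j^k)^{1/2}R^{1/2}
$$
for a uniform constant $R$ dominating $\dd_{1,j}(\f_j,0)$ (bounded since $\f_j\to\pi^\star\p$ strongly), $\dd_{1,j}(u_j^k,0)$, and $\dd_{1,j}(\chi,0)$. By Proposition~\ref{prop:dcont} and the fact that $\pi^\star$ is a $\dd_1$-isometry of $\cE^1$ (a consequence of the preservation of nonpluripolar Monge--Ampère and energy under pullback), $\dd_{1,j}(\f_j,u_j^k)\to\dd_1(\p,\p_k)$, which vanishes as $k\to\infty$ by the strong convergence $\p_k\to\p$ in $\cET_X$. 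For $|B^k-I_\infty|$, the strong continuity of $\MA_{X,v}$ on $\cET_X$ from Proposition~\ref{prop:MAvE1} combined with the weak continuity of $\pi^\star$ on nonpluripolar measures recorded at the start of~\S\ref{sec:setup} yields $B^k\to I_\infty$ as $k\to\infty$.

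The main technical step is the bounded case $A_j^k\to B^k$ for each fixed $k$ as $j\to\infty$. I will further regularize $\p_k$ by a decreasing sequence $\p_k^m\in\cH^T_X$ of smooth Kähler potentials (via the Cho--Choi regularization used in~\S\ref{sec:normal}), and interpose $\int_Y\chi\,\MA_{j,v}((1-\e_j)\pi^\star\p_k^m)$ and $\int_Y\chi\,\pi^\star\MA_{X,v}(\p_k^m)$ in the telescope. For fixed $m$, smoothness of $\p_k^m$ ensures that $(1-\e_j)\pi^\star\p_k^m\to\pi^\star\p_k^m$ smoothly on $Y$, so the weighted Monge--Ampère forms converge smoothly, identifying the $j\to\infty$ limit of the first intermediary with the second. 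A second application of~\eqref{equ:MAvhold} combined with Proposition~\ref{prop:dcont} bounds the remaining discrepancy $|A_j^k-\int_Y\chi\,\MA_{j,v}((1-\e_j)\pi^\star\p_k^m)|$ by $\dd_1(\p_k,\p_k^m)^{1/2}R^{1/2}+o_j(1)$, which vanishes after sending $j\to\infty$ and then $m\to\infty$. The principal obstacle lies in keeping the constant $R$ uniform across this double approximation: this requires uniform-in-$j$ (and, for fixed $k$, uniform-in-$m$) bounds on the relevant $\dd_{1,j}$-norms, which I will derive from Proposition~\ref{prop:dcont} applied to each of the strongly convergent sequences $\f_j\to\pi^\star\p$, $u_j^k\to\pi^\star\p_k$, and $(1-\e_j)\pi^\star\p_k^m\to\pi^\star\p_k^m$, combined with the triangle inequality.
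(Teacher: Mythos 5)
There is a genuine gap at the very first step of your argument, namely the reduction to testing against an arbitrary $\chi\in C^\infty(Y)^T$. The H\"older estimate~\eqref{equ:MAvhold} only applies to test functions $\tau\in\cE^1(\om_j)$ (or uniformly bounded differences of such), and the quantity $\dd_{1,j}(\chi,0)$ that you want your constant $R$ to dominate is not defined, let alone uniformly bounded, for a general smooth $\chi$ on $Y$. The decomposition of a smooth function as a difference of K\"ahler potentials (used in the proof of Proposition~\ref{prop:MAvE1}) requires the reference form to be K\"ahler; here $\om_j\to\pi^\star\om_X$, which degenerates along the fibers of $\pi$ over $X_{\sing}$, so if $\ddc\chi$ has a negative eigenvalue in a fiber direction of the exceptional locus then $A\om_j+\ddc\chi$ fails to be semipositive for large $j$ no matter how large $A$ is. Consequently no uniform-in-$j$ decomposition of $\chi$ into $\om_j$-psh pieces exists, and the estimate you invoke is unavailable for exactly the test functions needed to detect the behavior of $\mu_j:=\MA_{j,v}(\f_j)$ on the exceptional locus.

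The paper's proof works around this in two steps that are both absent from your proposal. First, it only tests against $\pi^\star f$ with $f\in C^\infty(X)$: after scaling, such $f$ is $\tfrac12\om_X$-psh on $X$, so $\pi^\star f$ is $\om_j$-psh for all large $j$ with uniformly bounded $\dd_{1,j}$-norms, and the H\"older estimate plus the strong continuity criterion (Lemma~\ref{lem:strongcont}) give $\int\pi^\star f\,\mu_j\to\int\pi^\star f\,\mu$ --- essentially the telescoping argument you describe, but for an admissible test class. This only identifies $\pi_\star\tilde\mu=\pi_\star\mu$ for a weak limit point $\tilde\mu$ of $(\mu_j)$, i.e.\ $\tilde\mu=\mu$ off the exceptional locus. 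Second, one must upgrade this to equality on all of $Y$: since the total masses $\mu_j(Y)\to\mu(Y)$ and $\mu=\pi^\star\MA_{X,v}(\p)$ is nonpluripolar and hence puts no mass on the exceptional locus, the limit $\tilde\mu$ cannot charge the exceptional locus either, whence $\tilde\mu=\mu$. Your telescoping scheme for the middle term $A_j^k\to B^k$ and the outer truncation terms is sound in spirit (it reproves Lemma~\ref{lem:strongcont} by hand), but without restricting the test class and adding the mass/exceptional-locus argument the proof does not close.
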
 
\begin{proof} Set $\mu_j:=\MA_{j,v}(\f_j)$, $\mu:=\MA_{Y,v}(\pi^\star\p)$, and note that $\mu_j$ is a positive measure whose total mass $\mu_j(Y)=\int_Y v(m_j)\om_j^n$ converges to $\mu(Y)=\int_X v(m_X)\om_X^n$. By weak compactness, we may thus assume that $\mu_j$ converges weakly a positive measure $\tilde\mu$, necessarily of same mass as $\mu$, and we then need to show $\tilde\mu=\mu$. 

Pick $f\in C^\infty(X)$. We claim that $\int_Y \pi^\star f\mu_j\to\int_Y\pi^\star f\,\mu$. Taking this for granted, we then get $\pi_\star\tilde\mu=\pi_\star\mu$, and hence $\tilde\mu=\mu$ outside the exceptional locus of $\pi$. Since $\mu$ is puts no mass on the exceptional locus and $\mu,\tilde\mu$ have same mass, it follows that $\tilde\mu$ also puts no mass on the exceptional locus, and hence $\tilde\mu=\mu$ on $Y$. 

To prove the claim, we may assume, after scaling $f$, that the latter is $\tfrac 12\om$-psh, and hence that $\pi^\star f$ is $\om_j$-psh for all $j$. By~\eqref{equ:MAvhold}, the functional $\cET_j\ni\f\mapsto F_j(\f):=\int_Y \pi^\star f\,\MA_{j,v}(\f)$ then satisfies a uniform H\"older estimate 
$$
\left|F_j(\f)-F_j(\f')\right|\le C \dd_{1,j}(\f,\f')^\a\max\{\dd_{1,j}(\f,0),\dd_{1,j}(\f',0)\}^{1-\a}. 
$$
For $\tau\in\cH^T_X$, 
$$
F_j((1-\e_j)\pi^\star\tau)=\int_Y\pi^\star f\,\MA_{j,v}((1-\e_j)\pi^\star\tau)
$$
further converges to 
$$
\int_X f\,\MA_{X,v}(\tau)=\int_Y \pi^\star f\,\MA_{Y,v}(\pi^\star\tau)=:F(\pi^\star\tau). 
$$ 
By Lemma~\ref{lem:strongcont}, we infer that $F_j(\f_j)=\int_Y \pi^\star f\,\mu_j$ converges to $F(\pi^\star\p)=\int_Y\pi^\star f\,\mu$, which proves the claim and concludes the proof. 
\end{proof}

Next we establish the strong continuity of the weighted Monge--Amp\`ere energy: 

\begin{lem}\label{lem:wencv} Consider a sequence $(g_j)$ of smooth functions on $\ft^\vee$, converging uniformly on compact sets to a smooth function $g$. Then:
\begin{itemize}
\item[(i)] for each sequence $\f_j\in\cET_j$ converging strongly to $\pi^\star\p$ with $\p\in\cET_X$, we have $\en_{j,g_j}(\f_j)\to\en_{X,g}(\p)$; 
\item[(ii)] there exists $A>0$ such that 
\begin{equation}\label{equ:wenbd}
\left|\en_{j,g_j}(\f)\right|\le A\dd_{1,j}(\f,0)
\end{equation}
for all $j$ and $\f\in\cET_j$. 
\end{itemize}
\end{lem}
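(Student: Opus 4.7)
The plan is to derive both parts from the Lipschitz estimate~\eqref{equ:wenlip} of Proposition~\ref{prop:wenE1} applied on each K\"ahler manifold $(Y,\om_j)$, using part (ii) as the uniform Lipschitz input that powers the strong continuity criterion Lemma~\ref{lem:strongcont} used to establish (i).

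For (ii), the estimate~\eqref{equ:wenlip} together with $\en_{j,g_j}(0)=0$ immediately yields
$$
|\en_{j,g_j}(\f)|\le \bigl(\textstyle\sup_{P_j}|g_j|\bigr)\,\dd_{1,j}(\f,0),
$$
where $P_j=m_j(Y)$ is the moment polytope of $\Om_j$. The smooth convergence $m_j\to\pi^\star m_X$ implies that $P_j\to P_X$ in Hausdorff distance, and combined with the locally uniform convergence $g_j\to g$ this produces a uniform bound $A:=\sup_j\sup_{P_j}|g_j|<\infty$.

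For (i), I would invoke Lemma~\ref{lem:strongcont} with $\cF_j:=\cET_j$, $F_j:=\en_{j,g_j}$, reference subspaces $\cR:=\pi^\star\cH^T_X$ and $\cF:=\pi^\star\cET_X$, and limit functional $F(\pi^\star\tau):=\en_{X,g}(\tau)$. The uniform Lipschitz estimate required by the criterion (with exponent $\alpha=1$) is precisely part (ii). Condition (R) on decreasing approximation reduces on $X$ to the singular-space analogue of Lemma~\ref{lem:ETcomp}, based on the Cho--Choi regularization theorem (cf.~\S\ref{sec:normal}), and lifts verbatim under $\pi^\star$ since pullback preserves both boundedness and monotonicity. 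Condition (F) has two ingredients: first, $(1-\e_j)\pi^\star\tau\in\cET_j$ since $\om_j+(1-\e_j)\ddc\pi^\star\tau\ge (1-\e_j)\pi^\star\om_\tau\ge 0$; second, the scalar convergence
$$
\en_{j,g_j}\bigl((1-\e_j)\pi^\star\tau\bigr)\longrightarrow \en_{X,g}(\tau),
$$
which follows from the integral representation~\eqref{equ:wenexp} combined with the smooth convergences of $\om_j,m_j,g_j$ and $\e_j\to 0$, together with the tautological identity $\en_{\pi^\star\Om_X,g}(\pi^\star\tau)=\en_{X,g}(\tau)$ (both sides equal the same iterated integral on $X_\reg$, where $\pi$ is an isomorphism).

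The only delicate step is the verification of condition (F), but this reduces to routine dominated/smooth convergence in the Euler--Lagrange integral formula on bounded smooth data; no deeper pluripotential input is needed beyond the birational invariance of the weighted Monge--Amp\`ere measure on $X_\reg$. Once (F) is checked, Lemma~\ref{lem:strongcont} immediately produces the claimed strong continuity $\en_{j,g_j}(\f_j)\to \en_{X,g}(\p)$, completing (i).
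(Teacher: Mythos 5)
Your proposal is correct and follows essentially the same route as the paper: the uniform Lipschitz constant from~\eqref{equ:wenlip} combined with $P_j\to P_X$ gives (ii), and then Lemma~\ref{lem:strongcont} with $\cR=\pi^\star\cH_X^T$ yields (i), the conditions (R) and (F) being verified exactly as you describe. The only cosmetic remark is that Lemma~\ref{lem:strongcont} needs the full two-variable Lipschitz estimate $|\en_{j,g_j}(\f)-\en_{j,g_j}(\p)|\le A\,\dd_{1,j}(\f,\p)$ rather than its special case (ii) at $\p=0$, but this is what~\eqref{equ:wenlip} actually provides with the same uniform constant, so nothing is missing.
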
 
\begin{proof} Since $P_j\to P_X$, \eqref{equ:wenlip} yields a uniform Lipschitz estimate $|\en_{j,g_j}(\f)-\en_{j,g_j}(\p)|\le C\dd_{1,j}(\f,\p)$ for $\f,\p\in\cET_j$. This proves (ii), and also implies (i) by Lemma~\ref{lem:strongcont}, since any function in $\pi^\star\cET_X$ is a decreasing limit of functions in $\pi^\star\cH^T_X$, and $\en_{j,g_j}((1-\e_j)\pi^\star\p)\to\en_{\pi^\star\Om_X,g}(\pi^\star\p)=\en_{X,g}(\p)$ for $\p\in\cH^T_X$. 
\end{proof}

We now turn to the twisted weighted energy with respect to an equivariant current $\Theta=(\theta,m_\Theta)$ on $Y$. Recall that we have set $\en_{j,v}^\Theta(\f):=\en_{\Om_j,v}^\Theta(\f)$ for $\f\in\cH^T_j$. For $\f\in\cH^T_X$ we similarly set
$$
\en_{Y,v}^\Theta(\pi^\star\p):=\en_{\pi^\star\Om_X,v}^\Theta(\pi^\star\p). 
$$
Assume first that $\Theta$ is smooth. Since each $\Om_j$ is K\"ahler, Proposition~\ref{prop:wenE1} yields a unique continuous extension $\en_{j,v}^\Theta\colon\cET_j\to\R$, which further satisfies a H\"older estimate
\begin{equation*}
\left|\en_{j,v}^\Theta(\f)-\en_{j,v}^\Theta(\p)\right|\lesssim B_j \,\dd_{1,j}(\f,\p)^\a\max\{\dd_{1,j}(\f,0),\dd_{1,j}(\p,0)\}^{1-\a}
\end{equation*}
for all $\f,\p\in\cET_j$, where $\a:=2^{-n}$ and
$$
B_j:=\|\theta\|_{\om_j}\sup_{P_j}|v|+(\sup_X\|m_\Theta\|)(\sup_{P_j}\|v'\|). 
$$
In particular, 
\begin{equation}\label{equ:twenbd}
\left|\en_{j,v}^\Theta(\f)\right|\lesssim B_j \,\dd_{1,j}(\f,0).
\end{equation}


Since $P_j\to P_X$, we have 
$$
\sup_{P_j}|v|\to\sup_{P_X}|v|,\quad \sup_{P_j}\|v'\|\to\sup_{P_X}\|v'\|.
$$
If we further assume 
$$
\|\theta\|_{\pi^\star\om_X}:=\inf\{C\ge 0\mid\pm\theta\le C\pi^\star\om_X\}<\infty,
$$
then Lemma~\ref{lem:strongcont} and~\eqref{equ:twenbd} yield:

\begin{lem}\label{lem:twenext1} Let $\Theta=(\theta,m_\Theta)$ be a (smooth) equivariant form on $Y$ such that $\pm\theta\le C\pi^\star\om_X$ for some $C>0$. Then:
\begin{itemize}
\item[(i)] $\en_{Y,v}^{\Theta}\colon\pi^\star\cH_X^T\to\R$ admits a unique continuous extension $\en_{Y,v}^\Theta\colon\pi^\star\cET_X\to\R$, which satisfies
\begin{equation*}
\left|\en_{Y,v}^\Theta(\pi^\star\p)-\en_{Y,v}^\Theta(\pi^\star\p)\right|\lesssim B\,\dd_{1,X}(\f,\p)^\a\max\{\dd_{1,X}(\f,0),\dd_{1,X}(\p,0)\}^{1-\a}
\end{equation*}
for all $\f,\p\in\cET_X$, with
$B:=\|\theta\|_{\pi^\star\om_X}\sup_{P_X}|v|+(\sup_X\|m_\Theta\|)(\sup_{P_X}\|v'\|)$; 
\item[(ii)] for any strongly convergent sequence $\cET_j\ni\f_j\to\pi^\star\p\in\pi^\star\cET_X$, we have 
$$
\lim_j\en_{j,v}^\Theta(\f_j)=\en_{Y,v}^\Theta(\pi^\star\p); 
$$ 
\item[(iii)] there exists a uniform constant $A>0$ such that 
\begin{equation}\label{equ:twenjbd}
\left|\en_{j,v}^\Theta(\f)\right|\le A \dd_{1,j}(\f,0)
\end{equation}
for all $j$ and $\f\in\cET_j$. 
\end{itemize}
\end{lem}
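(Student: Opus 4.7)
The plan is to apply the abstract strong continuity criterion Lemma~\ref{lem:strongcont} to the family $F_j := \en_{j,v}^\Theta$ on $\cF_j := \cET_j$, with reference class $\cR := \pi^\star\cH^T_X$, target $\cF := \pi^\star\cET_X$, and limiting functional $F(\pi^\star\p) := \en_{Y,v}^\Theta(\pi^\star\p)$ for $\p \in \cH^T_X$.

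The first step is to establish uniform control on the Hölder constant $B_j$ appearing in Proposition~\ref{prop:wenE1}. The hypothesis $\pm\theta \le C\pi^\star\om_X$ combined with $\om_j \ge (1-\e_j)\pi^\star\om_X$ yields $\|\theta\|_{\om_j} \le C(1-\e_j)^{-1}$, uniformly bounded. Since $m_j \to \pi^\star m_X$ smoothly, the moment polytopes $P_j$ converge to $P_X$, so $\sup_{P_j}|v|$ and $\sup_{P_j}\|v'\|$ are also uniformly bounded; hence $B := \sup_j B_j < \infty$. Plugging this into~\eqref{equ:twenbd} gives assertion (iii) with $A \approx B$, and Proposition~\ref{prop:wenE1} then yields a family of functionals with the uniform Hölder estimate~\eqref{equ:holdFj} required by Lemma~\ref{lem:strongcont}.

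Next I would verify conditions (R) and (F) of Lemma~\ref{lem:strongcont}. For (R): the Cho--Choi regularization theorem on the klt space $X$ ensures that every $T$-invariant $\om_X$-psh function is the pointwise decreasing limit of a sequence in $\cH^T_X$ (which consists of bounded functions); pulling back via $\pi$ gives the required decreasing approximation of any $\pi^\star\p \in \cF$ by elements of $\cR$. For (F): if $\p \in \cH^T_X$, then $(1-\e_j)\pi^\star\p$ is $\om_j$-psh (because $\om_j - (1-\e_j)\pi^\star\om_X \ge 0$) and bounded, hence in $\cF_j$; furthermore, the smooth convergences $\Om_j \to \pi^\star\Om_X$ and $(1-\e_j)\pi^\star\p \to \pi^\star\p$ feed into the smooth integral representation~\eqref{equ:twenexp} of $\en_{j,v}^\Theta$ to give the required convergence $\en_{j,v}^\Theta((1-\e_j)\pi^\star\p) \to \en_{Y,v}^\Theta(\pi^\star\p)$.

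Lemma~\ref{lem:strongcont} then directly delivers both the unique continuous extension $\en_{Y,v}^\Theta: \pi^\star\cET_X \to \R$ with the stated Hölder bound (assertion (i)) and the continuity along strongly convergent sequences (assertion (ii)). The only genuinely new input beyond the abstract machinery is the uniform control of $\|\theta\|_{\om_j}$ in Step~1, which is precisely where the hypothesis $\|\theta\|_{\pi^\star\om_X} < \infty$ is used; I do not anticipate any serious obstacle, since the remaining ingredients (polytope convergence, smooth convergence of $\Om_j$, regularization on $X$) are already in place.
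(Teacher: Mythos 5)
Your proposal is correct and follows essentially the same route as the paper, which likewise deduces the lemma directly from the uniform Hölder estimate of Proposition~\ref{prop:wenE1} (with $B_j$ uniformly bounded via $\|\theta\|_{\om_j}\le C(1-\e_j)^{-1}$ and $P_j\to P_X$), the bound~\eqref{equ:twenbd} for part (iii), and the abstract criterion of Lemma~\ref{lem:strongcont} for parts (i) and (ii). The paper states this in a single sentence; your verification of conditions (R) and (F) fills in exactly the details it leaves implicit.
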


The above condition $\pm\theta\le C\pi^\star\om_X$ with $C>0$ of course fails in general, \eg when $\theta$ is a K\"ahler form on $Y$. But as it turns out, a one-sided bound is enough to get a semicontinuous extension of the twisted weighted energy. For later reference we allow $\theta$ to be a current in the next result. 

\begin{lem}\label{lem:twenext} Let $\Theta=(\theta,m_\Theta)$ be an equivariant current with $\theta$ $\pi$-semipositive. Then the following properties hold:
\begin{itemize}
   \item[(i)] 
    the functionals $\en_{j,v}^\Theta\colon\cH_j^T\to\R$ and $\en_{Y,v}^\Theta\colon\pi^\star\cH_X^T\to\R$ admit unique usc extensions
    $$
   \en_{j,v}^\Theta\colon\cET_j\to\R\cup\{-\infty\},\quad \en_{Y,v}^\Theta\colon\pi^\star\cET_X\to\R\cup\{-\infty\}
    $$
    that are continuous along decreasing sequences; 
\item[(ii)] for any strongly convergent sequence $\cET_j\ni\f_j\to\pi^\star\p$ with $\p\in\cET_X$ we have 
    $$
    \limsup_j\en_{j,v}^\Theta(\f_j)\le\en_{Y,v}^\Theta(\pi^\star\p);
    $$
\item[(iii)] there exists a constant $A>0$ such that 
\begin{equation}\label{equ:twenjsup}
\en_{j,v}^\Theta(\f)\le A(\dd_{1,j}(\f,0)+1)
\end{equation}
for all $j$ and all $\f\in\cET_j$.
\end{itemize}
\end{lem}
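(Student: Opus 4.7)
The plan is to adapt the monotonicity-based extension of Proposition~\ref{prop:twenext}. Since $\theta$ is $\pi$-semipositive, fix $C > 0$ with $\theta + C\pi^\star\om_X \ge 0$ and decompose the resulting positive equivariant current
\[
\Theta + C\pi^\star\Om_X = \widetilde{\Theta}_0 + \ddcT f,
\]
where $\widetilde{\Theta}_0 = (\widetilde{\theta}_0, m_{\widetilde{\Theta}_0})$ is a smooth $T$-invariant equivariant form (obtained from a smooth representative of the equivariant cohomology class) and $f$ is a $T$-invariant $\widetilde{\theta}_0$-psh function (averaging over $T$ if necessary). By linearity of $\Theta \mapsto \en_{j,v}^\Theta$,
\[
\en_{j,v}^\Theta = \en_{j,v}^{\widetilde{\Theta}_0 + \ddcT f} - C\,\en_{j,v}^{\pi^\star\Om_X},
\]
and since $\pm\pi^\star\om_X \le \pi^\star\om_X$ the second summand already enjoys the required properties by Lemma~\ref{lem:twenext1}; the task thus reduces to extending the first summand.

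Apply Lemma~\ref{lem:enmono} to $\widetilde{\Theta}_0$: since $v > 0$ on $P_X$ and $P_j \to P_X$, for $j$ large there is a uniform $B > 0$ making $G_j := \en_{j,v}^{\widetilde{\Theta}_0 + \ddcT f} + B\en_{j,v}$ monotone increasing on $\cH^T_j$. Extend $G_j$ to $\cET_j$ by $G_j(\f) := \inf\{G_j(\psi) : \psi \in \cH^T_j,\ \psi \ge \f\}$; a Dini-type argument exactly as in Proposition~\ref{prop:twenext} and Lemma~\ref{lem:weakext} shows this extension is continuous along decreasing sequences (uniqueness being automatic from Lemma~\ref{lem:ETcomp}), weakly usc, and translation equivariant with uniformly bounded translation constant. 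Setting $\en_{j,v}^\Theta(\f) := G_j(\f) - B\en_{j,v}(\f) - C\en_{j,v}^{\pi^\star\Om_X}(\f)$ yields the desired extension on $\cET_j$, since the subtracted terms are continuous by Proposition~\ref{prop:wenE1} and Lemma~\ref{lem:twenext1}; the construction of the extension on $\pi^\star\cET_X$ is identical with $\Om_j$ replaced by $\pi^\star\Om_X$. This gives (i). For (iii), monotonicity plus translation equivariance yield $G_j(\f) \le G_j(\sup\f) = a_j\sup\f \le |a_j|\,|\sup\f|$ for a uniformly bounded cohomological constant $a_j$; combined with $|\sup\f| \lesssim \dd_{1,j}(\f,0) + 1$ (Lemma~\ref{lem:supd}) and the Lipschitz bounds on $\en_{j,v}$ and $\en_{j,v}^{\pi^\star\Om_X}$, this produces the linear upper bound.

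For (ii), let $\f_j \in \cET_j$ converge strongly to $\pi^\star\p \in \pi^\star\cET_X$. Since $\en_{j,v}(\f_j) \to \en_{Y,v}(\pi^\star\p)$ by Lemma~\ref{lem:wencv} and $\en_{j,v}^{\pi^\star\Om_X}(\f_j) \to \en_{Y,v}^{\pi^\star\Om_X}(\pi^\star\p)$ by Lemma~\ref{lem:twenext1}, it suffices to show $\limsup_j G_j(\f_j) \le G_Y(\pi^\star\p)$, where $G_Y$ is the corresponding monotone extension on $\pi^\star\cET_X$. For any continuous $\psi \in \cH^T_X$ with $\psi \ge \p$, Hartogs' lemma applied to $\f_j \to \pi^\star\p$ in $L^1$ gives $\f_j \le (1-\e_j)\pi^\star\psi + o(1)$ eventually, so monotonicity and translation equivariance of $G_j$ yield $G_j(\f_j) \le G_j((1-\e_j)\pi^\star\psi) + o(1)$. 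The key remaining step---and the main technical obstacle---is the convergence $\lim_j G_j((1-\e_j)\pi^\star\psi) = G_Y(\pi^\star\psi)$: the $B\en_{j,v}$ summand is handled by Lemma~\ref{lem:wencv}, but the twisted summand $\en_{j,v}^{\widetilde{\Theta}_0 + \ddcT f}((1-\e_j)\pi^\star\psi)$ requires approximating the quasi-psh potential $f$ by a decreasing sequence $f_k \searrow f$ of smooth $T$-invariant functions; for each fixed $k$ joint smooth convergence $\Om_j \to \pi^\star\Om_X$ and $(1-\e_j)\pi^\star\psi \to \pi^\star\psi$ gives the desired fixed-$k$ limit, after which the limits $j,k\to\infty$ are interchanged using continuity along decreasing sequences of $\en_{j,v}^{\widetilde{\Theta}_0 + \ddcT f_k}$ (Proposition~\ref{prop:twenext} applied to $\ddcT f_k$). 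Taking the infimum over admissible $\psi$ in the monotone definition of $G_Y(\pi^\star\p)$ then yields $\limsup_j G_j(\f_j) \le G_Y(\pi^\star\p)$, completing (ii).
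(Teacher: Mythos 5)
Your proposal follows essentially the same route as the paper: reduce to the case $\theta\ge 0$ by subtracting $C\en^{\pi^\star\Om_X}_{j,v}$ (handled by Lemma~\ref{lem:twenext1}), split the positive current into a smooth equivariant form plus $\ddcT f$ with $f$ quasi-psh, use Lemma~\ref{lem:enmono} with a $j$-uniform constant $B$ (available since $P_j\to P_X$ and $v>0$ on $P_X$) to produce monotone increasing functionals $G_j$, extend by the infimum/Dini construction, and derive (iii) from monotonicity, translation equivariance and Lemma~\ref{lem:supd}. The paper packages the extension over $\pi^\star\cET_X$ and the $\limsup$ inequality (ii) through the general Lemma~\ref{lem:weakext} rather than re-running the Hartogs argument by hand, but that difference is cosmetic.

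The one step where your justification does not hold up as written is the verification that $\lim_j\en_{j,v}^{\widetilde\Theta_0+\ddcT f}\big((1-\e_j)\pi^\star\psi\big)=\en_{Y,v}^{\widetilde\Theta_0+\ddcT f}(\pi^\star\psi)$ for smooth $\psi$ (condition (F)). You propose to approximate $f$ by smooth $f_k\searrow f$ and then ``interchange the limits $j,k\to\infty$ using continuity along decreasing sequences,'' but that continuity refers to decreasing sequences of \emph{potentials} $\f$, not of the twisting datum $f_k$, and an interchange of iterated limits requires some uniformity that you have not supplied. The clean way to close this (and what the paper's terse assertion rests on) is to use the identity $\en_{j,v}^{\ddcT f}(\f)=\int_Y f\big(\MA_{j,v}(\f)-\MA_{j,v}(0)\big)$ from Proposition~\ref{prop:twenext}: for $\f=(1-\e_j)\pi^\star\psi$ both measures are smooth with densities converging uniformly to those of $\MA_{\pi^\star\Om_X,v}(\pi^\star\psi)$ and $\MA_{\pi^\star\Om_X,v}(0)$, so the convergence follows from dominated convergence since $f\in L^1(Y)$. (Alternatively, your double-limit argument can be salvaged by observing that $|a_{j,k}-a_{j,\infty}|\le C\int_Y(f_k-f)\,dV$ with $C$ a uniform bound on the densities, which makes the $k$-limit uniform in $j$; but this needs to be said.) With that repair the proof is complete.
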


\begin{proof} By Lemma~\ref{lem:twenext1}, the result holds with $\pi^\star\Om_X$ in place of $\Theta$. Since 
$$
\en_{Y,v}^\Theta=\en_{Y,v}^{\Theta+C\pi^\star\Om_X}-C\en_{Y,v}^{\pi^\star\Om_X},\quad\en_{j,v}^\Theta=\en_{j,v}^{\Theta+C\pi^\star\Om_X}-C\en_{j,v}^{\pi^\star\Om_X}
$$
where $\theta+C\pi^\star\om_X\ge 0$ for $C\gg 1$, we may thus assume without loss $\theta\ge 0$. Pick a quasi-psh $T$-invariant function $f$ such that $\Theta':=\Theta-\ddcT f$ is smooth. For each $j$, $\en_{j,v}^{\Theta'}$ and $\en_{j,v}^{\ddcT f}$ respectively admit a unique continuous extension $\en_{j,v}^{\Theta'}\colon\cET_j\to\R$ (see Proposition~\ref{prop:wenE1}) and a unique usc extension $\en_{j,v}^{\ddcT f}\colon\cET_j\to\R\cup\{-\infty\}$ that is continuous along decreasing sequences (see Proposition~\ref{prop:twenext}). Since $\Om_j$ is K\"ahler, $\en_{j,v}^{\Theta'}$ is continuous on $\cET_j$, and 
$\en_{j,v}^\Theta=\en_{j,v}^{\Theta'}+\en_{j,v}^{\ddcT f}$ thus admits a unique usc extension $\en_{j,v}^\Theta\colon\cET_j\to\R\cup\{-\infty\}$ that is continuous along decreasing sequences. 

In order to extend $\en_{Y,v}^\Theta\colon\pi^\star\cH_X^T\to\R$ to $\pi^\star\cET_X$, we will rely on Lemma~\ref{lem:weakext} with $\cF_j=\cE^{1,T}_j$, $\cR=\pi^\star\cH_X^T$, $\cF=\pi^\star\cE^{1,T}_X\simeq\cE^{1,T}_X$. We claim that 
$$
F_j:=\en_{j,v}^\Theta+A\en_{j,v}
$$
is monotone increasing on $\cET_j$ for a constant $A>0$ independent of $j$ (allowed to vary from line to line in what follows). Indeed, since $v>0$ on $P_X=\lim_j P_j$, we can choose $B>0$ independent of $j$ such that
$$
\langle v'(\a),\b\rangle+B v(\a)\ge 0
$$
for all $\a\in P_j$ and $\b\in K$. Here $K\subset\ft^\vee$ is the compact subset provided by Lemma~\ref{lem:enmono} applied to the equivariant form $\Theta'$. Since $\theta=\theta'+\ddc f\ge 0$, we conclude as desired that $\en_{j,v}^\Theta+A\en_{j,v}$ is monotone increasing. 

For any $\f\in\cH_X^T$, the smooth convergence $(1-\e_j)\pi^\star\p\to\pi^\star\p$ implies
$$
\en^\Theta_{j,v}\left((1-\e_j)\pi^\star\p\right)\to\en^\Theta_{Y,v}(\pi^\star\p),\quad\en_{j,v}((1-\e_j)\pi^\star\p)\to \en_{Y,v}(\pi^\star\p). 
$$
We can thus apply Lemma~\ref{lem:weakext} to $F:=\en_{Y,v}^\Theta+A\en_{Y,v}: \pi^\star\cH_X^T\to \R$, and get a unique extension to $F\colon\pi^\star\cET_X\to\R\cup\{-\infty\}$ that is continuous along decreasing sequences, weakly usc, monotone increasing, translation equivariant and such that
$$
\limsup_j F_j(\f_j)\leq F(\f)
$$
for any weakly convergent sequence $\cE^{1,T}_j\ni \f_j\to \f\in \pi^\star\cE^{1,T}_X$. Since $\en_{Y,v}$ is strongly continuous on $\pi^\star\cET_X$, setting $\en_{Y,v}^{\Theta}:=F-A\en_{Y,v}$ on $\pi^\star\cET_X$ yields the desired extension for which (i) and (ii) hold.

To prove (iii), we may assume $\f\in\cH_j^T$, by continuity of $\en_{j,v}^\Theta$ along decreasing sequences in $\cET_j$. Since $|\en_{j,v}(\f)|\le B \dd_{1,j}(\f,0)$ by~\eqref{equ:wenbd}, it is enough to prove the estimate for the monotone increasing functional $F_j=\en_{j,v}^{\Theta}+A\en_{j,v}$. The equivariance properties of $\en_{j,v}^\Theta$ and $\en_{j,v}$ yield 
$$
F_j(\f+c)=F_j(\f)+a_j c
$$
for $\f\in\cH_j^T$ and $c\in\R$, where 
$$
a_j:=\int_Y\left(\MA_{j,v}^{\Theta}(0)+C\MA_{j,v}(0)\right)\to\int_Y\left(\MA_{\pi^\star\Om_X,v}^{\Theta}(0)+C\MA_{\pi^\star\Om_X,v}(0)\right)
$$
as $j\to\infty$, and hence $|a_j|\le B$. Since $F_j$ is monotone increasing and vanishes at $0$, we have $F_j(\f-\sup\f)\le 0$, and hence 
$$
F_j(\f)\le a_j\sup\f\le B|\sup\f|\lesssim B (V_j^{-1} \dd_{1,j}(\f,0)+T_{\om_j}),
$$
by Lemma~\ref{lem:supd}. Finally, Lemma~\ref{lem:weakcomp} shows that $T_{\om_j}$ is bounded, and (iii) follows. 
\end{proof}

%

%
%
%
%
\subsection{Proof of main theorem}\label{sec:proofmain}
Assuming from now on $\pi\colon Y\to X$ to be of Fano type, we work towards the proof of Theorem~\ref{thm:opencoermab}. Pick a quasi-psh function $f$ on $Y$ such that $\hnu_Y=e^{-2f}\nu_Y$ has finite total mass and $\Ric(\hnu_Y)=\Ric(\nu_Y)+\ddc f$ is $\pi$-semipositive (see Definition~\ref{defi:Fanotype}). After averaging with respect to the Haar measure of $T$, we may further assume that $f$ is $T$-invariant, and normalize it by 
\begin{equation}\label{equ:normf}
\int_Y f\MA_{Y,v}(0)=0. 
\end{equation}
As a first key property we show: 
\begin{lem}\label{lem:hqpsh} We have $\hnu_Y=\pi^\star\left(e^{-2\rho}\nu_X\right)$ for a quasi-psh function $\rho$ on $X$ such that $\int_X\rho\MA_{X,v}(0)=0$. 
\end{lem}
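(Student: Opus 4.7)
The equation $\hnu_Y = \pi^\star(e^{-2\rho}\nu_X)$, combined with $\pi^\star\nu_X = e^{2\rho_{Y/X}}\nu_Y$ and $\hnu_Y = e^{-2f}\nu_Y$, is equivalent to the pointwise identity
$$\pi^\star\rho = f + \rho_{Y/X}$$
holding outside a pluripolar set on $Y$. Hence the task is to show that the function $g := f + \rho_{Y/X}$ on $Y$ descends to a quasi-psh function $\rho$ on $X$, and then to check the normalization.

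My plan is to reformulate things in terms of metrics on the $\Q$-line bundle $K_X$. Setting $\psi_X := \tfrac12\log\nu_X$ (smooth metric on $K_X$) and $\psi_Y := \tfrac12\log\hnu_Y$ (singular metric on $K_Y$), a direct unfolding of the definitions of $\phi_{Y/X}$, $\phi_D$ and $\rho_{Y/X}$ gives
$$g = f + \rho_{Y/X} = \pi^\star\psi_X - (\psi_Y - \phi_D),$$
where $\psi_Y - \phi_D$ is a singular metric on $K_Y - \cO(D) = \pi^\star K_X$. The heart of the proof is to show that this metric on $\pi^\star K_X$ is the pullback of a singular metric $\tilde\psi_X$ on $K_X$ such that $\psi_X - \tilde\psi_X$ is quasi-psh; one then sets $\rho := \psi_X - \tilde\psi_X$. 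Here the Fano-type hypothesis on $\pi$ is crucial: its content is that $-\psi_Y$ is a singular metric on $-K_Y$ with $\pi$-semipositive curvature and trivial multiplier ideal, and correspondingly $-(\psi_Y - \phi_D)$ is a singular metric on $\pi^\star(-K_X)$ with $\pi$-semipositive curvature $\Ric(\hnu_Y) + 2\pi[D]$, whose trivial multiplier ideal (inherited from the finite mass of $\hnu_Y$) is precisely what guarantees that the pushforward has quasi-psh (rather than $+\infty$-valued) local potential.

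On the Zariski open $V := X\setminus(X_\sing\cup\pi(\Exc))$ where $\pi$ is an isomorphism, $\rho$ is simply the function $g\circ\pi^{-1}$, and a direct computation gives
$$\ddc\rho|_V = (\Ric(\hnu_Y) - \pi^\star\Ric(\nu_X))|_{\pi^{-1}(V)} \ge -C\om_X|_V,$$
using the $\pi$-semipositivity of $\Ric(\hnu_Y)$. Extending $\rho$ across the proper analytic subset $X \setminus V$ is done via usc regularization of quasi-psh functions, which requires local boundedness from above of $\rho$ near $X \setminus V$; this in turn reduces to showing that $g$ is locally bounded above on $Y$, so that the cancellation between the $+\infty$ poles of $\rho_{Y/X}$ along exceptional divisors $E_j\subset D^-$ (where the discrepancy $a_j\in(-1,0)$) and the $-\infty$ singularities of $f$ is strong enough. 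This cancellation is precisely what the Fano-type structure provides: via the explicit construction in Lemma~\ref{lem:logFano}, or more directly from the interplay between $\pi$-semipositivity of $\Ric(\hnu_Y)$ and integrability of $e^{-2f}$, the Lelong numbers of $f$ along each $E_j\subset D^-$ dominate $|a_j|$.

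The main obstacle is making this descent argument rigorous: one must show that $g$ nowhere attains $+\infty$, so that the pushforward $\rho$ genuinely lands in $[-\infty,+\infty)$. Once the descent is established, the identity $\hnu_Y = \pi^\star(e^{-2\rho}\nu_X)$ follows directly from the construction. Finally, for the normalization, observe that $\MA_{Y,v}(0)=v(m_{\pi^\star\Om_X})\pi^\star\om_X^n = \pi^\star\MA_{X,v}(0)$ as a nonpluripolar measure; combining with the normalizations \eqref{equ:normf} and \eqref{equ:normY} one obtains
$$\int_X\rho\,\MA_{X,v}(0) = \int_Y\pi^\star\rho\cdot\pi^\star\MA_{X,v}(0) = \int_Y g\,\MA_{Y,v}(0) = 0,$$
so the required normalization is automatic.
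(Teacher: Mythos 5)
Your overall strategy is the same as the paper's: descend $g:=f+\rho_{Y/X}$ to $X$, show the descended function is quasi-psh via the curvature identity $\ddc g=\Ric(\hnu_Y)-\pi^\star\Ric(\nu_X)+2\pi[D]$ together with $\pi$-semipositivity of $\Ric(\hnu_Y)$, and get the normalization for free from $\MA_{Y,v}(0)=\pi^\star\MA_{X,v}(0)$ and \eqref{equ:normf}, \eqref{equ:normY}. The curvature computation on $V$ and the normalization argument are correct.

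However, the step you yourself flag as ``the main obstacle'' --- extending across $X\setminus V$ --- is where the proposal has a genuine gap, and the difficulty you set up is not actually resolved. You assert that the extension requires $g$ to be locally bounded above near the exceptional locus, and propose to obtain this from a lower bound on the Lelong numbers of $f$ along the components of $D$ with negative discrepancy. No such bound follows from the Fano type hypothesis: finiteness of $\hnu_Y=e^{-2f}\nu_Y$ (trivial multiplier ideal) constrains the Lelong numbers of $f$ from \emph{above}, not from below, and $\pi$-semipositivity of $\Ric(\hnu_Y)$ only yields that $g+C\pi^\star\tau+\sum_{a_i<0}(-a_i)\log|g_i|$ is locally psh, which gives no upper bound on $g$ itself along $D^-$. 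The point that makes the argument work --- and that your proposal misses --- is that no boundedness is needed: since $X$ is normal, $X_{\sing}\cup\pi(\Exc(\pi))$ has codimension at least $2$ in $X$, so the psh function $v=\tau+\rho$ obtained on $U\setminus(X_{\sing}\cup\pi(\Exc(\pi)))$ extends across this set by the Grauert--Remmert removable singularity theorem with \emph{no} hypothesis of local boundedness above. The resulting $\rho$ is then automatically $[-\infty,+\infty)$-valued, and the identity $\pi^\star\rho=f+\rho_{Y/X}$ only needs to hold almost everywhere for the measure identity $\hnu_Y=\pi^\star(e^{-2\rho}\nu_X)$, so the possible $+\infty$ values of $g$ on the ($\nu_Y$-null) exceptional locus are irrelevant. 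With that substitution your argument closes and coincides with the paper's proof.
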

\begin{proof} Set $\hro:=\rho_{Y/X}+f$, which satisfies $\int_Y\hro\,\pi^\star\MA_{X,v}(0)=0$ by~\eqref{equ:normY} and~\eqref{equ:normf}. By~\eqref{equ:Jpi} we have $\hnu_Y=e^{-2\hro}\pi^\star\nu_X$, and our goal is to show that $\hro=\pi^\star\rho$ for a quasi-psh function $\rho$ on $X$. Pick $C>0$ such that 
$$
\Ric(\nu_Y)+\ddc f\ge -C\pi^\star\om_X.
$$
By~\eqref{equ:RicTh} and~\eqref{equ:LPrho}, we have 
$$
\pi^\star\Ric(\nu_X)-\Ric(\nu_Y)+\ddc\rho_{Y/X}=2\pi[D],
$$
and hence 
\begin{equation}\label{equ:inD}
\pi^\star\eta_X+\ddc\hro\ge 2\pi[D]
\end{equation}
with $\eta_X:=\Ric(\nu_X)+C\om_X\in\cZ(X)$. Pick a small open subset $U\subset X$ such that $\eta_X=\ddc\tau$ for a smooth function $\tau\in C^\infty(U)$. Since $D$ is $\pi$-exceptional, \eqref{equ:inD} shows that $\pi^\star\tau+\hro$ induces a psh function $v$ on $U$ outside an analytic subset of codimension at least $2$. By normality of $U$, $v$ uniquely extends to a psh function $v$ on $U$, see~\cite{GR}. This shows that $\hro=\pi^\star\rho$ on $\pi^{-1}(U)$ for the quasi-psh function $\rho:=v-\tau$, and the result follows by uniqueness of such a function $\rho$. 
\end{proof}
Introduce next the equivariant current 
\begin{equation}\label{equ:hRic}
\Ric^T(\hnu_Y):=\Ric^T(\nu_Y)+\ddcT f. 
\end{equation}
Since $\Ric(\hnu_Y)$ is $\pi$-semipositive, Lemma~\ref{lem:twenext} shows that setting
$$
\henR_{j,v}(\f):=-\en_{j,v}^{\Ric^T(\hnu_Y)}(\f),\quad\henR_{Y,v}(\pi^\star\p):=-\en_{Y,v}^{\Ric^T(\hnu_Y)}(\pi^\star\p)
$$
for $\f\in\cET_j$, $\p\in\cET_X$ defines lsc functionals
$$
\henR_{j,v}\colon\cET_j\to\R\cup\{+\infty\},\quad\henR_{Y,v}\colon\pi^\star\cET_X\to\R\cup\{+\infty\}
$$
that are continuous along decreasing sequences. By~\eqref{equ:hRic} and Proposition~\ref{prop:twenext}, we have
\begin{equation}\label{equ:henR}
\henR_{j,v}(\f)=\enR_{j,v}(\f)-\int_Y f\MA_{j,v}(\f)+c_j,\quad\f\in\cET_j, 
\end{equation}
where 
$$
c_j:=\int_Y f\MA_{j,v}(0)\to\int_Y f\MA_{Y,v}(0)=0,
$$
see~\eqref{equ:normf}. On the other hand, Lemma~\ref{lem:hqpsh} yields
\begin{equation}\label{equ:henR2}
\henR_{Y,v}(\pi^\star\p)=\enR_{X,v}(\p)-\int_X\rho\MA_{X,v}(\p),\quad\p\in\cH^T_X,
\end{equation}
see Lemma~\ref{lem:3.14revisited}. Using this, we next establish the following uniform entropy growth. 

\begin{lem}\label{lem:entgrowth} 
There exists $\d,C>0$ such that
    $$
    \mab^\rel_j(\f)\geq \d\ent_j(\f)-C\left(\dd_{1,j}(\f,0)+1\right)
    $$
    for all $j$ large enough and $\f\in \cET_j$.
\end{lem}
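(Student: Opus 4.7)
The plan is to decompose $\mab^\rel_j(\f)$ into its three summands, replace the ordinary Ricci energy $\enR_{j,v}$ by the ``hatted'' version $\henR_{j,v}$ built from the singular measure $\hnu_Y$, and then bound each piece. Using~\eqref{equ:henR} we can write
$$
\mab^\rel_j(\f)=\ent_{j,v}(\f)+\henR_{j,v}(\f)+\int_Y f\,\MA_{j,v}(\f)-c_j+\en_{j,vw\ell_j}(\f),
$$
where $c_j\to 0$ (since $\MA_{j,v}(0)\to\MA_{Y,v}(0)$ smoothly and $f$ is $\MA_{Y,v}(0)$-normalized by~\eqref{equ:normf}), and $\ell_j\to\ell_X$ by Lemma~\ref{lem:ellextcv}. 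Since $\Ric(\hnu_Y)$ is $\pi$-semipositive, Lemma~\ref{lem:twenext}(iii) applied to $\Theta=\Ric^T(\hnu_Y)$ yields $\en_{j,v}^{\Ric^T(\hnu_Y)}(\f)\le A(\dd_{1,j}(\f,0)+1)$, and hence
$$
\henR_{j,v}(\f)\ge -A(\dd_{1,j}(\f,0)+1).
$$
Similarly, Lemma~\ref{lem:wencv} (applied to the sequence of weights $g_j:=vw\ell_j\to vw\ell_X$, which converges uniformly on a neighborhood of the polytopes) gives $|\en_{j,vw\ell_j}(\f)|\lesssim\dd_{1,j}(\f,0)$ with a uniform constant.

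The key step is the control of $\int_Y f\,\MA_{j,v}(\f)$, where we exploit the Fano type hypothesis. Since the singular metric $e^{-2f}\nu_Y=\hnu_Y$ has finite mass, $f$ has trivial multiplier ideal. By Berndtsson--Guan--Zhou's openness theorem, there exists $\lambda>2$ such that $e^{-\lambda f}\in L^1(\nu_Y)$. The Legendre duality for the relative entropy (applied to $g=-\lambda f$ against the positive measure $\mu=\MA_{j,v}(\f)$, whose mass $V_{j,v}$ is uniformly bounded) then yields
$$
\int_Y f\,\MA_{j,v}(\f)\ge -\tfrac{2}{\lambda}\ent_{j,v}(\f)-C,
$$
for a uniform constant $C>0$, using that $\int_Y e^{-\lambda f}\,d\nu_Y$ is finite and independent of $j$.

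Combining the above bounds,
$$
\mab^\rel_j(\f)\ge\left(1-\tfrac{2}{\lambda}\right)\ent_{j,v}(\f)-C'(\dd_{1,j}(\f,0)+1),
$$
and $\delta_1:=1-2/\lambda>0$ by the openness input. Finally, since $P_j\to P_X$ and $v>0$ on $P_X$, we have $\inf_{P_j}v\ge\delta_0>0$ for $j$ large, so Lemma~\ref{lem:entvgr} (applied on $(Y,\om_j)$, whose volume $V_j$ is uniformly bounded) yields $\ent_{j,v}(\f)\ge\delta_0\ent_j(\f)-C$. Multiplying through gives the desired estimate with $\delta:=\delta_0\delta_1$.

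The main obstacle in this plan is obtaining the \emph{strict} inequality $\lambda>2$: without it we merely recover a coefficient zero on the entropy term, which is useless. This is exactly where the openness theorem, together with the $\pi$-semipositivity of $\Ric(\hnu_Y)$ furnished by the Fano type assumption, plays an essential role---the two ingredients must combine to absorb the negative contribution $-\tfrac{2}{\lambda}\ent_{j,v}(\f)$ coming from the entropy--Legendre inequality into the positive $\ent_{j,v}(\f)$ of the Mabuchi decomposition.
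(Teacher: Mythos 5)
Your proposal is correct and follows essentially the same route as the paper: the paper likewise uses the $\pi$-semipositivity of $\Ric^T(\hnu_Y)$ via Lemma~\ref{lem:twenext}(iii) to bound $\henR_{j,v}$ below, the openness theorem to get $e^{-2pf}\in L^1(\nu_Y)$ for some $p>1$ (your $\lambda=2p>2$), the entropy change-of-reference-measure/Legendre inequality to absorb $\int_Y f\,\MA_{j,v}(\f)$ into $(1-p^{-1})\ent_{j,v}(\f)$, Lemma~\ref{lem:wencv} for the $\en_{j,vw\ell_j}$ term, and Lemma~\ref{lem:entvgr} to pass from $\ent_{j,v}$ to $\ent_j$. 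The only difference is presentational (you expand $\enR_{j,v}$ via~\eqref{equ:henR} at the outset rather than bounding it after the fact), and your closing remark correctly identifies $p>1$, i.e.\ $\lambda>2$, as the crucial strictness supplied by openness.
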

\begin{proof} By Lemma~\ref{lem:twenext} we have $\henR_{j,v}(\f)\ge-C(\dd_{1,j}(\f,0)+1)$ for $\f\in\cET_j$, where $C>0$ denotes a uniform constant that is allowed to vary from line to line. Combined with~\eqref{equ:henR} this yields
\begin{equation}\label{equ:enRlow}
\enR_{j,v}(\f)\ge\int_Yf\MA_{j,v}(\f)-C(\dd_{1,j}(\f,0)+1).
\end{equation}
On the other hand, we can find $p>1$ such that $e^{-2p f}\in L^1(Y)$, by the openness theorem~\cite{GuanZhou}. For $\f\in\cET_j$, Lemma \ref{lem:entexp} yields
\begin{equation}\label{equ:entlow}
\frac 1p\ent_{j,v}(\f)=\frac{1}{2p}\Ent\left(\MA_{j,v}(\f)|e^{-2p f}\nu_Y\right)-\int_Y f\MA_{j,v}(\f)\ge -C-\int_Yf\MA_{j,v}(\f)  
\end{equation}
since the relative entropy is uniformly bounded below, see~\eqref{equ:entbd}. Finally, the smooth convergence $vw\ell_j\to vw\ell_X$ (see Lemma~\ref{lem:ellextcv}) yields 
$$
\left|\en_{j,vw\ell_j}(\f)\right|\le C\dd_{1,j}(\f,0). 
$$
Combining this with~\eqref{equ:enRlow} and~\eqref{equ:entlow}, we conclude  
\begin{align*}
\mab^\rel_j(\f) & =\ent_{j,v}(\f)+\enR_{j,v}(\f)+\en_{j,vw\ell_j}(\f) \\
& \ge (1-p^{-1})\ent_{j,v}(\f)-C\big(\dd_{1,j}(\f,0)+1\big). 
\end{align*}
Finally, Lemma \ref{lem:entvgr} yields $\ent_{j,v}(\f)\ge\d\ent_j(\f)-C$ for uniform constants $\d,C>0$, and the result follows. 
 \end{proof}

\begin{rmk} Dropping the Fano type condition on $\pi$, we can always find a quasi-psh function $f$ on $Y$ such that $\Ric(\nu_Y)+\ddc f$ is $\pi$-semipositive, simply because $c_1(Y)+C\{\pi^\star\omega_X\}$ is big for $C\gg 1$, and hence contains a closed positive current. By Skoda's integrability theorem we then have $e^{-p f}\in L^2(Y)$ for $0<p\ll 1$, and the proof of Lemma~\ref{lem:entgrowth} now yields, for any resolution of singularities $\pi\colon Y\to X$, a uniform entropy bound
$$
\mab^\rel_j(\f)\ge-C\left( \ent_j(\f)+\dd_{1,j}(\f,0)+1\right),\quad\f\in\cET_j. 
$$
\end{rmk}
Introduce now lsc functionals
$$
\hent_{j,v}\colon\cET_j\to\R\cup\{+\infty\},\quad\hent_{Y,v}\colon\pi^\star\cET_X\to\R\cup\{+\infty\}
$$
by setting 
$$
\hent_{j,v}(\f):=\tfrac12\Ent\left(\MA_{j,v}(\f)|\hnu_Y\right),\quad\hent_{Y,v}(\pi^\star\p):=\tfrac 12\Ent\left(\MA_{Y,v}(\pi^\star\p)|\hnu_Y\right)
$$
for $\f\in\cET_j$, $\p\in\cET_X$. Since $\hnu_Y=e^{-2f}\nu_Y=\pi^\star(e^{-2\rho}\nu_X)$ where $f$ and $\rho$ are usc, Lemma~\ref{lem:entexp} yields
\begin{equation}\label{equ:hent}
\ent_{j,v}(\f)=\hent_{j,v}(\f)-\int_Y f\MA_{j,v}(\f),\quad\ent_{X,v}(\p)=\hent_{Y,v}(\pi^\star\p)-\int_X\rho\MA_{X,v}(\p). 
\end{equation}
Using this we prove the following version of Lemma~\ref{lem:MY}. 

\begin{lem}\label{lem:MYext} For all $\p\in\cET_X$ we have 
$$
\hent_{Y,v}(\pi^\star\p)+\henR_{Y,v}(\pi^\star\p)\ge\ent_{X,v}(\p)+\enR_{X,v}(\p).
$$
\end{lem}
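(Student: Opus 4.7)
The plan is to reduce to an identity that can be checked on $\cH^T_X$ and then extended to $\cET_X$ via continuity along decreasing sequences. Substituting the identity $\hent_{Y,v}(\pi^\star\p)=\ent_{X,v}(\p)+\int_X\rho\,\MA_{X,v}(\p)$ from~\eqref{equ:hent}, which is valid on all of $\cET_X$ by construction, the desired inequality reduces to
$$
\Phi(\p):=\int_X\rho\,\MA_{X,v}(\p)+\henR_{Y,v}(\pi^\star\p)-\enR_{X,v}(\p)\ge 0,\quad\p\in\cET_X.
$$
On $\cH^T_X$, equation~\eqref{equ:henR2} asserts exactly that $\Phi\equiv 0$, so the plan is to show that this equality in fact persists throughout $\cET_X$ (yielding the lemma with equality) by exhibiting $\Phi$ as a functional continuous along decreasing sequences.

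I would handle the three terms of $\Phi$ separately. For $\int_X\rho\,\MA_{X,v}(\p)$, the normalizations $\en_{X,v}^{\ddcT\rho}(0)=0$ and $\int_X\rho\,\MA_{X,v}(0)=0$ from Lemma~\ref{lem:hqpsh} combine with Proposition~\ref{prop:twenext} to give the identification $\int_X\rho\,\MA_{X,v}(\p)=\en_{X,v}^{\ddcT\rho}(\p)$, the usc extension continuous along decreasing sequences. For $\henR_{Y,v}(\pi^\star\p)=-\en_{Y,v}^{\Ric^T(\hnu_Y)}(\pi^\star\p)$, Lemma~\ref{lem:twenext}(i), applied to the $\pi$-semipositive equivariant current $\Ric^T(\hnu_Y)$ furnished by the Fano type assumption on $\pi$, gives continuity along decreasing sequences on $\pi^\star\cET_X$. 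Finally, $\enR_{X,v}$ is the twisted energy of the smooth form $-\Ric^T(\nu_X)$ and is thus strongly continuous on $\cET_X$, hence in particular continuous along decreasing sequences.

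Any $\p\in\cET_X$ is the decreasing pointwise limit of a sequence $\p_k\in\cH^T_X$ by the Cho--Choi regularization recalled in~\S\ref{sec:normal}, and such a pointwise decreasing convergence is automatically strong in $\cET_X$. Passing to the limit in the equality $\Phi(\p_k)=0$ via the three continuity properties above then yields $\Phi(\p)=0$, which completes the argument. The main delicacy is the need to route around the fact that, unlike in the smooth case recalled in Remark~\ref{rmk:AJL}, the entropy $\ent_{X,v}$ is not a priori continuous along decreasing sequences in the singular setting: the introduction of the hatted Ricci energy $\henR_{Y,v}$, which is a twisted energy with respect to the $\pi$-semipositive current $\Ric^T(\hnu_Y)$ and therefore enjoys the needed continuity by Lemma~\ref{lem:twenext}, is precisely what allows the cancellation of the entropies to go through in a regime where only the ``hatted'' measure $\hnu_Y$ (not $\nu_Y$ itself) interacts well with $\pi$.
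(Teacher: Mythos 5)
Your argument is correct and follows the same skeleton as the paper's proof: check the identity on $\cH^T_X$ using \eqref{equ:henR2} and \eqref{equ:hent}, then pass to the limit along a decreasing sequence $\p^k\searrow\p$ with $\p^k\in\cH^T_X$, exploiting the continuity of $\enR_{X,v}$ under strong (hence decreasing) convergence and the continuity of $\henR_{Y,v}$ along decreasing sequences from Lemma~\ref{lem:twenext}. The genuine difference is your treatment of the remaining term $\int_X\rho\,\MA_{X,v}(\p^k)$: the paper only uses that $\rho$ is usc together with the weak convergence $\MA_{X,v}(\p^k)\to\MA_{X,v}(\p)$, which yields the one-sided bound $\limsup_k\int_X\rho\,\MA_{X,v}(\p^k)\le\int_X\rho\,\MA_{X,v}(\p)$ and hence only the stated inequality; you instead identify this term with the normalized functional $\en_{X,v}^{\ddcT\rho}$ via Proposition~\ref{prop:twenext} and invoke its continuity along decreasing sequences, which upgrades the conclusion to an equality on all of $\cET_X$. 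This is a legitimate strengthening, provided you make explicit that Proposition~\ref{prop:twenext} is available on the \emph{singular} space $X$: in \S\ref{sec:normal} the paper only explicitly restates the extensions of \S\ref{sec:mabext} for smooth equivariant forms $\Theta$, but the proof of Proposition~\ref{prop:twenext} relies only on ingredients (the Cho--Choi regularization, the symmetry property of Lemma~\ref{lem:symm}, and Lemma~\ref{lem:enmono}) that are established in the singular setting, and $\rho$ is indeed $A\om_X$-psh for some $A>0$ by the proof of Lemma~\ref{lem:hqpsh}. If accepted, your argument shows that the caveat in the Remark following the lemma (equality only when $\p$ admits entropy-approximating smooth potentials) is superfluous. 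One small point worth spelling out: the cancellation of $\hent_{Y,v}(\pi^\star\p)$ and the rearrangement of \eqref{equ:hent} require a brief case analysis when $\int_X\rho\,\MA_{X,v}(\p)=-\infty$ or $\ent_{X,v}(\p)=+\infty$, but in each such case both sides of the asserted identity equal $+\infty$, so nothing is lost.
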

\begin{proof} When $\p\in \cH^T_X$, \eqref{equ:henR2} and~\eqref{equ:hent} yield 
$$
\hent_{Y,v}(\pi^\star\p)+\henR_{Y,v}(\pi^\star\p)=\ent_{X,v}(\p)+\enR_{X,v}(\p). 
$$
In the general case, write $\p\in\cET_X$ as the limit of a decreasing sequence $\p^k\in \cH^T_X$. By the previous step we get
 $$
 \enR_{X,v}(\p^k)-\henR_{Y,v}(\pi^\star\p^k)=\hent_{Y,v}(\pi^\star\p^k)-\ent_{X,v}(\p^k)=\int_X\rho\,\MA_{X,v}(\p^k), 
$$
using again~\eqref{equ:henR2}. Since $\p^k$ decreases to $\p$, we have $ \enR_{X,v}(\p^k)\to  \enR_{X,v}(\p)$, $\henR_{Y,v}(\pi^\star\p^k)\to \henR_{Y,v}(\pi^\star\p)$ (see Lemma~\ref{lem:twenext}), and $\MA_{X,v}(\p^k)\to \MA_{X,v}(\p)$ weakly as measures on $X$. As $\rho$ is usc, we infer
$$
 \enR_{X,v}(\p)-\henR_{Y,v}(\pi^\star\p)\leq \int_X\rho\,\MA_{X,v}(\p)= \hent_{Y,v}(\pi^\star\p)-\ent_{X,v}(\p), 
$$
which concludes the proof.
 \end{proof}

\begin{rmk} As we saw in the proof, equality holds in Lemma~\ref{lem:MYext} when $\p\in\cH^T_X$. More generally, equality holds if $\p\in\cET_X$ can be written as the limit of a sequence $\p_j\in\cH^T_X$ such that $\ent_{X,v}(\p_j)\to\ent_{X,v}(\p)$ (\eg when $X$ is smooth, see~\cite[Lemma~6.16]{AJL}). Indeed, we then have $\hent_{Y,v}(\pi^\star\p_j)+\henR_{Y,v}(\pi^\star\p_j)=\ent_{X,v}(\p_j)+\enR_{X,v}(\p_j)$, where the right-hand side converges to $\ent_{X,v}(\p)+\enR_{X,v}(\p)$, while $\ent_{Y,v}(\pi^\star\p)\le\liminf_j\ent_{Y,v}(\pi^\star\p_j)$, $\enR_{Y,v}(\pi^\star\p)\le\liminf_j\enR_{Y,v}(\pi^\star\p_j)$ since $\ent_{Y,v}$ and $\enR_{Y,v}$ are lsc. 
\end{rmk} 

We are now in a position to conclude the proof of Theorem~\ref{thm:opencoermab}. Indeed, consider the functionals 
$$
M_j\colon\cET_j\to\R\cup\{+\infty\},\quad M\colon\pi^\star\cET_X\to\R\cup\{+\infty\}
$$
respectively defined by
$$
M_j(\f):=\mab^\rel_j(\f)=\ent_{j,v}(\f)+\enR_{j,v}(\f)+\en_{j,vw\ell_j}(\f),\quad\f\in\cET_j, 
$$
and
$$
M(\pi^\star\p):=\mab^\rel_X(\p)=\ent_{X,v}(\p)+\enR_{X,v}(\p)+\en_{vw\ell_X}(\p),\quad\p\in\cET_X. 
$$
By Theorem~\ref{thm:coeropen}, it suffices to show that the conditions of~\S\ref{sec:opencoer} are satisfied for the group $G=T_\C$. Translation invariance and $T_\C$-invariance are automatic (see~\S\ref{sec:relmab}), and so is the properness of the $T_\C$-action on $\cET_j$, since $\om_j$ is K\"ahler and $Y$ is smooth (see Proposition~\ref{prop:isom}). The normalization condition $M_j(0)\to M(0)$ follows from Proposition~\ref{prop:cvmab}, the convexity of $M_j$ along psh geodesics in $\cET_j$ is proved in~\cite[Theorem~6.1]{AJL} (see Proposition~\ref{prop:relmabE1}), and the entropy growth condition on $M_j$ was proved in Lemma~\ref{lem:entgrowth}. 

It remains to take care of lower semicontinuity. Assume that $\f_j\in\cE^1_j$ converges strongly to $\pi^\star\p$ with $\p\in\cET_X$. For each $j$, \eqref{equ:henR} and~\eqref{equ:hent} yield
$$
M_j(\f_j)=\hent_{j,v}(\f_j)+\henR_{j,v}(\f_j)+\en_{j,vw\ell_j}(\f_j)+c_j
$$
with $c_j\to 0$. Since $\MA_{j,v}(\f_j)\to\MA_{Y,v}(\pi^\star\p)$ weakly (see Lemma~\ref{lem:entlsc}), the lower semicontinuity of the relative entropy $\Ent(\cdot|\hnu_Y)$ yields 
$$
\liminf_j\hent_{j,v}(\f_j)\ge\hent_{Y,v}(\pi^\star\p),
$$
while Lemma~\ref{lem:twenext} implies 
$$
\liminf_j\henR_{j,v}(\f_j)\ge\henR_{Y,v}(\pi^\star\p).
$$
Combined with Lemma~\ref{lem:MYext}, this yields
$$
\liminf_j\left(\hent_{j,v}(\f_j)+\henR_{j,v}(\f_j)\right)\ge\hent_{Y,v}(\pi^\star\p)+\henR_{Y,v}(\pi^\star\p)\ge \ent_{X,v}(\p)+\enR_{X,v}(\p).
$$
On the other hand, since $\ell_j\to\ell_X$ (see Lemma~\ref{lem:ellextcv}), Lemma~\ref{lem:wencv} yields 
$$
\lim_j\en_{j,vw\ell_j}(\f_j)=\en_{X,vw\ell_X}(\p). 
$$
We thus get the desired lower semicontinuity property
$$
\liminf_j M_j(\f_j)\ge M(\pi^\star\p), 
$$
which concludes the proof of Theorem~\ref{thm:opencoermab}. 

\appendix
%
%
\section{Relative entropy}\label{sec:ent}
Let $X$ be an arbitrary compact (Hausdorff) topological space, with a fixed positive Radon measure $\nu$. Recall that the \emph{relative entropy} of any positive Radon measure $\mu$ on $X$ with respect to $\nu$ is defined as
$$
\Ent(\mu|\nu)=\int_X f\log f\,d\nu\in\R\cup\{+\infty\}
$$
if $d\mu=f d\nu$, $f\in L^1(\nu)$, and $\Ent(\mu|\nu)=+\infty$ otherwise. It can be expressed as a Legendre transform 
\begin{equation}\label{equ:entleg}
\Ent(\mu|\nu)=\sup_{g\in C^0(X)}\left\{\int g\,d\mu-\mu(X)\log\int e^g d\nu\right\}+\mu(X)\log\mu(X), 
\end{equation}
which shows that
$$
\Ent(\cdot|\nu)\colon\cM\to\R\cup\{+\infty\}
$$
is convex and lsc on the space $\cM$ of positive Radon measures (equipped with the weak topology), and satisfies 
\begin{equation}\label{equ:entbd}
\Ent(\mu|\nu)\ge\mu(X)\log\frac{\mu(X)}{\nu(X)}.
\end{equation}
As in~\cite[Lemma 2.11]{BBEGZ}, we note that~\eqref{equ:entleg} remains valid when $g$ is merely lsc and bounded below (resp.~usc and bounded above), by monotone approximation by continuous functions. 


\begin{lem}\label{lem:entcst} Assume $\mu,\mu'$ are positive Radon measures such that $\mu'\le A\mu$ with $A>0$. Then 
$$
\Ent(\mu'|\nu)\le A\Ent(\mu|\nu)+C
$$
with $C>0$ only depending on $A,\nu(X),\mu(X)$. 
\end{lem}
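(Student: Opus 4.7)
The plan is to reduce to the absolutely continuous case and then perform a pointwise comparison on the densities. If $\mu$ is not absolutely continuous with respect to $\nu$, then $\Ent(\mu|\nu)=+\infty$ and the estimate is vacuous. Otherwise, write $d\mu=f\,d\nu$ and $d\mu'=f'\,d\nu$ with $f,f'\in L^1(\nu)$; note that the hypothesis $\mu'\le A\mu$ forces $f'\le Af$ $\nu$-almost everywhere, and in particular $\int f'\,d\nu=\mu'(X)\le A\mu(X)$.

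First I would split $\int f'\log f'\,d\nu$ according to whether $f'\ge 1$ or $f'<1$. On the set $\{f'<1\}$, the function $f'\log f'$ takes values in $[-1/e,0]$, so this piece contributes at most $\nu(X)/e$ in absolute value. On $\{f'\ge 1\}$, since $x\mapsto x\log x$ is increasing on $[1,\infty)$ (more importantly on $[1/e,\infty)$) and $1\le f'\le Af$, we have the pointwise bound
$$
f'\log f'\le Af\log(Af)=Af\log A+Af\log f.
$$
Integrating and using $\int f\,d\nu=\mu(X)$ controls the $Af\log A$ term by $A|\log A|\,\mu(X)$.

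The last step is to bound $A\int_{\{f'\ge 1\}}f\log f\,d\nu$ by $A\Ent(\mu|\nu)$ modulo a constant depending only on $\nu(X)$. This follows from the universal lower bound $x\log x\ge -1/e$: one has
$$
\int_{\{f'\ge 1\}}f\log f\,d\nu\le\int_X f\log f\,d\nu-\int_{\{f'<1\}}f\log f\,d\nu\le\Ent(\mu|\nu)+\nu(X)/e.
$$
Combining the three contributions yields $\Ent(\mu'|\nu)\le A\Ent(\mu|\nu)+C$ with $C$ a function of $A$, $\mu(X)$, and $\nu(X)$ only. No step is truly an obstacle; the only subtlety is handling the possibly negative part of $f\log f$ cleanly, which is why invoking the universal bound $x\log x\ge-1/e$ on $[0,1]$ is the right tool rather than attempting a direct pointwise comparison everywhere.
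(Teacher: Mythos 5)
Your proof is correct and follows essentially the same elementary route as the paper: reduce to densities, isolate the region where the integrand could be negative via the universal bound $x\log x\ge -1/e$, and extract $A\Ent(\mu|\nu)$ plus a constant depending only on $A$, $\mu(X)$, $\nu(X)$. The only (cosmetic) difference is that the paper factors $d\mu'=g\,d\mu$ with $0\le g\le A$ and splits on $\{f\ge 1\}$, whereas you compare the densities $f'\le Af$ directly and split on $\{f'\ge 1\}$ using the monotonicity of $x\log x$ on $[1,\infty)$; both yield the same terms $Af\log f+Af\log A$ and the same conclusion.
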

\begin{proof} Write $d\mu=f\,d\nu$ with $f\in L^1(\nu)$ and $d\mu'=g d\mu$ with $g\in L^1(\mu)$. Then $0\le g\le A$, and hence
$$
\Ent(\mu'|\nu)=\int_X (fg)\log(fg)\,d\nu=\int_X (fg)\log f\,d\nu+\int_X g\log g\,d\mu. 
$$
$$
\le A\int_{\{f\ge 1\}} f\log f\,d\nu+A\log A\,\mu(X)\le A\Ent(\mu|\nu)+A\nu(X)+A\log A\,\mu(X),
$$
using $x\log x\ge -1$ for $x\in (0,1)$. 
\end{proof}

\begin{lem}\label{lem:entexp} Consider another positive Radon measure $\nu'$ on $X$ such that $d\nu'=e^\rho d\nu$ with $\rho\colon X\to\R\cup\{+\infty\}$ lsc. For any positive Radon measure $\mu$, we then have 
\begin{equation}\label{equ:entexp}
\Ent(\mu|\nu)=\Ent(\mu|\nu')+\int \rho\,d\mu.
\end{equation}
\end{lem}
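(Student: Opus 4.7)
The identity is a straightforward calculation in the absolutely continuous, smooth case: writing $d\mu = f\,d\nu = f e^{-\rho}\,d\nu'$ and expanding $\log(f e^{-\rho}) = \log f - \rho$ immediately yields the formula. The plan is to handle the general lsc case through the Legendre transform characterization~\eqref{equ:entleg}, together with the extension noted immediately after it, to the effect that the supremum remains unchanged when taken over lsc bounded-below test functions, and symmetrically over usc bounded-above test functions.

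For the inequality $\Ent(\mu|\nu) \geq \Ent(\mu|\nu') + \int \rho\,d\mu$, I would pick any continuous $g$ on $X$, note that $g + \rho$ is lsc and bounded below (since $X$ is compact and $\rho$ is lsc), and apply the extended Legendre formula for $\Ent(\mu|\nu)$ to the test function $g + \rho$. Using the elementary identity $\int e^{g+\rho}\,d\nu = \int e^g\,d\nu'$, the resulting bound rearranges cleanly into the claim after taking the supremum over continuous $g$. The reverse inequality is obtained symmetrically: given $h \in C^0(X)$, the function $h - \rho$ is usc and bounded above, so applying the extended Legendre formula for $\Ent(\mu|\nu')$ to the test function $h - \rho$ and using $\int e^{h-\rho}\,d\nu' = \int e^h\,d\nu$ yields the matching bound after sup over $h$.

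The main obstacle is verifying that all integrals are well-defined and that the degenerate cases are handled correctly. Since $\rho$ is lsc on a compact space it is bounded below, so $\int \rho\,d\mu \in (-\infty, +\infty]$ is well-defined. In the finite case the argument above gives both inequalities and hence equality. When $\int \rho\,d\mu = +\infty$, the right-hand side is $+\infty$ (using $\Ent(\mu|\nu') > -\infty$ from~\eqref{equ:entbd}), and the first Legendre inequality above—applied with $g$ chosen, say, to concentrate positive mass near the set $\{\rho=+\infty\}$—forces $\Ent(\mu|\nu) = +\infty$ as well, so the identity is preserved as an equality in $\R\cup\{+\infty\}$. This compactness-based truncation is the only point requiring genuine care; the rest of the argument is clean algebraic manipulation of the Legendre formula.
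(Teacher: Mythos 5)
Your proposal is correct and follows essentially the same route as the paper: both inequalities are obtained from the Legendre characterization~\eqref{equ:entleg}, applied to the lsc bounded-below test function $g+\rho$ for $\Ent(\mu|\nu)$ and to the usc bounded-above test function $g-\rho$ for $\Ent(\mu|\nu')$, using $\int e^{g+\rho}\,d\nu=\int e^g\,d\nu'$. (Your aside about the case $\int\rho\,d\mu=+\infty$ is fine, though no special choice of $g$ is needed there—$g=0$ in the first inequality already forces $\Ent(\mu|\nu)=+\infty$.)
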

\begin{proof} After scaling $\mu$, we may assume without loss $\mu(X)=1$. Pick $g\in C^0(X)$. Applying~\eqref{equ:entleg} to the lsc function $g+\rho$ yields
$$
\int(g+\rho)\,d\mu\le\log\int e^{g+\rho}\,d\nu+\Ent(\mu|\nu)=\log\int e^g\,d\nu'+\Ent(\mu|\nu). 
$$
Thus
$$
\int g\,d\mu-\log\int e^g\,d\nu'+\int \rho\,d\mu\le \Ent(\mu|\nu),
$$
and hence 
$$
\Ent(\mu|\nu')+\int \rho\,d\mu\le \Ent(\mu|\nu),
$$
by~\eqref{equ:entleg}. Similarly, \eqref{equ:entleg} applied to the usc function $g-\rho$ yields 
$$
\int(g-\rho)\,d\mu\le\log\int e^{g-\rho}\,d\nu'+\Ent(\mu|\nu')=\log\int e^g\,d\nu+\Ent(\mu|\nu'). 
$$
Adding $\int \rho\,d\mu$ to both sides, we get
$$
\int g\,d\mu-\log\int e^g\,d\nu\le \Ent(\mu|\nu')+\int \rho\,d\mu,
$$
and~\eqref{equ:entleg} yields
$\Ent(\mu|\nu)=\Ent(\mu|\nu')+\int \rho\,\mu$. 

\end{proof}

We will also rely on the following basic result. 

\begin{lem}\label{lem:ent} Pick a convergent sequence $f_j\to f$ in $L^1(\nu)$, and assume that 
$$
C_p:=\sup_j\int_X \exp(p|f_j|)\,d\nu<+\infty
$$
for each $p>0$. For each $C>0$, we then have $\int|f_j-f|\,d\mu\to 0$ uniformly with respect to all positive measures $\mu$ such that $\Ent(\mu|\nu)\le C$. 
\end{lem}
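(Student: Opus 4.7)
The plan is to bound $\int |f_j - f|\, d\mu$ using the Young-type inequality
$$xy \le x\log x - x + e^y, \qquad x\ge 0,\ y\in\R,$$
which expresses that $x\log x - x + 1$ and $e^y - 1$ are Legendre conjugates. Since $\Ent(\mu|\nu)\le C$ forces $\mu$ to be absolutely continuous with respect to $\nu$, we may write $d\mu = h\, d\nu$ with $h\in L^1(\nu)$ and $\int h\log h\, d\nu = \Ent(\mu|\nu)\le C$. Applying the inequality pointwise with $x = h$ and $y = p|f_j - f|$ for a parameter $p>0$ and integrating against $\nu$ would give
$$p\int |f_j-f|\, d\mu \le \Ent(\mu|\nu) - \mu(X) + \int e^{p|f_j-f|}\, d\nu \le C + \int e^{p|f_j-f|}\, d\nu,$$
since $\mu(X)\ge 0$.

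The next step is to show that, for each fixed $p>0$,
$$\int e^{p|f_j - f|}\, d\nu \longrightarrow \nu(X) \qquad \text{as } j\to\infty.$$
By Fatou's lemma applied to an a.e.-convergent subsequence extracted from an $L^1(\nu)$-convergent sequence, the hypothesis $\sup_j \int e^{q|f_j|}\, d\nu < +\infty$ transfers to the limit, yielding $\int e^{q|f|}\, d\nu < +\infty$ for every $q>0$. Cauchy--Schwarz then gives
$$\sup_j \int e^{2p|f_j-f|}\, d\nu \le \left(\sup_j\int e^{4p|f_j|}\, d\nu\right)^{1/2}\left(\int e^{4p|f|}\, d\nu\right)^{1/2} < +\infty,$$
so $\{e^{p|f_j-f|}\}_j$ is $L^2(\nu)$-bounded and hence uniformly integrable with respect to $\nu$. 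Since $L^1(\nu)$-convergence implies convergence in $\nu$-measure, $e^{p|f_j-f|}\to 1$ in $\nu$-measure, and Vitali's convergence theorem delivers the displayed convergence.

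Combining the two points, for every fixed $p>0$,
$$\limsup_{j\to\infty}\ \sup_{\mu:\,\Ent(\mu|\nu)\le C}\int |f_j - f|\, d\mu \le \frac{C + \nu(X)}{p}.$$
Letting $p\to\infty$ concludes the proof. The only non-routine point is the uniform integrability underpinning Vitali's theorem, which is precisely what forces the hypothesis of exponential integrability of \emph{every} multiple $p|f_j|$ rather than of a single Orlicz function.
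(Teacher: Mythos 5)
Your proof is correct, and it rests on the same mechanism as the paper's: a Young--Legendre inequality pairing the density of $\mu$ against $p|f_j-f|$, which trades the integral against $\mu$ for the entropy bound $C$ plus an exponential integral against $\nu$, with the factor $p^{-1}$ absorbing everything once $j\to\infty$ is taken before $p\to\infty$. The difference lies only in the choice of conjugate pair and in how the exponential term is handled. The paper first reduces to $f=0$ and uses $\chi(x)=(x+1)\log(x+1)-x$ with $\chi^\star(y)=e^y-y-1\le ye^y$, so its error term is $\int_X|f_j|\exp(p|f_j|)\,d\nu$, which \emph{vanishes} for fixed $p$ after a separate lemma showing $\|f_j\|_{L^2(\nu)}\to 0$ (uniform integrability of $|f_j|^2$) and Cauchy--Schwarz, leaving only the residual $p^{-1}C'$. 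You use the plain pair $x\log x-x$ and $e^y$, whose error term $\int_X e^{p|f_j-f|}\,d\nu$ does not vanish but converges to $\nu(X)$ by Vitali, the requisite uniform integrability coming from $L^2(\nu)$-boundedness of $e^{p|f_j-f|}$ --- which, as you observe, is exactly where the hypothesis on \emph{every} exponent $p$ enters. All your intermediate steps check out: $\Ent(\mu|\nu)\le C$ forces $\mu\ll\nu$ by the very definition of relative entropy, Fatou transfers the exponential bounds to the limit $f$, and convergence in $\nu$-measure together with uniform integrability on the finite measure space $(X,\nu)$ justifies the Vitali conclusion. Your version is, if anything, slightly leaner, since it dispenses with the separate $L^2$-convergence step.
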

\begin{proof} Note first that $\int_X \exp(p|f|)\,d\nu\le C_p$. Indeed, after passing to a subsequence we have $f_j\to f$ $\nu$-a.e., which yields the estimate by Fatou's lemma. Since 
$$
\exp(p|f_j-f|)\le \exp(2p\max\{|f_j|,|f|\})\le \exp(2p|f_j|)+\exp(2p|f|),
$$
we may thus assume without loss $f=0$. 

Since $\int \exp(|f_j|)\,d\nu$ is bounded, $(f_j)$ is bounded in $L^2(\nu)$, and we claim $\|f_j\|_{L^2(\nu)}\to 0$. 
Let $L$ be a limit point of $\int|f_j|^2\,d\nu$. We need to show $L=0$. After passing to a subsequence, we may assume $\int|f_j|^2\,d\nu\to L$, and also $f_j\to 0$ $\nu$-a.e, since $f_j\to 0$ in $L^1(\nu)$. As $\int\exp(|f_j|)\,d\nu$ is bounded, the sequence $|f_j|^2$ is uniformly integrable, and $f_j\to 0$ $\nu$-a.e.\ thus implies $\int|f_j|^2\,d\nu\to 0$, \ie $L=0$. 

Next, pick a positive measure $\mu$ such that $\Ent(\mu|\nu)\le C$, and write $d\mu=g\,d\nu$ with $g\in L^1(\nu)$. Arguing as in the proof of~\cite[Theorem~2.17]{BBEGZ}, we introduce the convex function $\chi(x)=(x+1)\log(x+1)-x$ on $\R_+$, with Legendre conjugate 
$$
\chi^\star(y)=\sup_{x\ge 0}\{xy-\chi(x)\}=e^y-y-1\le y e^y
$$
on $\R_+$. Since $\chi(x)-x\log x$ is bounded above on $\R_+$, we have $\int \chi(g)\,d\nu\le C'$ where $C'$ only depends on $C$. For each $p>0$ we have 
$$
|f_j| g=p^{-1}\left(p|f_j|\right) g\le p^{-1} \chi^\star(p|f_j|)+p^{-1}\chi(g)\le |f_j| \exp(p|f_j|)+p^{-1}\chi(g), 
$$
and hence 
$$
\int|f_j|\,d\mu=\int |f_j|g\,d\nu\le \int |f_j| \exp(p|f_j|)d\nu+p^{-1}C'\le \|f_j\|_{L^2(\nu)}C_{2p}^{1/2}+p^{-1} C',
$$
by Cauchy--Schwarz. For any $\e>0$, we can choose $p\gg 1$ such that $p^{-1}C'\le\e$. For all $j$ large enough we then have $\|f_j\|_{L^2(\nu)}C_{2p}^{1/2}\le\e$, and hence $\int |f_j|\,\mu\le 2\e$. The result follows. 
\end{proof}

%
%
%

%
%
%
%
%
%

\begin{thebibliography}{BBEGZ19}

\bibitem[AJL23]{AJL}
V.~Apostolov, S.~Jubert and A.~Lahdili.
\newblock\emph{Weighted K-stability and coercivity with applications to extremal K\"ahler and Sasaki metrics}.
\newblock Geom. Topol. \textbf{27} (2023), 3229--3302.

\bibitem[AP06]{AP1}
C.~Arezzo and F.~Pacard. 
\newblock\emph{Blowing up and desingularizing constant scalar curvature K\"ahler manifolds}.
Acta Math. \textbf{196} (2006), 179--228.

\bibitem[AP09]{AP2}
C.~Arezzo and F.~Pacard. 
\newblock\emph{Blowing up K\"ahler manifolds with constant scalar curvature. II}.
Ann. of Math., \textbf{170} (2009), 685--738.

\bibitem[APS11]{APS11}
C.~Arezzo, F.~Pacard and M.~Singer.
\newblock\emph{Extremal metrics on blowups}.
Duke Math. J. \textbf{157} (2011), 1--51.

\bibitem[BBEGZ19]{BBEGZ} 
R.~J.~Berman, S.~Boucksom, P.~Eyssidieux, V.~Guedj and A.~Zeriahi.
\newblock\emph{K\"ahler--Einstein metrics and the K\"ahler--Ricci flow on log Fano varieties}.
\newblock J. reine angew. Math. \textbf{751} (2019), 27--89.

\bibitem[BBGZ13]{BBGZ}
R.~J.~Berman, S.~Boucksom, V.~Guedj and A.~Zeriahi.
\newblock \emph{A variational approach to complex Monge--Amp\`ere equations}.
\newblock Publ. Math. Inst. Hautes {\'E}tudes Sci. 117 (2013), 179--245.

\bibitem[BB17]{BerBer}
R.~J.~Berman and B.~Berndtsson. 
\newblock\emph{Convexity of the K-energy on the space of K\"ahler metrics and uniqueness of
extremal metrics}.
\newblock J. Amer. Math. Soc. \textbf{30} (2017), 1165--1196.

\bibitem[BDL17]{BDL17}
R.~J.~Berman, T.~Darvas and C.~H.~Lu.
\newblock\emph{Convexity of the extended K-energy and the large time behavior of the weak Calabi flow}.
\newblock Geom. Topol. \textbf{21} (2017), 2945--2988. 


\bibitem[BWN14]{BWN}
R.~J.~Berman and D Witt Nystr\"om. 
\newblock\emph{Complex optimal transport and the pluripotential theory of K\"ahler--Ricci solitons}.
\newblock \texttt{arXiv:1401.8264}.



\bibitem[Bir24]{Bir}
C.~Birkar. 
\newblock\emph{Boundedness of Fano type fibrations}.
\newblock Ann. Scient. Éc. Norm. Sup. \textbf{57} (2024), 787--840.

\bibitem[BK07]{BK} 
Z.~Blocki and S.~Kolodziej.
\newblock\emph{On regularization of pluri-subharmonic functions on manifolds}.
\newblock Proc. Amer. Math. Soc. \textbf{135} (2007), 2089–-2093.


\bibitem[BG13]{BG} S.~Boucksom, V.~Guedj.
\newblock{Regularizing properties of the K\"ahler--Ricci flow}. 
\newblock In \emph{An Introduction to the K\"ahler--Ricci Flow}. S. Boucksom, P. Eyssidieux and V. Guedj editors.
\newblock Lecture Notes in Mathematics \textbf{2086}. Springer, Cham (2013)




\bibitem[BoJ23b]{synthetic}
S.~Boucksom and M.~Jonsson. 
\newblock \emph{Measures of finite energy in pluripotential theory: a synthetic approach}. 
\newblock Convex and complex: perspectives on positivity in geometry, Contemp. Math. \textbf{810} (2025), 159--208, Amer. Math. Soc.

\bibitem[BKR01]{BKR} 
T.~Bridgelang, A.~King, M.~Reid. 
\newblock\emph{The McKay correspondence as an equivalence of derived categories}. 
\newblock J. Amer. Math. Soc. \textbf{14} (2001), 535--554. 


\bibitem[CC21a]{CC1}
X.~Chen and J.~Cheng.
\newblock\emph{On the constant scalar curvature K\"ahler metrics (I)---a priori estimates}.
\newblock J. Amer Math. Soc. \textbf{34} (2021), 909--936.

\bibitem[CC21b]{CC2}
X.~Chen and J.~Cheng.
\newblock\emph{On the constant scalar curvature K\"ahler metrics (II)---existence results}.
\newblock J. Amer Math. Soc. \textbf{34} (2021), 937--1009.

\bibitem[CC24]{CC}
Y.-W.L.~Cho, Y.J.~Choi. 
\newblock\emph{Continuity of solutions to complex Monge-Ampère equations on compact K\"ahler spaces}. 
\newblock Math. Ann. \textbf{393} (2025), no.~1, 807--830.

\bibitem[Dar15]{Dar}
T.~Darvas. 
\newblock\emph{The Mabuchi geometry of finite energy classes}.
\newblock Advances Math. \textbf{285} (2015), 182--219. 

\bibitem[DDL18a]{DDL}
T.~Darvas, E.~Di Nezza and C.~H.~Lu.
\newblock\emph{$L^1$ metric geometry of big cohomology classes}. 
\newblock Ann. Inst. Fourier \textbf{68} (2018), 3053--
3086.

\bibitem[DDL18b]{DDL2}
T.~Darvas, E.~Di Nezza and C.~H.~Lu.
\newblock\emph{On the singularity type of full mass currents in big cohomology classes}. 
\newblock Compositio Math. \textbf{154} (2018), 380--409. 


\bibitem[DL20]{DL}
T.~Darvas and C.~H.~Lu. 
\newblock{Geodesic stability, the space of rays and uniform convexity in Mabuchi geometry}.
\newblock Geom. Topol. \textbf{24} (2020), 1907--1967. 

\bibitem[DR17]{DR}
T.~Darvas and Y.~Rubinstein.
\newblock{Tian's properness conjecture and Finsler geometry of the space of K\"ahler metrics}.
\newblock J. Amer. Math. Soc. \textbf{30} (2017), 347--387.

\bibitem[Dem85]{SMF} J.-P.~Demailly. 
\newblock\emph{Mesures de Monge-Ampère et caractérisation géométrique des variétés algébriques affines}. 
\newblock M\'em. Soc. Math. France \textbf{19} (1985), 124 pp.


\bibitem[DS21]{DS21}
R.~Dervan and L.~M.~Sektnan.
\newblock\emph{Extremal K\"ahler metrics on blowups}. 
\newblock \texttt{arXiv:2110.13579}.

\bibitem[DJL24a]{DJL24a}
E.~Di~Nezza, S.~Jubert and A.~Lahdili.
\newblock\emph{Weighted cscK metrics (I): a priori estimates}.
\newblock  J. Funct. Anal. \textbf{289} (2025),  no.~11, Paper No. 111148, 37 pp.


\bibitem[DJL24b]{DJL24b}
E.~Di~Nezza, S.~Jubert and A.~Lahdili.
\newblock\emph{Weighted cscK metrics (II): a continuity method}.
\newblock \texttt{arXiv:2503.22183}.

\bibitem[Fuj78]{Fuj} 
A.~Fujiki. 
\newblock\emph{On automorphism groups of compact K\"ahler manifolds}. 
\newblock Invent. Math. {\bf 44} (1978), no. 3, 225--258. 



\bibitem[FM95]{FM}
A.~Futaki and T.~Mabuchi.
\newblock\emph{Bilinear forms and extremal K\"ahler vector fields associated with K\"ahler classes}.
\newblock Math. Ann. \textbf{301} (1995), 199--210. 


\bibitem[GR56]{GR} 
H.~Grauert, R.~Remmert.
\newblock\emph{Plurisubharmonische Funktionen in komplexen R\"aumen}.
\newblock Math. Z. \textbf{65} (1956), 175--194.

\bibitem[GZ15]{GuanZhou}
Q.~Guan and X.~Zhou.\newblock \emph{A proof of Demailly’s strong openness conjecture}. \newblock Ann. of Math. \textbf{182} (2015), 605--616.

\bibitem[Hal23]{Hal}
M.~Hallam. 
\newblock\emph{Weighted extremal metrics on blowups}.
\newblock J. Geom. Anal. \textbf{35} (2025), no.~6, Paper No. 187, 57 pp. MR4902167

\bibitem[HLi23]{HL} 
Y.~Han and C.~Li. 
\newblock\emph{On the Yau--Tian--Donaldson conjecture for generalized K\"ahler--Ricci soliton equations}. 
\newblock Comm. Pure Appl. Math. \textbf{76} (2023), 1793--1867. 

\bibitem[HLiu24]{HL24}
J.~Han and Y.~Liu.
\newblock\emph{On the existence of weighted-cscK metrics}.
\newblock Adv. Math. \textbf{463} (2025), Paper No. 110125, 54 pp.

\bibitem[He19]{He}
W.~He. 
\newblock\emph{On Calabi's extremal metrics and properness}.
\newblock Trans. Amer. Math. Soc. \textbf{372} (2019), 5595--5619. 

\bibitem[Ino22]{Ino}
E.~Inoue. 
\newblock\emph{Constant $\mu$-scalar curvature Kähler metric---formulation and foundational results}.
\newblock J. Geom. Anal. \textbf{32}, 145 (2022). 

\bibitem[Laz04]{PAG}
R.~Lazarsfeld.
\newblock\emph{Positivity in algebraic geometry. I.}
\newblock Ergeb. Math. Grenzgeb. (3), 48 [Results in Mathematics and Related Areas. 3rd Series. A Series of Modern Surveys in Mathematics].
\newblock Springer-Verlag, Berlin, 2004.


\bibitem[Lah19]{Lah}
A.~Lahdili.
\newblock\emph{K\"ahler metrics with constant weighted scalar curvature and weighted K-stability}. 
\newblock Proc. London Math. Soc. (3) \textbf{119} (2019), 1065--1114. 


\bibitem[Li23]{Li}
Z.~Li. 
\newblock\emph{On Skoda's theorem for Nadel-Lebesgue multiplier ideal sheaves on singular complex spaces and regularity of weak K\"ahler-Einstein metrics}.
\newblock \texttt{arXiv:2301.00094}. 

\bibitem[LS94]{LS94} C.~LeBrun, S.R.~Simanca.
\newblock\emph{Extremal Kähler metrics and complex deformation theory}.
\newblock Geom. Func. Anal. \textbf{4} (1994), no. 3, 298--336.





\bibitem[Naj24]{Naj}
M.~Najafpour.
\newblock{Constant Scalar Curvature Kähler Metrics on Resolutions of an Orbifold Singularity of Depth 1}.
\newblock J. Geom. Anal. \textbf{35} (2025), no.~6, Paper No. 186, 67 pp.



\bibitem[PT24]{PT24}
C.-M. Pan and T. D. Tô. 
\newblock\emph{Weighted cscK metrics on Kähler varieties}.
\newblock \texttt{arXiv:2412.07968}.

\bibitem[PTT23]{PTT23}
C.-M. Pan, T. D. Tô and A.~Trusiani.
\newblock\emph{Singular cscK metrics on smoothable varieties}.
\newblock \texttt{arXiv:2312.13653}.

\bibitem[Ric68]{Ric}
R.~Richberg.
\newblock\emph{Stetige streng pseudokonvexe Funktionen}. 
\newblock Math. Ann. \textbf{175} (1968), 257--286.

\bibitem[SSz20]{SSz20} 
R.~Seyyedali and G.~Sz\'ekelyhidi. 
\newblock\emph{Extremal metrics on blowups along submanifolds}. 
\newblock J. Differential Geom. \textbf{114} (2020), 171--192.  

\bibitem[Sz\'e12]{Sze12} 
G.~Sz\'ekelyhidi. 
\newblock\emph{On blowing up extremal K\"ahler manifolds}. 
\newblock Duke Math. J. \textbf{161} (2012), 1411--1453.

\bibitem[Sz\'e15]{Sze15}
G.~Sz\'ekelyhidi. \newblock \emph{Blowing up extremal Kähler manifolds II}. \newblock Invent. Math. \textbf{200} (2015), 925–977.


\bibitem[Sz\'e24]{Sze24}
G.~Sz\'ekelyhidi. \newblock \emph{Singular Kähler--Einstein metrics and RCD spaces}. \newblock \texttt{arXiv:2408.10747}.


\bibitem[Tru22]{Tru22}
A.~Trusiani. 
\newblock \emph{Kähler-Einstein metrics with prescribed singularities on Fano manifolds}. J. Reine Angew. Math. \textbf{793} (2022), 1–57.

\bibitem[Tru23]{Tru23}
A.~Trusiani. 
\newblock\emph{Continuity method with movable singularities for classical complex Monge-Ampère equations}. Indiana Univ. Math. J. \textbf{72} (2023), 1577–1625.


\bibitem[Yau78]{Yau}
S.~T.~Yau.
\newblock\emph{On the Ricci curvature of a compact K\"ahler 
  manifold and the complex Monge--Amp{\`e}re equation}.
\newblock  Comm. Pure Appl. Math.  \textbf{31}  (1978), 339--411. 

\bibitem[Zer01]{Zer}
A.~Zeriahi.
\newblock \emph{Volume and capacity of sublevel sets of a Lelong class of psh functions}. 
\newblock Indiana Univ. Math. J. \textbf{50} (2001), 671--703.

\end{thebibliography}
\end{document}